\numberwithin{equation}{section}
\theoremstyle{plain}
\newtheorem{theorem}{Theorem}[section]
\newtheorem{lemma}[theorem]{Lemma}
\newtheorem{corollary}[theorem]{Corollary}
\newtheorem{proposition}[theorem]{Proposition}
\newtheorem{fact}{Fact}[section]
\theoremstyle{definition}
\newtheorem{definition}{Definition}[section]
\newcommand{\R}{\mathbb{R}}
\def\iid{i.i.d.\xspace}                
\def\normaldist{\mathcal{N}}        
\newcommand{\<}{\langle}
\renewcommand{\>}{\rangle}
\newcommand{\goto}{\rightarrow}
\newcommand{\sgn}{\textrm{sgn}}
\renewcommand{\P}{\operatorname{\mathbb{P}}}
\newcommand{\E}{\operatorname{\mathbb{E}}}
\newcommand{\norm}[1]{{\|#1\|}}
\renewcommand{\vec}[1]{{\boldsymbol{#1}}}
\newcommand{\supp}[1]{\operatorname{supp}(#1)}
\newcommand{\half}{{\textstyle\frac{1}{2}}}
\definecolor{eac}{RGB}{200,50,50}
\definecolor{gosia}{RGB}{50,200,50}
\definecolor{ejc}{RGB}{255,0,0}
\definecolor{wjs}{RGB}{100,100,0}
\definecolor{vdb}{RGB}{0,0,255}
\definecolor{srb}{RGB}{200,10,255}
\newcommand{\BH}{BHq\xspace}
\newcommand{\slope}[1]{\operatorname{prox}_{\bm\lambda}\left( #1 \right)}
\newcommand{\slopex}[2]{\operatorname{prox}_{#1}\left( #2 \right)}
\newcommand{\prox}[1]{\operatorname{prox}_{#1}}
\newcommand{\order}[1]{#1_{(i)}}
\newcommand{\preceqm}{\preceq}
\newcommand\lambe{\bm{\lambda}_{\epsilon}}
\newcommand{\lambex}[1]{\ifthenelse{\equal{#1}{}}{\lambda_{\epsilon}}{\lambda_{\epsilon, #1}}}
\newcommand\lambbh{\lambda^{\textnormal{\tiny BH}}}
\newcommand\ceil[1]{\lceil #1 \rceil}
\newcommand\floor[1]{\lfloor #1 \rfloor}
\renewcommand\d{\mathrm{d}}
\newcommand{\lb}{\left(}
\newcommand{\rb}{\right)}
\newcommand{\K}{k^{\star}}
\def\X{\bm{X}}
\def\Beta{\bm{\beta}}
\renewcommand{\hat}[1]{\widehat{#1}}
\renewcommand{\tilde}[1]{\widetilde{#1}}
\def\eqd{\,{\buildrel d \over =}\,} 
\newcommand{\barS}{{\overline{S}}}
\newcommand\e[1]{\mathrm{e}^{#1}}
\newcommand{\ee}[1]{\ifthenelse{\equal{#1}{1}}{\mathrm{e}}{\mathrm{e}^{#1}}}
\numberwithin{equation}{section}
\begin{document}

\begin{frontmatter}
\title{SLOPE is Adaptive to Unknown Sparsity and Asymptotically Minimax}
\runtitle{Adaptivity and Minimaxity of SLOPE}

\begin{aug}
\author{\fnms{Weijie} \snm{Su}\thanksref{t1}\ead[label=e1]{wjsu@stanford.edu}}
\and
\author{\fnms{Emmanuel} \snm{Cand\`es}\thanksref{t2}\ead[label=e2]{candes@stanford.edu}}

\thankstext{t1}{Supported in part by a General Wang Yaowu Stanford Graduate Fellowship.}
\thankstext{t2}{Supported in part by NSF under grant CCF-0963835 and by the Math + X Award from the Simons Foundation.}
\runauthor{W.~Su and E.~Cand\`es}

\affiliation{Stanford University}

\address{Departments of Statistics\\
Stanford University\\
Stanford, CA 94305\\
USA\\
\printead{e1}}

\address{Departments of Statistics and Mathematics\\
Stanford University\\
Stanford, CA 94305\\
USA\\
\printead{e2}}
\end{aug}

\begin{abstract}
  We consider high-dimensional sparse regression problems in which we
  observe $\bm y = \X \bm \beta + \bm z$, where $\X$ is an $n \times
  p$ design matrix and $\bm z$ is an $n$-dimensional vector of
  independent Gaussian errors, each with variance $\sigma^2$. Our
  focus is on the recently introduced SLOPE estimator \cite{slope},
  which regularizes the least-squares estimates with the
  rank-dependent penalty $\sum_{1 \le i \le p} \lambda_i |\hat
  \beta|_{(i)}$, where $|\hat \beta|_{(i)}$ is the $i$th largest
  magnitude of the fitted coefficients. Under Gaussian designs, where
  the entries of $\X$ are i.i.d.~$\mathcal{N}(0, 1/n)$, we show that
  SLOPE, with weights $\lambda_i$ just about equal to $\sigma \cdot
  \Phi^{-1}(1-iq/(2p))$ ($\Phi^{-1}(\alpha)$ is the $\alpha$th
  quantile of a standard normal and $q$ is a fixed number in $(0,1)$)
  achieves a squared error of estimation obeying
\[
\sup_{\|\bm\beta\|_0 \le k} \,\, \P \left(\|
  \hat{\bm\beta}_{\textnormal{\tiny SLOPE}} - \bm\beta \|^2
  >   (1+\epsilon) \, 2\sigma^2 k \log(p/k) \right)
\longrightarrow 0
\] as the dimension $p$ increases to $\infty$, and where $\epsilon >
0$ is an arbitrary small constant. This holds under a weak
  assumption on the $\ell_0$-sparsity level, namely, $k/p \goto 0$ and
  $(k\log p)/n \goto 0$, and is sharp in the sense that this is the
  best possible error {\em any} estimator can achieve.  A remarkable
  feature is that SLOPE does not require any knowledge of the degree
  of sparsity, and yet automatically adapts to yield optimal total
  squared errors over a wide range of $\ell_0$-sparsity classes. We
are not aware of any other estimator with this property.
\end{abstract}

\begin{keyword}[class=MSC]
\kwd[Primary ]{62C20}
\kwd[; secondary ]{62G05}
\kwd{62G10}
\kwd{62J15}.
\end{keyword}

\begin{keyword}
\kwd{SLOPE}
\kwd{sparse regression}
\kwd{adaptivity}
\kwd{false discovery rate (FDR)}
\kwd{Benjamini-Hochberg procedure}
\kwd{FDR thresholding}
\end{keyword}

\end{frontmatter}

\section{Introduction}
\label{sec:introduction}
Twenty years ago, Benjamini and Hochberg proposed the false discovery
rate (FDR) as a new measure of type-I error for multiple testing,
along with a procedure for controlling the FDR in the case of
statistically independent tests \cite{BH95}. In words, the FDR is the
expected value of the ratio between the number of false rejections and
the total number of rejections, with the convention that this ratio
vanishes in case no rejection is made.  To describe the
Benjamini-Hochberg procedure, henceforth referred to as the \BH
procedure, imagine we observe a $p$-dimensional vector $\bm y \sim
\mathcal{N}(\bm \beta, \sigma^2 \bm{I}_p)$ of independent statistics
$\{y_i\}$, and wish to test which means $\beta_i$ are nonzero.  Begin
by ordering the observations as $|y|_{(1)} \ge |y|_{(2)} \ge \cdots
\ge |y|_{(p)}$---that is, from the most to the least significant---and
compute a data-dependent threshold given by
\[
\hat{t}_{\textnormal{\tiny FDR}} = |y|_{(R)},  
\]
where $R$ is the last time $|y|_{(i)}/\sigma$ exceeds a critical curve
$\lambbh_i$: formally,
\begin{equation}
\label{eq:BH}
R \triangleq \max\left\{i : |y|_{(i)}/\sigma \ge \lambbh_i \right\}
\text{ with } \lambbh_i = \Phi^{-1}\left( 1-iq/(2p) \right);
\end{equation}
throughout, $0 < q < 1$ is a target FDR level and $\Phi$ is the
cumulative distribution function of a standard normal random
variable. (The chance that a null statistic $z \sim \mathcal{N}(0,1)$
exceeds $\lambbh_i$ is $\P(|z| \ge \lambbh_i) = q \cdot i/p$.)  Then
\BH rejects all those hypotheses with
$|y_i| \ge \hat{t}_{\textnormal{\tiny FDR}}$ and makes no rejection in
the case where all the observations fall below the critical curve,
i.e.~when the set $\{i : |y|_{(i)}/\sigma \ge \lambbh_i\}$ is
empty. In short, the hypotheses corresponding to the $R$ most
significant statistics are rejected. Letting $V$ be the number of
false rejections, Benjamini and Hochberg proved that this procedure
controls the FDR in the sense that
\[
\mathrm{FDR} = \E \left[ \frac{V}{R \vee 1}\right] = \frac{qp_0}{p}
\le q,
\]
where $p_0 = |\{i: \beta_i = 0\}|$ is the total number of nulls.
Unlike a Bonferroni procedure---see e.g.~\cite{bonferroni}---where the
threshold for significance is fixed in advance, a very appealing
feature of the \BH procedure is that the threshold is adaptive as it
depends upon the data $\bm y$. Roughly speaking, this threshold is
high when there are few discoveries to be made and low when there are
many.

Interestingly, the acceptance of the FDR as a valid error measure has
been slow coming, and we have learned that the FDR criterion initially
met much resistance. Among other things, researchers questioned
whether the FDR is the right quantity to control as opposed to more
traditional measures such as the familywise error rate (FWER), and
even if it were, they asked whether among all FDR controlling
procedures, the \BH procedure is powerful enough. Today, we do not
need to argue that this step-up procedure is a useful tool for
addressing multiple comparison problems, as both the FDR concept and
this method have gained enormous popularity in certain fields of
science; for instance, they have influenced the practice of genomic
research in a very concrete fashion. The point we wish to make is,
however, different: as we discuss next, if we look at the multiple
testing problem from a different point of view, namely, from that of
estimation, then FDR becomes in some sense the right notion to
control, and naturally appears as a valid error measure.


Consider estimating $\bm \beta$ from the same data $\bm y \sim
\mathcal{N}(\bm \beta, \sigma^2 \bm{I}_p)$ and suppose we have reasons
to believe that the vector of means is sparse in the sense that most
of the coordinates of $\bm \beta$ may be zero or close to zero, but
have otherwise no idea about the number of `significant' means. It is
well known that under sparsity constraints, thresholding rules can far
outperform the maximum likelihood estimate (MLE). A key issue is thus
how one should determine an appropriate threshold.  Inspired by the
adaptivity of \BH, Abramovich and Benjamini \cite{fdrthreshold}
suggested estimating the mean sequence by the following {\em
  testimation} procedure:\footnote{See \cite{adke1987two} for the use
  of this word.} use \BH to select which coordinates are worth
estimating via the MLE and which do not and can be set to
zero. Formally, set $0 < q < 1$ and define the FDR estimate as
\begin{equation}
\label{eq:FDRThresh}
\hat{\beta}_i = 
\begin{cases}
  y_i, \quad & |y_i| \ge \hat{t}_{\textnormal{\tiny FDR}},\\
  0, \quad & \text{otherwise}.
\end{cases}
\end{equation}
The idea behind the FDR-thresholding procedure is to automatically
adapt to the unknown sparsity level of the sequence of means under
study. Now a remarkably insightful article \cite{ABDJ} published ten
years ago rigorously established that this way of thinking is
fundamentally correct in the following sense: if one chooses a
constant $q \in (0, 1/2]$,
then the FDR estimate is asymptotically minimax over the class of
$k$-sparse signals as long as $k$ is neither too small nor too large.
More precisely, take any $\bm \beta \in \R^p$ with a number $k$ of
nonzero coordinates obeying $\log^5 p \le k \le p^{1-\delta}$ for any
constant $\delta > 0$. Then as $p \goto \infty$, it holds that
\begin{equation}
  \label{eq:ABDJ}
  \text{MSE} =  \E \|\hat{\bm \beta} - \bm \beta\|^2 \le (1+o(1)) \, 2\sigma^2 k \log(p/k). 
\end{equation}
It can be shown that the right-hand side is the asymptotic minimax
risk over the class of $k$-sparse signals (\cite{ABDJ} provides other asymptotic minimax results for $\ell_p$
balls) and, therefore, there is a sense in which the FDR estimate
asymptotically achieves the best possible mean-square error
(MSE). This is remarkable because the FDR estimate is not given any
information about the sparsity level $k$ and no matter this value in
the stated range, the estimate will be of high quality. To a certain
extent, the FDR criterion strikes the perfect balance between bias and
variance. Pick a higher threshold/or a more conservative testing
procedure and the bias will increase resulting in a loss of
minimaxity. Pick a lower threshold/or use a more liberal procedure and
the variance will increase causing a similar outcome. Thus we see that
the FDR criterion provides a fundamentally correct answer to an
estimation problem with squared loss, which is admittedly far from
being a pure multiple testing problem.

For the sake of completeness, we emphasize that the FDR thresholding
estimate happens to be very close to penalized estimation procedures
proposed earlier in the literature, which seek to regularize the
maximum likelihood by adding a penalty term of the form
\begin{equation}
\label{eq:penMLE}
\underset{\bm b}{\mbox{argmin}} ~ \|\bm y - \bm b\|_2^2 + \sigma^2
\operatorname{Pen}(\|\bm b\|_0),
\end{equation}
where $\operatorname{Pen}(k) = 2k \log(p/k)$ see \cite{FosterStine}
and \cite{BirgeMassart,TibshiraniKnight} for related ideas. In fact,
\cite{ABDJ} begins by considering the penalized MLE with
\[
\operatorname{Pen}(k) = \sum_{i \le k} (\lambbh_i)^2  = (1+o(1))\, 2k
\log(p/k),
\]
which is different from the FDR thresholding estimate, and shown to
enjoy asymptotic minimaxity under the restrictions on the sparsity
levels listed above. In a second step, \cite{ABDJ} argues that the FDR
thresholding estimate is sufficiently close to this penalized MLE so
that the estimation properties carry over.


\subsection{SLOPE}
\label{sec:extent-linea-model}

Our aim in this paper is to extend the link between estimation and
testing by showing that a procedure originally aimed at controlling
the FDR in variable selection problems enjoys optimal estimation
properties. We work with a linear model, which is far more general
than the orthogonal sequence model discussed up until this point;
here, we observe an $n$-dimensional response vector obeying
\begin{equation}\label{eq:linear_model}
\bm y = \bm X \bm\beta + \bm z,
\end{equation}
where $\bm X \in \R^{n \times p}$ is a design matrix, $\bm\beta \in
\R^p$ is a vector of regression coefficients and $\bm z \sim
\mathcal{N}(\bm 0, \sigma^2 \bm{I}_n)$ is an error term.  

On the testing side, finding finite sample procedures that would test
the $p$ hypotheses $H_j: \beta_j = 0$ while controlling the FDR---or
other measures of type-I errors---remains a challenging topic. When $p
\le n$ and the design $\bm X$ has full column rank, this is equivalent
to testing a vector of means under arbitrary correlations since the
model is equivalent to $\hat{\bm{\beta}}_{\text{LS}} \sim
\mathcal{N}(\bm \beta, \sigma^2 (\bm{X}'\bm{X})^{-1})$
($\hat{\bm{\beta}}_{\text{LS}}$ is the least-squares
estimate). Applying \BH procedure to the least-squares estimate (1) is
not known to control the FDR (the positive regression dependency
\cite{prds} does not hold here), and (2) suffers from high variability
in false discovery proportions due to correlations
\cite{slope}. Having said this, we are aware of recent significant
progress on this problem including the development of the knockoff
filter \cite{Knockoffs}, which is a powerful FDR controlling method
working when $p \le n$, and other innovative ideas
\cite{fan2012estimating,liugaussian,lockhart2012significance,ji2014rate} relying on assumptions,
which may not always hold.

On the estimation side, there are many procedures available for
fitting sparse regression models and the most widely used is the Lasso
\cite{Tibs96}. When the design is orthogonal, the Lasso simply applies
the same soft-thresholding rule to all the coordinates of the
least-squares estimates. This is equivalent to comparing all the
$p$-values to a {\em fixed} threshold. In the spirit of the adaptive
\BH procedure, \cite{slope} proposed a new fitting strategy called
{SLOPE}, a short-hand for Sorted L-One Penalized Estimation: fix a
nonincreasing sequence $\lambda_1 \ge \lambda_2 \ge \cdots \ge
\lambda_p \ge 0$ not all vanishing; then SLOPE is the solution to
\begin{equation}\label{eq:slope_lambda}
  \underset{\bm b}{\mbox{minimize}} \quad  \frac12 \|\bm y - \bm X \bm b\|^2 + \lambda_1|b|_{(1)} + \lambda_2|b|_{(2)} + \cdots + \lambda_p|b|_{(p)}, 
\end{equation}
where $|b|_{(1)} \ge |b|_{(2)} \ge \cdots \ge |b|_{(p)}$ are the order
statistics of $|b_1|, |b_2|, \ldots, |b_p|$. The regularization is a
{\em sorted $\ell_1$ norm}, which penalizes coefficients whose
estimate is larger more heavily than those whose estimate is smaller.
This reminds us of the fact that in multiple testing procedures,
larger values of the test statistics are compared with higher
thresholds. In particular, recall that \BH compares $|y|_{(i)}/\sigma$
with $\lambbh_i = \Phi^{-1}(1-iq/2p)$---the $(1-iq/2p)$th quantile of
a standard normal (for information, the sequence $\bm \lambbh$ shall
play a crucial role in the rest of this paper). SLOPE is a convex
program and \cite{slope} demonstrates an efficient solution algorithm
(the computational cost of solving a SLOPE problem is roughly the same
as that of solving the Lasso).

To gain some insights about SLOPE, it is helpful to consider the
orthogonal case, which we can take to be the identity without loss of
generality.  When $\bm X = \bm{I}_p$, the SLOPE estimate is the
solution to
\begin{equation}\label{eq:prox_intro}
  \slope{\bm y} \triangleq \underset{\bm b}{\mbox{argmin}} ~\textstyle{\frac12} \|\bm y - \bm b\|^2 + \lambda_1|b|_{(1)}  + \cdots + \lambda_p|b|_{(p)}; 
\end{equation} 
in the literature on optimization, this solution is called the prox to
the sorted $\ell_1$ norm evaluated at $\bm y$, hence the notation in
the left-hand side. (In the case of a general orthogonal design in
which $\bm{X}' \bm X = \bm{I}_p$, the SLOPE solution is
$\slope{\bm{X}'\bm y}$.)  Suppose the observations are nonnegative and already ordered, i.e.~$y_1 \ge y_2 \ge \cdots \ge y_p \ge
0$.\footnote{For arbitrary data, the solution can be obtained as
  follows: let $\bm P$ be a permutation that sorts the magnitudes
  $|\bm y|$ in a non-increasing fashion. Then $\slope{\bm y} =
  \sgn({\bm y}) \odot \bm P^{-1} \slope{\bm P |\bm y|}$, where $\odot$
  is componentwise multiplication.  In words, we can replace the
  observations by their sorted magnitudes, solve the problem and,
  finally, undo the ordering and restore the signs.} Then by
\cite[Proposition 2.2]{slope} SLOPE can be recast as the solution to
\begin{equation}
  \label{eq:slopeQP}
\begin{array}{ll} 
  \text{minimize} & \quad \textstyle{\frac{1}{2}} \|\bm y - \bm \lambda - \bm b\|^2 = \textstyle{\frac{1}{2}} \sum_i (y_i - \lambda_i - b_i)^2 \\
  \text{subject to} &  \quad b_1 \ge b_2 \ge \cdots \ge b_p \ge 0
\end{array} 
\end{equation}
so that it is equivalent to solving an isotonic regression
  problem with data $\bm y - \bm\lambda$. Hence, methods like the
  pool adjacent violators algorithm (PAVA) \cite{kruskal64,barlow72}
  are directly applicable. Further, two observations are in order:
the first is that the fitted values have the same signs and ranks as
the original observations; for any pair $(i,j)$, $y_i \ge y_j$ implies
that $\hat \beta_i \ge \hat \beta_j$.  The second is that the fitted
values are as close as possible to the shrunken observations $y_i -
\lambda_i$ under the ordering constraint. Hence, SLOPE is a sort of
soft-thresholding estimate in which the amount of thresholding is data
dependent and such that the original ordering is preserved.

To emphasize the similarities with the \BH procedure, assume that we
work with $\lambda_i = \sigma \cdot \lambbh_i$ and that we use SLOPE
as a multiple testing procedure rejecting $H_i : \beta_i = 0$ if and
only if $\hat{\beta}_i \neq 0$. Then this procedure rejects all the
hypotheses the \BH step-down procedure would reject, and accepts all
those the step-up procedure would accept. Under independence,
i.e.~$\bm y \sim \mathcal{N}(\bm \beta, \sigma^2 \bm{I}_p)$, SLOPE
controls the FDR \cite{slope}, namely,
$\operatorname{FDR}(\text{SLOPE}) \le {qp_0}/{p}$, where again $p_0$
is the number of nulls, i.e.~of vanishing means.

Figure \ref{fig:adapt} displays SLOPE estimates for two distinct data
sets, with one set containing many more stronger signals than the
other. We see that SLOPE sets a lower threshold of significance when
there is a larger number of strong signals. We can also see that SLOPE
tends to shrink less as observations decrease in magnitude.  In
summary, SLOPE encourages sparsity just as the Lasso, but unlike the
Lasso its degree of penalization is adaptive to the unknown sparsity
level.
\begin{figure}[!htbp]
\centering
\hfill
\begin{subfigure}[b]{0.49\textwidth}
\centering
\includegraphics[width=\textwidth]{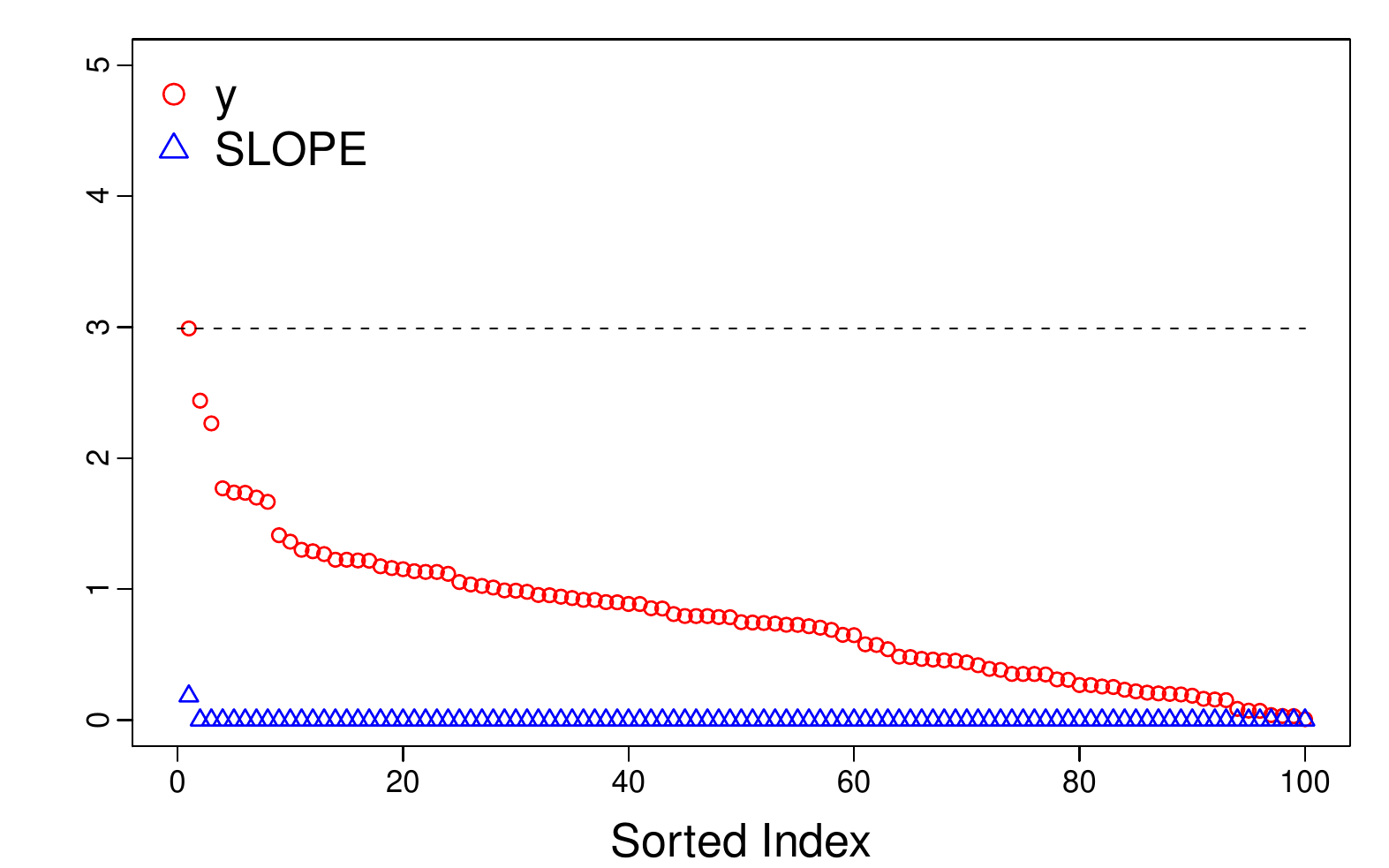}
\caption{Weak signals.}
\label{fig:adapt_k5}
\end{subfigure}
\begin{subfigure}[b]{0.49\textwidth}
\centering
\includegraphics[width=\textwidth]{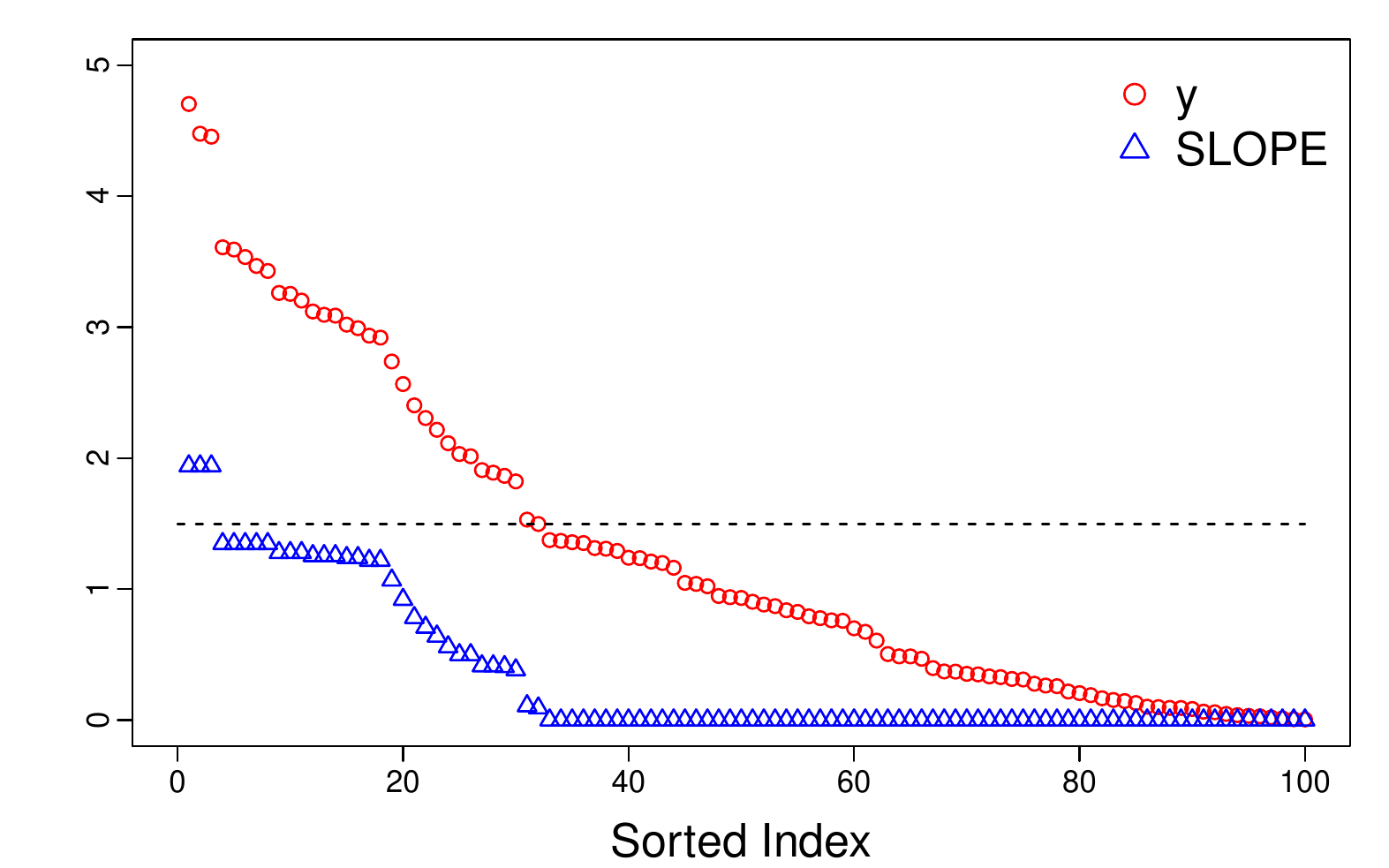}
\caption{Strong signals.}
\label{fig:adapt_k15}
\end{subfigure}
\hfill
\caption{Illustrative examples of original observations and SLOPE estimates with the identity design. All observations below the threshold
  indicated by the dotted line are set to zero; this threshold is data
  dependent.}
\label{fig:adapt}
\end{figure}

\subsection{Orthogonal designs}
\label{sec:slope-orth-design-intro}

We now turn to estimation properties of SLOPE and begin by considering
orthogonal designs.  Multiplying both sides of \eqref{eq:linear_model}
by $\bm X'$ gives the statistically equivalent Gaussian sequence
model,
\[
\bm y = \bm\beta + \bm{z},
\]
where $\bm z \sim \mathcal{N}(\bm 0, \sigma^2\bm{I}_p)$.  Estimating a
sparse mean vector from Gaussian data is a well-studied problem with a
long line of contributions, see
\cite{bickel1981minimax,donoho1994a,foster1994,BirgeMassart,cai2009,
  johnstone} for example.  Among other things, we have already mentioned
that the asymptotic risk over sparse signals is known: consider a
sequence of problems in which $p\goto \infty$ and $k/p \goto 0$, then
\[
R_p(k) \triangleq \inf_{\hat{\bm\beta}} \,\, \sup_{\|\bm\beta\|_0 \le
  k} \E \| \hat{\bm\beta} - \bm\beta\|^2 = (1+o(1))\, 2\sigma^2
k\log(p/k),
\]
where the infimum is taken over all measurable estimators, see
\cite{donoho1994b} and \cite{johnstone}. Furthermore, both soft or
hard-thresholding at the level of $\sigma\sqrt{2\log(p/k)}$ are
asymptotically minimax.  Such estimates require knowledge of the
sparsity level ahead of time, which is not realistic. Our first result is that SLOPE also achieves asymptotic minimaxity \textit{without} this knowledge.
\begin{theorem}\label{thm:orth_main_general_intro}
  Let $\bm X$ be orthogonal and assume that $p \goto \infty$ with $k/p
  \goto 0$. Fix $0 < q < 1$. Then SLOPE with $\lambda_i = \sigma \cdot
  \Phi^{-1}(1-iq/2p) = \sigma \cdot \lambbh_i$ obeys
\begin{equation}
\label{eq:orth_main_general_intro}
\sup_{\|\bm\beta\|_0 \le k}\E \| \hat{\bm\beta}_{\textnormal{\tiny SLOPE}} - \bm\beta \|^2 = (1+o(1))\, 2 \sigma^2k\log(p/k).
\end{equation}
\end{theorem}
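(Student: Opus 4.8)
The goal is the upper bound in \eqref{eq:orth_main_general_intro}; the matching lower bound is free, since the right‑hand side equals the minimax risk $R_p(k)$ recalled above and hence $\sup_{\|\bm\beta\|_0\le k}\E\|\hat{\bm\beta}_{\textnormal{\tiny SLOPE}}-\bm\beta\|^2\ge R_p(k)=(1+o(1))\,2\sigma^2k\log(p/k)$ for every estimator. Rescaling lets us take $\sigma=1$, so the SLOPE weights are $\lambda_i=\lambbh_i$. By the sign‑and‑permutation equivariance of the prox recorded after \eqref{eq:prox_intro} it suffices to consider data $\bm y=\bm\beta+\bm z$ with $\beta_1\ge\cdots\ge\beta_p\ge 0$ and to work with the isotonic reformulation \eqref{eq:slopeQP}: writing $\bm\lambbh$ for the weight vector, $\hat{\bm\beta}_{\textnormal{\tiny SLOPE}}$ is the Euclidean projection of $\bm y-\bm\lambbh$ onto the monotone cone $\mathcal C=\{b:b_1\ge\cdots\ge b_p\ge0\}$, which PAVA realizes as a piecewise‑constant vector whose entry on each level block equals the (truncated‑at‑zero) average of $y_i-\lambbh_i$ over that block.

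Fix $\bm\beta$ with support $S=\{i:\beta_i>0\}$ and $|S|=k_0\le k$, and split $\|\hat{\bm\beta}_{\textnormal{\tiny SLOPE}}-\bm\beta\|^2=\sum_{i\in S}(\hat\beta_i-\beta_i)^2+\sum_{i\notin S}\hat\beta_i^2$. For the signal sum I would exploit the block structure: for $i\in S$, $\hat\beta_i-\beta_i=\overline z_{B(i)}-\overline{\lambbh}_{B(i)}$ modulo the zero‑truncation (which only shrinks the loss), where $B(i)$ is the PAVA block through $i$ and bars denote block averages. Hence $\sum_{i\in S}(\hat\beta_i-\beta_i)^2\le 2\sum_{i\in S}\overline z_{B(i)}^2+2\sum_{i\in S}\overline{\lambbh}_{B(i)}^2$; since the $\lambbh_i$ are nonincreasing, a blockwise Cauchy--Schwarz controls the second term by $(1+o(1))\,2\sum_{i\le k}\lambbh_i^2=(1+o(1))\,2k\log(p/k)$, invoking the elementary asymptotics of $\sum_{i\le k}(\lambbh_i)^2$ quoted after \eqref{eq:penMLE} together with $k/p\goto0$. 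The term $\sum_{i\in S}\overline z_{B(i)}^2$ is of order $k$ up to mean‑zero fluctuations on a high‑probability event on which the blocks meeting $S$ are short, hence $o(k\log(p/k))$ because $p/k\goto\infty$; for coordinates carrying only a weak signal one instead uses the one‑dimensional bound $(\hat\beta_i-\beta_i)^2\lesssim\min\{\beta_i^2,\,2\log(p/k)\}$. In every case the signal sum is $\le(1+o(1))\,2k\log(p/k)$.

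For the null sum the aim is $\E\sum_{i\notin S}\hat\beta_i^2=o(k\log(p/k))$. The device is the event $E=\{|z|_{(i)}\le\lambbh_i\ \text{for all }i\}$ (more precisely, a refinement that also controls how far below the BH curve the order statistics sit), which has probability bounded away from zero because $\P(|z|\ge\lambbh_i)=iq/p$ for a standard normal---this is exactly the event underlying $\operatorname{FDR}(\mathrm{SLOPE})\le qp_0/p$ recalled in \S\ref{sec:extent-linea-model}. On $E$ the entries of $\bm y-\bm\lambbh$ at null ranks are $\le0$, so no signal block of the monotone projection extends rightward into them, and after truncation the null fitted values vanish except where pooled with positive fluctuations; monotonicity of the solution then bounds $\sum_{i\notin S}\hat\beta_i^2$ by a sum of positive parts of local averages of $|z|_{(i)}-\lambbh_i$, which Gaussian and extreme‑value estimates for order statistics keep negligible. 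On $E^c$ one falls back on a crude deterministic bound for the loss---through the basic inequality $\tfrac12\|\hat{\bm\beta}_{\textnormal{\tiny SLOPE}}-\bm\beta\|^2\le J_{\bm\lambbh}(\hat{\bm\beta}_{\textnormal{\tiny SLOPE}}-\bm\beta)+J_{\bm\lambbh}(\bm\beta)-J_{\bm\lambbh}(\hat{\bm\beta}_{\textnormal{\tiny SLOPE}})$ with $J_{\bm\lambbh}$ the sorted $\ell_1$ norm, or from nonexpansiveness of the prox---times the small probability $\P(E^c)$, which with Gaussian tail integrability is again $o(k\log(p/k))$. Adding the two estimates and maximizing over $\|\bm\beta\|_0\le k$ delivers the upper bound, which with the lower bound gives \eqref{eq:orth_main_general_intro}.

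The main obstacle is uniformity over $\bm\beta$, specifically the regime of moderate signal strengths where the ranks of $\bm y$ do not cleanly separate signal from noise and a PAVA block may straddle the two, so that the convenient picture ``top‑$k_0$ block $=$ signal, remainder $=$ null'' breaks down. One must then show, for every placement and every profile of the $k_0$ nonzero magnitudes, that straddling blocks neither produce a non‑negligible null fitted value nor inflate the signal contribution past $(1+o(1))\,2k\log(p/k)$, and that the noise‑average terms $\sum_{i\in S}\overline z_{B(i)}^2$ stay of order $k$ uniformly; this is where the combinatorics of isotonic regression must be handled with care, perhaps after a preliminary reduction (by a monotonicity/rearrangement argument) to worst‑case configurations of $\bm\beta$ with few distinct magnitudes.
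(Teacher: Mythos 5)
Your plan is genuinely different from the paper's proof—you try to analyze the PAVA level sets of the isotonic reformulation directly—but as written it has a constant‑factor error in the signal bound and a real gap in the noise bound, so it does not give \eqref{eq:orth_main_general_intro}.

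On the signal sum, the inequality
$\sum_{i\in S}(\hat\beta_i-\beta_i)^2\le 2\sum_{i\in S}\overline z_{B(i)}^2+2\sum_{i\in S}\overline{\lambbh}_{B(i)}^2$
already costs a factor $2$ that never disappears: even if $\sum_{i\in S}\overline{\lambbh}_{B(i)}^2\le(1+o(1))\sum_{i\le k}(\lambbh_i)^2$, your claim that $2\sum_{i\le k}(\lambbh_i)^2=(1+o(1))\,2k\log(p/k)$ is false, since $\sum_{i\le k}(\lambbh_i)^2=(1+o(1))\,2k\log(p/k)$ on its own. The theorem requires the constant $2$, and the split $(a+b)^2\le 2a^2+2b^2$ is too lossy because $\|\bm z_S\|$ is \emph{not} of strictly smaller order than $\|\bm\lambda^{[k]}\|$ in your accounting. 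The paper avoids this by never splitting coordinatewise. It uses Fact~\ref{fc:slope_maj} (that $\bm y-\hat{\bm\beta}$ is majorized by $\bm\lambda$) together with Fact~\ref{fc:norm} to get $\|\hat{\bm\beta}_S-\bm\beta_S\|\le\|\bm\lambda^{[k]}\|+\|\bm z_S\|$ deterministically; squaring and using that $\|\bm z_S\|=O(\sqrt k)$ is of \emph{lower} order than $\|\bm\lambda^{[k]}\|\asymp\sqrt{k\log(p/k)}$ gives exactly $(1+o(1))\,2k\log(p/k)$ with no wasted constant.

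On the null sum, the plan ``work on $E=\{|z|_{(i)}\le\lambbh_i\ \forall i\}$, then multiply a crude bound by the small probability $\P(E^c)$'' cannot close. Under the global null, $\P(E^c)$ is bounded away from zero (it is roughly the probability that the \BH step‑up procedure makes at least one rejection, a constant of order $q$), and no refinement of $E$ that still captures the FDR geometry makes $\P(E^c)=o(1)$. So the $E^c$ term is constant times whatever crude deterministic bound you use, which is far larger than $o(k\log(p/k))$. The paper sidesteps this entirely: it never conditions on a good event, but instead uses Fact~\ref{fc:reduce} and Lemma~\ref{lm:algo-monotone} to bound the off‑support contribution deterministically by $\sum_i(|\zeta|_{(i)}-\lambbh_{k+i})_+^2$ and then estimates the \emph{expectation} of that sum directly (Lemmas~\ref{lm:bound-out-support-close} and \ref{lm:bound-out-support-far}), with no high‑probability truncation. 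Finally, you yourself flag that the straddling‑block combinatorics are not handled; that is indeed the hard part of a PAVA‑based argument, and it is exactly what the majorization facts let the paper avoid. If you want to pursue a PAVA route, the essential missing ingredient is a deterministic inequality that bounds the projection uniformly in $\bm\beta$; Facts~\ref{fc:norm}–\ref{fc:reduce} and Lemma~\ref{lm:algo-monotone} are the paper's substitute for that.
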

Hence, no matter how we select the parameter $q$ controlling the FDR
level in the range $(0,1)$, we get asymptotic minimaxity (in practice
we would probably stick to values of $q$ in the range $[0.05, 0.30]$).
There are notable differences with the result from \cite{ABDJ} we
discussed earlier. First, recall that to achieve minimaxity in that
work, the nominal FDR level needs to obey $q \le 1/2$ (the MSE is
larger otherwise) and the sparsity level is required to obey $\log^5 p
\le k \le p^{1-\delta}$ for a constant $\delta > 0$, i.e.~the signal
cannot be too sparse nor too dense. The lower bound on sparsity has
been improved to $\log^{4.5} p$ \cite{wuzhou}.  In contrast, there are
no restrictions of this nature in
Theorem~\ref{thm:orth_main_general_intro}; this has to do with the
fact that SLOPE is a continuous procedure whereas FDR thresholding is
highly discontinuous; small perturbations in the data can cause the
FDR thresholding estimates to jump. This idea may also be found in the recent work \cite{jiangadaptive} in which the authors prove that some smooth-thresholding procedures uniformly achieve asymptotic minimaxity under the same assumptions as in Theorem~\ref{thm:orth_main_general_intro}. They also establish some optimality results for these thresholding rules at a fixed $\bm\beta$. Second, SLOPE effortlessly extends to
linear models while it is not clear how one would extend FDR
thresholding ideas in a computationally tractable fashion.


One can ask which vectors $\bm \beta$ achieve the equality in
\eqref{eq:orth_main_general_intro}, and it is not very hard to see
that equality holds if the $k$ nonzero entries of $\bm \beta$ are very
large. Suppose for simplicity that $\beta_1 \gg \beta_2 \gg \cdots \gg
\beta_k \gg 1$ and that $\beta_{k+1} = \cdots = \beta_p = 0$. Spacing
the nonzero coefficients sufficiently far apart will insure that $y_j
- \lambda_j$, $1 \le j \le k$, is nonincreasing with high probability
so that the SLOPE estimate is obtained by rank-dependent
soft-thresholding:
\[
\hat{\beta}_{\textnormal{\tiny
    SLOPE},j} = y_j - \sigma \lambbh_j. 
\]
Informally, since the mean-square error is the sum of the squared bias
and variance, this gives
\[
\E (\hat{\beta}_{\textnormal{\tiny
    SLOPE},j} - \beta_j)^2 \approx \sigma^2 \cdot ((\lambbh_j)^2+1). 
\]
Since $\sum_{1 \le j \le k} (\lambbh_j)^2 = (1+o(1))\,
2k\log(p/k)$,\footnote{This relation follows from $\Phi^{-1}(1 -c) =
  (1 + o(1)) \sqrt{2\log(1/c)}$ when $c \searrow 0$ and applying
  Stirling's approximation.} summing this approximation over the first
$k$ coordinates gives
\[
\E \sum_{1 \le j \le k} (\hat{\beta}_{\textnormal{\tiny SLOPE},j} -
\beta_j)^2 \approx \sigma^2 \cdot \Bigl(k + \sum_{1 \le j \le k}
(\lambbh_j)^2\Bigr) = (1+o(1))\, 2 \sigma^2 k \log(p/k),
\]
where the last inequality follows from the condition $k/p \goto
0$. Theorem \ref{thm:orth_main_general_intro} states that in
comparison, the $p-k$ vanishing means contribute a negligible MSE.

We pause here to observe that if one hopes SLOPE with weights
$\lambda_j$ to be minimax, then they will need to satisfy
\[
\sum_{j = 1}^k \lambda_j^2 = (1+o(1)) \, 2k\log(p/k) 
\]
for all $k$ in the stated range. Since $\lambda_j^2 = \sum_{i =
    1}^j \lambda_i^2 - \sum_{i = 1}^{j-1} \lambda_i^2$, we have that
  $\lambda_j^2$ is roughly the derivative of $f(x) = 2x \log(p/x)$ at
  $x = j$ yielding $\lambda_j^2 \approx f'(j) = 2\log p - 2\log j -
  2$, or
\[
\lambda_j \approx \sqrt{2 \log(p/j)} \approx \Phi^{-1}(1 - jq/2p).
\]
As a remark, all our results---e.g.~Theorems
\ref{thm:orth_main_general_intro} and
\ref{thm:gauss_minimax}---continue to hold if we replace
$\lambda_j^\text{BH}(q)$ with $\sqrt{2\log(p/j)}$.

We speculate that Theorem \ref{thm:orth_main_general_intro}---and to some extent Theorem \ref{thm:gauss_minimax} below---extend to other loss functions.  For instance, from the proofs of Theorem \ref{thm:orth_main_general_intro} we believe that for $r \ge 1$,
\[
\sup_{\|\bm\beta\|_0 \le k} \E \| \hat{\bm\beta}_{\textnormal{\tiny
    SLOPE}} - \bm\beta \|_r^r = (1+o(1)) \cdot k \cdot 
\bigl(2\sigma^2\log(p/k)\bigr)^{r/2}
\]
holds. Furthermore, examining the proof of
Theorem \ref{thm:orth_main_general_intro} reveals that for all $k$ not
necessarily obeying $k/p \goto 0$ (e.g.~$k = p/2$),
\[
\frac{\sup_{\|\bm\beta\|_0 \le k} \E \|
  \hat{\bm\beta}_{\textnormal{\tiny SLOPE}} - \bm\beta \|^2}{R_p(k)}
\le C(q), 
\]
where $C(q)$ is a positive numerical constant that only depends on
$q$.

\subsection{Random designs}
\label{sec:slope-under-general-intro}

We are interested in getting results for sparse regression that would
be just as sharp and precise as those presented in the orthogonal
case. In order to achieve this, we assume a tractable model in which
$\bm X$ is a Gaussian random design with $X_{ij}$
i.i.d.~$\mathcal{N}(0, 1/n)$ so that the columns of $\bm X$ have just
about unit norm. Random designs allow to analyze fine structures of
the models of interest with tools from random matrix theory and large
deviation theory, and are very popular for analyzing regression
methods in the statistics literature. An incomplete list of works
working with Gaussian designs would include
\cite{brown2002,baraud2002,birge2004,bunea2007,wainwright2009, lassorisk, donohorobust}. On the one hand, Gaussian designs are
amenable to analysis while on the other, they capture some of the
features one would encounter in real applications.



To avoid any ambiguity, the theorem below considers a sequence of
problems indexed by $(k_j, n_j, p_j)$, where the number of variables
$p_j \goto \infty$, $k_j/p_j \goto 0$ and $(k_j\log p_j)/n_j \goto
0$. From now on, we shall omit the subscript.
\begin{theorem}\label{thm:gauss_minimax}
  Fix $0 < q < 1$ and set
  $\bm \lambda = \sigma (1+\epsilon) \bm \lambbh(q)$ for some
  arbitrary constant $0 < \epsilon < 1$.  Suppose $k/p \goto 0$ and
  $(k\log p)/n \goto 0$. Then
\begin{equation}
\label{eq:gauss_minimax} 
\sup_{\|\bm\beta\|_0 \le k} \P \left( \frac{\|
    \hat{\bm\beta}_{\textnormal{\tiny SLOPE}} - \bm\beta
    \|^2}{2\sigma^2 k \log(p/k)} > 1 + 3\epsilon \right)
\longrightarrow 0.
\end{equation}
\end{theorem}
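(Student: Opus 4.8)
\emph{The plan.} The idea is to exhibit SLOPE under the random design as the \emph{orthogonal} SLOPE (the prox $\slope{\cdot}$) evaluated at a mildly perturbed observation, and then invoke Theorem~\ref{thm:orth_main_general_intro}. Write $\bm h = \hat{\bm\beta}_{\textnormal{\tiny SLOPE}} - \bm\beta$ and let $J_{\bm\lambda}$ denote the sorted $\ell_1$ norm. The KKT conditions for \eqref{eq:slope_lambda} say $\bm X'(\bm y - \bm X\hat{\bm\beta}_{\textnormal{\tiny SLOPE}}) \in \partial J_{\bm\lambda}(\hat{\bm\beta}_{\textnormal{\tiny SLOPE}})$, which via the prox fixed-point identity and $\bm y = \bm X\bm\beta + \bm z$ rearranges to $\hat{\bm\beta}_{\textnormal{\tiny SLOPE}} = \slope{\bm\beta + \bm X'\bm z - (\bm X'\bm X - \bm I)\bm h}$. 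Thus $\hat{\bm\beta}_{\textnormal{\tiny SLOPE}}$ is exactly an orthogonal-sequence SLOPE estimate with weights $\sigma(1+\epsilon)\bm\lambbh$ applied to the ``effective observation'' $\bm v := \bm\beta + \bm X'\bm z - (\bm X'\bm X - \bm I)\bm h$. Since $\sum_{i\le k}(\sigma(1+\epsilon)\lambbh_i)^2 = (1+\epsilon)^2(1+o(1))\,2\sigma^2 k\log(p/k)$, a variant of Theorem~\ref{thm:orth_main_general_intro} allowing the $1+\epsilon$ inflation of the weights gives a risk bound $(1+\epsilon)^2(1+o(1))\,2\sigma^2 k\log(p/k)$ for that orthogonal problem, and since $(1+\epsilon)^2 + o(1)\le 1+3\epsilon$ on the stated range of $\epsilon$, this is precisely \eqref{eq:gauss_minimax}. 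Everything reduces to showing $\bm v$ is close enough to a clean noisy observation of $\bm\beta$ that the orthogonal analysis survives, uniformly over $\|\bm\beta\|_0 \le k$.

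\emph{A good event.} First I would restrict to a high-probability event $\mathcal{E}$ depending only on $(\bm X,\bm z)$ — so that uniformity in $\bm\beta$ is automatic — on which: (a) $\|\bm z\|^2 = (1+o(1))n\sigma^2$; (b) a restricted-isometry bound holds, $\|\bm X\bm u\|^2 = (1+o(1))\|\bm u\|^2$ and $\|(\bm X'\bm X - \bm I)\bm u\| = o(\|\bm u\|)$ whenever $\bm u$ is supported on a set of size $O(k)$ — admissible since $(k\log p)/n\to 0$ — together with finer concentration of the entries of $\bm X'\bm X$; and (c) the vector $\bm X'\bm z$, which conditionally on $\bm X$ is $\mathcal N(\bm 0,\sigma^2\bm X'\bm X)$ with $\bm X'\bm X$ near the identity, has its sorted magnitudes controlled both termwise and in partial sums, namely $\sum_{i\le m}|\bm X'\bm z|_{(i)} \le \sum_{i\le m}\sigma(1+\epsilon)\lambbh_i$ for every $m$, and $\|(\bm X'\bm z)_S\|^2 = (1+o(1))|S|\sigma^2$ for every $S$ with $|S|\le k$. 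Property (c) rests on the same tail estimate that gives $\bm\lambbh$ its FDR interpretation (using $q<1$), now with the comfortable factor $1+\epsilon$ to spare.

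\emph{Bootstrap and localize.} Optimality of $\hat{\bm\beta}_{\textnormal{\tiny SLOPE}}$ yields the basic inequality $\tfrac12\|\bm X\bm h\|^2 \le \langle \bm X'\bm z,\bm h\rangle + J_{\bm\lambda}(\bm\beta) - J_{\bm\lambda}(\hat{\bm\beta}_{\textnormal{\tiny SLOPE}})$. Splitting $\bm h$ over $S = \supp{\bm\beta}$ and $S^c$ and using the subgradient inequality for $J_{\bm\lambda}$ at $\bm\beta$ (its tail block charging $\bm h_{S^c}$ with weights $\sigma(1+\epsilon)\lambbh_{|S|+1},\sigma(1+\epsilon)\lambbh_{|S|+2},\dots$) together with the partial-sum bound in (c) and summation by parts, the $\bm h_{S^c}$ contributions are nonpositive and drop out; this gives the SLOPE cone condition — so (b) applies — a crude rate $\|\bm h\|^2 = O(\sigma^2 k\log p)$, and the fact that the coordinates SLOPE activates off $S$ are few and, by the rank-dependent shrinkage, each of order much smaller than $\sigma\sqrt{\log(p/k)}$, so that $\bm h$ equals an $O(k)$-sparse vector plus a remainder of norm $o(\sigma\sqrt{k\log(p/k)})$. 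Plugging this back into $\bm v = \bm\beta + \bm X'\bm z - (\bm X'\bm X - \bm I)\bm h$ and using (b) shows the perturbation $(\bm X'\bm X - \bm I)\bm h$ has norm $o(\sigma\sqrt{k\log(p/k)})$; combined with (a) and (c), the effective noise $\bm v - \bm\beta$ meets the hypotheses under which the argument behind Theorem~\ref{thm:orth_main_general_intro} delivers its risk bound, and since $\slope{\cdot}$ is $1$-Lipschitz the lower-order perturbation of $\bm v$ changes $\|\hat{\bm\beta}_{\textnormal{\tiny SLOPE}} - \bm\beta\|$ by at most $o(\sigma\sqrt{k\log(p/k)})$. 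This yields $\|\hat{\bm\beta}_{\textnormal{\tiny SLOPE}} - \bm\beta\|^2 \le (1+3\epsilon)\,2\sigma^2 k\log(p/k)$ on $\mathcal{E}$, uniformly in $\bm\beta$, and $\P(\mathcal E)\to 1$ gives \eqref{eq:gauss_minimax}.

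\textbf{Main obstacle.} The hard part is controlling the perturbation $(\bm X'\bm X - \bm I)\bm h$ — and, inseparably, pinning down how sparse and how small $\hat{\bm\beta}_{\textnormal{\tiny SLOPE}}$ actually is away from the true support — uniformly over $\bm\beta$, including over the \emph{magnitude} of $\bm\beta$; the regime of very large $\bm\beta$, where $(\bm X'\bm X - \bm I)\bm\beta$ is largest, is the one that stresses the bound. Since $p$ may exceed $n$, $\bm X'\bm X - \bm I$ is not small in operator norm, so the bound $\|(\bm X'\bm X - \bm I)\bm h\| \le \|\bm X'\bm X - \bm I\|_{\mathrm{op}}\|\bm h\|$ is worthless; one must exploit the effective low-dimensionality of $\bm h$ and the fine structure of the Gaussian Gram matrix. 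And throughout, a crude restricted-eigenvalue-plus-Cauchy--Schwarz argument would recover only the right rate with a lossy constant (something like $4(1+\epsilon)^2$ rather than $(1+\epsilon)^2$); getting the sharp $1+3\epsilon$ forces the route through Theorem~\ref{thm:orth_main_general_intro}, i.e.\ showing the random design perturbs the prox by only an $o(1)$ fraction. That is where essentially all the work lies.
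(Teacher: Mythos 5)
Your high-level skeleton is the same as the paper's: identify the ``effective observation'' $\bm v = \bm\beta + \bm X'\bm z - (\bm X'\bm X - \bm I)\bm h$ so that $\hat{\bm\beta} = \slope{\bm v}$, compare with $\tilde{\bm\beta} = \slope{\bm\beta + \bm X'\bm z}$ (the paper calls this the one-step proximal approximation started at the truth), and then appeal to the orthogonal-design analysis. But the step you use to control $\|\hat{\bm\beta} - \tilde{\bm\beta}\|$ — ``since $\slope{\cdot}$ is $1$-Lipschitz the perturbation changes the estimate by at most $o(\sigma\sqrt{k\log(p/k)})$'' — is genuinely false in the regime the theorem allows. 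The Lipschitz bound gives $\|\hat{\bm\beta} - \tilde{\bm\beta}\| \le \|(\bm X'\bm X - \bm I)\bm h\|_2$, and even when $\bm h$ is supported on a set $T$ of size $O(k)$, the off-$T$ block $\bm X'_{\overline T}\bm X_T \bm h_T$ is conditionally a Gaussian vector in $\R^{p - |T|}$ with per-entry variance $\|\bm X_T\bm h_T\|^2/n$, so $\|(\bm X'\bm X - \bm I)\bm h\|_2 \asymp \sqrt{p/n}\,\|\bm h\|$. Since the hypotheses permit $p \gg n$ (e.g.\ $k = \sqrt{n}$, $p = n^{100}$ satisfies $k/p \to 0$ and $(k\log p)/n \to 0$), this perturbation is not lower order: it can exceed $\sigma\sqrt{k\log(p/k)}$ by a factor growing polynomially in $n$. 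The $\ell_2$-Lipschitz argument therefore gives nothing in the high-dimensional case.

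The paper does not use a Lipschitz bound for precisely this reason. Instead, it makes two moves you need and do not supply. First, it constructs a \emph{resolvent set} $S^\star$ (Definition~\ref{def:T_star}) of size $\K = O(k)$ and proves, via a reduced-SLOPE KKT lemma (Lemma~\ref{lm:lifted}) together with careful majorization bounds on $\bm X'_{\overline{S^\star}}\bm z$ and $\bm X'_{\overline{S^\star}}\bm X_{S^\star}(\bm\beta_{S^\star} - \hat{\bm b}_{S^\star})$, that $\supp\bm\beta \cup \supp\hat{\bm\beta} \cup \supp\tilde{\bm\beta} \subset S^\star$ with probability $\to 1$ (Lemma~\ref{lm:T_contains}). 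This turns the effectively infinite-dimensional comparison into a comparison on the $\K$ coordinates of $S^\star$, on which $\bm X_{S^\star}$ is nearly orthogonal. Second, restricted to $S^\star$, it compares the two losses $L_1(\bm b) = \frac12\|\bm X_{S^\star}(\bm\beta_{S^\star}-\bm b)\|^2 + \bm z'\bm X_{S^\star}(\bm\beta_{S^\star}-\bm b) + J_{\bm\lambda^{[m]}}(\bm b)$ and $L_2(\bm b) = \frac12\|\bm\beta_{S^\star}-\bm b\|^2 + \bm z'\bm X_{S^\star}(\bm\beta_{S^\star}-\bm b) + J_{\bm\lambda^{[m]}}(\bm b)$, and uses strong convexity of $L_2$ plus the near-isometry of $\bm X_{S^\star}$ to get $\|\hat{\bm\beta} - \tilde{\bm\beta}\|^2 \le \frac{3\delta}{1-2\delta}\|\tilde{\bm\beta} - \bm\beta\|^2$ (Lemma~\ref{lm:two_beta}). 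This bounds the discrepancy as a vanishing \emph{fraction} of $\|\tilde{\bm\beta} - \bm\beta\|$, which is exactly what preserves the sharp constant, rather than an additive error that one hopes is lower order. Your ``bootstrap and localize'' paragraph gestures at both ingredients — a small effective support and fine control of the off-support coordinates — but does not supply the reduced-SLOPE dual certificate that actually establishes them, and without it the argument cannot close.
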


For information, it is known that under some regularity conditions on
the design \cite{raskutti2011, verzelen2012}, the minimax risk is on
the order of $O(\sigma^2 k\log(p/k))$, without a tight matching in the
lower and upper bounds.  Against this, our main result states that
SLOPE, which does not use any information about the sparsity level,
achieves a squared loss bounded by $(1+o(1)) \, 2\sigma^2k\log(p/k)$
with large probability. This is the best any procedure can do as we
show next.
\begin{theorem}\label{thm:lower_beta} 
  Under the assumptions of Theorem \ref{thm:gauss_minimax}, for any $
  \epsilon >0$, we have
\begin{equation}\nonumber
  \inf_{\hat{\bm\beta}}\sup_{\|\bm\beta\|_0 \le k}\P\left( \frac{\|\hat{\bm\beta} - \bm{\beta}\|^2}{2\sigma^2k\log(p/k)} > 1 - \epsilon \right) \longrightarrow 1.
\end{equation}
\end{theorem}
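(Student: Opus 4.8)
For the lower bound the adversary may restrict $\bm\beta$ to be $k$-sparse and even to follow a prior $\Pi$ supported on $k$-sparse vectors, since $\sup_{\bm\beta}\P_{\bm\beta}(\norm{\hat{\bm\beta}-\bm\beta}^2>t)\ge\P_{\bm\beta\sim\Pi}(\norm{\hat{\bm\beta}-\bm\beta}^2>t)$. Because $(k\log p)/n\to0$, a standard concentration argument shows that, on an event of probability tending to one, $\bm X$ obeys a restricted isometry property of order $2k$ with constant $\delta_p$ that may be taken to tend to $0$: $(1-\delta_p)\norm{\bm v}^2\le\norm{\bm X\bm v}^2\le(1+\delta_p)\norm{\bm v}^2$ for all $2k$-sparse $\bm v$. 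On this good event the log-likelihood ratios between $\normaldist(\bm X\bm\beta,\sigma^2\bm{I}_n)$ and $\normaldist(\bm X\bm\beta',\sigma^2\bm{I}_n)$, for $k$-sparse $\bm\beta,\bm\beta'$, have the same law up to a $1+o(1)$ rescaling of mean and variance as in the white model $\bm y=\bm\beta+\bm z$, $\bm z\sim\normaldist(\bm{0},\sigma^2\bm{I}_p)$ (the difference $\bm\beta-\bm\beta'$ is $2k$-sparse); hence the posterior $\Pi(\cdot\mid\bm y)$, and the Bayes risk of $\Pi$ for any bounded loss, agree up to $o(1)$ with their sequence-model counterparts. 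It therefore suffices to exhibit, in the sequence model, a $k$-sparse prior $\Pi$ with
\[
\sup_{\hat{\bm\beta}}\ \P_\Pi\bigl(\norm{\hat{\bm\beta}-\bm\beta}^2\le t\bigr)\longrightarrow 0,\qquad t:=(1-\epsilon)\,2\sigma^2k\log(p/k).
\]

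\textbf{Step 2: nearly-black prior and reduction to ``posterior spread''.} Fix $0<\epsilon'<\epsilon$ and let $\Pi$ draw a support $S$ uniformly among the $\binom{p}{k}$ $k$-subsets of $\{1,\dots,p\}$ and set $\bm\beta=\mu\bm{1}_S$ with $\mu^2=2(1-\epsilon')\sigma^2\log(p/k)$, so that the common signal height sits just \emph{below} the detection level $\sigma\sqrt{2\log(p/k)}$. For the indicator loss $\mathbf{1}\{\norm{\hat{\bm\beta}-\bm\beta}^2\le t\}$ the optimal estimator $\hat{\bm\beta}(\bm y)$ is the center of the posterior-heaviest ball of radius $\sqrt t$, whence
\[
\sup_{\hat{\bm\beta}}\ \P_\Pi\bigl(\norm{\hat{\bm\beta}-\bm\beta}^2\le t\bigr)=\E\Bigl[\,\sup_{\bm a\in\R^p}\,\Pi\bigl(\norm{\bm\beta-\bm a}^2\le t\mid\bm y\bigr)\,\Bigr],
\]
the outer expectation being over the marginal law of $\bm y$. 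The task is thus: with $\bm y$-probability tending to one, the posterior places vanishing mass on every Euclidean ball of radius $\sqrt t$.

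\textbf{Step 3: the combinatorial / large-deviation core (the hard part).} Since $\bm\beta=\mu\bm{1}_S$ one has $\norm{\bm\beta-\bm a}^2=\norm{\bm a}^2+k\mu^2-2\mu\langle\bm{1}_S,\bm a\rangle$, so $\{S:\norm{\mu\bm{1}_S-\bm a}^2\le t\}=\{S:\langle\bm{1}_S,\bm a\rangle\ge\theta_{\bm a}\}$ with $\theta_{\bm a}=(\norm{\bm a}^2+k\mu^2-t)/(2\mu)$; here $k\mu^2-t=(\epsilon-\epsilon')\,2\sigma^2k\log(p/k)>0$, which is precisely where the sharp constant $2$ is used. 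On the other hand the support posterior is the exponential family $\Pi(S\mid\bm y)\propto\exp(\mu\langle\bm{1}_S,\bm y\rangle/\sigma^2)$, whose normalizing constant is an elementary symmetric polynomial in $\{\exp(\mu y_i/\sigma^2)\}$. A large-deviation estimate shows that, with $\bm y$-probability tending to one, this posterior is spread in an essentially uniform way over the $k$-subsets built from the top order statistics of $\bm y$: the $k$ true coordinates sit at height $\approx\mu$, which is below the largest of the $p-k$ noise coordinates and overwhelmingly outnumbered by noise coordinates of comparable size (there are $\gg k$ of them), so the true support carries only an $o(1)$ fraction of the mass and no configuration is singled out. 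The crux is then to show that no single affine constraint $\langle\bm{1}_S,\bm a\rangle\ge\theta_{\bm a}$, weighed against the $\ell_2$-budget $\norm{\bm a}^2$ that also enters $\theta_{\bm a}$, can capture more than an $o(1)$ fraction of this posterior mass, uniformly over $\bm a$; feeding this back gives $\E\bigl[\sup_{\bm a}\Pi(\norm{\bm\beta-\bm a}^2\le t\mid\bm y)\bigr]\to0$ and hence the theorem. This uniform control of the normalizing sum and of the admissible supports under the $\ell_2$-budget --- in which the sub-threshold calibration $\mu^2=2(1-\epsilon')\sigma^2\log(p/k)$ must be used in full force, and in which the (otherwise innocuous) regime of small $k$ has to be handled separately --- is the main obstacle.
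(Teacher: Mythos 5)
Your proposal takes a genuinely different route from the paper's, but it contains two gaps that, as written, prevent it from constituting a proof.

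\emph{Gap in Step 1.} You invoke a restricted isometry property of order $2k$ to conclude that "the posterior $\Pi(\cdot\mid\bm y)$, and the Bayes risk of $\Pi$ for any bounded loss, agree up to $o(1)$ with their sequence-model counterparts." This does not follow. RIP of order $2k$ controls the mean and variance of each \emph{individual} log-likelihood ratio $\log(dP_{\bm\beta}/dP_{\bm\beta'})$ up to a $1+o(1)$ factor, but the posterior over the $\binom{p}{k}$ supports is a functional of the \emph{joint} law of all these ratios, i.e.~of a huge Gaussian vector whose covariance structure involves inner products $\langle\bm X\bm v_1,\bm X\bm v_2\rangle$ for pairs of $2k$-sparse $\bm v_1,\bm v_2$ (which needs RIP of order $4k$ at the least), and even then a multiplicative $1+\delta_p$ perturbation of the covariance of an exponentially large Gaussian vector need not move extremal or normalized statistics by only $o(1)$ unless $\delta_p\to0$ fast enough relative to $\log\binom{p}{k}$. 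You have not stated, let alone verified, a rate. The paper avoids this entirely: instead of a nearly-black prior over all $k$-subsets, it uses a \emph{product} prior over $k$ disjoint blocks of size $p/k$, with one 1-sparse signal per block. In the regression model, when analyzing block $j$, the contribution of the other $k-1$ blocks plus the noise is Gaussian, independent of $\bm X^{(j)}$ and $\bm\beta^{(j)}$, with variance $\tau^2(k-1)/n+1\ge1$; so the problem reduces, block by block, to a 1-sparse problem with an inflated but harmless noise level, and no RIP or uniform Gaussian-process comparison is needed.

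\emph{Gap in Step 3.} You explicitly call the uniform control of the posterior mass of $\ell_2$-balls "the main obstacle" and do not prove it. The posterior $\Pi(S\mid\bm y)\propto\exp(\mu\langle\bm 1_S,\bm y\rangle/\sigma^2)$ ranges over $\binom{p}{k}$ correlated supports, and a ball constraint $\norm{\bm\beta-\bm a}^2\le t$ translates into the constraint $\langle\bm 1_S,\bm a\rangle\ge\theta_{\bm a}$ that must be controlled \emph{uniformly over $\bm a\in\R^p$}; this is a hard combinatorial large-deviation statement, not a routine one. By contrast, the paper's block factorization reduces the Bayes analysis to $k$ \emph{independent} 1-sparse posteriors, each computed in closed form (Lemma 5.1): the posterior probability of any fixed $2/\epsilon$ indices is bounded by the probability that the true coordinate is among the top $\lceil2/\epsilon\rceil$ observations, which by a simple counting lemma (Lemma A.23) goes to $0$ because the sub-threshold signal is out-shone by $\to\infty$ many noise coordinates. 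The $k$-block version is then finished by a short truncation-and-Markov argument exploiting the independence across blocks (Proposition 5.2).

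In short: your nearly-black prior and posterior-spread idea is a recognized strategy for sparse-means lower bounds and, if carried out, would also produce the sharp constant, but both the design-reduction step and the core spread estimate are left as open claims. The paper's block-product prior replaces both of these with a sequence of elementary, exactly computable steps, which is why its proof is so short.
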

Similar results dealing with arbitrary designs can be found in the literature, compare Theorem 1 in \cite{ye2010}. However, the notable difference is that our theorem captures the exact constants in addition to the rate.

Taking Theorems~\ref{thm:gauss_minimax} and \ref{thm:lower_beta} together demonstrate that in a probabilistic sense $2\sigma^2k\log(p/k)$ is the fundamental limit for the squared loss
and that SLOPE achieves it. It is also likely that our methods would
yield corresponding bounds for the expected squared loss but this
would involve technical issues having to do with the bounding of the
loss on rare events. This being said, Theorem \ref{thm:gauss_minimax}
provides a more accurate description of the squared error than a
result in expectation since it asserts that the error is at most
$2\sigma^2 k \log(p/k)$ with high probability. The proof of this fact
presents several novel elements not found in the literature.

The condition $(k \log p)/n \goto 0$ is natural and cannot be
fundamentally sharpened. To start with, our results imply that SLOPE
perfectly recovers $\bm \beta$ in the limit of vanishing noise. In the
high-dimensional setting where $p > n$, this connects with the
literature on compressed sensing, which shows that in the noiseless
case, $n \ge 2(1+o(1)) k \log (p/k)$ Gaussian samples are necessary
for perfect recovery by $\ell_1$ methods in the regime of interest
\cite{donoho2009counting, donoho2010exponential}. Our condition is a
bit more stringent but naturally so since we are dealing with noisy
data.

We hope that it is clear that results for orthogonal designs do not
imply results for Gaussian designs because of (1) correlations between
the columns of the design and (2) the high dimensionality. Under an
orthogonal design, when there is no noise, one can recover $\bm \beta$
by just computing $\bm X' \bm y$. However, as discussed above it is
far less clear how one should do this in the high-dimensional regime
when $p \gg n$. As an aside, with noise it would be foolish to find
$\hat{\bm \beta}$ via $\slope{\bm X' \bm y}$; that is, by applying
$\bm X'$ and then pretending that we are dealing with an orthogonal
design. Such estimates turn out to have unbounded risks.

We remark that a preprint \cite{oscaranalysis} considers statistical
properties of a generalization of OSCAR \cite{oscar} that coincides
with SLOPE. The findings and results are very different from those
presented here; for instance, the selection of optimal weights $\lambda_i$ is
not discussed.

Finally, to see our main results under a slightly different light,
suppose we get a new sample $(\bm x^*, y^*)$, independent from the
`training set' $(\X, \bm y)$, obeying the linear model $y^* = \< \bm
x^*, \bm \beta\> + \sigma z^*$ with $\bm x \sim \mathcal{N}(0, n^{-1}
\bm I_p)$ and $z^* \sim \mathcal{N}(0,\sigma^2)$. Then for any
estimate $\hat{\bm \beta}$, the prediction $\hat y = \<\bm x^*,
\hat{\bm \beta}\>$ obeys
\[
\E(y^* - \hat y)^2 = n^{-1} \E \|\bm \beta - \hat{\bm \beta}\|^2 +
\sigma^2,
\]
so that, in some sense, SLOPE with BH weights actually yields the best
possible prediction.

\subsection{Back to multiple testing}

Although our emphasis is on estimation, we would nevertheless like to
briefly return to the multiple testing viewpoint. In \cite{slope,
  slopeold}, a series of experiments demonstrated empirical FDR
control whenever $\bm \beta$ is sufficiently sparse.  While this paper
does not go as far as proving that SLOPE controls the FDR in our
Gaussian setting, the ideas underlying the proof of Theorem
\ref{thm:gauss_minimax} have some implications for FDR control.  Our
discussion in this section is less formal.


Suppose we wish to keep the false discovery proportion (FDP)
$\mbox{FDP} = V/(R \vee 1) \le q$. Since the number of true
discoveries $R - V$ is at most $k$, the false discovery number $V = \{i: \beta_i = 0 \text{ and }
\hat{\beta}_{\textnormal{\tiny SLOPE},i} \neq 0\}$ {\em must} obey
\begin{equation}\label{eq:v_upper_bd}
  V \le \frac{q}{1-q} \, k.
\end{equation}
Interestingly, an intermediate result of the proof of Theorem
\ref{thm:gauss_minimax} implies that \eqref{eq:v_upper_bd} is
satisfied with probability tending to one if $k$ is sufficiently large
and $q$ is replaced by $(1+o(1))q$.
This is shown in Lemma~\ref{lm:T_contains}. Another consequence of our
analysis is that if the nonzero regression coefficients are larger
than $1.1 \, \sigma\lambbh_1(q)$ (technically, we can replace 1.1 with
any fixed number greater than one), then the true positive proportion
(the ratio between the number of true discoveries and $k$) approaches
one in probability. In this setup, we thus have FDR control in the
sense that
\[
\mbox{FDR}_{\textnormal{\tiny SLOPE}} \le (1 + o(1))q.
\] 

Figure \ref{fig:fdr_simu} demonstrates empirical FDR control at the
target level $q = 0.1$. Over 500 replicates, the averaged FDR is
$0.09$, and the averaged false discovery number $V$ is $9.4$, as
compared with $11.1$, the upper bound in \eqref{eq:v_upper_bd}. We
emphasize that \cite{slope,slopeold} also provide strong evidence that
FDR is also controlled for moderate signals.

\begin{figure}[!htbp]
\centering
\hfill
\begin{subfigure}[b]{0.49\textwidth}
\centering
\includegraphics[width=\textwidth]{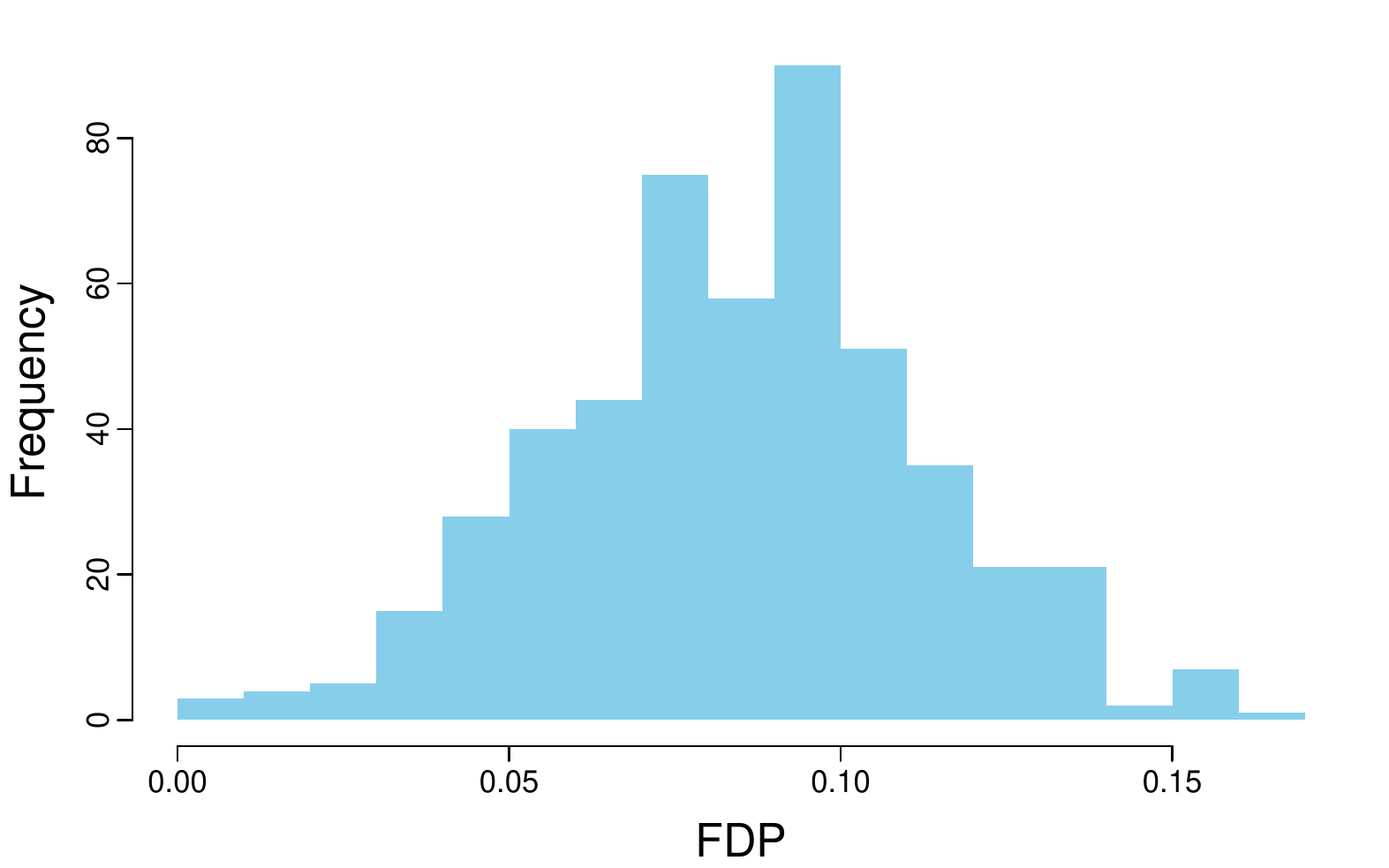}
\caption{Histogram of FDP.}
\label{fig:fdr_hist}
\end{subfigure}
\begin{subfigure}[b]{0.49\textwidth}
\centering
\includegraphics[width=\textwidth]{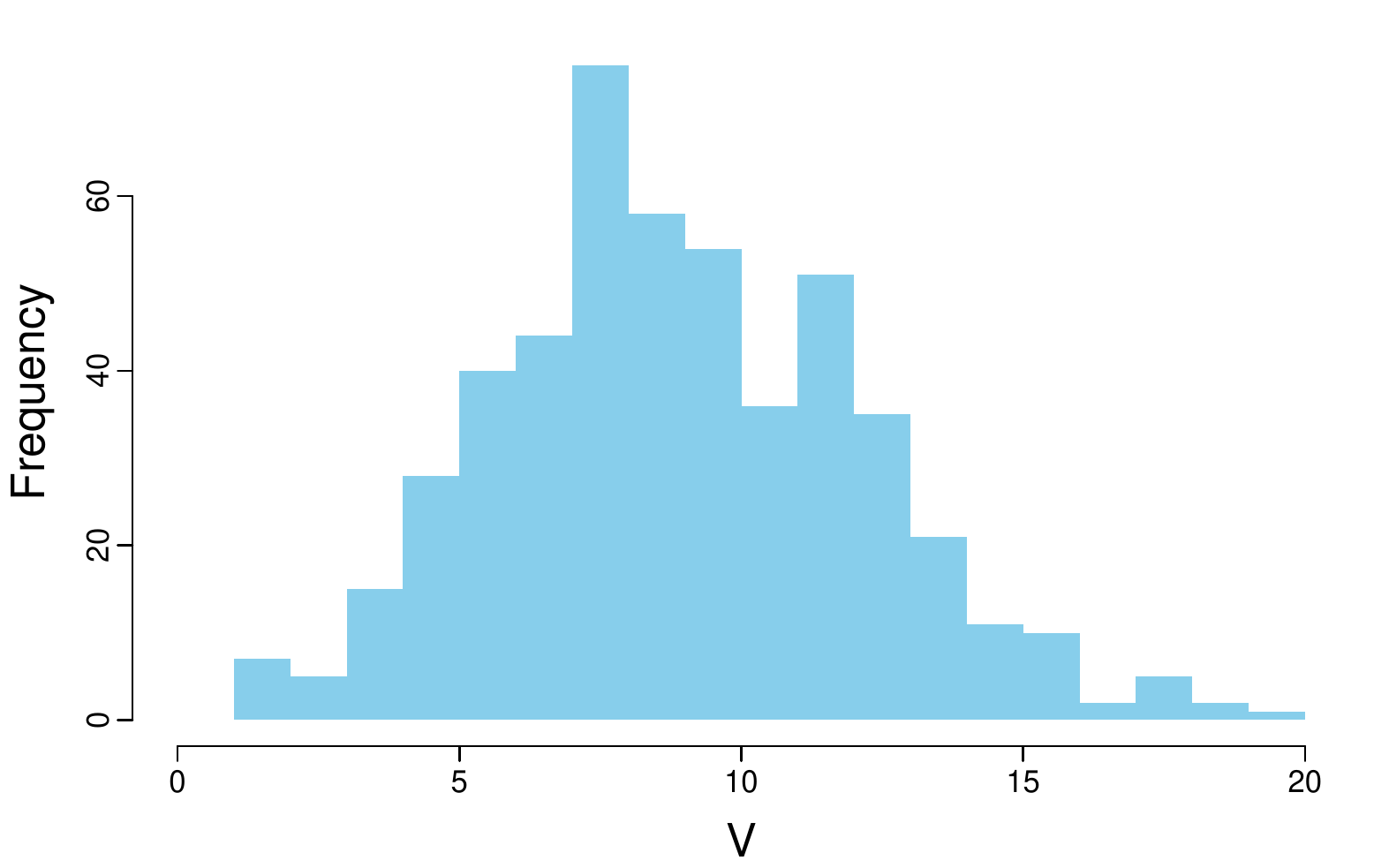}
\caption{Histogram of $V$.}
\label{fig:v_hist}
\end{subfigure}
\hfill
\caption{Gaussian design with $(n,p)=(8,000, 10,000)$ and $\sigma =
  1$. There are $k= 100$ nonzero coefficients with amplitudes
  $10\sqrt{2\log p}$. Here, the nominal level is $q = 0.1$ and
  $\bm\lambda = 1.1\bm\lambbh(0.1)$.}
\label{fig:fdr_simu}
\end{figure}

Since our paper proves that SLOPE does not make a large number of
false discoveries, the support of $\hat{\bm\beta}_{\textnormal{\tiny
    SLOPE}}$ is of small size, and thus we see that $\|\X
(\hat{\bm\beta}_{\textnormal{\tiny SLOPE}} - \bm \beta)\|^2$ is very
nearly equal to $\|\hat{\bm\beta}_{\textnormal{\tiny SLOPE}} - \bm
\beta\|^2$ since skinny Gaussian matrices are near
isometries. Therefore, we can carry our results over to the estimation
of the mean vector $\X \bm\beta$.

\begin{corollary}\label{coro:gauss_prediction_minimax}
  Under the assumptions of Theorem \ref{thm:gauss_minimax},
\[
\sup_{\|\bm\beta\|_0 \le k} \P\left( \frac{\|\bm X
    \hat{\bm\beta}_{\textnormal{\tiny SLOPE}} - \bm X
    \bm\beta\|^2}{2\sigma^2k\log(p/k)} > 1 + 3\epsilon \right) \longrightarrow
0.
\]
\end{corollary}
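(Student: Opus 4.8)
The plan is to derive the corollary from Theorem~\ref{thm:gauss_minimax} by exploiting the sparsity of $\bm h \triangleq \hat{\bm\beta}_{\textnormal{\tiny SLOPE}} - \bm\beta$ together with the fact that a tall Gaussian design acts as a near-isometry on sparse vectors. First I would bound $\|\bm h\|_0$. Since $\bm\beta$ has at most $k$ nonzero coordinates, $\|\bm h\|_0 \le k + \|\hat{\bm\beta}_{\textnormal{\tiny SLOPE}}\|_0 = k + R$, where $R$ is the number of nonzero SLOPE coordinates. The number of true discoveries $R - V$ never exceeds $\|\bm\beta\|_0 \le k$, while the discussion around \eqref{eq:v_upper_bd}---made rigorous in Lemma~\ref{lm:T_contains}, an intermediate step in the proof of Theorem~\ref{thm:gauss_minimax}---shows that $V \le (1+o(1))\tfrac{q}{1-q}k$ with probability tending to one, uniformly over $\|\bm\beta\|_0 \le k$. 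Hence, on an event of probability $\to 1$ that is uniform in $\bm\beta$,
\[
\|\bm h\|_0 \le s \triangleq C_q\, k, \qquad C_q = \frac{2-q}{1-q} + o(1),
\]
a constant multiple of $k$ depending only on $q$.

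Next I would invoke the restricted isometry property of Gaussian matrices. For $\bm X$ with i.i.d.\ $\mathcal{N}(0,1/n)$ entries, a standard argument---concentration of $\|\bm X\bm u\|^2$ about $\|\bm u\|^2$ for fixed $\bm u$, a sufficiently fine net of the unit sphere supported on each $s$-element subset, and a union bound over the $\binom{p}{s}$ such subsets (see the compressed-sensing references already cited)---yields, with probability tending to one,
\[
\bigl|\,\|\bm X\bm u\|^2 - \|\bm u\|^2\,\bigr| \le \delta_p\,\|\bm u\|^2 \quad\text{for every } \bm u \text{ with } \|\bm u\|_0 \le s,
\]
where $\delta_p = O\bigl(\sqrt{s\log(p/s)/n}\bigr) \to 0$; here one uses $(k\log p)/n \to 0$ together with $s = C_q k$ and $\log(p/s) \le \log p$, so that $s\log(p/s)/n = O\bigl(k\log p/n\bigr) \to 0$. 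Crucially this event depends on $\bm X$ alone, hence is automatically uniform over $\bm\beta$.

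On the intersection of the three events---$\|\bm h\|_0 \le s$, the restricted isometry bound at order $s$, and $\|\bm h\|^2 \le (1+3\epsilon)\cdot 2\sigma^2 k\log(p/k)$ from Theorem~\ref{thm:gauss_minimax}---one obtains
\[
\|\bm X\bm h\|^2 \le (1+\delta_p)\,\|\bm h\|^2 \le (1+\delta_p)(1+3\epsilon)\cdot 2\sigma^2 k\log(p/k) \le (1+3\epsilon)\cdot 2\sigma^2 k\log(p/k)
\]
for all large $p$, the last step because $\delta_p \to 0$ while $\epsilon > 0$ is a fixed constant, so the vanishing near-isometry distortion is absorbed by the slack in the constant $1+3\epsilon$ (the analysis behind Theorem~\ref{thm:gauss_minimax} in fact leaves a fixed positive margin there for every fixed $\epsilon$). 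Each of the three events has probability tending to one uniformly over $\|\bm\beta\|_0 \le k$, hence so does their intersection, and the corollary follows.

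The only points requiring care---and what I would regard as the crux---are, first, that the support bound on $\hat{\bm\beta}_{\textnormal{\tiny SLOPE}}$ must hold uniformly over the entire $\ell_0$-ball (this is inherited from the proof of Theorem~\ref{thm:gauss_minimax} rather than proved afresh), and second, that the restricted isometry is invoked at the enlarged order $s = O(k)$ rather than $k$---but since $s$ is only a $q$-dependent constant times $k$ and $\log(p/s) \le \log p$, the hypothesis $(k\log p)/n \to 0$ still suffices. No genuinely new difficulty arises beyond these bookkeeping issues; in particular, an analogue in expectation would in addition require controlling $\|\bm X\bm h\|^2$ on the rare events where the sparsity or the isometry estimate fails, which is precisely why the statement is phrased in probability.
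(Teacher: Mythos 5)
Your approach is essentially the one the paper uses: the error vector $\bm h = \hat{\bm\beta}_{\textnormal{\tiny SLOPE}} - \bm\beta$ has support of size $O(k)$ with high probability, and on sparse vectors the Gaussian design is a near-isometry, so the loss in predicting $\X\bm\beta$ is within a vanishing factor of the loss in estimating $\bm\beta$. In the paper this is all packaged inside the proof of Theorem~\ref{thm:gauss_minimax}: on the event $\mathscr{A}_1$ the supports of $\bm\beta$, $\hat{\bm\beta}$ and $\tilde{\bm\beta}$ all lie in $S^\star$ (so $\supp{\bm h}\subset S^\star$ with $|S^\star|=\K$), and on $\mathscr{A}_2$ the singular values of the single submatrix $\X_{S^\star}$ lie in $[\sqrt{1-\delta'},\sqrt{1+\delta'}]$, giving $\|\X\bm h\|^2 \le (1+\delta')\|\bm h\|^2$. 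You instead bound the support size directly (again via Lemma~\ref{lm:T_contains}, so the same event is being used) and then invoke a full restricted-isometry bound at order $s$ uniformly over all $\binom{p}{s}$ subsets by a net plus union bound. That is a valid substitute and is conceptually clean, but it proves more than is needed; the paper only controls the singular values of the one random submatrix $\X_{S^\star}$, which is delicate precisely because $S^\star$ depends on $\bm z$, and the paper handles this through the rotation trick in Lemma~\ref{lm:slim_spec_refined}.

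One step in your write-up is wrong as literally displayed. You write
$\|\X\bm h\|^2 \le (1+\delta_p)(1+3\epsilon)\cdot 2\sigma^2 k\log(p/k) \le (1+3\epsilon)\cdot 2\sigma^2 k\log(p/k)$,
but the second inequality is false for any $\delta_p>0$. Your parenthetical remark has the right fix: the proof of Theorem~\ref{thm:gauss_minimax} actually establishes a bound with a built-in margin, namely $\|\bm h\|^2/(2k\log(p/k)) < (1+\delta)(1+\epsilon)^2/(1+\delta')$ on $\mathscr{A}_1\cap\mathscr{A}_2\cap\mathscr{A}_3$, where $\delta'$ is the same singular-value tolerance, so the near-isometry factor $(1+\delta')$ cancels exactly to yield $1+3\epsilon$. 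You cannot obtain the corollary from the bare statement of Theorem~\ref{thm:gauss_minimax} plus RIP; you must, as you acknowledge, reach into its proof, and the cleaner bookkeeping is to track $\delta'$ rather than appeal informally to ``slack.'' A second, minor point: your $s = C_q k$ with $C_q = (2-q)/(1-q) + o(1)$ does not cover the regime where $k$ stays bounded; the paper takes $\K = \max\{\lceil 2k/(1-q)\rceil, k+d\}$ with $d\to\infty$ precisely to handle that case, and only requires $\K\log(p/\K)/n\to 0$, which still holds.
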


As before, there are matching lower bounds: for these, it
  suffices to restrict attention to estimates of the form $\hat{\bm
    \mu} = \X \hat{\bm \beta}$ since projecting any estimator
  $\hat{\bm\mu}$ onto the column space of $\X$ never increases the
  loss.
\begin{corollary}\label{cor:lowerbound_general_pred}
  Assume $k/p \goto 0$ and $p = O(n)$. Then
  \begin{equation}\nonumber
    \inf_{\hat{\bm\beta}} \sup_{\|\bm\beta\|_0 \le k}\P\left( \frac{\|\bm X \hat{\bm\beta} - \bm X \bm\beta\|^2}{2\sigma^2k\log(p/k)} > 1 - \epsilon \right) \longrightarrow 1
\end{equation}
\end{corollary}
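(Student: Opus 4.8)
The plan is to deduce this prediction lower bound from the estimation lower bound already obtained in Theorem \ref{thm:lower_beta}. The key observation is that, under the Gaussian design with $p = O(n)$, the restricted isometry behaviour of $\bm X$ lets us transfer between $\|\bm X \hat{\bm\beta} - \bm X \bm\beta\|^2$ and $\|\hat{\bm\beta} - \bm\beta\|^2$ whenever the vector $\hat{\bm\beta} - \bm\beta$ is sufficiently sparse. Concretely, fix $\bm\beta$ with $\|\bm\beta\|_0 \le k$; since any estimator $\hat{\bm\beta}$ supported off a set of size $O(k)$ behaves well, the first step is to argue that we may restrict attention to sparse estimators. Given an arbitrary estimator $\hat{\bm\beta}$, let $S$ be the index set of its $k$ largest-magnitude coordinates together with the support of $\bm\beta$, and replace $\hat{\bm\beta}$ by its restriction $\hat{\bm\beta}_S$; hard-thresholding to the top $k$ coordinates can only decrease the estimation loss against a $k$-sparse target (up to a constant factor absorbed into the slack $\epsilon$), so it suffices to prove the bound for estimators whose difference with $\bm\beta$ has support of size at most $2k$.

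Next I would invoke the standard Gaussian restricted isometry estimate: with $p = O(n)$ and sparsity level $m = 2k = o(n/\log p)$, with probability tending to one the matrix $\bm X$ satisfies $(1-\eta)\|\bm v\|^2 \le \|\bm X \bm v\|^2 \le (1+\eta)\|\bm v\|^2$ simultaneously for all $m$-sparse $\bm v$, for any fixed $\eta > 0$; this is exactly the regime used in the proof of Theorem \ref{thm:gauss_minimax} and in the remark that "skinny Gaussian matrices are near isometries." Condition on this event. On it, for every sparse estimator we have $\|\bm X \hat{\bm\beta} - \bm X\bm\beta\|^2 \ge (1-\eta)\|\hat{\bm\beta} - \bm\beta\|^2$, so
\begin{equation}\nonumber
\P\left( \frac{\|\bm X \hat{\bm\beta} - \bm X\bm\beta\|^2}{2\sigma^2 k \log(p/k)} > 1-\epsilon \right) \ge \P\left( \frac{\|\hat{\bm\beta} - \bm\beta\|^2}{2\sigma^2 k \log(p/k)} > \frac{1-\epsilon}{1-\eta} \right) - o(1).
\end{equation}
Choosing $\eta$ small enough that $(1-\epsilon)/(1-\eta) < 1 - \epsilon/2$, the right-hand probability is bounded below by $\inf_{\hat{\bm\beta}}\sup_{\|\bm\beta\|_0\le k}\P(\|\hat{\bm\beta}-\bm\beta\|^2/(2\sigma^2 k\log(p/k)) > 1-\epsilon/2) - o(1)$, which tends to $1$ by Theorem \ref{thm:lower_beta}. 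Taking the supremum over $\bm\beta$ and then the infimum over estimators (the reduction to sparse estimators being uniform in $\bm\beta$) yields the claim, after relabelling $\epsilon/2$ as $\epsilon$.

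The one point that needs a little care — and which I expect to be the main obstacle — is the order of quantifiers: the near-isometry event depends on $\bm X$ but not on $\bm\beta$ or on $\hat{\bm\beta}$, so it is legitimate to condition on it uniformly; however, the hard-thresholding reduction must be performed for each candidate estimator, and one must check that it does not help the adversary, i.e.\ that passing from $\hat{\bm\beta}$ to $\hat{\bm\beta}_S$ genuinely cannot decrease the loss below $(1-\epsilon)\,2\sigma^2 k\log(p/k)$ except on a negligible event. This follows because $\|\hat{\bm\beta}_S - \bm\beta\|^2 \le \|\hat{\bm\beta} - \bm\beta\|^2$ always (removing coordinates of $\hat{\bm\beta}$ outside $\supp{\bm\beta}$ that are not among its largest $k$ can only bring it closer to the $k$-sparse $\bm\beta$ — more carefully, one uses that the best $k$-term approximation error of $\hat{\bm\beta}$ is at most $\|\hat{\bm\beta}-\bm\beta\|$), so any estimator that beats the bound in prediction loss, once thresholded, beats a slightly relaxed bound in estimation loss, contradicting Theorem \ref{thm:lower_beta}. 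The remaining ingredients — the Gaussian RIP bound and the triangle-type inequality for best $k$-term approximations — are entirely standard, so no lengthy computation is required.
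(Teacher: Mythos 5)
The central step in your proposal---reducing to sparse estimators by hard thresholding $\hat{\bm\beta}$---does not go through, and this is exactly where the paper's proof takes a different turn. Your inequality $\|\hat{\bm\beta}_S - \bm\beta\|^2 \le \|\hat{\bm\beta} - \bm\beta\|^2$ compares estimation losses with estimation losses; what you would need to make the reduction legitimate is an implication of the form \emph{prediction} loss of $\hat{\bm\beta}$ small $\implies$ \emph{estimation} loss of $\hat{\bm\beta}_S$ small (on a high-probability event), so that ``beating the prediction bound'' forces ``beating the estimation bound'' for a genuine estimator, contradicting Theorem~\ref{thm:lower_beta}. That implication is precisely the place where $p>n$ causes trouble: when $\bm X$ has a nontrivial kernel, $\|\bm X(\hat{\bm\beta}-\bm\beta)\|$ being small says nothing about $\|\hat{\bm\beta}-\bm\beta\|$, and consequently nothing about $\|\hat{\bm\beta}_S-\bm\beta\|$ either---the thresholded vector $\hat{\bm\beta}_S$ is governed by the large-magnitude entries of $\hat{\bm\beta}$, which bear no relation to $\bm X(\hat{\bm\beta}-\bm\beta)$. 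The RIP estimate you invoke controls $\|\bm X\bm v\|$ versus $\|\bm v\|$ only for $O(k)$-sparse $\bm v$, but the vector $\hat{\bm\beta}-\bm\beta$ you need to control is not sparse; applying RIP to $\hat{\bm\beta}_S-\bm\beta$ instead would require comparing $\|\bm X(\hat{\bm\beta}_S-\bm\beta)\|$ with the actual prediction loss $\|\bm X(\hat{\bm\beta}-\bm\beta)\|$, and these can be arbitrarily far apart. There is also a secondary issue: your set $S$ contains $\supp{\bm\beta}$, so $\hat{\bm\beta}_S$ is not a legitimate estimator and cannot be plugged into Theorem~\ref{thm:lower_beta}; patching this by taking $S$ to be the top $2k$ coordinates of $\hat{\bm\beta}$ alone costs you a factor (best $k$-term approximation gives $\|\hat{\bm\beta}_S-\bm\beta\|\le 2\|\hat{\bm\beta}-\bm\beta\|$), which would destroy the sharp constant even if the rest of the argument were sound.

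The paper avoids all of this by changing the \emph{dimension} rather than the \emph{sparsity}: it sets $p^{\star}=\min\{\lfloor cn\rfloor, p\}$ for a small constant $c$ and argues that the adversary may restrict $\bm\beta$ to be supported on the first $p^{\star}$ coordinates, in which case projecting any candidate $\hat{\bm\mu}=\bm X\hat{\bm\beta}$ onto $\colspan(\bm X_{[p^{\star}]})$ can only decrease the prediction loss (a genuine, loss-nonincreasing reduction, because the target $\bm X\bm\beta$ lives in that subspace). Now $\bm X_{[p^{\star}]}$ is a \emph{skinny} $n\times p^{\star}$ Gaussian matrix with $p^{\star}\le cn$, so its smallest singular value is $1-\sqrt{c}+o_{\P}(1)$ uniformly over all of $\R^{p^{\star}}$---no sparsity required---and the prediction loss dominates $(1-\sqrt{c})^2$ times the estimation loss for the reduced problem. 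Since $p=O(n)$ forces $p^{\star}\asymp p$ and hence $\log(p^{\star}/k)\sim\log(p/k)$, Theorem~\ref{thm:reg_prob_lowerbound} on the reduced problem gives the sharp constant, and letting $c,\epsilon'\downarrow 0$ finishes. Your instinct that ``skinny Gaussian matrices are near isometries'' should be used is correct, but the isometry needed here is the global one on the $p^{\star}$-column submatrix, not the sparse RIP; and the valid reduction is dimension restriction plus orthogonal projection, not coordinate thresholding of $\hat{\bm\beta}$.
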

Again, SLOPE is optimal for estimating the mean response, and achieves
an estimation error which is the same as that holding for the
regression coefficients themselves. 


\subsection{Organization and notations}
\label{sec:organization}

In the rest of the paper, we briefly explore possible alternatives to
SLOPE in Section \ref{sec:alternatives-slope}. Section
\ref{sec:slope-with-orth} concerns the estimation properties of SLOPE
under orthogonal designs and proves Theorem
\ref{thm:orth_main_general_intro}. We then turn to study SLOPE under
Gaussian random designs in Section \ref{sec:slope-under-random-all},
where both Theorem \ref{thm:gauss_minimax} and Corollary
\ref{coro:gauss_prediction_minimax} are proved. Last, we prove
corresponding lower bounds in Section \ref{sec:lowerbound}, including
Theorem \ref{thm:lower_beta}. Corollary
\ref{cor:lowerbound_general_pred} and auxiliary results are proved in
the Appendix.


Recall that $p, n, k$ are positive integers with $p \goto \infty$, but
not necessarily so for $k$.  We use $\overline{S}$ for the complement
of $S$. For any vector $\bm a$, define the support of $\bm a$ as
$\supp{\bm a}\triangleq \{i: a_i \ne 0\}$.  A bold-faced $\bm\lambda$
denotes a general vector obeying
$\lambda_1 \ge \lambda_2 \ge \cdots \ge \lambda_p \ge 0$, with at
least one strict inequality. For any integer $0 < m < p$,
$\bm\lambda^{[m]} \triangleq (\lambda_1, \ldots, \lambda_m)$ and
$\bm\lambda^{-[m]} \triangleq (\lambda_{m+1}, \ldots, \lambda_p)$. We
write $\lambe$ (the superscript is omitted to save space) for the
$\epsilon$-inflated \BH critical values,
\[
\lambda_{\epsilon, i} = (1 + \epsilon)\lambbh_i = (1 + \epsilon)\Phi^{-1}\left(1 - iq/(2p)  \right).
\]
Last and for simplicity, $\hat{\bm\beta}$ is the SLOPE estimate,
unless specified otherwise.


\section{Alternatives to SLOPE?}
\label{sec:alternatives-slope}

It is natural to wonder whether there are other estimators, which can
potentially match the theoretical performance of SLOPE for sparse
regression. Although getting an answer is beyond the scope of this
paper, we pause to consider a few alternatives.

\subsection{Other $\ell_1$ penalized methods}

The Lasso,
\begin{equation}\nonumber
  \underset{\bm b}{\mbox{minimize}} \quad \frac12 \|\bm y - \bm X \bm b\|^2 + \lambda \|\bm b\|_1, 
\end{equation}
serves as a building block for a lot of sparse estimation
procedures. If $\lambda$ is chosen non adaptively, then a value equal
to $(1-c) \cdot \sigma \sqrt{2\log p}$ for $0 < c < 1$ would cause a
large number of false discoveries even under the global null and, consequently, the risk when
estimating sparse signals would be high. This phenomenon can
already be seen in the orthogonal case
\cite{foster1994,johnstone}. This means that if we choose $\lambda$ in
a non-adaptive fashion then we would need to select $\lambda \ge
\sigma \sqrt{2\log p}$. Under the assumptions of Theorem \ref{thm:gauss_minimax} and setting
$\lambda = (1+c)\cdot \sigma\sqrt{2\log p}$ for an arbitrary positive
constant $c$ gives
\begin{equation}\label{eq:lasso_risk}
\sup_{\|\bm\beta\|_0 \le k}\P \left( \frac{\|\hat{\bm\beta}_{\textnormal{\tiny Lasso}} - \bm\beta\|^2}{2 \sigma^2 k\log p} > 1 \right) \goto 1.
\end{equation}
The proof is in Appendix \ref{sec:proofs-sect-refs}.  Hence the risk
inflation does not decreases as the sparsity level $k$ increases,
whereas it does for SLOPE. Note that when $p = n$ and $k = p^{1-\delta}$,
\[
\frac{2\sigma^2k\log p}{2\sigma^2 k\log(p/k)} \goto \frac{1}{\delta}. 
\]
The reason why the Lasso is suboptimal is that the bias is too large
(the fitted coefficients are shrunk too much towards zero). All in
all, by our earlier considerations and by letting $\delta \goto 0$
above, we conclude that no matter how we pick $\lambda$
non-adaptively, the ratio
\[
\frac{\mbox{max risk of Lasso}}{\mbox{max risk of SLOPE}} \goto \infty
\]
in the worst case over $k$.

Figures \ref{fig:lasso_slope} and \ref{fig:lasso_slope2} compare SLOPE with Lasso estimates for both strong and moderate signals. SLOPE is more accurate than the
Lasso in both cases, and the comparative advantage increases as $k$
gets larger. This is consistent with the reasoning that SLOPE has a
lower bias when $k$ gets larger.

Of course, one might want to select $\lambda$ in a data-dependent
manner, perhaps by cross-validation (see next section), or by
attempting to control a type-I error such as the FDR. For instance, we
could travel on the Lasso path and stop `at some point'. Some recent
procedures such as \cite{lockhart2012significance} make very strong
assumptions about the order in which variables enter the path and are
likely not to yield sharp estimation bounds such as
\eqref{eq:gauss_minimax}---provided that they can be analyzed. Others
such as \cite{lassotest} are likely to be far too
conservative. In a different direction, it would be interesting to compare SLOPE with the Lasso in different settings, where perhaps both $k/p$ and $n/p$ converge to positive constants. While some
  tools have been developed for the Lasso in this asymptotic regime
  \cite{lassorisk}, it is unclear how SLOPE would behave and even what
  a good sequence of weights $\{\lambda_i\}$ might be in this case.

\subsection{Data-driven procedures}
\label{sec:cross-valit-type}

While finding tuning parameters adaptively is an entirely new issue, a
data-driven procedure where the regularization parameter of the Lasso
is chosen in an adaptive fashion would presumably boost
performance. Cross-validation comes to mind whenever applicable, which
is not always the case as when $\bm y \sim \mathcal{N}(\bm \beta,
\sigma^2 \bm I_p)$. Cross-validation techniques are also subject to
variance effects and may tend to select over-parameterized models. To
make the selection of the tuning parameter as easy and accurate as
possible, we work in the orthogonal setting where we have available a
remarkable unbiased estimate of the risk.

\begin{figure}[!htp]
\centering
\begin{subfigure}[b]{0.48\textwidth}
\centering
\includegraphics[width=\textwidth]{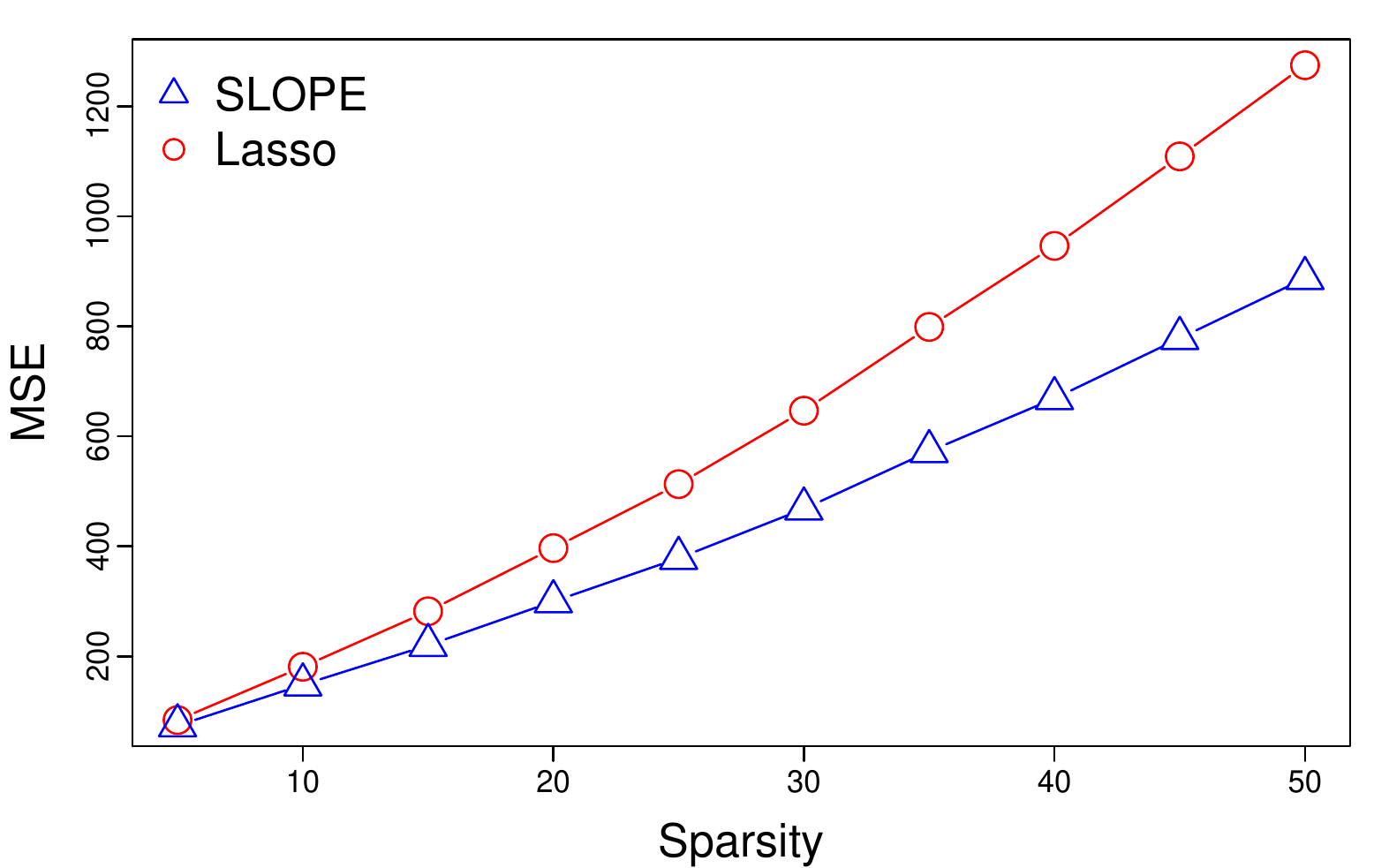}
\caption{Strong signals.}
\label{fig:lasso_slope}
\end{subfigure}
\hfill
\begin{subfigure}[b]{0.48\textwidth}
\centering
\includegraphics[width=\textwidth]{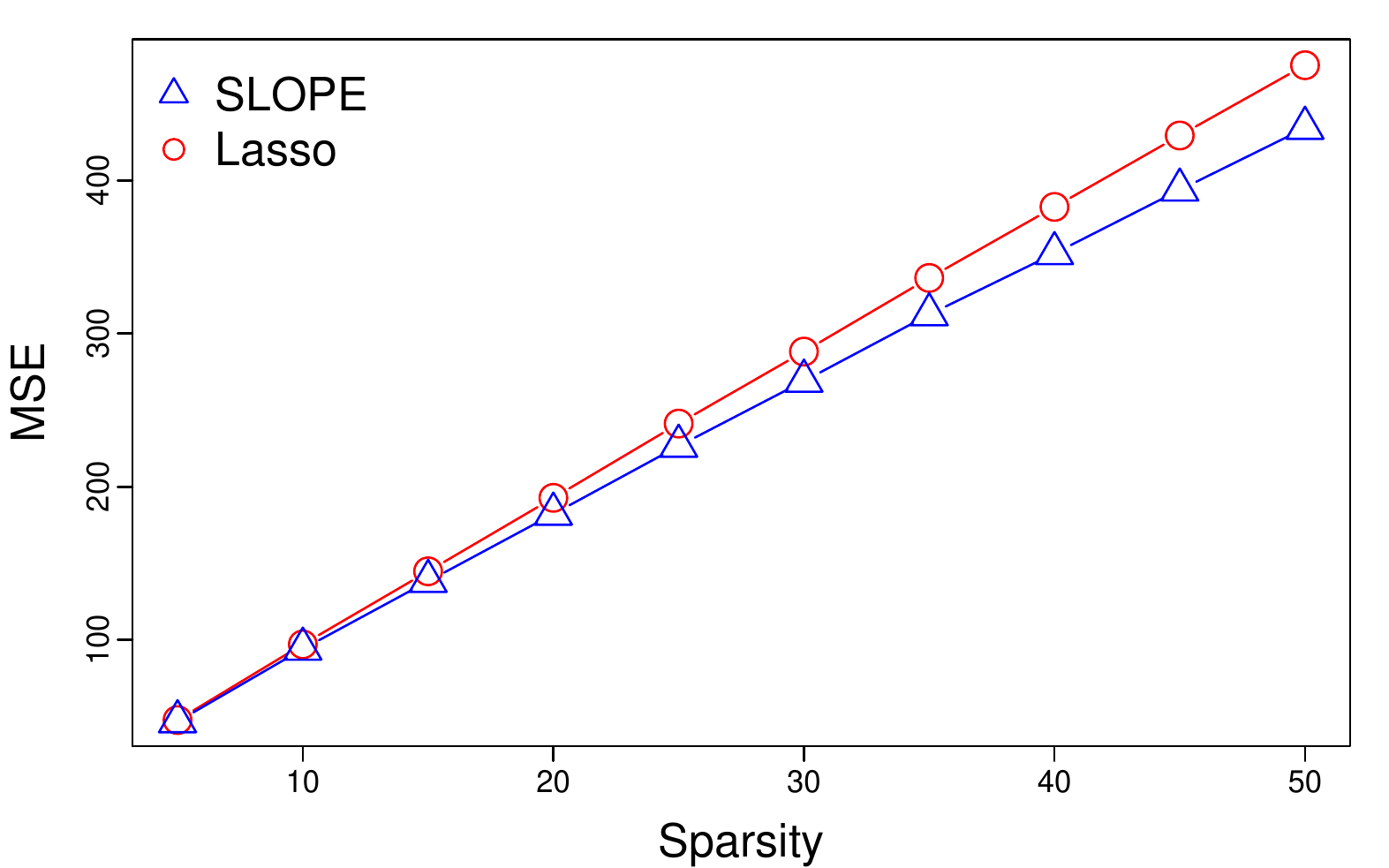}
\caption{Moderate signals.}
\label{fig:lasso_slope2}
\end{subfigure}
\begin{subfigure}[b]{0.48\textwidth}
\centering
\includegraphics[width = \textwidth]{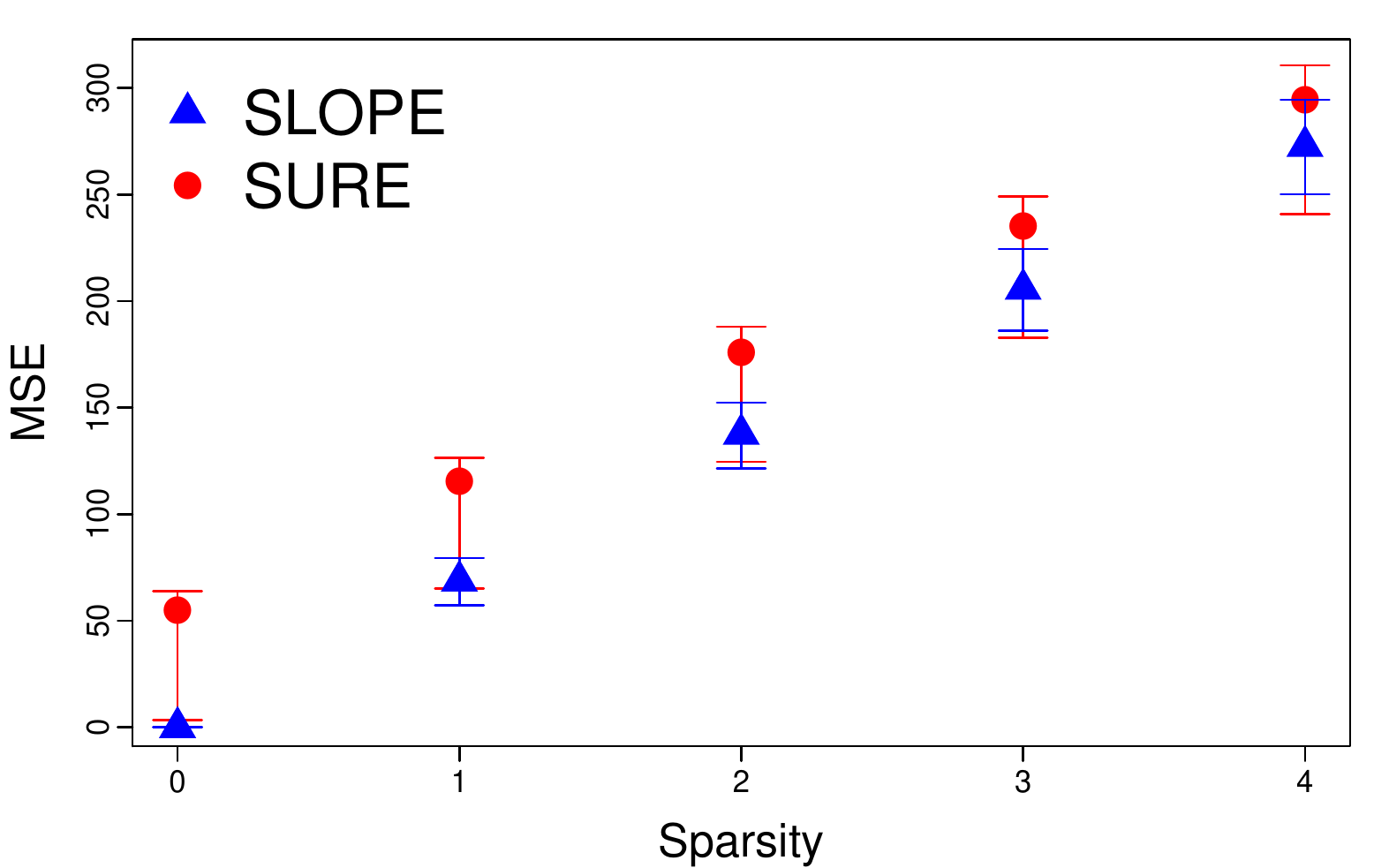}
\caption{Strong signals.}
\label{fig:sure_bar}
\end{subfigure}
\hfill
\begin{subfigure}[b]{0.48\textwidth}
\centering
\includegraphics[width = \textwidth]{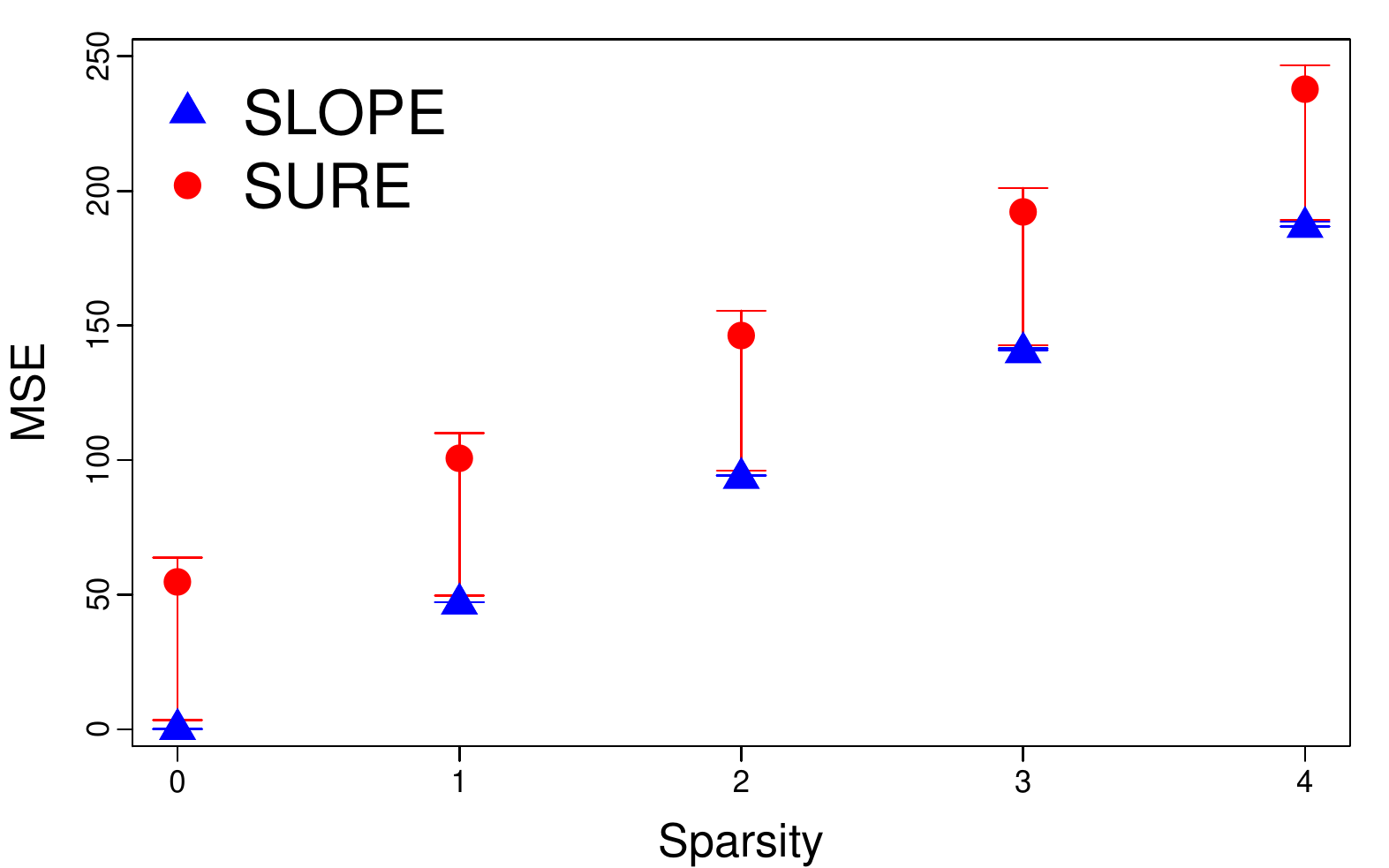}
\caption{Moderate signals.}
\label{fig:sure_bar2}
\end{subfigure}
\centering
\begin{subfigure}[b]{0.48\textwidth}
\centering
\includegraphics[width = \textwidth]{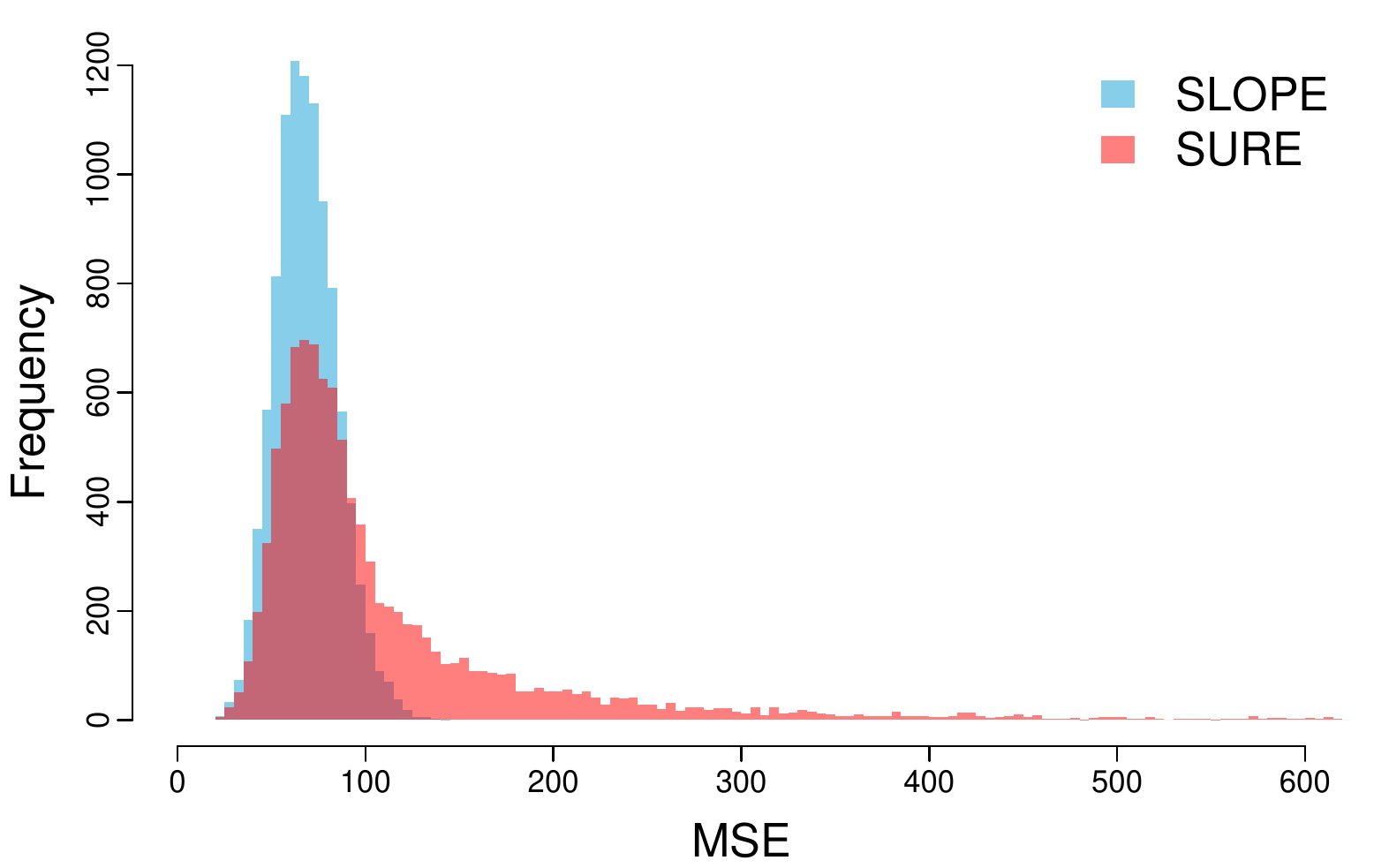}
\caption{Strong
  signals with $k = 1$.}
\label{fig:sure_hist}
\end{subfigure}
\hfill
\begin{subfigure}[b]{0.48\textwidth}
\centering
\includegraphics[width = \textwidth]{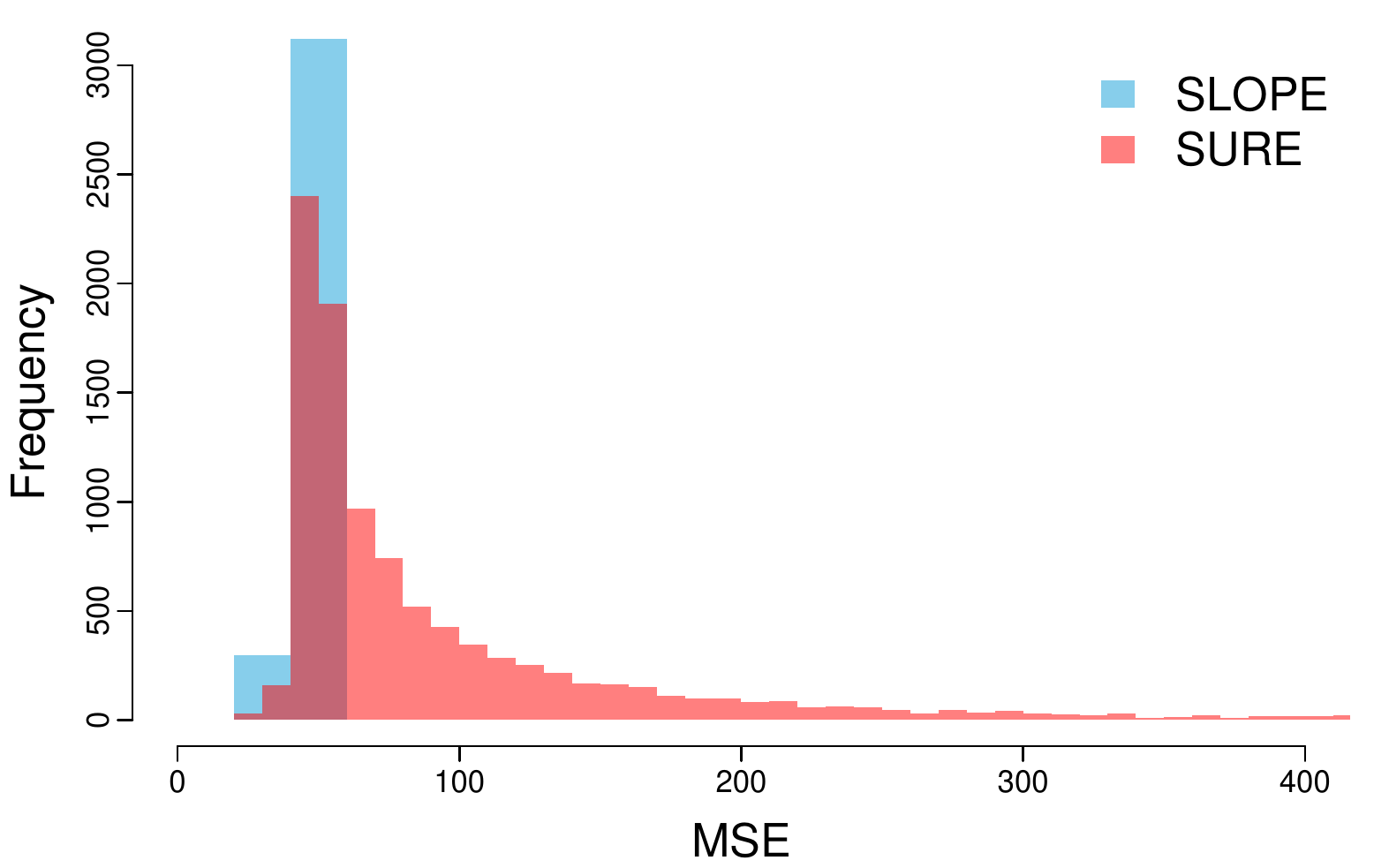}
\caption{Moderate
  signals with $k=1$.}
\label{fig:sure_hist2}
\end{subfigure}
\caption{(a) and (b) compares between SLOPE and Lasso under Gaussian design with $(n,p) = (500, 1000)$ and $\sigma = 1$. The risk $\E \|\hat{\bm\beta} - \bm\beta\|^2$ is averaged over
  100 replicates. SLOPE uses $\bm\lambda = \bm\lambbh(q)$ and Lasso
  uses $\lambda=\lambbh_{1}(q)$ with level $q = 0.05$. In (a), the components have magnitude $10\lambbh_1$; in (b), the magnitudes are set to $0.8\lambbh_1$. Next, (c, d, e, f) compare SLOPE with SURE under orthogonal design. Empirical distributions of $\|\hat{\bm\beta} - \bm\beta\|^2$ is obtained from 10,000 replicates.  Strong signals have nonzero $\beta_i$ set
  to $100\sqrt{2\log p}$ while this value is $0.8\sqrt{2\log p}$ for  moderate signals. In (c) and (d), the bars represent 75\% and 25\% percentiles.}
\label{fig:intro_sure}
\end{figure}

SURE thresholding \cite{SUREthreshold} for estimating a vector of
means from $\bm y \sim \mathcal{N}(\bm \beta, \sigma^2 \bm I_p)$ is a
cross-validation type procedure in the sense that the thresholding
parameter is selected to minimize Stein's unbiased estimate of risk
(SURE) \cite{steinsure}. For soft-thresholding at $\lambda$, SURE
reads
\[
\operatorname{SURE}(\lambda) = p\sigma^2 + \sum_{i=1}^p y_i^2 \wedge
\lambda^2 - 2\sigma^2\#\{i: |y_i| \le \lambda\}.
\]
One then applies the soft-thresholding rule at the minimizer $\hat
\lambda$ of $\operatorname{SURE}(\lambda)$. It has been observed
\cite{SUREthreshold, cai2009} that SURE thresholding loses performance
in cases of sparse signals $\bm\beta$, an empirical phenomenon which
can perhaps be made theoretically precise. Indeed, our own work in
progress aims to show that for any fixed sparsity $k$, SURE
thresholding obeys
\[
\frac{\sup_{\|\bm\beta\|_0 \le k}\E \|
  \hat{\bm\beta}_{\textnormal{\tiny SURE}} - \bm\beta
  \|^2}{\sup_{\|\bm\beta\|_0 \le k}\E \|
  \hat{\bm\beta}_{\textnormal{\tiny SLOPE}} - \bm\beta \|^2} \ge
(1+o(1))\frac{k+1}{k} > 1,
\]
where $k$ is allowed to take the value zero and $(k+1)/k = \infty$ in
this case. In particular, SURE has a risk that is infinitely larger
than SLOPE under the global null $\bm\beta = \vec 0$.

Figure \ref{fig:intro_sure} compares SLOPE with SURE in estimation
error. In Figures \ref{fig:sure_bar} and \ref{fig:sure_bar2}, we see
that SURE thresholding exhibits a squared error, which is consistently
larger in mean (risk) and variability. This difference is more
pronounced, the sparser the signal. Figures \ref{fig:sure_hist} and
\ref{fig:sure_hist2} display the error distribution for $k = 1$; we
see that the error of SURE thresholding is distributed over a longer
range.

\subsection{Variations on FDR thresholding}
\label{sec:other-bh-motivated}
As brought up earlier, the paper \cite{jiangadaptive} suggests a variation on FDR thresholding, where an adaptive smooth-thresholding rule is applied instead of a hard one. Such a procedure is still intrinsically limited to sequence models, and cannot be generalized to linear regression. On this subject, consider the
\textit{sequential FDR thresholding} rule,
\[
\hat{\bm\beta}_{\textnormal{\tiny Seq},i} = \sgn(y_i) \cdot \left( |y_i| -
\sigma\lambbh_{r(i)} \right)_+
\]
where $r(i)$ is the rank of $y_i$ when sorting the observations by
decreasing order of magnitude; that is, we apply soft-thresholding at
level $\sigma\lambbh_i$ to the $i$th largest observation (in
magnitude).  Under the same assumptions as in Theorem
\ref{thm:orth_main_general_intro}, this estimator also obeys
\begin{equation}\label{eq:seq_fdr_eq}
\sup_{\|\bm\beta\|_0 \le k}\E \|\hat{\bm\beta}_{\textnormal{\tiny Seq}} - \bm\beta\|^2= (1+o(1)) \, 2\sigma^2 k \log(p/k).
\end{equation}
The proof is in Appendix \ref{sec:proofs-sect-refs} and resembles
that of Theorem \ref{thm:orth_main_general_intro}. Even though the
worst case performance of this estimate matches that of SLOPE, it is
not a desirable procedure for at least two reasons. The first is that
it is not monotone; we may have $|y_i| > |y_j|$ and $|\hat \beta_j| >
|\hat \beta_i|$, which does not make much sense. A consequence is that
it will generally have higher risk. Also note that this estimator is
not continuous with respect to $\bm y$, since a small perturbation can
change the ordering of magnitudes and, therefore, the amount of
shrinkage applied to an individual component. The second reason is
that this procedure does not really extend to linear models.


\section{Orthogonal designs}
\label{sec:slope-with-orth}

This section proves the optimality of SLOPE under orthogonal
designs. As we shall see, the proof is considerably shorter and
simpler than that in \cite{ABDJ} for FDR thresholding. One reason for
this is that SLOPE continuously depends on the observation vector while
FDR thresholding does not, a fact which causes serious technical
difficulties. The discontinuities of the FDR hard-thresholding
procedure also limits the range of its effectiveness (recall the
limits on the range of sparsity levels which state that the signal
cannot be too sparse or too dense) as false discoveries result in
large squared errors.

A reason for separating the proof in the orthogonal case is
pedagogical in that the argument is conceptually simple and, yet, some
of the ideas and tools will carry over to that of Theorem
\ref{thm:gauss_minimax}.  From now on and throughout the paper we set
$\sigma = 1$.

\subsection{Preliminaries}
\label{sec:preliminaries}

We collect some preliminary facts, which will prove useful, and begin
with a definition used to characterize the solution to SLOPE.
\begin{definition}
  A vector $\bm a \in \R^p$ is said to majorize $\bm b \in \R^p$ if
  for all $i = 1,\ldots, p$,
\begin{equation}\nonumber
|a|_{(1)}+\cdots + |a|_{(i)} \geq |b|_{(1)}+\cdots + |b|_{(i)}. 
\end{equation}
\end{definition}
This differs from a more standard definition---e.g.~see
\cite{olkin}---where the last inequality with $i = p$ is replaced by
an equality (and absolute values are omitted). We see that if $\bm a$
majorizes $\bm b$ and $\bm c$ majorizes $\bm d$, then the concatenated
vector $(\bm a, \bm c)$ majorizes $(\bm b, \bm d)$. For convenience,
we list below some basic but nontrivial properties of majorization and
of the prox to the sorted $\ell_1$ norm as defined in \eqref{eq:prox_intro}.  All the
proofs are deferred to the Appendix.
\begin{fact}\label{fc:norm}
If $\bm a$ majorizes $\bm b$, then $\norm{\bm a} \geq \norm{\bm b}$.
\end{fact}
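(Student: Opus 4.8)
The plan is to work directly with the sorted magnitudes. Write $a_j = |a|_{(j)}$ and $b_j = |b|_{(j)}$, so that $a_1 \ge \cdots \ge a_p \ge 0$, and likewise $b_1 \ge \cdots \ge b_p \ge 0$; the majorization hypothesis is precisely that $D_i \triangleq \sum_{j=1}^i (a_j - b_j) \ge 0$ for every $i = 1, \ldots, p$. Since $\norm{\bm a}^2 = \sum_j a_j^2$ and $\norm{\bm b}^2 = \sum_j b_j^2$ (the Euclidean norm is invariant under permutation of coordinates), it suffices to show $\sum_{j=1}^p (a_j^2 - b_j^2) \ge 0$.

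First I would factor $a_j^2 - b_j^2 = (a_j + b_j)(a_j - b_j)$ and set $e_j = a_j + b_j$ and $d_j = a_j - b_j = D_j - D_{j-1}$, with the convention $D_0 = 0$. Then I would apply summation by parts:
\[
\sum_{j=1}^p (a_j^2 - b_j^2) = \sum_{j=1}^p e_j (D_j - D_{j-1}) = e_p D_p + \sum_{j=1}^{p-1} (e_j - e_{j+1}) D_j .
\]
Every term on the right-hand side is nonnegative: $e_p D_p \ge 0$ because $e_p = a_p + b_p \ge 0$ and $D_p \ge 0$; and for $1 \le j \le p-1$ we have $D_j \ge 0$ by hypothesis while $e_j - e_{j+1} = (a_j - a_{j+1}) + (b_j - b_{j+1}) \ge 0$ since both sequences of order statistics are nonincreasing. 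Hence $\sum_j (a_j^2 - b_j^2) \ge 0$, which gives $\norm{\bm a} \ge \norm{\bm b}$.

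There is essentially no serious obstacle here: the statement is the instance $\phi(t) = t^2$ of the elementary fact that weak majorization is preserved under coordinatewise application of a nondecreasing convex function vanishing at $0$, and the Abel-summation identity above is the standard proof of that fact. The only points that require a moment of care are verifying that both order-statistic sequences are nonincreasing and nonnegative (so that $e_j$ is nonincreasing and the boundary term $e_p D_p$ has the right sign), and noting that it is the \emph{weak} form of majorization --- inequality rather than equality at $i = p$ --- that is in force, which is exactly what the argument uses, through $D_p \ge 0$ rather than $D_p = 0$.
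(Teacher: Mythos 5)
Your proof is correct, and it rests on the same key tool as the paper's---Abel summation combined with the monotonicity of the sorted magnitudes---but executes it in a slightly different way. The paper applies Abel summation twice to sandwich the inner product, establishing the chain $\norm{\bm a}^2 \ge \sum_j |a|_{(j)}|b|_{(j)} \ge \norm{\bm b}^2$, whereas you factor $a_j^2 - b_j^2 = (a_j + b_j)(a_j - b_j)$ and apply Abel summation once directly to the difference of squares. Your version is a bit more compact; the paper's yields the intermediate inner-product inequality as a by-product. Both are instances of the standard argument that weak majorization is preserved under coordinatewise application of a nondecreasing convex function vanishing at the origin, specialized to $t \mapsto t^2$, and both correctly use only $D_p \ge 0$ (weak majorization) rather than equality at $i=p$.
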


\begin{fact}\label{fc:maj_zero}
  If $\bm \lambda$ majorizes $\bm a$, then $\slope{\bm a} = \bm 0$.
\end{fact}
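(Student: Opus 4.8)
The plan is to verify directly that $\bm b=\bm 0$ minimizes the prox objective
$g(\bm b)=\tfrac12\norm{\bm a-\bm b}^2+\sum_{i=1}^p\lambda_i|b|_{(i)}$,
which suffices because $g$ is strictly convex (its quadratic part is) and so has a unique minimizer, namely $\slope{\bm a}$. Writing $J(\bm b)=\sum_i\lambda_i|b|_{(i)}$ for the sorted $\ell_1$ norm and expanding the square, one has $g(\bm b)-g(\bm 0)=\tfrac12\norm{\bm b}^2-\langle\bm a,\bm b\rangle+J(\bm b)$, so it will be enough to establish the pointwise bound $\langle\bm a,\bm b\rangle\le J(\bm b)$ for every $\bm b\in\R^p$; the leftover term $\tfrac12\norm{\bm b}^2\ge 0$ then makes the inequality strict whenever $\bm b\ne\bm 0$.

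To prove this bound I would first pass to sorted coordinates: by the rearrangement inequality, $\langle\bm a,\bm b\rangle\le\sum_i|a_i||b_i|\le\sum_{i=1}^p|a|_{(i)}\,|b|_{(i)}$, so it remains to show $\sum_i|a|_{(i)}|b|_{(i)}\le\sum_i\lambda_i|b|_{(i)}$. Then I would apply summation by parts. Put $A_i=\sum_{j\le i}|a|_{(j)}$ and $L_i=\sum_{j\le i}\lambda_j$ (with $A_0=L_0=0$); the hypothesis that $\bm\lambda$ majorizes $\bm a$ is precisely $A_i\le L_i$ for all $i$. Abel's identity gives $\sum_{i=1}^p|a|_{(i)}|b|_{(i)}=\sum_{i=1}^{p-1}A_i\bigl(|b|_{(i)}-|b|_{(i+1)}\bigr)+A_p|b|_{(p)}$, and the same identity holds with $A$ replaced by $L$. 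Since $|b|_{(1)}\ge\cdots\ge|b|_{(p)}\ge 0$, each increment $|b|_{(i)}-|b|_{(i+1)}$ and the boundary term $|b|_{(p)}$ are nonnegative, so replacing each $A_i$ by the larger $L_i$ only increases the right-hand side, yielding $\sum_i|a|_{(i)}|b|_{(i)}\le\sum_i\lambda_i|b|_{(i)}=J(\bm b)$, as desired.

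I do not expect a genuine obstacle here; the only points needing care are choosing the correct pairing in the rearrangement step (we pair the sorted magnitudes of $\bm a$ with those of $\bm b$, since we want an \emph{upper} bound on the inner product) and correctly handling the boundary term in the Abel summation. As an alternative one can recognize the chain of inequalities above as exactly the statement that the dual of the sorted $\ell_1$ norm, $J^\ast(\bm a)=\max_i A_i/L_i$, is at most $1$, so that $\bm a\in\partial J(\bm 0)$ and hence $\bm 0$ solves the prox problem; but the elementary summation-by-parts route is self-contained and avoids invoking the dual-norm formula.
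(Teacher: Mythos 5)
Your proof is correct, but it takes a genuinely different route from the paper's. The paper proves Facts \ref{fc:maj_zero} and \ref{fc:slope_maj} simultaneously from the dual formulation of the prox: specializing the SLOPE dual to $\X = \bm{I}_p$, the residual $\bm a - \slope{\bm a}$ is shown to be the Euclidean projection of $\bm a$ onto the polytope $C_{\bm\lambda,\bm I_p}$ of vectors majorized by $\bm\lambda$, so if $\bm a$ is already in the polytope the projection fixes it and $\slope{\bm a}=\bm 0$. You instead verify the optimality of $\bm b=\bm 0$ directly: expanding $g(\bm b)-g(\bm 0)=\tfrac12\|\bm b\|^2-\langle\bm a,\bm b\rangle + J(\bm b)$ and establishing $\langle\bm a,\bm b\rangle\le J(\bm b)$ for all $\bm b$ via the rearrangement inequality followed by Abel summation, with the majorization hypothesis $A_i\le L_i$ feeding in at exactly the right spot because the increments $|b|_{(i)}-|b|_{(i+1)}$ and the boundary term $|b|_{(p)}$ are nonnegative. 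Both arguments are valid; yours is more elementary and fully self-contained (and, as you note, is really just the statement that $\bm a$ lies in the dual-norm unit ball $\partial J(\bm 0)$), whereas the paper's geometric projection picture does a bit more work up front but delivers Fact \ref{fc:slope_maj} as a free byproduct, which your direct verification does not.
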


\begin{fact}\label{fc:slope_maj}
The difference $\bm a - \slope{\bm a}$ is majorized by $\bm\lambda$.
\end{fact}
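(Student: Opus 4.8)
The plan is to read the majorization off the first-order optimality conditions of the prox problem \eqref{eq:prox_intro}. Write $J_{\bm\lambda}(\bm b) = \sum_{i=1}^p \lambda_i |b|_{(i)}$ for the sorted $\ell_1$ norm, and set $\bm b^\star = \slope{\bm a}$, $\bm v = \bm a - \bm b^\star$. Since $\bm b^\star$ minimizes $\bm b \mapsto \tfrac12\|\bm a - \bm b\|^2 + J_{\bm\lambda}(\bm b)$, we have $\bm 0 \in (\bm b^\star - \bm a) + \partial J_{\bm\lambda}(\bm b^\star)$, i.e. $\bm v \in \partial J_{\bm\lambda}(\bm b^\star)$. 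I would then exploit the positive homogeneity of $J_{\bm\lambda}$: testing the subgradient inequality $J_{\bm\lambda}(\bm w) \ge J_{\bm\lambda}(\bm b^\star) + \langle \bm v, \bm w - \bm b^\star\rangle$ at $\bm w = \bm 0$ and at $\bm w = 2\bm b^\star$ yields $\langle \bm v, \bm b^\star\rangle = J_{\bm\lambda}(\bm b^\star)$, and substituting this back into the subgradient inequality gives $\langle \bm v, \bm w\rangle \le J_{\bm\lambda}(\bm w)$ for every $\bm w \in \R^p$ (equivalently, $\bm v$ lies in the unit ball of the norm dual to $J_{\bm\lambda}$).

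Next I would plug in the right test vectors. Fix $1 \le i \le p$, let $T$ be a set of $i$ indices carrying the $i$ largest values of $|v_j|$, and take $\bm w = \sum_{j\in T}\sgn(v_j)\,\be_j$. Then $\langle \bm v, \bm w\rangle = |v|_{(1)} + \cdots + |v|_{(i)}$, while $\bm w$ has exactly $i$ nonzero entries, each of magnitude one, so (the nonincreasing ordering of $\bm\lambda$ making this immediate) $J_{\bm\lambda}(\bm w) = \lambda_1 + \cdots + \lambda_i$. The inequality of the previous paragraph therefore reads $|v|_{(1)} + \cdots + |v|_{(i)} \le \lambda_1 + \cdots + \lambda_i$, and since $i$ was arbitrary this is precisely the statement that $\bm a - \slope{\bm a}$ is majorized by $\bm\lambda$.

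No step here is genuinely hard; the point requiring the most care is the passage from the optimality condition to the coordinatewise majorization inequalities, namely recognizing that sign-pattern indicator vectors are the test directions that make both sides of $\langle \bm v, \bm w\rangle \le J_{\bm\lambda}(\bm w)$ explicit. An alternative, more computational route avoids subgradients: by the footnote reduction one may assume $\bm a$ is sorted with nonnegative entries, in which case \eqref{eq:slopeQP} identifies $\slope{\bm a}$ with the Euclidean projection of $\bm a - \bm\lambda$ onto the cone $\{b_1 \ge \cdots \ge b_p \ge 0\}$; one then checks that the projection residual has nonpositive prefix sums and that $\bm a - \slope{\bm a}$ is nonnegative and nonincreasing, whence the partial-sum bounds follow. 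I would prefer the subgradient argument, as it is shorter and sidesteps the monotonicity bookkeeping.
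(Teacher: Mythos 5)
Your proof is correct, and it reaches the result by a route that is genuinely parallel to, but not identical with, the paper's. The paper proves Facts~\ref{fc:maj_zero} and~\ref{fc:slope_maj} together through the Lagrangian dual of the SLOPE program: it derives the dual feasible polytope $C_{\bm\lambda,\bm I_p} = \{\bm\nu : \bm\nu \text{ majorized by } \bm\lambda\}$, and then uses the stationarity relation $\bm a - \slope{\bm a} = \hat{\bm\nu}$ to read off that the residual is the Euclidean projection of $\bm a$ onto this polytope, hence a member of it. You instead take the primal route: extract $\bm v = \bm a - \slope{\bm a} \in \partial J_{\bm\lambda}(\slope{\bm a})$ from the first-order condition, upgrade this (via $\bm w = \bm 0$ and $\bm w = 2\bm b^\star$, which is the standard device for positively homogeneous functions) to the dual-ball membership $\langle \bm v, \bm w\rangle \le J_{\bm\lambda}(\bm w)$ for all $\bm w$, and then test against sign-pattern indicators to unwind this into the $p$ partial-sum inequalities defining majorization. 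Both arguments pivot on the same underlying fact — that the unit ball of the norm dual to the sorted $\ell_1$ norm $J_{\bm\lambda}$ is exactly the set of vectors majorized by $\bm\lambda$ — but the paper takes this characterization as already computed (it appears inside the Fenchel conjugate when assembling the dual), whereas you derive it explicitly via the sign-pattern test vectors, which makes your version a bit more self-contained for a reader who has not already seen the dual set-up. The paper's duality machinery pays for itself elsewhere (notably in Lemma~\ref{lm:lifted}), so neither approach is wasteful in context.

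One small cleanup: when some $v_j = 0$ for $j \in T$, the vector $\bm w = \sum_{j\in T}\sgn(v_j)\bm e_j$ has fewer than $i$ nonzero entries (since $\sgn(0)=0$), so the claim that $\bm w$ has ``exactly $i$ nonzero entries'' is not literally true. This is harmless — either observe that $J_{\bm\lambda}(\bm w) \le \lambda_1 + \cdots + \lambda_i$ still holds when $\bm w$ has fewer nonzeros (because $\bm\lambda$ is nonnegative), or simply set $s_j = 1$ when $v_j = 0$ so that $\bm w$ has exactly $i$ unit-modulus coordinates while $\langle\bm v, \bm w\rangle$ is unchanged. Either patch closes the gap in one line.
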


\begin{fact}\label{fc:monotone}\label{fc:reduce}
  Let $T$ be a nonempty proper subset of $\{1, \ldots, p \}$, and
  recall that $\bm{a}_T$ is the restriction of $\bm{a}$ to $T$ and
  $\bm\lambda^{-[m]} = (\lambda_{m+1}, \ldots, \lambda_p)$. Then
\[
\left\| \left[ \slope{\bm a} \right]_{\overline T} \right\| \le \left\| \slopex{\bm\lambda^{-[|T|]}}{\bm a_{\overline T}} \right\|.
\]
\end{fact}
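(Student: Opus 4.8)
The plan is to reduce to the isotonic picture and then establish a monotonicity/comparison principle. By the sign- and permutation-equivariance of the prox recalled in the footnote after \eqref{eq:prox_intro}, we may assume $a_1\ge a_2\ge\cdots\ge a_p\ge 0$ (relabelling $T$ accordingly): then $\bm a_{\overline T}$ is again nonnegative and nonincreasing, both proxes in the statement have nonnegative nonincreasing solutions, and neither side of the claimed inequality is affected. Under this normalization \eqref{eq:slopeQP} identifies $\slopex{\bm\lambda}{\bm a}$ with the Euclidean projection of $\bm a-\bm\lambda$ onto the cone $\mathcal C_p=\{\bm b:\ b_1\ge\cdots\ge b_p\ge 0\}$, and likewise $\slopex{\bm\lambda^{-[|T|]}}{\bm a_{\overline T}}$ with the projection of $\bm a_{\overline T}-\bm\lambda^{-[|T|]}$ onto $\mathcal C_{p-|T|}$. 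Write $t=|T|$ and $\overline T=\{i_1<i_2<\cdots<i_m\}$ with $m=p-t$.

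The one arithmetic fact that drives everything: at most $t$ of the indices $1,\dots,i_j$ lie outside $\overline T$, so $i_j\le j+t$, hence $\lambda_{i_j}\ge\lambda_{t+j}$. Consequently the entry fed to coordinate $j$ of the reduced problem, $a_{i_j}-\lambda_{t+j}$, dominates the entry fed to coordinate $i_j$ of the full problem, $a_{i_j}-\lambda_{i_j}$; and since $\bm a$ and $\bm\lambda$ are both nonincreasing one in fact gets $a_{i_j}-\lambda_{t+j}\ge a_\ell-\lambda_\ell$ for every full index $\ell$ in the window $[i_j,\,t+j]$, these windows together covering $\{i_1,i_1+1,\dots,p\}$. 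This ``windowed domination'' of the reduced data over the full data is what should let the deleted ($T$-)coordinates be absorbed.

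With the reduction in place, the remaining task is the comparison principle: under the windowed domination just described, the reduced fit majorizes the restriction of the full fit, i.e.\ $\slopex{\bm\lambda^{-[|T|]}}{\bm a_{\overline T}}$ majorizes $\bigl[\slopex{\bm\lambda}{\bm a}\bigr]_{\overline T}$ in the sense of the Definition above; once this is known, Fact~\ref{fc:norm} yields the stated norm inequality immediately. Note a plain coordinatewise comparison of the two fits is false---deleting a coordinate that carries large data inflates the full fit nearby---so majorization is the correct target. One route is to prove the comparison principle from the pool-adjacent-violators description of both projections: track how the PAVA level-set blocks of the full problem split and recombine once the $T$-coordinates are deleted and the weights slide from $\lambda_{i_j}$ down to $\lambda_{t+j}$, and use the windowed domination to check that no retained coordinate sees its block average decrease. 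An alternative, isotonic-regression-free route is to note that block-coordinate optimality in \eqref{eq:slope_lambda} exhibits $\bigl[\slopex{\bm\lambda}{\bm a}\bigr]_{\overline T}$ as the prox of $\bm a_{\overline T}$ for the penalty $\bm c\mapsto J_{\bm\lambda}\bigl((\hat{\bm a}_T,\bm c)\bigr)$, where $J_{\bm\lambda}(\bm b):=\sum_i\lambda_i|b|_{(i)}$; writing $J_{\bm\lambda}$ as a maximum over permutations of weighted sums (rearrangement inequality) shows that this penalty dominates $J_{\bm\lambda^{-[|T|]}}(\bm c)$ up to a $\bm c$-independent additive constant, reducing matters to a lemma asserting that a larger symmetric, coordinatewise-monotone penalty produces a prox of no larger norm.

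The main obstacle is exactly this comparison/monotonicity step. It is delicate because the two penalties agree only up to a constant and only on the region where $\bm c$ is small, and because, as noted, the naive coordinatewise inequality between the fits fails; controlling the rearrangement of the pooling blocks (or, on the second route, proving the penalty-monotonicity lemma for non-homogeneous penalties) is where the real work lies. The rest---the reduction to sorted data, the bound $i_j\le j+t$, and the final appeal to Fact~\ref{fc:norm}---is routine.
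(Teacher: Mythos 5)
The key step of your argument is missing, and you say as much: the ``comparison/monotonicity principle'' that the reduced fit majorizes the restricted full fit is exactly the content of the Fact, so what you have written is a plan rather than a proof. Both routes you outline for closing the gap have issues beyond being unfinished. On the PAVA route, the windowed domination $i_j \le j + |T|$ (hence $\lambda_{i_j} \ge \lambda_{|T|+j}$) is correct, but it does not by itself control how the level-set blocks of the full isotonic problem split and recombine after deleting the $T$-coordinates; that bookkeeping is the real work and is not carried out. On the second route, the claim that $J_{\bm\lambda}\bigl((\hat{\bm a}_T,\bm c)\bigr)$ dominates $J_{\bm\lambda^{-[|T|]}}(\bm c)$ up to a $\bm c$-independent additive constant is not right: the weight assigned to each fixed $T$-entry depends on its rank relative to the entries of $\bm c$, so the $T$-contribution is not constant in $\bm c$. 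Even if a pointwise penalty comparison were available, you would still need a lemma asserting that a pointwise-larger (symmetric, nonseparable) penalty produces a prox of no larger norm, which is false without further structure and, when specialized appropriately, is about as hard as the original Fact.

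The paper proves this Fact by an entirely different and shorter route that avoids any majorization comparison. It first establishes a \emph{coordinate monotonicity} of the prox: for nonnegative $\bm a$, if $\bm b \ge \bm a$ entrywise then $\slope{\bm b} \ge \slope{\bm a}$ entrywise, verified by computing the (a.e.) Jacobian of the prox from the FastProxSL1 averaging step (its entries are nonnegative) and invoking Lipschitz continuity. Then one sends $a_i \to \infty$ for all $i \in T$: by monotonicity $\|[\slope{\bm a}]_{\overline T}\|$ is nondecreasing along this path, and in the limit the $T$-coordinates occupy the top $|T|$ ranks and absorb $\lambda_1,\ldots,\lambda_{|T|}$, so the $\overline T$-coordinates face exactly the weights $\bm\lambda^{-[|T|]}$ and the limiting norm is $\|\slopex{\bm\lambda^{-[|T|]}}{\bm a_{\overline T}}\|$. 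This limiting argument is the idea your proposal is missing; it delivers the inequality directly without needing the windowed-domination bookkeeping or a penalty-monotonicity lemma.
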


\begin{lemma}\label{lm:algo-monotone}
For any $\bm a$, it holds that
\[
\|\slope{\bm a}\| \le \| (|\bm a| - \bm \lambda)_+\|, 
\]
where $|\bm a|$ is the vector of magnitudes $(|a_1|, \ldots, |a_p|)$.
\end{lemma}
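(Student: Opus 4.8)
The plan is to reduce to the case where $\bm a$ is sorted and nonnegative, and then exploit the reformulation \eqref{eq:slopeQP}, which identifies $\slope{\bm a}$ in that case with the Euclidean projection of the shrunken vector $\bm a - \bm\lambda$ onto the monotone cone $\mathcal{C} = \{\bm b : b_1 \ge \cdots \ge b_p \ge 0\}$.

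For the reduction, I would first note that the sorted $\ell_1$ norm $J(\bm b) = \sum_i \lambda_i |b|_{(i)}$ depends on $\bm b$ only through the multiset of magnitudes $\{|b_i|\}$, so that $\slope{\bm s \odot \bm P \bm a} = \bm s \odot \bm P\,\slope{\bm a}$ for every permutation $\bm P$ and every sign vector $\bm s$; hence $\|\slope{\bm a}\| = \|\slope{\bm c}\|$, where $\bm c$ is the vector of sorted magnitudes of $\bm a$. Next, a rearrangement argument — the pairing cost $(u,v) \mapsto ((u - v)_+)^2$ is submodular, so co-sorting the two sequences minimizes the total cost, and $\bm\lambda$ is already sorted — gives $\|(\bm c - \bm\lambda)_+\|^2 = \sum_i ((|a|_{(i)} - \lambda_i)_+)^2 \le \sum_i ((|a_i| - \lambda_i)_+)^2 = \|(|\bm a| - \bm\lambda)_+\|^2$. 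So it is enough to prove $\|\slope{\bm c}\| \le \|(\bm c - \bm\lambda)_+\|$ for $\bm c$ sorted and nonnegative.

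For this main step, write $\hat{\bm b} = \slope{\bm c} = P_{\mathcal{C}}(\bm c - \bm\lambda)$. The variational characterization of the projection, tested against the point $\bm 0 \in \mathcal{C}$, reads $\langle (\bm c - \bm\lambda) - \hat{\bm b},\, -\hat{\bm b}\rangle \le 0$, i.e.\ $\|\hat{\bm b}\|^2 \le \langle \bm c - \bm\lambda,\, \hat{\bm b}\rangle$. Since $\hat{\bm b} \in \mathcal{C}$ has nonnegative coordinates, replacing $\bm c - \bm\lambda$ by its coordinatewise positive part only increases the inner product, so $\|\hat{\bm b}\|^2 \le \langle (\bm c - \bm\lambda)_+,\, \hat{\bm b}\rangle \le \|(\bm c - \bm\lambda)_+\|\,\|\hat{\bm b}\|$ by Cauchy--Schwarz; dividing by $\|\hat{\bm b}\|$ (the statement being trivial when $\hat{\bm b} = \bm 0$) finishes the argument.

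The bulk of this is routine; the one spot that needs care is the reduction, namely checking that the rearrangement inequality points in the direction that lets one replace the stated right-hand side by the co-sorted quantity $\|(\bm c - \bm\lambda)_+\|$. If one prefers to avoid \eqref{eq:slopeQP} entirely, the same Cauchy--Schwarz estimate runs directly from the standard prox-of-a-norm identity $\langle \slope{\bm a},\, \bm a - \slope{\bm a}\rangle = J(\slope{\bm a}) \ge 0$ together with the rearrangement bound $\langle \slope{\bm a},\, \bm a\rangle \le \sum_i |\slope{\bm a}|_{(i)} |a|_{(i)}$, which give $\|\slope{\bm a}\|^2 \le \sum_i |\slope{\bm a}|_{(i)} (|a|_{(i)} - \lambda_i)_+$ and then, by Cauchy--Schwarz and rearrangement, the same conclusion.
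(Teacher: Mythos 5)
Your proof is correct, but it follows a genuinely different route from the paper's. The paper works directly with the unsorted vector $\bm a$: it invokes \emph{firm nonexpansiveness} of the prox, $\|\slope{\bm a} - \slope{\bm b}\|^2 \le \langle \bm a - \bm b,\ \slope{\bm a} - \slope{\bm b}\rangle$, makes the clever choice $\bm b = \sgn(\bm a)\odot \bm\lambda$ (whose prox vanishes by Fact~\ref{fc:maj_zero}), and closes with Cauchy--Schwarz and the sign identity $\sgn(\bm a)\odot\slope{\bm a} \ge 0$ --- three lines, no reduction. You instead pass to the sorted nonnegative representative $\bm c$ via permutation/sign equivariance, use the isotonic-regression identity \eqref{eq:slopeQP} to identify $\slope{\bm c}$ with the Euclidean projection of $\bm c - \bm\lambda$ onto the monotone cone, apply the variational inequality at $\bm 0$ plus Cauchy--Schwarz to get $\|\slope{\bm c}\| \le \|(\bm c - \bm\lambda)_+\|$, and then need a separate rearrangement step (the submodularity of $(u,v)\mapsto((u-v)_+)^2$, so co-sorting minimizes) to pass from $\|(\bm c - \bm\lambda)_+\|$ to $\|(|\bm a| - \bm\lambda)_+\|$; all of these steps check out. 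The trade-off: the paper's argument is shorter and avoids the reduction entirely, while yours is more elementary in that it only uses the basic projection inequality on a convex cone rather than firm nonexpansiveness, and it directly exploits the concrete PAVA geometry of \eqref{eq:slopeQP}. (The paper also sketches a second short proof in the remark after the proof of Fact~\ref{fc:monotone}, viewing $\|\slope{\bm a}\|$ as the distance from $\bm a$ to the polytope $C_{\bm\lambda,\bm I_p}$ and exhibiting the feasible point $b_i = \sgn(a_i)\min\{|a_i|,\lambda_i\}$; that is yet a third route, closer in spirit to yours since it is also a projection argument, but again needing no sorting reduction.)
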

\begin{proof}[Proof of Lemma \ref{lm:algo-monotone}]
  The firm nonexpansiveness (e.g. see pp.131 of \cite{BoydProx}) of
  the prox reads
\[
\left\| \slope{\bm a} - \slope{\bm b} \right\|^2 \le (\bm a - \bm b)'\left(\slope{\bm a} - \slope{\bm b} \right)
\]
for all $\bm a, \bm b$. Taking $\bm b = \sgn(\bm a) \odot \bm\lambda$,
where $\odot$ is componentwise multiplication, and observing that
$\slope{\bm b} = \bm 0$ (Fact \ref{fc:maj_zero}) give 
\begin{align*}
  \left\| \slope{\bm a}\right\|^2 &\le \left\langle \sgn(\bm a) \odot (|\bm a| - \bm\lambda),  \slope{\bm a} \right\rangle \\
  &\le \left\langle (|\bm a| - \bm\lambda)_+, \sgn(\bm a) \odot \slope{\bm a} \right\rangle\\
  & \le \left\| ( |\bm a| - \bm\lambda)_+ \right\| \cdot \left\|\slope{\bm a} \right\|,
\end{align*}
where we use the nonnegativity of $\sgn(\bm a) \odot \slope{\bm a}$
and the Cauchy-Schwarz inequality. This yields the lemma. 
\end{proof}

\subsection{Proof of Theorem \ref{thm:orth_main_general_intro}}
\label{sec:proof-theor-refthm}

Let $S$ be the support of the vector $\bm \beta$,
$S = \supp{\bm\beta}$, and decompose the total mean-square error as
\[
\E \|\hat{\bm \beta} - \bm \beta\|^2 = \E \|\hat{\bm \beta}_S - \bm
\beta_S\|^2 + \E\| \hat{\bm \beta}_\barS - \bm \beta_\barS\|^2,
\]
i.e.~as a the sum of the contributions on and off support (in case
$\|\bm\beta\|_0 < k$, augment $S$ to have size $k$). Theorem
\ref{thm:orth_main_general_intro} follows from the following two
lemmas.

\begin{lemma}\label{lm:bound-on-support}
  Under the assumptions of Theorem \ref{thm:orth_main_general_intro},
  for all $k$-sparse vectors $\bm \beta$, 
\begin{equation}\nonumber
  \E \|\hat{\bm \beta}_S - \bm \beta_S\|^2 \le (1+o(1)) \, 2k \log(p/k). 
\end{equation}
\end{lemma}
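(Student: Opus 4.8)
Recall that we have set $\sigma = 1$, so $\bm y = \bm\beta + \bm z$ with $\bm z \sim \mathcal{N}(\bm 0, \bm I_p)$ and $\hat{\bm\beta} = \slope{\bm y}$. On the index set $S$, which we take to be of size exactly $k$, the plan is to split the error into a deterministic ``shrinkage'' part and a stochastic ``noise'' part via the triangle inequality,
\[
\norm{\hat{\bm\beta}_S - \bm\beta_S} \;\le\; \norm{(\bm y - \hat{\bm\beta})_S} + \norm{\bm y_S - \bm\beta_S} \;=\; \norm{(\bm y - \hat{\bm\beta})_S} + \norm{\bm z_S}.
\]
The point is that each term can be controlled uniformly over $\bm\beta$: the first by a pathwise bound coming from the structure of the prox, the second because $\norm{\bm z_S}$ does not depend on $\bm\beta$ at all.

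\textbf{The shrinkage term.} Here I would invoke the majorization machinery collected in Section~\ref{sec:preliminaries}. By Fact~\ref{fc:slope_maj}, the residual $\bm y - \slope{\bm y}$ is majorized by $\bm\lambda$. Restricting a vector to a subset of $k$ coordinates can only decrease the partial sums of its sorted magnitudes, so $(\bm y - \hat{\bm\beta})_S$, viewed as a $k$-dimensional vector, is majorized by $\bm\lambda^{[k]} = (\lambda_1,\ldots,\lambda_k)$. Fact~\ref{fc:norm} then yields the \emph{deterministic} inequality
\[
\norm{(\bm y - \hat{\bm\beta})_S}^2 \;\le\; \sum_{i=1}^k \lambda_i^2 \;=\; \sum_{i=1}^k (\lambbh_i)^2 \;=\; (1+o(1))\,2k\log(p/k),
\]
where the last step is the asymptotic equivalence recorded in Section~\ref{sec:slope-orth-design-intro} (it follows from $\Phi^{-1}(1-c) = (1+o(1))\sqrt{2\log(1/c)}$ together with Stirling's formula, and is valid since $k/p \goto 0$).

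\textbf{The noise term and conclusion.} Since $\norm{\bm z_S}^2$ follows a $\chi^2_k$ law, $\E\norm{\bm z_S}^2 = k$ and, by Jensen, $\E\norm{\bm z_S} \le \sqrt{k}$. Writing $C_k^2 := (1+o(1))\,2k\log(p/k)$ for the bound above, squaring the triangle inequality and taking expectations gives
\[
\E\norm{\hat{\bm\beta}_S - \bm\beta_S}^2 \;\le\; C_k^2 + 2 C_k \,\E\norm{\bm z_S} + \E\norm{\bm z_S}^2 \;\le\; C_k^2 + 2 C_k\sqrt{k} + k.
\]
Because $k/p \goto 0$ forces $\log(p/k) \goto \infty$, both $\sqrt{k}/C_k = O(1/\sqrt{\log(p/k)})$ and $k/C_k^2 = O(1/\log(p/k))$ tend to zero, so the right-hand side is $(1+o(1))\,C_k^2 = (1+o(1))\,2k\log(p/k)$; and every bound used is uniform in the $k$-sparse $\bm\beta$, which proves the lemma.

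\textbf{Anticipated difficulty.} I do not expect a serious obstacle in this particular lemma: the only delicate points are the ``restriction weakly preserves majorization'' step and the evaluation of $\sum_{i\le k}\lambda_i^2$, both of which are routine. The genuinely hard part of Theorem~\ref{thm:orth_main_general_intro} is the companion estimate bounding the off-support error $\E\norm{\hat{\bm\beta}_\barS - \bm\beta_\barS}^2$, where one must show that the $p-k$ near-null coordinates contribute only $o(k\log(p/k))$; it is there that SLOPE's rank-dependent thresholds, rather than a single fixed threshold, are what make the argument succeed.
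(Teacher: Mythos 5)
Your proof is correct and is essentially the paper's argument: the same triangle-inequality split into a shrinkage term and $\norm{\bm z_S}$, the same use of Fact~\ref{fc:slope_maj} plus restriction to get majorization of the on-support residual by $\bm\lambda^{[k]}$, Fact~\ref{fc:norm} for the norm bound, and the same Cauchy--Schwarz/Jensen treatment of the cross term followed by $\log(p/k)\goto\infty$ to absorb the lower-order pieces. No substantive difference.
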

\begin{proof}
  We know from Fact 3.3 that $\bm y - \hat{\bm\beta}$ is majorized by
  ${\bm \lambda} = \bm\lambbh$, which implies that $\bm{y}_S - \hat{\bm\beta}_S =
  \bm{\beta}_S + \bm{z}_S - \hat{\bm\beta}_S$ is majorized by
  ${\bm \lambda}^{[k]}$. The triangle inequality together with Fact
  \ref{fc:norm} give
\begin{equation*}\label{eq:orth_risk_s}
  \norm{\bm\beta_S - \hat{\bm\beta}_S} = \norm{\bm\beta_S +\bm{z}_S - \hat{\bm\beta}_S - \bm{z}_S} \le 
  \norm{\bm{y}_S - \hat{\bm\beta}_S} + \norm{\bm z_S} \le \norm{{\bm \lambda}^{[k]}} + \norm{\bm z_S}. 
\end{equation*}
This gives 
\begin{equation}\nonumber
\begin{aligned}
  \E   \norm{\bm\beta_S - \hat{\bm\beta}_S}^2  &\le  \sum_{i=1}^k(\lambbh_i)^2 + \E \|\bm z_S\|^2 + 2 \sqrt{\sum_{1\le i \le k}(\lambbh_i)^2} \E \|\bm z_S\|\\
  &\le  \sum_{i=1}^k(\lambbh_i)^2 + \E \|\bm z_S\|^2 + 2 \sqrt{\sum_{1\le i \le k}(\lambbh_i)^2 \E \|\bm z_S\|^2}\\
  & \le  \sum_{i=1}^k(\lambbh_i)^2 + k  + 2 \sqrt{k\sum_{1\le i \le k}(\lambbh_i)^2} \\
  & =(1+o(1))\, 2k\log(p/k),
\end{aligned}
\end{equation}
where the last step makes use of $\sum_{1\le i \le k}(\lambbh_i)^2 =
(1+o(1)) 2k\log(p/k)$ and $\log(p/k) \goto \infty$. 
\end{proof}

\begin{lemma}\label{lm:bound-out-support}
  Under the assumptions of Theorem \ref{thm:orth_main_general_intro},
  for all $k$-sparse vectors $\bm \beta$,
\begin{equation}
\label{eq:bound-out-support} 
\E \|\hat{\bm \beta}_\barS - \bm \beta_\barS\|^2 = o(1) \, 2k \log(p/k). 
\end{equation}
\end{lemma}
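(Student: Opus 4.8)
The plan is to bound the off-support error $\|\hat{\bm\beta}_{\barS} - \bm\beta_{\barS}\| = \|\hat{\bm\beta}_{\barS}\|$ (since $\bm\beta$ vanishes on $\barS$) by comparing it to the prox of the noise vector restricted to $\barS$ against the \emph{tail} of the weight sequence. Concretely, I would first invoke Fact~\ref{fc:monotone} with $T = S$ to get
\[
\|\hat{\bm\beta}_{\barS}\| = \left\| \left[\slope{\bm y}\right]_{\barS} \right\| \le \left\| \slopex{\bm\lambda^{-[k]}}{\bm y_{\barS}} \right\| = \left\| \slopex{\bm\lambda^{-[k]}}{\bm z_{\barS}} \right\|,
\]
the last equality because $\bm y_{\barS} = \bm z_{\barS}$ on the complement of the support. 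Thus it suffices to show that the prox of a $(p-k)$-dimensional standard Gaussian vector against the shifted weights $(\lambbh_{k+1}, \ldots, \lambbh_p)$ has expected squared norm that is $o(1) \cdot 2k\log(p/k)$.

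Next I would use Lemma~\ref{lm:algo-monotone} to further bound $\left\| \slopex{\bm\lambda^{-[k]}}{\bm z_{\barS}} \right\|^2 \le \sum_{i \in \barS} \big(|z_i| - \lambbh_{k+i'}\big)_+^2$ where the indices $i'$ run over $1, \ldots, p-k$ (matching each coordinate to a tail weight; since majorization-type arguments are insensitive to this pairing one may simply bound by $\sum_{j=1}^{p-k}(|z|_{(j)} - \lambbh_{k+j})_+^2$ after sorting). Taking expectations, the key estimate is then a one-dimensional Gaussian tail computation: for a standard normal $z$, $\E\big[(|z| - \lambbh_{m})_+^2\big]$ is controlled by $\P(|z| \ge \lambbh_m)$ times a factor polynomial in $\lambbh_m$, plus a Gaussian-tail integral. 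Since $\P(|z| \ge \lambbh_m) = qm/p$ by construction, one gets a bound of roughly $\frac{qm}{p}\big((\lambbh_m)^2 + O(1)\big)$ for each term. Summing over $m = k+1, \ldots, p$ and using $(\lambbh_m)^2 = (1+o(1)) 2\log(p/m)$ yields an expression of order $\frac{q}{p}\sum_{m=k+1}^{p} m \log(p/m)$, which after comparing the sum to the integral $\int_{k}^{p} x\log(p/x)\,dx = O(p^2)$ gives a bound of order $q \cdot p \cdot$ (a bounded factor). The point is to check that this is $o(1)\cdot 2k\log(p/k)$, i.e.\ that $p = o(k\log(p/k))$ — but that is \emph{false} in general, so the crude sorted bound is too lossy and the pairing/truncation must be handled more carefully.

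The main obstacle, then, is exactly this: the naive bound $\sum_m \P(|z|\ge \lambbh_m)(\lambbh_m)^2$ is dominated by the \emph{dense} tail weights (small $\lambbh_m$ near $m = p$), where too many noise coordinates survive thresholding. The fix is to observe that the prox operator enforces a global \emph{rank} constraint, not coordinatewise thresholding: $\slopex{\bm\lambda^{-[k]}}{\bm z_{\barS}}$ can have at most $R$ nonzero entries where $R$ is itself controlled by where the order statistics $|z|_{(j)}$ cross the curve $\lambbh_{k+j}$, and by a Benjamini--Hochberg-type argument (the defining property $\P(|z|\ge\lambbh_i) = qi/p$) this crossing point $R$ is $O_{\P}(k)$ — roughly, of order $\frac{q}{1-q}k$ — with overwhelming probability, plus a contribution only from the very largest order statistics of the $p - k$ Gaussians. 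So I would (i) split according to whether $R \le Ck$ or not, controlling the rare event $R > Ck$ by a union bound / large-deviation estimate on Gaussian order statistics exceeding the BH curve, and (ii) on the event $R \le Ck$, bound $\|\hat{\bm\beta}_{\barS}\|^2 \le \sum_{j=1}^{Ck}(|z|_{(j)})^2$ and use the standard fact that the sum of the top $Ck$ squared order statistics of $p$ standard Gaussians is $(1+o(1))\cdot 2Ck\log(p/k)$ in expectation. Choosing the comparison carefully (and using that $\lambbh$ is genuinely subtracted, which saves the leading constant) brings the bound down to $o(1)\cdot 2k\log(p/k)$; the delicate part is making the rare-event contribution — where $\|\hat{\bm\beta}_{\barS}\|^2$ could a priori be as large as $\|\bm z\|^2 \approx p$ — provably negligible in expectation, which requires the tail bound on $R$ to decay fast enough to beat this $p$.
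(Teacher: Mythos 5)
Your opening moves match the paper exactly: Fact~\ref{fc:monotone} with $T = S$ reduces the off-support error to $\bigl\|\slopex{\bm\lambda^{-[k]}}{\bm z_{\barS}}\bigr\|$, and Lemma~\ref{lm:algo-monotone} (applied to the sorted data) then bounds this by $\sum_{j=1}^{p-k}\bigl(|z|_{(j)} - \lambbh_{k+j}\bigr)_+^2$. This is precisely the reduction the paper makes, and the remaining task is to show $\sum_{j=1}^{p-k}\E\bigl(|\zeta|_{(j)} - \lambbh_{k+j}\bigr)_+^2 = o\bigl(2k\log(p/k)\bigr)$.

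The gap is in what you do next. When you ``take expectations,'' you replace the order statistic $|\zeta|_{(j)}$ with a single standard Gaussian $z$ and compute $\E(|z|-\lambbh_m)_+^2$ via $\P(|z|\ge\lambbh_m)=qm/p$. This is not a bound on the quantity you need: for small $j$ the order statistic $|\zeta|_{(j)}$ is stochastically much larger than a single Gaussian, so $\E(|\zeta|_{(j)}-\lambbh_{k+j})_+^2 \gg \E(|z|-\lambbh_{k+j})_+^2$, and the two sums are simply not comparable. The conclusion you draw from this miscalculation --- that ``the crude sorted bound is too lossy'' --- is therefore wrong; the sorted bound is exactly what the paper uses, and it is \emph{not} lossy. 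What makes it tight is the concentration of Gaussian order statistics: for each fixed $j$, $|\zeta|_{(j)}$ concentrates around $\sqrt{2\log(p/j)}$ with variance $O(1/(j\log(p/j)))$, so the typical value of $|\zeta|_{(j)}-\lambbh_{k+j}$ is roughly $\sqrt{2\log(p/j)}-\sqrt{2\log(p/(k+j))}$, which is small. The paper's Lemma~\ref{lm:bound-out-support-close} does a bias--variance decomposition term by term and shows the sum over $j \le Ak$ is $o(2k\log(p/k))$; Lemma~\ref{lm:bound-out-support-far} handles $j > Ak$ with a Chernoff/KL argument and shows that tail sums to $O(1)$.

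Your pivot to ``bound the number of nonzeros $R = O_\P(k)$ and then sum the top $Ck$ squared order statistics'' runs into the problem you yourself identify: without the subtraction of $\lambbh$, the step-(ii) bound is $(1+o(1))\cdot 2Ck\log(p/k)$, which is $O(1)$ times the target, not $o(1)$. Reinstating the subtraction ``saves the leading constant'' only after you redo the very order-statistics calculation you abandoned, so this path does not circumvent the technical work; it repackages the need for it. As written, the key estimate is asserted but not established, so the proposal does not constitute a proof.
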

\begin{proof}
  It follows from Fact \ref{fc:monotone} that
\[
\|\hat{\bm \beta}_\barS\|^2 = \left\| [\slopex{\bm \lambda}{\bm
    y}]_{\overline S} \right\|^2 \le \left\| \slopex{\bm
    \lambda^{-[k]}}{\bm{z}_{\overline S}} \right\|^2.
\]
We proceed by showing that for $\bm\zeta \sim \mathcal{N}(\bm 0,
\bm{I}_{p-k})$, $\E\norm{\slopex{\bm \lambda^{-[k]}}{\bm\zeta}}^2 =
o(1) \, 2k\log(p/k)$. To do this, pick $A>0$ sufficiently large such that
$q(1+1/A)< 1$ in Lemmas \ref{lm:bound-out-support-close} and
\ref{lm:bound-out-support-far}, which then give
\[
  \sum_{i=1}^{p-k}\E \left( |\zeta|_{(i)} - \lambbh_{k+i} \right)_+^2 = o(1) \, 2k 
  \log(p/k).
\]
The conclusion follows from Lemma \ref{lm:algo-monotone} since
\[
  \E \left\| \slopex{\bm \lambda^{-[k]}}{\bm\zeta} \right\|^2 \le \sum_{i=1}^{p-k}\E \left( |\zeta|_{(i)} - \lambbh_{k+i} \right)_+^2 = o(1) \, 2k\log(p/k).
\]
\end{proof}

We conclude this section with a probabilistic bound on the squared
loss. The proposition below, whose argument is nearly identical to
that of Theorem \ref{thm:orth_main_general_intro}, shall be used as a
step in the proof of Theorem \ref{thm:gauss_minimax}. 
\begin{proposition}\label{prop:orth_main_general}
  Fix $0 < q < 1$ and set $\bm \lambda = (1+\epsilon) \bm \lambbh(q)$
  for some arbitrary $0 < \epsilon < 1$.  Suppose $k/p \goto 0$, then
  for each $\delta > 0$ and all $k$-sparse $\bm \beta$,
\begin{equation}\nonumber
  \P\left( \frac{\|\bm{\hat\beta} - \bm\beta\|^2}{2(1+\epsilon)^2k\log(p/k)} < 1 + \delta \right) \goto 1.
\end{equation}
Here, the convergence is uniform over $\epsilon$.
\end{proposition}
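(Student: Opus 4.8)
The plan is to mirror, almost line for line, the support/off-support decomposition used in the proof of Theorem~\ref{thm:orth_main_general_intro}, but to upgrade the two \emph{expectation} bounds of Lemmas~\ref{lm:bound-on-support} and~\ref{lm:bound-out-support} into \emph{high-probability} bounds. Write $S = \supp{\bm\beta}$, augmented arbitrarily to have exactly $k$ elements (so that $\bm\beta_{\overline S} = \bm 0$), and split
\[
\|\hat{\bm\beta} - \bm\beta\|^2 = \|\hat{\bm\beta}_S - \bm\beta_S\|^2 + \|\hat{\bm\beta}_{\overline S}\|^2 .
\]
It then suffices to control each piece on an event of probability tending to one, uniformly over $\|\bm\beta\|_0 \le k$ and over $\epsilon \in (0,1)$.

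For the on-support term I would reuse Fact~\ref{fc:slope_maj}: $\bm y - \hat{\bm\beta}$ is majorized by $\bm\lambda = (1+\epsilon)\bm\lambbh$, hence $\bm y_S - \hat{\bm\beta}_S$ is majorized by $\bm\lambda^{[k]}$ and, by Fact~\ref{fc:norm}, $\|\bm y_S - \hat{\bm\beta}_S\| \le \|\bm\lambda^{[k]}\| = (1+\epsilon)\bigl(\sum_{i\le k}(\lambbh_i)^2\bigr)^{1/2}$. The triangle inequality gives $\|\hat{\bm\beta}_S - \bm\beta_S\| \le (1+\epsilon)\bigl(\sum_{i\le k}(\lambbh_i)^2\bigr)^{1/2} + \|\bm z_S\|$, where $\|\bm z_S\|^2 \sim \chi^2_k$. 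Since $\log(p/k)\to\infty$ we have $\|\bm z_S\|^2 = o_P\bigl(k\log(p/k)\bigr)$ --- when $k\to\infty$ via a standard Gaussian deviation bound (e.g.\ Laurent--Massart), and when $k$ stays bounded simply because $\|\bm z_S\|^2 = O_P(1)$ while $k\log(p/k)\to\infty$. Combining this with $\sum_{i\le k}(\lambbh_i)^2 = (1+o(1))\,2k\log(p/k)$ yields, on an event of probability tending to one,
\[
\|\hat{\bm\beta}_S - \bm\beta_S\|^2 \le (1+\epsilon)^2(1+o(1))\,2k\log(p/k),
\]
with the $o(1)$ term not depending on $\epsilon$.

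For the off-support term, Fact~\ref{fc:monotone} gives $\|\hat{\bm\beta}_{\overline S}\|^2 \le \bigl\|\slopex{\bm\lambda^{-[k]}}{\bm z_{\overline S}}\bigr\|^2$ (using $\bm y_{\overline S} = \bm z_{\overline S}$), and Lemma~\ref{lm:algo-monotone} bounds this by $\sum_{i=1}^{p-k}\bigl(|\zeta|_{(i)} - (1+\epsilon)\lambbh_{k+i}\bigr)_+^2$ for $\bm\zeta\sim\mathcal{N}(\bm 0,\bm I_{p-k})$. Since $(1+\epsilon)\lambbh_{k+i}\ge \lambbh_{k+i}$, this is dominated termwise by $\sum_{i=1}^{p-k}\bigl(|\zeta|_{(i)} - \lambbh_{k+i}\bigr)_+^2$, whose expectation is exactly the quantity shown to be $o(1)\,2k\log(p/k)$ inside the proof of Lemma~\ref{lm:bound-out-support}. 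Markov's inequality then gives, for every $\eta>0$,
\[
\P\Bigl(\|\hat{\bm\beta}_{\overline S}\|^2 > \eta\,2k\log(p/k)\Bigr) \le \frac{o(1)}{\eta}\longrightarrow 0,
\]
and, because of the domination by the $\epsilon=0$ case, this is uniform over $\epsilon\in(0,1)$.

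Putting the two pieces together, on an event of probability tending to one $\|\hat{\bm\beta}-\bm\beta\|^2 \le \bigl[(1+\epsilon)^2(1+o(1)) + o(1)\bigr]\,2k\log(p/k)$; dividing by $2(1+\epsilon)^2 k\log(p/k)$ and using $(1+\epsilon)^2\ge 1$ shows the ratio is at most $1+o(1)$ with an $o(1)$ independent of $\epsilon$, hence below $1+\delta$ once $p$ is large. The only point requiring care is the claimed uniformity in $\epsilon$: all the $o(1)$ corrections arise from $\sum_{i\le k}(\lambbh_i)^2/(2k\log(p/k))\to 1$, from the $\chi^2_k$ fluctuation of $\|\bm z_S\|^2$, or from the expectation bound of Lemma~\ref{lm:bound-out-support}, none of which involves $\epsilon$, and the only genuine $\epsilon$-dependence is the explicit factor $(1+\epsilon)^2$ that cancels against the denominator. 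I expect the off-support estimate to be the only delicate ingredient, but it is already available from the proof of Lemma~\ref{lm:bound-out-support} via the comparison $(1+\epsilon)\lambbh\ge\lambbh$, so the whole argument is essentially a Markov-inequality upgrade of Theorem~\ref{thm:orth_main_general_intro}.
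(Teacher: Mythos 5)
Your proof is correct and follows essentially the same approach as the paper: decompose into on-support and off-support pieces, use the majorization facts plus the triangle inequality for $\|\hat{\bm\beta}_S - \bm\beta_S\|$, and apply Markov's inequality to the off-support term. The one place where you add a small explicit detail — dominating $(|\zeta|_{(i)} - (1+\epsilon)\lambbh_{k+i})_+^2$ termwise by the $\epsilon=0$ case so that the expectation bound from Lemma~\ref{lm:bound-out-support} applies unchanged — is exactly what the paper's remark ``since $\bm\lambda$ has increased, it is only natural that the off-support error remains under control'' leaves implicit.
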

\begin{proof}
  We only sketch the proof. As in the proof of Lemma
  \ref{lm:bound-on-support}, we have
  \[
\|\bm \beta_S - \hat{\bm \beta}_S\| \le 
\norm{\lambe^{[k]}} + \norm{\bm z_S}. 
\]
Since $\norm{\lambe^{[k]}} = (1 + o(1)) \cdot (1+\epsilon)\sqrt{2k
  \log(p/k)}$ and $\|z_S\| = o_{\P}(\sqrt{2k \log (p/k)})$, we have that
for each $\delta > 0$,
\begin{equation}\nonumber
  \P\left( \frac{\| \hat{\bm\beta}_S - \bm\beta_S\|^2}{2(1+\epsilon)^2k\log(p/k)} < 1 + \delta/2 \right) \goto 1.
\end{equation}
Since $\bm \lambda$ has increased, it is only natural that the
off-support error remains under control. In fact,
\eqref{eq:bound-out-support} still holds, and the Markov inequality then
gives
\[
\P\left( \frac{\| \hat{\bm\beta}_\barS -
    \bm\beta_\barS\|^2}{2k\log(p/k)} < \frac{\delta}{2} \right) \goto
1.
\]
This concludes the proof. 
\end{proof}


\section{Gaussian random designs}
\label{sec:slope-under-random-all}

When moving from an orthogonal to a non-orthogonal design, the
correlations between the columns of $\X$ and the high dimensionality
create much difficulty. This is already apparent when scanning the
literature on penalized sparse estimation procedures such as the
Lasso, SCAD \cite{fan2001variable}, the Dantzig selector
\cite{candes2007dantzig} and MC$+$ \cite{zhang2010mc}, see
e.g.~\cite{greenshtein2004,candes2006,zou2006,candes2009,bickel2009,van2009,
  wainwright2009,meinshausen2009,ye2010,lassorisk,donoho2011} for a
highly incomplete list of references. For example, a statistical
analysis of the Lasso often relies on several ingredients: first, the
Karush-Kuhn-Tucker (KKT) optimality conditions; second, appropriate
assumptions about the designs such as the Gaussian model we use here,
which guarantee a form of local orthogonality (known under the name of
restricted isometries or restricted eigenvalue conditions); third, the
selection of a penalty $\lambda$ several times the size of the
universal threshold $\sigma\sqrt{2\log p}$, which while introducing a
large bias yielding MSEs that cannot possibly approach the precise
bounds we develop in this paper, facilitates the analysis since it
effectively sets many coordinates to zero. 

Our approach must be different for at least two reasons. To begin
with, the KKT conditions for SLOPE are not easy to manipulate. Leaving
out this technical matter, a more substantial difference is that the
SLOPE regularization is far weaker than that of a Lasso model with a
large value of the regularization parameter $\lambda$. To appreciate
this distinction, consider the {\em orthogonal design} setting. In
such a simple situation, it is straightforward to obtain error
estimates about a hard thresholding rule set at---or several
times---the Bonferroni level. Getting sharp estimates for FDR
thresholding is entirely a different matter, compare the far longer proof in \cite{ABDJ}.

\subsection{Architecture of the proof}
\label{sec:architecture} 

Our aim in this section is to provide a general overview of the proof,
explaining the key novel ideas and intermediate results. At a high
level, the general structure is fairly simple and is as follows:
\begin{enumerate}
\item Exhibit an ideal estimator $\tilde{\bm \beta}$, which is easy to
  analyze and achieves the optimal squared error loss with high
  probability.
\item Prove that the SLOPE estimate $\hat{\bm \beta}$ is close to this
  ideal estimate. 
\end{enumerate}
We discuss these in turn and recall that throughout,
$\bm \lambda = (1+\epsilon) \bm \lambbh(q)$.

A solution algorithm for SLOPE is the proximal gradient method, which
operates as follows: starting from an initial guess $\bm b^{(0)} \in
\R^p$, inductively define
\[
\bm{b}^{(m+1)} = \slopex{t_m\bm\lambda}{\bm{b}^{(m)} - t_m\bm{X}'(\bm{X}\bm{b}^{(m)} - \bm y)}, 
\]
where $\{t_m\}$ is an appropriate sequence for step sizes.  It is
empirically observed that under sparsity constraints, the proximal
gradient algorithm for SLOPE (and Lasso) converges quickly provided we
start from a good initial point. Here, we propose approximating the
SLOPE solution by starting from the ground truth and applying just one
iteration; that is, with $t_0 = 1$, define 
\begin{equation}\label{eq:slope_orth_approx}
\tilde{\bm\beta} := \slopex{\bm\lambda}{\bm\beta + \bm{X}'\bm{z}}.
\end{equation}
This oracle estimator $\tilde{ \bm \beta}$ approximates the SLOPE
estimator $\hat{ \bm \beta}$ well---they are equal when the design is
orthogonal---and has statistical properties far easier to
understand. The lemma below is the subject of Section
\ref{sec:minimax-one-step}.
\begin{lemma}\label{lm:weak_conditional}
  Under the assumptions of Theorem \ref{thm:gauss_minimax}, for all
  $k$-sparse $\bm \beta$, we have
\[
\P\left( \frac{\| \tilde{\bm\beta}-\bm\beta\|^2}{(1+\epsilon)^2\, 2
    k\log(p/k)} < 1 + \delta \right) \goto 1,
\]
where $\delta > 0$ is an arbitrary constant.
\end{lemma}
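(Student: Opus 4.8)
The plan is to imitate the support-based decomposition used in the orthogonal case (Lemmas~\ref{lm:bound-on-support} and~\ref{lm:bound-out-support}). Let $S$ be the support of $\bm\beta$, augmented to a deterministic set of size exactly $k$ if $\|\bm\beta\|_0<k$, so that $\barS\subseteq\{i:\beta_i=0\}$. Writing
\[
\|\tilde{\bm\beta}-\bm\beta\|^2=\|\tilde{\bm\beta}_S-\bm\beta_S\|^2+\|\tilde{\bm\beta}_\barS\|^2,
\]
I would show the on-support piece is at most $(1+\epsilon)^2(1+o_\P(1))\,2k\log(p/k)$ and the off-support piece is $o_\P(k\log(p/k))$. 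The only ingredient not already available from those two lemmas is a decoupling remark that reduces the off-support term to exactly the orthogonal situation handled there.

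For the on-support term I would argue as in Lemma~\ref{lm:bound-on-support}: by Fact~\ref{fc:slope_maj} the vector $(\bm\beta+\bm X'\bm z)-\tilde{\bm\beta}$ is majorized by $\bm\lambda$, hence its restriction to $S$ is majorized by $\bm\lambda^{[k]}$, and Fact~\ref{fc:norm} with the triangle inequality gives $\|\tilde{\bm\beta}_S-\bm\beta_S\|\le\|\bm\lambda^{[k]}\|+\|\bm X_S'\bm z\|$. Since $\bm\lambda=(1+\epsilon)\bm\lambbh$ we have $\|\bm\lambda^{[k]}\|^2=(1+\epsilon)^2\sum_{i\le k}(\lambbh_i)^2=(1+\epsilon)^2(1+o(1))\,2k\log(p/k)$, so it suffices to check $\|\bm X_S'\bm z\|^2=o_\P(k\log(p/k))$; in fact it is $O_\P(k)$. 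Indeed $\|\bm X_S'\bm z\|^2=\bm z'(\bm X_S\bm X_S')\bm z$, and since $\bm X_S\bm X_S'$ has rank at most $k$, rotational invariance of $\bm z$ gives $\|\bm X_S'\bm z\|^2\eqd\sum_{j=1}^k\sigma_j(\bm X_S)^2 g_j^2\le\smax(\bm X_S)^2\,\chi^2_k$ for i.i.d.\ standard normals $g_j$; since $k/n\to0$, standard bounds on the top singular value of a Gaussian matrix give $\smax(\bm X_S)^2=O_\P(1)$, while $\chi^2_k=O_\P(k)$. As $\log(p/k)\to\infty$, expanding the square then yields $\|\tilde{\bm\beta}_S-\bm\beta_S\|^2\le(1+\epsilon)^2(1+o_\P(1))\,2k\log(p/k)$.

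For the off-support term, Fact~\ref{fc:reduce} with $T=S$ gives
\[
\|\tilde{\bm\beta}_\barS\|\;\le\;\bigl\|\slopex{\bm\lambda^{-[k]}}{(\bm\beta+\bm X'\bm z)_\barS}\bigr\|\;=\;\bigl\|\slopex{\bm\lambda^{-[k]}}{\bm X_\barS'\bm z}\bigr\|,
\]
using $\bm\beta_\barS=\bm 0$. The point that makes this tractable is that the columns $\{\bm X_j\}_{j\in\barS}$ are i.i.d.\ $\normaldist(\bm 0,\bm I_n/n)$ and independent of $\bm z$, so conditionally on $\bm z$ the coordinates of $\bm X_\barS'\bm z$ are i.i.d.\ $\normaldist(0,\|\bm z\|^2/n)$; that is, $\bm X_\barS'\bm z\eqd(\|\bm z\|/\sqrt n)\,\bm\zeta$ with $\bm\zeta\sim\normaldist(\bm 0,\bm I_{p-k})$ independent of $\bm z$. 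Fix $\epsilon'>0$ small enough that $\sqrt{1+\epsilon'}\le1+\epsilon$; on the event $\{\|\bm z\|^2\le(1+\epsilon')n\}$, whose probability tends to one, Lemma~\ref{lm:algo-monotone} together with the elementary inequality $(cu-(1+\epsilon)t)_+\le c\,(u-t)_+\le\sqrt{1+\epsilon'}\,(u-t)_+$ (valid for $u,t\ge0$ and $0\le c\le\sqrt{1+\epsilon'}\le1+\epsilon$), applied with $c=\|\bm z\|/\sqrt n$, gives
\[
\bigl\|\slopex{\bm\lambda^{-[k]}}{\bm X_\barS'\bm z}\bigr\|^2\;\le\;(1+\epsilon')\sum_{i=1}^{p-k}\bigl(|\zeta|_{(i)}-\lambbh_{k+i}\bigr)_+^2 .
\]
Taking expectations (conditionally on $\bm z$), the right-hand side is $o(1)\,2k\log(p/k)$ by the computation inside the proof of Lemma~\ref{lm:bound-out-support}, i.e.\ $\sum_i\E(|\zeta|_{(i)}-\lambbh_{k+i})_+^2=o(1)\,2k\log(p/k)$ for standard Gaussian $\bm\zeta$, obtained from Lemmas~\ref{lm:bound-out-support-close} and~\ref{lm:bound-out-support-far} with $A$ chosen so that $q(1+1/A)<1$. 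Markov's inequality, together with negligibility of the complementary event for $\bm z$, then yields $\|\tilde{\bm\beta}_\barS\|^2=o_\P(k\log(p/k))$. Combining the two estimates gives $\|\tilde{\bm\beta}-\bm\beta\|^2\le(1+\epsilon)^2(1+o_\P(1))\,2k\log(p/k)$, which is the claim.

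The step I expect to require the most care is the off-support bound: without the conditional-i.i.d.\ reduction one is left analyzing $\slopex{\bm\lambda^{-[k]}}{\bm\zeta}$ for a \emph{correlated} Gaussian $\bm\zeta$ with covariance $(\bm X'\bm X)_{\barS\barS}$, which is awkward; the decoupling trick sidesteps this by conditioning on $\bm z$ and exploiting that distinct columns of $\bm X$ are independent. The remaining delicate bookkeeping is (i) keeping a little slack in the inflation factor to absorb the random scalar $\|\bm z\|/\sqrt n$, which is close to but not exactly $1$, and (ii) noting that $S$ is deterministic---the adversary must commit to $\bm\beta$ before seeing the data---so that $\|\bm X_S'\bm z\|^2$ concentrates at order $k$ rather than the order $k\log(p/k)$ it would reach for a data-dependent choice of $S$.
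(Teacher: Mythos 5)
Your proof is correct, and it rests on the same key observation as the paper: conditional on $\bm z$, the coordinates of $\bm X'\bm z$ are i.i.d.\ $\normaldist(0,\|\bm z\|^2/n)$, which decouples the random design and reduces the problem to the orthogonal sequence model. The difference is in how the reduction is exploited. The paper conditions on $\|\bm z\|=c\sqrt n$ exactly, writes
\[
\|\slopex{\lambe}{\bm\beta+\bm X'\bm z}-\bm\beta\|\eqd c\,\|\slopex{\bm\lambda_{\epsilon'}}{\bm\beta/c+\normaldist(\bm 0,\bm I_p)}-\bm\beta/c\|
\]
with $\epsilon'=(1+\epsilon)/c-1$, and then invokes Proposition~\ref{prop:orth_main_general} as a black box (this is why that proposition is stated with ``uniform over $\epsilon$''; after averaging over $c$ and controlling $\|\bm z\|$ with Borell's inequality, the claim follows). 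You instead redo the on/off-support split of Lemma~\ref{lm:bound-on-support}--\ref{lm:bound-out-support} directly in the conditional model, absorbing the factor $\|\bm z\|/\sqrt n$ into the majorization bound on $S$ and into the elementary inequality $(cu-(1+\epsilon)t)_+\le c(u-t)_+$ off $S$. Both are valid; the paper's rescaling is tidier because it avoids re-proving the orthogonal bound and automatically handles the random scale, while your version is a bit more self-contained but requires the separate slack $\epsilon'$ and the event $\{\|\bm z\|^2\le(1+\epsilon')n\}$. One small remark: your bound $\|\bm X_S'\bm z\|^2=O_\P(k)$ can also be read off immediately from the same decoupling you use off-support, since conditional on $\bm z$ this is $(\|\bm z\|^2/n)\chi^2_k$; the singular-value argument you give is an unnecessary detour, though not wrong.
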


\newcommand{\btilde}{{\tilde{\bm \beta}}}
\newcommand{\bhat}{{\hat{\bm \beta}}}

Since we know that $\btilde$ is asymptotically optimal, it suffices to
show that the squared distance between $\bhat$ and $\btilde$ is
negligible in comparison to that between $\btilde$ and $\bm \beta$.
This captured by the result below, whose proof is the subject of
Section \ref{sec:close}.
\begin{lemma}\label{lm:two_beta}
  Let $T \subset \{1, \ldots, p\}$ be a subset of columns assumed to
  contain the supports of $\bhat$, $\btilde$ and $\bm \beta$; i.e.~$T
  \supset \supp{\bhat} \cup \supp{\btilde} \cup \supp{\bm \beta}$.
  Suppose all the eigenvalues of $\X'_T \X_T$ lie in $[1-\delta,
  1+\delta]$ for some $\delta < 1/2$. Then
\begin{equation}\nonumber
\|\tilde{\bm\beta} - \hat{\bm\beta}\|^2 \le \frac{3\delta}{1 - 2\delta} \| \tilde{\bm\beta} - \bm\beta\|^2.
\end{equation}
In particular, $\bhat = \btilde$ under orthogonal designs.
\end{lemma}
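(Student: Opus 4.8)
The plan is to exploit the fact that both $\btilde$ and $\bhat$ are proximal-type solutions of closely related problems, and to turn the gap between them into a controllable quantity by using the firm nonexpansiveness of the prox (Lemma \ref{lm:algo-monotone}'s underlying inequality) together with the near-isometry of $\X_T$. Since, by hypothesis, all three vectors $\bhat$, $\btilde$, $\bm\beta$ are supported on $T$, the whole argument can be carried out inside $\R^{|T|}$, where $\X_T'\X_T = \bI + \bm E$ with $\|\bm E\| \le \delta$. First I would record the two characterizations: $\btilde = \slopex{\bm\lambda}{\bm\beta + \X'\bm z}$ by definition \eqref{eq:slope_orth_approx}, while the KKT/prox fixed-point condition for SLOPE gives $\bhat = \slopex{\bm\lambda}{\bhat - \X_T'(\X_T\bhat - \by_T)} = \slopex{\bm\lambda}{\bhat - \X_T'\X_T\bhat + \X_T'\bm y}$, using that $\bhat$ is supported on $T$ (here $\by$ is compatible with $\bm\beta$ supported on $T$ so $\X_T'\bm y = \X_T'\X_T\bm\beta + \X_T'\bm z$, and similarly on $\btilde$'s side after noting $\X'\bm z$ restricted to $T$ is $\X_T'\bm z$). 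Thus $\btilde$ and $\bhat$ are prox of the \emph{same} operator $\slopex{\bm\lambda}{\cdot}$ evaluated at two points $\bm u = \bm\beta + \X_T'\bm z$ and $\bm v = \bhat - (\X_T'\X_T - \bI)\bhat + (\X_T'\X_T - \bI)\bm\beta + \X_T'\bm z = \bm u - \bm E(\bhat - \bm\beta)$, where I abbreviate $\bm E = \X_T'\X_T - \bI$.

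The key step is then firm nonexpansiveness of the prox: $\|\slope{\bm u} - \slope{\bm v}\|^2 \le \langle \bm u - \bm v, \slope{\bm u} - \slope{\bm v}\rangle$, i.e.
\[
\|\btilde - \bhat\|^2 \le \langle \bm u - \bm v, \btilde - \bhat\rangle = \langle -\bm E(\bhat - \bm\beta), \btilde - \bhat\rangle \le \delta \, \|\bhat - \bm\beta\| \, \|\btilde - \bhat\|,
\]
so $\|\btilde - \bhat\| \le \delta\|\bhat - \bm\beta\| \le \delta(\|\bhat - \btilde\| + \|\btilde - \bm\beta\|)$, which rearranges (using $\delta < 1/2 < 1$) to $\|\btilde - \bhat\| \le \frac{\delta}{1-\delta}\|\btilde - \bm\beta\|$. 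Squaring gives $\|\btilde - \bhat\|^2 \le \frac{\delta^2}{(1-\delta)^2}\|\btilde - \bm\beta\|^2$, and since $\frac{\delta^2}{(1-\delta)^2} \le \frac{3\delta}{1-2\delta}$ for $\delta \in (0,1/2)$ (indeed $\delta(1-2\delta) \le 3(1-\delta)^2$ reduces to $7\delta^2 - 7\delta + 3 \ge 0$, which has negative discriminant), the claimed bound follows. The final sentence is immediate: under an orthogonal design $\bm E = 0$, so the displayed inequality forces $\|\btilde - \bhat\| = 0$.

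I expect the main obstacle to be the bookkeeping that legitimizes treating $\bhat$ as a fixed point of the $T$-restricted prox operator — that is, verifying that the SLOPE optimality condition, which a priori involves a subgradient of the full sorted-$\ell_1$ norm on $\R^p$, restricts cleanly to $T$ when $\supp{\bhat} \subset T$, and that the off-$T$ coordinates of the prox argument are consistent with a zero solution there. This needs a short argument that the sorted-$\ell_1$ prox "decouples" correctly on a coordinate subset containing the support, or alternatively a direct variational comparison of the two optimization problems; one must also make sure the step size $t_0 = 1$ is exactly the one making the identities line up. Everything else — the nonexpansiveness inequality, Cauchy–Schwarz, the triangle inequality, and the elementary constant comparison $\frac{\delta^2}{(1-\delta)^2} \le \frac{3\delta}{1-2\delta}$ — is routine.
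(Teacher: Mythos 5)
Your argument is correct and takes a genuinely different route from the paper. The paper proceeds variationally: it introduces the two objectives $L_1$ and $L_2$ (differing from the SLOPE and the prox objectives only by additive constants), bounds their gap on each side using the eigenvalue hypothesis, chains these bounds with the fact that $\bhat_T$ minimizes $L_1$ and $\btilde_T$ minimizes $L_2$, and finally uses $1$-strong convexity of $L_2$ to produce $\|\btilde-\bhat\|^2$. Your proof instead characterizes $\bhat_T$ and $\btilde_T$ as values of the \emph{same} prox map $\slopex{\bm\lambda^{[m]}}{\cdot}$ at two inputs $\bm u = \bm\beta_T+\X_T'\bm z$ and $\bm v=\bm u-\bm E(\bhat_T-\bm\beta_T)$, then invokes firm nonexpansiveness and Cauchy--Schwarz. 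The ``bookkeeping'' you flagged does go through cleanly: since $\supp{\btilde}\subset T$ and $\supp{\bhat}\subset T$, each vector also minimizes the corresponding objective restricted to $\{\bm b:\supp{\bm b}\subset T\}$, and on that subspace the sorted-$\ell_1$ penalty reduces to $J_{\bm\lambda^{[m]}}$ because the $p-m$ smallest magnitudes vanish; the prox-gradient fixed-point identity with step $t=1$ is then the standard equivalence $\bm 0\in\nabla f(\bhat_T)+\partial g(\bhat_T)\Leftrightarrow\bhat_T=\prox_{g}(\bhat_T-\nabla f(\bhat_T))$. Your route in fact yields the \emph{stronger} constant $\delta^2/(1-\delta)^2$, which is $O(\delta^2)$ rather than the paper's $O(\delta)$; this has no impact downstream since only $o(1)$ is needed, but it is a tighter bound. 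Two cosmetic slips: the sign should be $\bm u-\bm v=+\bm E(\bhat_T-\bm\beta_T)$ (Cauchy--Schwarz absorbs it), and the quadratic in the final comparison is $5\delta^2-7\delta+3$ rather than $7\delta^2-7\delta+3$ (discriminant $-11<0$, so the conclusion is unchanged). The orthogonal case follows immediately in your setup since $\bm E=\bm 0$ forces $\bm u=\bm v$.
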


We thus see that everything now comes down to showing that there is a
set of small cardinality containing the supports of $\bhat$, $\btilde$
and $\bm \beta$. While it is easy to show that
$\supp{\btilde} \cup \supp{\bm \beta}$ is of small cardinality, it is
delicate to show that this property still holds with the addition of
the support of the SLOPE estimate. Below, we introduce the
\textit{resolvent set}, which will prove to contain
$\supp{\bhat} \cup \supp{\btilde} \cup \supp{\bm \beta}$ with high
probability.
\begin{definition}[Resolvent set]\label{def:T_star}
  Fix $S = \supp{\bm \beta}$ of cardinality at most $k$, and an
  integer $\K$ obeying $k < \K < p$. The set $S^{\star} = S^\star(S,
  \K)$ is said to be a resolvent set if it is the union of $S$ and the
  $\K - k$ indices with the largest values of $|\bm{X}_i' \bm z|$
  among all $i \in \{1, \ldots, p\}\setminus S$.
\end{definition}

Under the assumptions of Theorem \ref{thm:gauss_minimax}, we shall see
in Section \ref{sec:find-supp-slope} that we can choose $\K$ in such a
way that on the one hand $\K$ is sufficiently small compared to $p$
and $n/\log p$, and on the other, the resolvent set $S^\star$ is still
expected to contain $\supp{\tilde{\bm\beta}}$ (easier) and
$\supp{\hat{\bm\beta}}$ (more difficult).  Formally, Lemma
\ref{lm:T_contains} below shows that
\begin{equation}
\label{eq:superset}
\inf_{\|\bm\beta\|_0 \le k}\P\left( \supp{\bm\beta} \cup
  \supp{\hat{\bm\beta}} \cup \supp{\tilde{\bm\beta}} \subset S^{\star}
\right) \goto 1.
\end{equation}
One can view the resolvent solution as a sophisticated type of a dual
certificate method, better known as primal-dual witness method
\cite{wainwright2009,candes2009,ravikumar2010} in the statistics
literature. A significant gradation in the difficulty of detecting the
support of the SLOPE solution a priori comes from the false
discoveries we commit because we happen to live on the edge, i.e.~work
with a procedure as liberal as can be.

With \eqref{eq:superset} in place, Theorem \ref{thm:gauss_minimax}
merely follows from Lemma \ref{lm:two_beta} and the accuracy of
$\btilde$ explained by Lemma \ref{lm:weak_conditional}; all the
bookkeeping is in Section \ref{sec:proof-theor-refthm:g}. Furthermore,
Corollary \ref{coro:gauss_prediction_minimax} is just one stone throw
away, please also see Section \ref{sec:proof-theor-refthm:g} for all
the necessary details.




\subsection{One-step approximation}
\label{sec:minimax-one-step}

The proof of Lemma \ref{lm:weak_conditional} is an immediate
consequence from Proposition \ref{prop:orth_main_general}.  In brief,
Borell's inequality---see Lemma \ref{lm:gauss_concentrate}---provides
a well-known deviation bound about chi-square random variables,
namely,
\begin{equation}\nonumber
  \P\left( \|\bm z\| \le (1 + \epsilon)\sqrt{n} \right) \ge 1 -  \e{-\epsilon^2 n/2} \goto 1
\end{equation}
since $\epsilon^2 n \goto \infty$.  Hence, to prove our claim, it suffices to
establish that
\begin{equation}
  \P\left( \frac{\|\prox{\lambe}{(\bm\beta+\bm{X}' \bm z)}-\bm\beta\|^2}{(1+\epsilon)^2 2k\log(p/k)} < 1 + \delta \ \Big{|}\  \|\bm z\| \le (1 + \epsilon)\sqrt{n} \right) \goto 1.\label{eq:one_step_upper2}
\end{equation}
Conditional on $\|\bm z\| = c\sqrt{n}$ for some $0 < c \le 1 +
\epsilon$, $\bm{X}' \bm z \sim \mathcal{N}(\bm 0, c^2 \bm I_p)$ and,
therefore, conditionally,
\begin{align*}
  \|\slopex{\lambe}{\bm\beta+\bm{X}' \bm z}-\bm\beta\| & \eqd
  \|\slopex{\lambe}{\bm\beta+ c \mathcal{N}(\bm 0, \bm I_p)} - \bm
  \beta\| \\
  & = c \|\slopex{\bm\lambda_{\epsilon'}}{\bm\beta/c+\mathcal{N}(\bm
    0, \bm{I}_p)}-\bm\beta/c\|
\end{align*}
for $\epsilon' = (1+\epsilon)/c - 1 \ge 0$. Hence, Proposition
\ref{prop:orth_main_general} gives
\[
\P \lb
\frac{\|\slopex{\bm\lambda_{\epsilon'}}{\bm\beta/c+\mathcal{N}(\bm 0,
    \bm{I}_p)}-\bm\beta/c\|^2}{(1+\epsilon')^2 \, 2k\log(p/k)} < 1 +
\delta \rb \goto 1. 
\]
Since $(1+\epsilon)^2/c^2 = (1+\epsilon')^2$, this is equivalent to
\[
\P \lb
\frac{c^2\|\slopex{\bm\lambda_{\epsilon'}}{\bm\beta/c+\mathcal{N}(\bm 0,
    \bm{I}_p)}-\bm\beta/c\|^2}{(1+\epsilon)^2 \, 2k\log(p/k)} < 1 +
\delta \rb \goto 1. 
\]
This completes the proof since we can deduce
\eqref{eq:one_step_upper2} by averaging over $\|\bm z\|$.

\subsection{ {\boldmath$\tilde\beta$} and {\boldmath$\hat\beta$} are close when {\boldmath$X$} is nearly orthogonal}

\label{sec:close}

We prove Lemma \ref{lm:two_beta} in the case where $T = \{1, \ldots,
p\}$, first. Set $J_{\bm \lambda}(\bm b) = \sum_{1 \le i \le p}
\lambda_i |b|_{(i)}$, by definition $\bhat$ and $\btilde$ respectively
minimize
\begin{align*}
  L_1(\bm b) & := \half\|\X(\bm\beta - \bm b)\|^2 + \bm z' \X(\bm\beta-\bm b)+J_{\bm \lambda}(\bm b) \\
  L_2(\bm b) &:= \half\|\bm\beta - \bm b\|^2 + \bm z' \X(\bm\beta- \bm
  b)+J_{\bm \lambda}(\bm b).
\end{align*}
Next the assumptions about the eigenvalues of $\X' \X$ implies that
these two functions are related, 
\begin{equation*}
\begin{aligned}
L_2(\bm{\tilde\beta}) - \frac{\delta}{2}\|\bm\beta - \bm{\tilde\beta}\|^2 &\le L_1(\bm{\tilde\beta}) \le L_2(\bm{\tilde\beta}) + \frac{\delta}{2}\|\bm\beta - \bm{\tilde\beta}\|^2,\\
L_2(\hat{\bm\beta}) - \frac{\delta}{2}\|\bm\beta - \hat{\bm\beta}\|^2 &\le L_1(\hat{\bm\beta}) \le L_2(\hat{\bm\beta}) + \frac{\delta}{2}\|\bm\beta - \hat{\bm\beta}\|^2.
\end{aligned}
\end{equation*}
Chaining these inequalities gives 
\begin{equation}
\label{eq:ripp}
L_2(\bm{\tilde\beta}) + \frac{\delta\|\bm\beta - \bm{\tilde\beta}\|^2}{2} \ge L_1(\bm{\tilde\beta}) \ge L_1(\hat{\bm\beta}) 
\ge L_2(\hat{\bm\beta}) - \frac{\delta\|\bm\beta - \hat{\bm\beta}\|^2}{2}. 
\end{equation}
Now the strong convexity of $L_2$ also gives 
\begin{equation*}\label{eq:strong_convex}
L_2(\hat{\bm\beta}) \ge L_2(\bm{\tilde\beta}) + \frac{\|\bm{\tilde\beta} - \hat{\bm\beta}\|^2}{2},   
\end{equation*}
and plugging this in the right-hand side of \eqref{eq:ripp} yields
\begin{equation}\label{eq:beta_close}
\frac{\|\bm{\tilde\beta} - \hat{\bm\beta}\|^2}{2} - \frac{\delta\|\bm\beta - \hat{\bm\beta}\|^2}{2} \le \frac{\delta\|\bm\beta - \bm{\tilde\beta}\|^2}{2}.
\end{equation}
Since $\delta\|\bm\beta - \hat{\bm\beta}\|^2/2 \le
\delta\|\bm{\tilde\beta} - \hat{\bm\beta}\|^2 + \delta\|\bm\beta -
\bm{\tilde\beta}\|^2$ (this is essentially the basic inequality
$(a+b)^2 \le 2a^2 + 2b^2$), the conclusion follows.

We now consider the general case. Let $m$ be the cardinality of $T$
and for $\bm b \in \R^m$, set
$J_{{\bm \lambda}^{[m]}}(\bm b) = \sum_{1 \le i \le m} \lambda_i
|b|_{(i)}$,
and observe that by assumption, $\bhat_T$ and $\btilde_T$ are
solutions to the reduced problems
\begin{equation}
  \label{eq:reduced}
\underset{\bm b \in \R^{|T|}}{\mbox{argmin}}
\quad \half \|\bm y-\bm{X}_T \bm b\|^2 +
J_{{\bm \lambda}^{[m]}}(\bm b)
\end{equation}
and 
\[
\underset{\bm b \in \R^{|T|}}{\mbox{argmin}}
\quad \half \|\bm \beta_T + \X'_T \bm z -
\bm b\|^2 + J_{{\bm \lambda}^{[m]}}(\bm b).
\]
Using the fact that $\X \beta = \X_T \beta_T$, we see that $\bhat_T$
and $\btilde_T$ respectively minimize
\begin{align*}
  L_1(\bm b) & := \half\|\X_T(\bm\beta_T - \bm b)\|^2 + \bm z' \X_T(\bm\beta_T-\bm b)+J_{{\bm \lambda}^{[m]}}(\bm b) \\
  L_2(\bm b) &:= \half\|\bm\beta_T - \bm b\|^2 + \bm z' \X_T(\bm\beta_T- \bm
  b)+J_{{\bm \lambda}^{[m]}}(\bm b).
\end{align*}
From now on, the proof is just as before.


\subsection{Support localization}
\label{sec:find-supp-slope}

Below we write $\bm a \preceq \bm b$ as a short-hand for $\bm b$
majorizes $\bm a$ and 
\begin{equation}
\label{eq:Sdiamond}
S^\diamond = \supp{\bm\beta} \cup \supp{\hat{\bm\beta}} \cup
\supp{\tilde{\bm\beta}}. 
\end{equation}
\begin{lemma}[Reduced SLOPE]\label{lm:lifted}
  Let $\hat{\bm b}_T$ be the solution to the reduced SLOPE problem
  \eqref{eq:reduced}, which only fits regression coefficients with
  indices in $T$. If 
\begin{equation}
\label{eq:partialKKT}
\bm{X}'_{\overline T}(\bm y - \bm X_T\hat{\bm b}_T) \preceq
\bm\lambda^{-[|T|]}, 
\end{equation}
then it is the solution to the full SLOPE problem in the sense that
$\bhat$ defined as $\bhat_T = \hat{{\bm b}}_T$ and $\bhat_{\overline{T}} =
\bm 0$ is solution.
\end{lemma}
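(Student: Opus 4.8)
The claim is a sufficient condition for "lifting" a solution of the reduced SLOPE problem on $T$ to a solution of the full SLOPE problem on all $p$ coordinates. The natural approach is to use the variational (optimality) characterization of SLOPE: a vector $\bhat$ minimizes $\frac12\|\bm y - \bm X\bm b\|^2 + J_{\bm\lambda}(\bm b)$ if and only if $\bm X'(\bm y - \bm X\bhat)$ lies in the subdifferential $\partial J_{\bm\lambda}(\bhat)$. Since $J_{\bm\lambda}$ is a norm (the sorted $\ell_1$ norm), its subdifferential at a point $\bm b$ is the set of vectors $\bm g$ with $\langle \bm g, \bm b\rangle = J_{\bm\lambda}(\bm b)$ and $\langle \bm g, \bm h\rangle \le J_{\bm\lambda}(\bm h)$ for all $\bm h$ — equivalently, $\bm g$ is in the unit ball of the dual norm, scaled appropriately, and is aligned with $\bm b$. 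The dual norm of the sorted $\ell_1$ norm has a clean description in terms of majorization: $\bm g$ lies in the "dual ball" exactly when $\bm g \preceq \bm\lambda$, i.e.\ when $\bm\lambda$ majorizes $\bm g$ (this is the content of the majorization facts collected in Section~\ref{sec:preliminaries}, and is why Fact~\ref{fc:slope_maj} says $\bm a - \slope{\bm a}\preceq\bm\lambda$).

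First I would write down the optimality condition for the reduced problem \eqref{eq:reduced}: $\hat{\bm b}_T$ is optimal iff $\bm X'_T(\bm y - \bm X_T\hat{\bm b}_T) \in \partial J_{\bm\lambda^{[m]}}(\hat{\bm b}_T)$, where $m = |T|$. Then I would define $\bhat$ by $\bhat_T = \hat{\bm b}_T$, $\bhat_{\overline T} = \bm 0$, and check that $\bm X'(\bm y - \bm X\bhat) \in \partial J_{\bm\lambda}(\bhat)$. Note $\bm X\bhat = \bm X_T\hat{\bm b}_T$, so the residual is the same $\bm r := \bm y - \bm X_T\hat{\bm b}_T$ whether we view things on $T$ or on all of $\{1,\dots,p\}$. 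On the coordinates in $T$, the subgradient condition is inherited directly from optimality of the reduced problem. On the coordinates in $\overline T$, we need $\bm X'_{\overline T}\bm r$, together with $\bm X'_T\bm r$, to form a valid subgradient of $J_{\bm\lambda}$ at $\bhat$. Because $\bhat_{\overline T} = \bm 0$, the alignment condition $\langle \bm g, \bhat\rangle = J_{\bm\lambda}(\bhat)$ only constrains the $T$-block, which is already satisfied; what remains is the "dual-ball" condition, namely that the full vector $\bm X'\bm r$ is majorized by $\bm\lambda$.

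The key combinatorial step — and the one I expect to be the main obstacle — is showing that $\bm X'_T\bm r$ being a subgradient of $J_{\bm\lambda^{[m]}}$ at $\hat{\bm b}_T$ (so in particular $\bm X'_T\bm r \preceq \bm\lambda^{[m]}$ in the appropriate $m$-dimensional sense), combined with hypothesis \eqref{eq:partialKKT} that $\bm X'_{\overline T}\bm r \preceq \bm\lambda^{-[m]}$, implies that the concatenation $\bm X'\bm r \preceq \bm\lambda$ in $\R^p$. This is exactly the concatenation property of majorization remarked on right after the definition in Section~\ref{sec:preliminaries}: if $\bm a$ majorizes $\bm b$ and $\bm c$ majorizes $\bm d$, then $(\bm a,\bm c)$ majorizes $(\bm b,\bm d)$. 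Here one applies it with $\bm a = \bm\lambda^{[m]}$, $\bm b = \bm X'_T\bm r$, $\bm c = \bm\lambda^{-[m]}$, $\bm d = \bm X'_{\overline T}\bm r$, noting that $(\bm\lambda^{[m]}, \bm\lambda^{-[m]}) = \bm\lambda$. A small subtlety to handle carefully: the subgradient of $J_{\bm\lambda^{[m]}}$ at $\hat{\bm b}_T$ controls the \emph{sorted partial sums} of $\bm X'_T\bm r$ against $\lambda_1,\dots,\lambda_m$, and one must make sure the interleaving of magnitudes across the $T$ and $\overline T$ blocks is compatible — but this is precisely what the concatenation lemma guarantees, since majorization is about dominating \emph{all} partial sums of sorted magnitudes simultaneously. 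Once $\bm X'\bm r \preceq \bm\lambda$ is established and alignment on $T$ is noted, the dual characterization of $\partial J_{\bm\lambda}$ gives $\bm X'\bm r \in \partial J_{\bm\lambda}(\bhat)$, hence $\bhat$ solves the full SLOPE problem, completing the proof.
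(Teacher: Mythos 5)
Your proposal is correct and follows essentially the same route as the paper: both arguments verify that the residual $\bm r = \bm y - \X_T\hat{\bm b}_T$ (the paper's dual variable $\tilde{\bm\nu}$) is a valid certificate by using the identification of the dual ball of the sorted $\ell_1$ norm with the majorization polytope, and both obtain the crucial inclusion $\X'\bm r \preceq \bm\lambda$ by concatenating the $T$-block majorization (from reduced optimality) with the $\overline T$-block majorization (from the hypothesis). The only cosmetic difference is that you phrase the verification as the subgradient/KKT condition $\X'\bm r \in \partial J_{\bm\lambda}(\bhat)$ with its alignment and dual-ball parts, while the paper writes the same check as dual feasibility plus zero duality gap for the explicit Lagrangian dual \eqref{eq:dual}.
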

The inequality \eqref{eq:partialKKT}, which implies localization of
the solution, reminds us of a similar condition for the Lasso. In
particular, if $\lambda_1 = \lambda_2 = \cdots = \lambda_p$, then
SLOPE is the Lasso and \eqref{eq:partialKKT} is equivalent to
$\|\bm{X}'_{\overline T}(\bm y - \bm X_T\hat{\bm b}_T)\|_\infty \le
\lambda$.
In this case, it is well known that this implies that a solution to
the Lasso is supported on $T$, see
e.g.~\cite{wainwright2009,candes2009,ravikumar2010}.

The main result of this section is this: 
\begin{lemma}\label{lm:T_contains}
  Suppose 
  \[
\K \ge \max \left\{ \frac{1 + c}{1-q} \, k, k + d\right\}
\]
for an arbitrary small constant $c > 0$, where $d$ is a deterministic sequence diverging to
infinity\footnote{Recall that we are considering a sequence of
  problems with $(k_j, n_j, p_j)$ so that this is saying that $\K_j
  \ge \max(2(1-q)^{-1} k_j, k_j + d_j)$ with $d_j \goto \infty$.}  in
such a way that $\K/p\goto 0$ and $(\K \log p)/n \goto 0$.  Then
\[
\inf_{\|\bm\beta\|_0 \le k}\P\left( S^\diamond \subset S^{\star} \right) \goto 1.
\]
\end{lemma}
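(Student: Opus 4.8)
The plan is to establish the three inclusions $\supp{\bm\beta}\subset S^\star$, $\supp{\tilde{\bm\beta}}\subset S^\star$ and $\supp{\hat{\bm\beta}}\subset S^\star$ separately; their conjunction is exactly $S^\diamond\subset S^\star$. The first is immediate from Definition~\ref{def:T_star} (after augmenting $S$, if $\|\bm\beta\|_0<k$, so that $|S^\star|=\K$). For the other two I would work throughout on the event $\{\|\bm z\|\le(1+\epsilon_0)\sqrt n\}$, which has probability tending to one by Borell's inequality; conditionally on $\bm z$ the vector $\X'\bm z$ is $\mathcal N(\bm 0,(\|\bm z\|^2/n)\bm I_p)$ with independent coordinates, so on this event the order statistics of $|\X'\bm z|$ are dominated by $(1+\epsilon_0)$ times those of a standard Gaussian vector $\bm g\in\R^p$.

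For $\supp{\tilde{\bm\beta}}\subset S^\star$: taking $T=S^\star$ in Fact~\ref{fc:monotone} and using that $\bm\beta$ vanishes off $S\subset S^\star$ gives $\|[\tilde{\bm\beta}]_{\overline{S^\star}}\|\le\|\slopex{\bm\lambda^{-[\K]}}{(\X'\bm z)_{\overline{S^\star}}}\|$, and by Fact~\ref{fc:maj_zero} the right-hand side is $\bm 0$ as soon as $\bm\lambda^{-[\K]}$ majorizes $(\X'\bm z)_{\overline{S^\star}}$. Since $\overline{S^\star}$ omits the $\K-k$ largest of $\{|(\X'\bm z)_i|\}_{i\notin S}$, the $i$th largest magnitude of $(\X'\bm z)_{\overline{S^\star}}$ is at most $|\X'\bm z|_{(\K-k+i)}$, the corresponding global order statistic --- and passing to the global order statistics is what kills the dependence on $S$. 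It then suffices to prove the coordinatewise bounds $|\X'\bm z|_{(\K-k+i)}\le(1+\epsilon)\lambbh_{\K+i}$ for every $i$, equivalently (on our event, with $\epsilon_0$ small enough) $|g|_{(m)}\le\lambbh_{m+k}$ for all $\K-k< m\le p-k$. Now $\P(|g|_{(m)}\ge\lambbh_{m+k})=\P(\mathrm{Binomial}(p,q(m+k)/p)\ge m)$, and the hypothesis $\K\ge\tfrac{1+c}{1-q}k$ forces the Binomial mean $q(m+k)$ to satisfy $q(m+k)\le(1-\gamma)m$ for a fixed $\gamma=\gamma(c,q)>0$, so a Chernoff bound gives $\P(|g|_{(m)}\ge\lambbh_{m+k})\le e^{-\Omega(m)}$; summing over $m\ge\K-k$ and using $\K-k\ge d\to\infty$ yields the inclusion, uniformly in $\bm\beta$.

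For the delicate inclusion $\supp{\hat{\bm\beta}}\subset S^\star$ I would invoke Lemma~\ref{lm:lifted} with $T=S^\star$: letting $\hat{\bm b}_{S^\star}$ be the solution of the reduced problem~\eqref{eq:reduced} and writing $\bm y=\X_{S^\star}\bm\beta_{S^\star}+\bm z$ (valid since $S\subset S^\star$), the condition to verify is the majorization
\[
\X'_{\overline{S^\star}}\bm z \;+\; \X'_{\overline{S^\star}}\X_{S^\star}(\bm\beta_{S^\star}-\hat{\bm b}_{S^\star})\ \preceq\ \bm\lambda^{-[\K]}.
\]
The first summand satisfies the majorization by a fixed positive fraction of $\bm\lambda^{-[\K]}$ by the argument of the previous paragraph, leaving room to absorb the cross term, which I would control using (i) an a priori estimate $\|\bm\beta_{S^\star}-\hat{\bm b}_{S^\star}\|^2=O(\K\log(p/\K))$ --- obtained by rerunning the proof of Proposition~\ref{prop:orth_main_general} on the $\K$-dimensional reduced problem, whose Gram matrix $\X'_{S^\star}\X_{S^\star}$ has all eigenvalues in $[1-o(1),1+o(1)]$ uniformly over size-$\K$ subsets because $(\K\log p)/n\to0$ --- and (ii) a Gaussian concentration bound showing that the order-statistic partial sums of $\X'_{\overline{S^\star}}\X_{S^\star}(\bm\beta_{S^\star}-\hat{\bm b}_{S^\star})$ are negligible against those of $\bm\lambda^{-[\K]}$, which again reduces to $(\K\log p)/n\to0$. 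The subtlety in (ii) is that $S^\star$ is itself a function of $\X'\bm z$, so I would condition on $(\bm z,\X'\bm z)$: this freezes $S^\star$ and makes the orthogonalized columns $\{\X_j-(\langle\X_j,\bm z\rangle/\|\bm z\|^2)\bm z\}_{j\notin S^\star}$ independent of $\hat{\bm b}_{S^\star}$, leaving only a deterministic rank-one contribution (the components of the $\X_j$ along $\bm z$) to be bounded by hand.

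I expect step (ii), and more generally the inclusion $\supp{\hat{\bm\beta}}\subset S^\star$, to be the main obstacle: one must simultaneously use the a priori estimation bound on $\hat{\bm b}_{S^\star}$, the near-isometry of the reduced design, and Gaussian concentration of the cross term, all while coping with the data-dependence of $S^\star$ and keeping every bound uniform over $\|\bm\beta\|_0\le k$ --- which is precisely why this is isolated as a dedicated lemma. The hypotheses enter exactly here: $\K/p\to0$ and $(\K\log p)/n\to0$ make the reduced design a uniform approximate isometry and drive the cross term below $\bm\lambda^{-[\K]}$, $\K\ge\tfrac{1+c}{1-q}k$ supplies the $q<1$ slack in the Binomial comparison, and $\K\ge k+d$ with $d\to\infty$ supplies the divergence needed to sum the order-statistic tail bounds.
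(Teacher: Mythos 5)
Your architecture matches the paper's exactly: split $S^\diamond\subset S^\star$ into the three inclusions, get $\supp{\bm\beta}\subset S^\star$ for free, use Lemma~\ref{lm:lifted} with $T=S^\star$ to reduce $\supp{\hat{\bm\beta}}\subset S^\star$ to the majorization $\X'_{\overline{S^\star}}(\bm y-\X_{S^\star}\hat{\bm b}_{S^\star})\preceq\bm\lambda^{-[\K]}$, decompose this into a pure noise term $\X'_{\overline{S^\star}}\bm z$ and a cross term, and recycle the noise-term majorization together with Facts~\ref{fc:reduce} and~\ref{fc:maj_zero} to handle $\supp{\tilde{\bm\beta}}\subset S^\star$. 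Within that architecture you take two sub-routes that differ from the paper's.

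First, for the coordinatewise order-statistic bounds (the content of the paper's Lemma~\ref{lm:main_lift_maj}) you propose a Binomial Chernoff bound plus a union bound. The paper instead represents the uniform order statistics as $T_i/T_{p+1}$ with $T_i$ partial sums of exponentials, and the event of interest becomes the entire trajectory of a random walk with positive drift staying above a moving barrier; $\K-k\to\infty$ makes the starting point of that barrier diverge. Your Chernoff/union route is valid and arguably more elementary: with $m>\K-k\ge\frac{c+q}{1-q}k$ one gets $q(m+k)\le(1-\gamma)m$ with $\gamma=\gamma(c,q)>0$, a Pinsker-type lower bound on the KL gives $\P(|g|_{(m)}\ge\lambbh_{m+k})\le\mathrm{e}^{-c_\gamma m}$, and summing the geometric tail from $m=\K-k+1$ with $\K-k\ge d\to\infty$ finishes. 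Both proofs use the same two hypotheses ($\K\ge\frac{1+c}{1-q}k$ for the slack, $\K-k\to\infty$ for the vanishing sum/escape), so this is a genuine but equivalent alternative.

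Second, for the a priori size of the estimation error on $S^\star$, you propose re-running the orthogonal-design argument (Proposition~\ref{prop:orth_main_general} via Lemma~\ref{lm:two_beta}) on the $\K$-dimensional reduced problem. This can be made to work, but it is noticeably more roundabout than the paper's trick: the KKT/dual characterization says directly that $\X'_{S^\star}(\bm y-\X_{S^\star}\hat{\bm b}_{S^\star})$ is majorized by $\bm\lambda^{[\K]}$, so by Fact~\ref{fc:norm} its norm is at most $\|\bm\lambda^{[\K]}\|\asymp\sqrt{\K\log(p/\K)}$; then $\X_{S^\star}(\bm\beta_{S^\star}-\hat{\bm b}_{S^\star})=\X_{S^\star}(\X'_{S^\star}\X_{S^\star})^{-1}\bigl(\X'_{S^\star}(\bm y-\X_{S^\star}\hat{\bm b}_{S^\star})-\X'_{S^\star}\bm z\bigr)$, and the near-isometry of $\X_{S^\star}$ plus Lemma~\ref{lm:bound_gradient} bound the whole thing by $C\sqrt{\K\log(p/\K)}$ in one line, with no need to re-establish the estimation result. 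Your conditioning on $(\bm z,\X'\bm z)$ to freeze $S^\star$ and exploit the independence of the orthogonalized columns $\{\X_j-\langle\X_j,\bm z\rangle\bm z/\|\bm z\|^2\}_{j\notin S^\star}$ from $\hat{\bm b}_{S^\star}$, with a separate rank-one contribution along $\bm z$, is exactly the mechanism behind the paper's $\bm Q$-rotation in Lemma~\ref{lm:noise_maj}; you identified the right structure even though you phrase it differently.

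One small point worth tightening if you write this out: the uniformity over $\|\bm\beta\|_0\le k$ comes precisely because, after conditioning on $\bm z$, every bound you invoke depends on $\bm\beta$ only through $|S|\le k$ (the order statistics of $\X'_{\overline S}\bm z$, and the size of $S^\star$); be explicit that the Chernoff/union bound and the near-isometry of $\X_{S^\star}$ are distributional statements that hold uniformly over which $k$-subset $S$ is, since $S^\star$ is a random set of fixed cardinality $\K$ whatever $S$ is.
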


\begin{proof}[Proof of Lemma \ref{lm:T_contains}]
  By construction, $\supp{\bm\beta} \subset S^\star$ so we only need
  to show (i) $\supp{\hat{\bm\beta}} \subset S^\star$ and (ii)
  $\supp{\tilde{\bm\beta}} \subset S^\star$.  We begin with (i). By
  Lemma \ref{lm:lifted}, $\supp{\hat{\bm\beta}}$ is contained in
  $S^\star$ if
\[
\bm{X}'_{\overline{S^\star}}(\bm y - \bm
X_{S^\star}\hat{\bm\beta}_{S^\star}) \preceq \lambe^{-[\K]},
\]
which would follow from
\begin{equation}\label{eq:t_contains_maj1}
  \bm{X}'_{\overline{S^\star}}\bm{X}_{S^\star}(\bm\beta_{S^\star} - \hat{\bm\beta}_{S^\star})  \preceq \frac{\epsilon}{2}\left( \lambbh_{\K+1}, \ldots, \lambbh_p \right)
\end{equation}
and
\begin{equation}\label{eq:t_contains_maj2}
\bm{X}'_{\overline{S^\star}}\bm z \preceq (1+\epsilon/2)\left( \lambbh_{\K+1}, \ldots, \lambbh_p \right).
\end{equation}
Lemma \ref{lm:noise_maj} in Appendix concludes that
\eqref{eq:t_contains_maj1} holds with probability tending to one, since, by assumption, $\epsilon > 0$ is constant and $\sqrt{(\K\log p)/n} \goto 0$. To
show that \eqref{eq:t_contains_maj2} also holds with probability
approaching one, we resort to Lemma
\ref{lm:main_lift_maj}. Conditional on $\bm z$, $\bm X'_{\overline S}
\bm z \sim \mathcal{N}(0, \|\bm z\|^2/n \cdot \bm{I}_{p-k})$. By
definition, $\bm X'_{\overline{S^\star}} \bm z$ is formed from $\bm
X'_{\overline S} \bm z$ by removing its $\K - k$ largest entries in
absolute value. Denoting by $\zeta_1, \ldots, \zeta_{p-k}$ \iid
standard Gaussian random variables, \eqref{eq:t_contains_maj2} thus
boils down to
\begin{equation}\label{eq:lm_main_lift_maj}
  \left( |\zeta|_{(\K-k+1)}, |\zeta|_{(\K-k+2)}, \ldots, |\zeta|_{(p-k)} \right) \preceq \frac{(1+\epsilon/2)\sqrt{n}}{\|\bm z\|}\left( \lambbh_{\K+1}, \ldots, \lambbh_p \right).
\end{equation}
Borell's inequality (Lemma \ref{lm:gauss_concentrate}) gives 
\[
\P\left((1+\epsilon/2)\sqrt{n}/\|\bm z\| < 1 \right) = \P\left( \|\bm z\| - \sqrt{n} > \epsilon\sqrt{n}/2 \right) \le \e{-n\epsilon^2/8} \goto 0.
\]
The conclusion follows from Lemma \ref{lm:main_lift_maj}.

We turn to (ii) and note that 
\[
(\bm\beta + \bm{X}' \bm z)_{\overline{S^\star}} = \bm{X}'_{\overline{S^\star}} \bm z.
\]
Now our previous analysis implies $\bm X'_{\overline{S^\star}} \bm z
\preceq \lambe^{-[\K]}$ with probability tending to one. However, it
follows from Facts \ref{fc:reduce} and \ref{fc:maj_zero} that 
\[
\|\tilde{\bm\beta}_{\overline{S^\star}}\| = \|\slopex{\lambe}{\bm\beta + \bm{X}' \bm z}_{\overline{S^\star}}\| \le \|\slopex{\lambe^{-[\K]}}{\bm X'_{\overline{S^\star}}\bm z}\|  = \bm 0.
\] 
In summary, $\bm X'_{\overline{S^\star}} \bm z \preceq \lambe^{-[\K]}
\implies \supp{\tilde{\bm\beta}} \subset S^{\star}$. This concludes
the proof. 
\end{proof}


\subsection{Proof of Theorem \ref{thm:gauss_minimax} and Corollary \ref{coro:gauss_prediction_minimax}}
\label{sec:proof-theor-refthm:g}

Put
\[
\delta = \frac{1+3\epsilon}{(1+\epsilon)^2} - 1 = \frac{\epsilon - \epsilon^2}{(1+\epsilon)^2} > 0,
\]
and choose any $\delta' > 0$ such that
\[
(1+\delta')\left( \sqrt{3\delta'/(1 - 2\delta')} + 1 \right)^2 (1 + \delta/2) < (1 + \delta).
\]
Let $\mathscr{A}_1$ be the event $S^\diamond \subset S^\star$,
$\mathscr{A}_2$ that all the singular values of $\bm{X}_{S^\star}$ lie
in $\left[ \sqrt{1 - \delta'} , \sqrt{1 + \delta'} \right]$, and
$\mathscr{A}_3$ that
\begin{equation}\nonumber
  \frac{\|\tilde{\bm\beta} - \bm\beta\|^2}{(1+\epsilon)^2 \, 2k \log(p/k)} < 1 + \frac{\delta}{2}.
\end{equation}

We prove that each event happens with probability tending to one. For
$\mathscr{A}_1$, use Lemma \ref{lm:T_contains}, and set
\[
d = \min\left\{ \left\lfloor \sqrt{kn/\log p} \right\rfloor, \left\lfloor \sqrt{p} \right\rfloor \right\},
\]
which diverges to $\infty$, and 
\[
\K = \max\left\{ \left\lceil 2k/(1-q) \right\rceil, k+d \right\}.
\]
It is easy to see that $\K$ satisfies the assumptions of Lemma
\ref{lm:T_contains}, which asserts that $\P(\mathscr{A}_1) \goto 1$
uniformly over all $k$-sparse $\bm\beta$. For $\mathscr{A}_2$, since
$(\K \log p)/n \goto 0$ implies that $\K\log(p/\K)/n \goto 0$, then
taking $t$ sufficiently small in Lemma \ref{lm:slim_spec_refined}
gives $\P(\mathscr{A}_2) \goto 1$ uniformly over all $k$-sparse
$\bm\beta$. Finally, $\P(\mathscr{A}_3) \goto 1$ also uniformly over
all $k$-sparse $\bm\beta$ by Lemma \ref{lm:weak_conditional} since
$\epsilon^2 n \goto \infty$.

Hence,
$\P(\mathscr{A}_1 \cap \mathscr{A}_2 \cap \mathscr{A}_3) \goto 1$
uniformly over all $\bm\beta$ with sparsity at most $k$. Consequently,
it suffices to show that on this intersection,
\[
\frac{\|\hat{\bm\beta} - \bm\beta\|^2}{2 k \log(p/k)} < 1 + 3\epsilon, \quad \frac{\|\X \hat{\bm\beta} - \X \bm\beta\|^2}{2 k \log(p/k)} < 1 + 3\epsilon.
\]
On $\mathscr{A}_2 \cap \mathscr{A}_3$, all the eigenvalues values of
$\X_{S^\diamond}'\bm{X}_{S^\diamond}$ are between $1 - \delta'$ and
$1 + \delta'$. By definition, all the coordinates of
$\bm\beta, \hat{\bm\beta}$ and $\tilde{\bm\beta}$ vanish outside of
$S^\diamond$. Thus, Lemma \ref{lm:two_beta} gives
\begin{align*}
  \|\hat{\bm\beta} - \bm\beta\| \le \|\hat{\bm\beta} - \tilde{\bm\beta}\| + \|\tilde{\bm\beta} - \bm\beta\| & \le 
                                                                                                              \left( \sqrt{\frac{3\delta'}{1-2\delta'}} + 1 \right) \|\tilde{\bm\beta} - \bm\beta\| \\ & \le \left(\frac{1+\delta}{(1+\delta/2)(1+\delta')}\right)^{1/2} \|\tilde{\bm\beta} - \bm\beta\|.
\end{align*}
Hence, on $\mathscr{A}_1 \cap \mathscr{A}_2 \cap \mathscr{A}_3$, we have
\[
\frac{\|\hat{\bm\beta} - \bm\beta\|^2}{2 k \log(p/k)} \le \frac{1+\delta}{(1+\delta/2)(1+\delta')} \cdot \frac{\|\tilde{\bm\beta} - \bm\beta\|^2}{2 k \log(p/k)} < \frac{(1+\delta)(1+\epsilon)^2}{1+\delta'} < 1 + 3\epsilon,
\]
and similarly,
\begin{equation}\nonumber
\frac{\|\X \hat{\bm\beta} - \X \bm\beta\|^2}{2 k \log(p/k)} \le (1+\delta')\frac{\|\hat{\bm\beta} - \bm\beta\|^2}{2 k \log(p/k)} < (1+\delta')\frac{(1+\delta)(1+\epsilon)^2}{1+\delta'} = 1 + 3\epsilon.
\end{equation}
This finishes the proof.




\section{Lower bounds}
\label{sec:lowerbound}

We here prove Theorem \ref{thm:lower_beta}, the lower matching bound
for Theorem \ref{thm:gauss_minimax}, and leave the proof of Corollary
\ref{cor:lowerbound_general_pred} to Appendix
\ref{sec:proofs-sect-refs-5}.  Once again, we warm up with the
orthogonal design and develop tools that can be readily applied to the
regression case.


\subsection{Orthogonal designs}
\label{sec:orthogonal-case-later}

Suppose $\bm y \sim \mathcal{N}(\bm \beta, \bm I_p)$. The first result
states that in this model, the squared loss for estimating 1-sparse
vectors cannot be lower than $2\log p$. The proof is in Appendix
\ref{sec:proofs-sect-refs-5}.
\begin{lemma}\label{lm:lowerbound_single}
  Let $\tau_p = (1 + o(1)) \sqrt{2\log p}$ be a sequence obeying
  $\sqrt{2\log p} - \tau_p \goto \infty$.  Consider the prior $\bm\pi$
  for $\bm\beta$, which selects a coordinate $i$ uniformly at random
  in $\{1, \ldots, p \}$, and sets $\beta_i = \tau_p$ and $\beta_j =
  0$ for $j \neq i$. For each $\epsilon > 0$,
\begin{equation}\nonumber
\inf_{\hat{\bm\beta}}\P_{\bm\pi}\lb \frac{\|\hat{\bm\beta} - \bm\beta\|^2}{2\log p} > 1 - \epsilon \rb \goto 1.
\end{equation}
\end{lemma}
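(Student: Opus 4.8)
The plan is to reduce the squared-error lower bound to the problem of \emph{locating} the single spike, and then to show that localization must fail because, under $\bm\pi$, the spike sits a diverging amount below the detection threshold $\sqrt{2\log p}$. Fix an arbitrary estimator $\hat{\bm\beta}=\hat{\bm\beta}(\bm y)$ and, for each realization of $\bm y$, introduce the ``good set'' $B(\bm y)=\{\, i : \|\hat{\bm\beta}(\bm y)-\tau_p\bm{e}_i\|^2 \le (1-\epsilon)\,2\log p \,\}$, where $\bm{e}_i$ is the $i$th standard basis vector. Under the prior $\bm\beta=\tau_p\bm{e}_{i^*}$ with $i^*$ uniform on $\{1,\dots,p\}$, so the event $\{\|\hat{\bm\beta}-\bm\beta\|^2 \le (1-\epsilon)\,2\log p\}$ is exactly the event $\{i^*\in B(\bm y)\}$; hence it suffices to bound $\P_{\bm\pi}(i^*\in B(\bm y))$ by a quantity not depending on $\hat{\bm\beta}$ and tending to $0$.

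The second step is a deterministic cardinality bound: $|B(\bm y)|\le C_\epsilon$ for a constant $C_\epsilon$ depending only on $\epsilon$, once $p$ is large. If $i_1,\dots,i_m$ are distinct members of $B(\bm y)$, then $\|\hat{\bm\beta}-\tau_p\bm{e}_{i_r}\|^2 \ge \sum_{s=1}^m \hat\beta_{i_s}^2 - 2\tau_p\hat\beta_{i_r} + \tau_p^2$; summing over $r=1,\dots,m$ and bounding $\sum_s\hat\beta_{i_s}$ via Cauchy--Schwarz gives $(m-1)\tau_p^2 \le m(1-\epsilon)\,2\log p$. Since $\tau_p^2=(1+o(1))\,2\log p$, this forces $m<2/\epsilon$ for $p$ large, so we may take $C_\epsilon=\lceil 2/\epsilon\rceil$.

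The third, and main, step is to show that no size-$\le C_\epsilon$ selection rule can capture $i^*$. By a Bayes argument, among all rules returning at most $C_\epsilon$ indices the one maximizing $\P_{\bm\pi}(i^*\in B(\bm y))$ returns the $C_\epsilon$ indices of largest posterior probability $\pi_i(\bm y)\propto \e{\tau_p y_i}$, i.e.\ simply the $C_\epsilon$ largest coordinates of $\bm y$; therefore $\P_{\bm\pi}(i^*\in B(\bm y)) \le \P_{\bm\pi}(y_{i^*}\text{ is among the }C_\epsilon\text{ largest entries of }\bm y)$. By symmetry this last probability does not depend on the value of $i^*$, so conditioning on $i^*$ we may take $y_{i^*}=\tau_p+Z$ with $Z\sim\mathcal{N}(0,1)$ independent of the remaining $p-1$ i.i.d.\ $\mathcal{N}(0,1)$ coordinates. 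On $\{Z\le a_p/2\}$, where $a_p=\sqrt{2\log p}-\tau_p\to\infty$, the number of other coordinates exceeding $y_{i^*}$ stochastically dominates a $\mathrm{Bin}\bigl(p-1,\,1-\Phi(\sqrt{2\log p}-a_p/2)\bigr)$ variable, whose mean $\mu_p$ diverges by the Mills-ratio estimate for the Gaussian tail --- here one uses both $a_p\to\infty$ and $a_p=o(\sqrt{\log p})$, the latter being forced by $\tau_p=(1+o(1))\sqrt{2\log p}$. A Chernoff bound then shows this count exceeds $C_\epsilon-1$ with probability tending to one, uniformly over $Z\le a_p/2$, while $\P(Z>a_p/2)\to 0$. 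Combining, $\P_{\bm\pi}(i^*\in B(\bm y))\to 0$, which is the desired conclusion.

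The main obstacle is the third step, and in particular resisting the natural but doomed attempt to prove it by a change of measure to the null $\mathcal{N}(\bm{0},\bm{I}_p)$ followed by $\sum_{i\in B}\e{\tau_p y_i}\le C_\epsilon\,\e{\tau_p\max_i y_i}$: the likelihood-ratio reweighting is simply too expensive near the detection boundary, so this only yields a vacuous bound. What must instead be exploited is that $y_{i^*}$ itself is very unlikely to rank among the top few coordinates --- equivalently, that the posterior puts negligible mass on the true spike --- which is why the argument has to go through the Bayes-optimal top-$C_\epsilon$ rule and a direct Binomial/Mills-ratio estimate rather than a union bound of order $C_\epsilon/p$.
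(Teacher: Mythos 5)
Your proposal is correct and follows essentially the same route as the paper's proof. Both arguments (i) rephrase the event $\{\|\hat{\bm\beta}-\bm\beta\|^2\le (1-\epsilon)\,2\log p\}$ as $\{I\in A(\bm y)\}$ for a data-dependent index set $A(\bm y)$; (ii) bound $|A(\bm y)|$ by a constant $\lceil 2/\epsilon\rceil$ (the paper via a threshold-plus-AM--GM computation, you via summing and Cauchy--Schwarz, both giving the same bound); (iii) observe that the posterior $\P_{\bm\pi}(I=i\mid\bm y)\propto\e{\tau y_i}$ makes the top-$\lceil 2/\epsilon\rceil$ coordinates of $\bm y$ the Bayes-optimal selection; and (iv) show via the Mills-ratio/Binomial estimate --- the paper's Lemma \ref{lm:many_large_bonf} in its version, your conditioning on $Z\le a_p/2$ in yours --- that the spike coordinate is unlikely to rank among the top few. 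The only differences are organizational; your diagnosis at the end of why a change-of-measure union bound fails is a nice addition, and matches the reason the paper goes through the posterior rather than a direct tail bound.
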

Next, we state a counterpart to Theorem
\ref{thm:orth_main_general_intro}, whose proof constructs $k$
independent $1$-sparse recovery problems.
\begin{proposition}\label{prop:lowerbound_sequence}
Suppose $k/p \goto 0$. Then for any $\epsilon > 0$, we have
\begin{equation}\nonumber
\inf_{\hat{\bm\beta}}\sup_{\|\bm\beta\|_0 \le k}\P\left( \frac{\|\hat{\bm\beta} - \bm\beta\|^2}{2k\log(p/k)} > 1 - \epsilon \right) \rightarrow 1.
\end{equation}
\end{proposition}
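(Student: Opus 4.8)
The plan is to reduce the $k$-sparse problem to $k$ independent copies of the $1$-sparse problem already handled by Lemma \ref{lm:lowerbound_single}, exploit additivity of the squared loss, and aggregate. Partition $\{1, \ldots, p\}$ into $k$ disjoint blocks $B_1, \ldots, B_k$ of common size $m = \lfloor p/k \rfloor$, discarding the at most $k-1$ leftover coordinates (whose means we set to zero). Since $k/p \goto 0$ we have $m \goto \infty$ and $\log m = (1+o(1))\log(p/k)$. On each block put an independent copy of the prior from Lemma \ref{lm:lowerbound_single} at scale $\tau_m = \sqrt{2\log m} - \log\log m = (1+o(1))\sqrt{2\log m}$, which satisfies $\sqrt{2\log m} - \tau_m \goto \infty$; call the resulting product prior $\bm\pi^\star$. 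Every $\bm\beta$ drawn from $\bm\pi^\star$ has exactly $k$ nonzero coordinates, so $\sup_{\|\bm\beta\|_0 \le k}\P(\cdot) \ge \P_{\bm\pi^\star}(\cdot)$, and it suffices to lower bound this Bayes probability uniformly over estimators.

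Next I would decouple using the product structure. Fix any estimator $\hat{\bm\beta} = \hat{\bm\beta}(\bm y)$ and write $\|\hat{\bm\beta} - \bm\beta\|^2 = \sum_{j=1}^k \|\hat{\bm\beta}_{B_j} - \bm\beta_{B_j}\|^2$. Because the noise vectors $\bm z_{B_i}$ and the prior restricted to block $i$ are independent across $i$, conditioning on $\{\bm y_{B_i} : i \ne j\}$ leaves the conditional law of $\bm\beta_{B_j}$ equal to $\bm\pi$ (within $B_j$) and turns $\hat{\bm\beta}_{B_j}$ into a (possibly randomized) estimator of $\bm\beta_{B_j}$ based on $\bm y_{B_j}$ alone. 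Writing $\mathscr{E}_j$ for the event $\{\|\hat{\bm\beta}_{B_j} - \bm\beta_{B_j}\|^2 > (1-\epsilon/2)\, 2\log m\}$, Lemma \ref{lm:lowerbound_single} applied within block $j$ (a $1$-sparse problem in dimension $m$, with its $\epsilon$ taken to be $\epsilon/2$) gives $\P_{\bm\pi^\star}(\mathscr{E}_j) \ge 1 - a_m$ for a sequence $a_m \goto 0$ that does not depend on $j$ or on $\hat{\bm\beta}$.

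Finally I would aggregate. Let $N = \#\{j : \mathscr{E}_j \text{ holds}\}$, so that $\E_{\bm\pi^\star}[k - N] = \sum_j \P_{\bm\pi^\star}(\mathscr{E}_j^c) \le k a_m$. Markov's inequality gives $\P_{\bm\pi^\star}(N < (1-\eta)k) = \P_{\bm\pi^\star}(k - N > \eta k) \le a_m/\eta \goto 0$ for any fixed $\eta > 0$, and this bound is valid whether or not $k \goto \infty$. On $\{N \ge (1-\eta)k\}$,
\[
\|\hat{\bm\beta} - \bm\beta\|^2 \ge N\,(1-\epsilon/2)\, 2\log m \ge (1-\eta)(1-\epsilon/2)(1+o(1))\, 2k\log(p/k),
\]
so choosing $\eta$ small enough that $(1-\eta)(1-\epsilon/2) > 1 - \epsilon$ makes the right-hand side exceed $(1-\epsilon)\,2k\log(p/k)$ for all large $p$. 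Hence $\inf_{\hat{\bm\beta}}\P_{\bm\pi^\star}\bigl(\|\hat{\bm\beta} - \bm\beta\|^2 > (1-\epsilon)\,2k\log(p/k)\bigr) \ge 1 - a_m/\eta \goto 1$, which proves the proposition. The only step requiring care is this aggregation: the events $\mathscr{E}_j$ need not be independent under an arbitrary $\hat{\bm\beta}$, so I pass through the first-moment bound on the number of "bad" blocks rather than a naive union bound (a union bound also works when $k$ stays bounded, but not when $k \goto \infty$). Everything else is bookkeeping of the harmless discrepancy between $\log m$ and $\log(p/k)$, which vanishes because $p/k \goto \infty$; the genuinely substantive input is Lemma \ref{lm:lowerbound_single} itself.
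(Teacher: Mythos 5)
Your proposal is correct and follows essentially the same route as the paper: the same block decomposition, the same product prior built from Lemma \ref{lm:lowerbound_single}, and an aggregation step that uses the first moment together with Markov's inequality to handle the possible dependence of the block-wise events under an arbitrary estimator. The only cosmetic difference is that you count the number of ``good'' blocks and threshold that count, whereas the paper truncates each block loss at $2\log(p/k)$ and applies Markov to the truncated sum (followed by sending an auxiliary $\epsilon'\to 0$); these are interchangeable implementations of the same idea.
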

\begin{proof}
  The fundamental duality between `min max' and `max min' gives
\begin{equation}\nonumber
  \inf_{\hat{\bm\beta}}\sup_{\|\bm\beta\|_0 \le k}\P\left( \frac{\|\hat{\bm\beta} - \bm\beta\|^2}{2k\log(p/k)} > 1 - \epsilon \right) \ge \sup_{\|\tilde{\bm\pi}\|_0 \le k} \inf_{\hat{\bm\beta}}\P_{\tilde{\bm\pi}}\left( \frac{\|\hat{\bm\beta} - \bm\beta\|^2}{2k\log(p/k)} > 1 - \epsilon \right).
\end{equation}
Above, $\tilde{\bm\pi}$ denotes any distribution on $\R^p$ such that
any realization $\bm\beta$ obeys $\|\bm\beta\|_0 \le k$, and
$\P_{\tilde{\bm\pi}}(\cdot)$ emphasizes that $\bm\beta$ follows the
prior $\tilde{\bm\pi}$, as earlier in Lemma
\ref{lm:lowerbound_single}. It is therefore sufficient to construct a
prior $\tilde{\bm \pi}$ with a right-hand side approaching one.

Assume $p$ is a multiple of $k$ (otherwise, replace $p$ with
$p_0 = k\lfloor p/k \rfloor$ and let $\bm\pi$ be supported on
$\{1, \ldots, p_0\}$). Partition $\{1, \ldots, p \}$ into $k$
consecutive blocks $\{1, \ldots, p/k\}$, $\{p/k + 1, \ldots, 2p/k\}$
and so on. Our prior is a product prior, where on each block, we
select a coordinate uniformly at random and sets its amplitude to
$\tau = (1+o(1)) \sqrt{\log(p/k)}$ and
$\sqrt{2\log (p/k)} - \tau \goto \infty$. Next, let $\hat{\bm\beta}$
be any estimator and write the loss
$\|\hat{\bm\beta} - \bm\beta\|^2 = L_1 + \cdots + L_k$, where $L_j$ is
the contribution from the $j$th block. The lemma is reduced to proving
\begin{equation}\label{eq:lower_orth_obj}
 \inf_{\hat{\bm\beta}}\P_{\bm\pi} \lb \frac{L_1 + \cdots + L_k}{2k\log(p/k)} > 1 - \epsilon \rb \goto 1.
\end{equation}

For any constant $\epsilon' > 0$, since $p/k \goto \infty$, Lemma
\ref{lm:lowerbound_single} claims that
\begin{equation}\label{eq:1_peak_ind}
  \inf_{\hat{\bm\beta}}\P_{\bm\pi} \lb 
  \frac{L_j}{2\log(p/k)} > 1 - \epsilon' \rb \goto 1
\end{equation}
uniformly over $j = 1, \ldots, k$ since distinct blocks are
stochastically independent. Set $$\bar L_j = \min\{ L_j, 2\log(p/k) \}
\le L_j.$$ On one hand,
\[
  \frac{ \E\left( \bar L_1 + \cdots + \bar L_k \right)}{2k\log(p/k)}
  \le (1-\epsilon) \cdot \P_{\bm\pi} \lb \frac{\bar L_1 + \cdots
      + \bar L_k}{2k\log(p/k)} \le 1 - \epsilon \rb +
    \P_{\bm\pi} \lb \frac{\bar L_1 + \cdots + \bar L_k}{2k\log(p/k)} >
    1 - \epsilon\right).
\]
On the other, 
\[
\frac{ \E\left( \bar L_1 + \cdots + \bar L_k \right)}{2k\log(p/k)}
\ge \frac{1 - \epsilon'}{k} \sum_{j=1}^k \P_{\bm\pi} \lb \frac{\bar L_j}{2\log(p/k)} > 1 - \epsilon' \rb.
\]
All in all, this gives 
\begin{equation}\nonumber
  \sup_{\hat{\bm\beta}}\P_{\bm\pi} \lb \frac{\bar L_1 + \cdots + \bar L_k}{2k\log(p/k)} \le 1 - \epsilon \rb \le \frac{1}{\epsilon} \cdot \left(1 - (1 - \epsilon')\inf_{\hat{\bm\beta}, j}\P_{\bm\pi} \lb \frac{\bar L_j}{2\log(p/k)} > 1 - \epsilon' \rb\right).
\end{equation}
Finally, take the limit $p \goto \infty$ in the above
inequality. Since $ \bar L_j/(2\log (p/k)) > 1 - \epsilon'$ if and only
if $L_j/(2\log (p/k)) > 1 - \epsilon'$, it follows from
\eqref{eq:1_peak_ind} that 
\begin{equation}\nonumber
  \limsup_{p\goto\infty} \sup_{\hat{\bm\beta}}\P_{\bm\pi} \lb \frac{\bar L_1 + \cdots + \bar L_k}{2k\log(p/k)} \le 1 - \epsilon \rb  \le  \frac{\epsilon'}{\epsilon}.
\end{equation}
We conclude by taking $\epsilon' \goto 0$.  
\end{proof}

\subsection{Random designs}
\label{sec:lower-bound-regr-later}

We return to the regression setup
$\bm y \sim \mathcal{N}(\X \bm \beta, \bm I_p)$, where $\X$ is our
Gaussian design.
\begin{lemma}\label{lm:regression_lower_single}
  Fix $\alpha \le 1$ and 
\[
\tau_{p,n} = \left( \sqrt{2\log p} - \log\sqrt{2\log p} \right)\left(
  1 - 2\sqrt{(\log p)/n} \right).
\]
Let $\bm\pi$ be the prior from Lemma \ref{lm:lowerbound_single} with
amplitude set to $\alpha \cdot \tau_{n,p}$. Assume $(\log p)/n \goto
0$. Then for any $\epsilon > 0$,
\begin{equation}\nonumber
  \inf_{\hat{\bm\beta}}\P_{\bm\pi}\lb \frac{\|\hat{\bm\beta} - \bm\beta\|^2}{\alpha^2 \cdot 2 \log p} > 1 - \epsilon \rb \goto 1.
\end{equation}
\end{lemma}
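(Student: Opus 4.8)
The plan is to mimic the proof in the orthogonal model (Lemma~\ref{lm:lowerbound_single} and Proposition~\ref{prop:lowerbound_sequence}): condition on $\X$, pass to a high‑probability ``good design'' event, and reduce the assertion to an estimate showing that a maximum of $p$ likelihood ratios is $o(p)$. Here $\alpha\in(0,1]$ is the fixed constant from the statement, and I write $\tau=\alpha\tau_{p,n}$ for the signal amplitude, $L=\sqrt{2\log p}$, $\ell=\log\sqrt{2\log p}$, $R=(1-\epsilon)\,\alpha^2\cdot 2\log p$, and $\bar\Phi=1-\Phi$. First I would condition on $\X$ and restrict to the event $\mathcal E=\{\max_{1\le l\le p}\|\X_l\|^2\le 1+\rho\}$ with $\rho=(2+o(1))\sqrt{(\log p)/n}$; a standard $\chi^2$ tail bound together with a union bound over the $p$ columns give $\P(\mathcal E)\goto 1$, since $(\log p)/n\goto 0$. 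Conditionally on $\X$, the data obey $\bm y\sim\normaldist(\tau\X_i,\bI_n)$ with $i$ uniform on $\{1,\dots,p\}$; writing $f_l$ for the density of $\normaldist(\tau\X_l,\bI_n)$ and $f_0$ for that of $\normaldist(\bm 0,\bI_n)$, we have $\tfrac{f_l}{f_0}(\bm y)=\e{a_l g_l-a_l^2/2}$ with $a_l=\tau\|\X_l\|$ and $g_l=\langle\X_l,\bm y\rangle/\|\X_l\|$, each $g_l$ is standard normal under $f_0$, and on $\mathcal E$ one has $a_l\le \bar a:=\tau\sqrt{1+\rho}$ for all $l$.

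Second, for any estimator $\hat{\bm\beta}$ I would reduce $\{\|\hat{\bm\beta}-\bm\beta\|^2\le R\}$ to a bounded‑budget event. On this event $(\hat\beta_i-\tau)^2\le R$, hence $\hat\beta_i\ge t:=\tau-\sqrt R$, which is positive and of order $L$ for $p$ large; and $\sum_{l\ne i}\hat\beta_l^2\le R$, so the set $A=A(\bm y)=\{l:\hat\beta_l(\bm y)\ge t\}$ obeys $i\in A$ and $|A|\le N:=1+\floor{R/t^2}$, which converges to a finite constant. A change of measure to $f_0$ then gives, on $\mathcal E$,
\[
\P_{\bm\pi}\!\left(\|\hat{\bm\beta}-\bm\beta\|^2\le R \mid \X\right)\ \le\ \E_{f_0}\!\left[\mathbf 1\{|A|\le N\}\,\frac1p\sum_{l\in A}\frac{f_l}{f_0}(\bm y)\right]\ \le\ \frac Np\,\E_{f_0}\!\left[\max_{1\le l\le p}\frac{f_l}{f_0}(\bm y)\right],
\]
so everything reduces to showing $\E_{f_0}\bigl[\max_l\tfrac{f_l}{f_0}(\bm y)\bigr]=o(p)$ uniformly over $\X\in\mathcal E$.

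Third — the core estimate, and the place where the exact form of $\tau_{p,n}$ is used. The $-\log\sqrt{2\log p}$ offset keeps the amplitude a $\Theta(\log\log p)$ margin below the detection level $L$, and the deflation factor $(1-2\sqrt{(\log p)/n})$ prevents this margin from being swallowed by the inflation $\|\X_l\|^2>1$: since $\sqrt{1+\rho}\le 1+\rho/2=1+(1+o(1))\sqrt{(\log p)/n}$ while $\tau_{p,n}$ carries the deflation $1-2\sqrt{(\log p)/n}$, a short computation gives $\bar a\le (L-\ell)\bigl(1-(1+o(1))\sqrt{(\log p)/n}\bigr)$, hence $L-\bar a\ge\ell$ for $p$ large. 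A two‑case bound on $\e{ag-a^2/2}$ (for $g\le\bar a$: $ag-a^2/2\le g^2/2\le\bar a^2/2$; for $g>\bar a\ge a$: the map $a\mapsto ag-a^2/2$ is increasing, so $ag-a^2/2\le\bar a g-\bar a^2/2$) then yields the deterministic inequality, valid on $\mathcal E$,
\[
\max_{1\le l\le p}\frac{f_l}{f_0}(\bm y)\ \le\ \e{\bar a^2/2}+\e{\bar a M-\bar a^2/2}\,\mathbf 1\{M>\bar a\},\qquad M:=\max_{1\le l\le p}g_l .
\]
The first term is $\e{\bar a^2/2}=p\,\e{-(L-\bar a)(L+\bar a)/2}\le p\,\e{-\ell L/2}=o(p)$. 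For the second, split at $G:=L-\ell/(2L)$, which satisfies $\P_{f_0}(M>G)\le p\,\bar\Phi(G)\goto 0$ (the standard estimate for the maximum of $p$ standard Gaussians, using $\bar\Phi(x)\asymp\phi(x)/x$ and $\e{\ell}=\sqrt{2\log p}$). On $\{M\le G\}$ one has $\e{\bar a M-\bar a^2/2}\le\e{\bar a G-\bar a^2/2}\le\e{G^2/2}=p\,\e{-\ell/2+o(1)}=o(p)$ deterministically. On $\{M>G\}$, integrating the likelihood ratio against $\P_{f_0}(M>u)\le p\,\e{-u^2/2}$ from $u=G>\bar a$ upward,
\[
\E_{f_0}\!\left[\e{\bar a M-\bar a^2/2}\mathbf 1\{M>G\}\right]\ \le\ p\,\e{-(G-\bar a)^2/2}\Bigl(1+\tfrac{\bar a}{G-\bar a}\Bigr)\ =\ p\cdot\e{-(G-\bar a)^2/2}\cdot O\!\bigl(\sqrt{\log p}/\ell\bigr)=o(p),
\]
since $G-\bar a\ge\ell/2$, so $(G-\bar a)^2\gtrsim\ell^2=\bigl(\tfrac12\log(2\log p)\bigr)^2$, which dominates $\log(\sqrt{\log p}/\ell)$. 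Adding the three pieces and inserting into the second‑step display gives $\P_{\bm\pi}(\|\hat{\bm\beta}-\bm\beta\|^2\le R\mid\X)\,\mathbf 1_{\mathcal E}\goto 0$ uniformly in $\hat{\bm\beta}$; since $\P(\mathcal E)\goto 1$, integrating over $\X$ yields $\sup_{\hat{\bm\beta}}\P_{\bm\pi}(\|\hat{\bm\beta}-\bm\beta\|^2\le R)\goto 0$, which is the claim.

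I expect the main obstacle to be precisely the constant bookkeeping in the third step: (i) certifying $\bar a<L$ with a gap of order $\ell=\log\sqrt{2\log p}$, which forces one to pair the sharp $\chi^2$ concentration of $\max_l\|\X_l\|^2$ (optimal constant close to $2$) against the coefficient $2$ in the deflation factor of $\tau_{p,n}$; and (ii) tracking the logarithmic corrections — the $\ell$‑offset in $\tau_{p,n}$, the $\Theta(\ell/L)$ offset in the threshold $G$, and the $\mathrm{poly}(\log p)$ prefactor from the tail integral — so that each of the three pieces is genuinely $o(p)$. Everything else is a faithful transcription of the orthogonal argument.
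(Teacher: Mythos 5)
Your proposal is correct, and it takes a genuinely different route from the paper. The two proofs share the opening move --- the bounded-cardinality reduction that forces the true coordinate $I$ to belong to a small set $A(\bm y)$ of ``large'' estimated coefficients --- but they diverge immediately afterward. The paper computes the posterior $\P_{\bm\pi}(I\in A\mid\bm y,\X)$ exactly as a ratio of likelihood terms, observes that this ratio is maximized when $A$ consists of the $\lceil 2/\epsilon\rceil$ largest scores $\X_i'\bm y/\alpha-\tau\|\X_i\|^2/2$, and then shows (via Lemma~\ref{lm:many_large_bonf}, after conditioning on several high-probability events for $\X$ and $\bm z$) that infinitely many decoy coordinates beat the truth's score, so the posterior mass on $A$ is negligible. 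You instead discard the normalizing denominator, change measure to the null $f_0=\normaldist(\bm 0,\bI_n)$, and reduce everything to the single estimate $\E_{f_0}[\max_l f_l/f_0]=o(p)$ on the good design event, which you then prove by a three-way Gaussian tail split. Both are sound: the paper's version is more combinatorial (count competitors that beat the truth) and retains the posterior normalization, which is the natural way to exploit the Bayes structure; yours is more analytic, self-contained (it does not need Lemma~\ref{lm:many_large_bonf}), and isolates cleanly the two roles of the deflation in $\tau_{p,n}$ --- the additive $\log\sqrt{2\log p}$ margin below the detection level and the multiplicative $(1-2\sqrt{(\log p)/n})$ factor compensating the column-norm inflation. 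The one place your proof is tighter than the paper's corresponding step is the $\chi^2$ concentration: you need the sharp Laurent--Massart constant (giving $\rho=(2+o(1))\sqrt{(\log p)/n}$ rather than $2\sqrt 2$) to certify $\bar a\le L-\ell$; with Borell alone you would lose this by a $\sqrt 2$, and the deflation factor $2$ in $\tau_{p,n}$ would no longer be enough headroom. Worth flagging this explicitly if writing it up. Otherwise the bookkeeping in all three tail pieces checks out.
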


With this, we are ready to prove a stronger version of Theorem
\ref{thm:lower_beta}. 
\begin{theorem}\label{thm:reg_prob_lowerbound}[Stronger version of Theorem \ref{thm:lower_beta}]
  Consider $\bm y \sim \mathcal{N}(\X \bm \beta, \sigma^2 \bm I_p)$,
  where $\X$ is our Gaussian design, $k/p \goto 0$ and $\log(p/k)/n
  \goto 0$. Then for each $\epsilon > 0$,
\begin{equation}\nonumber
  \inf_{\hat{\bm\beta}}\sup_{\|\bm\beta\|_0 \le k}\P\left(\frac{\|\hat{\bm\beta} - \bm\beta\|^2}{\sigma^2 \cdot 2k\log(p/k)} > 1 - \epsilon \right) \rightarrow 1.
\end{equation}
\end{theorem}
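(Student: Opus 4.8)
The plan is to mimic the proof of Proposition~\ref{prop:lowerbound_sequence}, substituting for the orthogonal single-spike bound of Lemma~\ref{lm:lowerbound_single} its regression analogue, Lemma~\ref{lm:regression_lower_single}. First, after rescaling $\bm y \mapsto \bm y/\sigma$ we may take $\sigma = 1$, and since a supremum over a set dominates any average over it, it suffices to produce a prior $\bm\pi$ concentrated on $k$-sparse vectors for which $\inf_{\hat{\bm\beta}}\P_{\bm\pi}( \|\hat{\bm\beta} - \bm\beta\|^2 > (1-\epsilon)\,2k\log(p/k) ) \goto 1$. I would take $\bm\pi$ to be a block-product prior: assuming $k \mid p$ (otherwise replace $p$ by $k\lfloor p/k\rfloor$ as in Proposition~\ref{prop:lowerbound_sequence}), partition $\{1,\dots,p\}$ into $k$ consecutive blocks of common size $m := p/k$, and on each block pick one coordinate uniformly at random and set its amplitude to $\alpha\,\tau_{m,n}$, where $0<\alpha<1$ is fixed (and sent to $1$ at the very end) and $\tau_{m,n}$ is the sequence of Lemma~\ref{lm:regression_lower_single} with $p$ replaced by $m$. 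The hypothesis $\log(p/k)/n \goto 0$ is exactly the condition $(\log m)/n \goto 0$ needed to invoke that lemma at dimension $m$. Writing $\|\hat{\bm\beta} - \bm\beta\|^2 = L_1 + \cdots + L_k$ with $L_j$ the squared error on block $j$, the argument then splits into a per-block lower bound and an aggregation step.

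The per-block step is where the proof departs from the orthogonal case and is the step I expect to be the main obstacle. Here the $k$ sub-problems are no longer stochastically independent: in $\bm y = \sum_{j'} \bm X_{B_{j'}}\bm\beta_{B_{j'}} + \bm z$ the spikes in the blocks $j' \ne j$ act as extra ``signal noise'' for the recovery of block $j$, and under the present weak hypotheses their aggregate energy $n^{-1}\sum_{j'\ne j}\beta_{i_{j'}}^2$, which is of order $(k\log(p/k))/n$, need not vanish. I would neutralise this by a data-augmentation (Blackwell-type) reduction. Since the columns of $\bm X$ are mutually independent, $\bm X_{B_j}$ is itself a bona fide $n\times m$ design with i.i.d.\ $\mathcal{N}(0,1/n)$ entries; given any estimator $\hat{\bm\beta}$ for the full problem one obtains an estimator for the clean $m$-dimensional sub-problem $\bm y' = \bm X_{B_j}\bm\gamma + \bm z$ by simulating fresh $\mathcal{N}(0,1/n)$ columns in place of the remaining ones together with a fresh draw of the other blocks from $\bm\pi$, forming the corresponding full pair $(\bm y,\bm X)$, running $\hat{\bm\beta}$ and reading off its $B_j$-block. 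The resulting loss has exactly the law of $L_j$ under $\bm\pi$, so for every $\epsilon'>0$,
\[
\P_{\bm\pi}\!\left( \frac{L_j}{\alpha^2\,2\log(p/k)} > 1-\epsilon' \right) \ \ge\ \inf_{\check{\bm\beta}}\,\P\!\left( \frac{\|\check{\bm\beta} - \bm\gamma\|^2}{\alpha^2\,2\log m} > 1-\epsilon' \right) \ \goto\ 1
\]
by Lemma~\ref{lm:regression_lower_single} (applied at dimension $m$, amplitude $\alpha\,\tau_{m,n}$), uniformly over $j$ by block symmetry. The key observation is that handing the statistician the contents of the other blocks can only lower her risk, so discarding that information leaves the lower bound intact --- the cross-contamination works in our favour rather than against us, which is precisely why no control of $(k\log(p/k))/n$ is needed.

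The aggregation step is then verbatim the end of the proof of Proposition~\ref{prop:lowerbound_sequence}: I would set $\bar L_j = \min\{L_j,\ \alpha^2\,2\log(p/k)\} \le L_j$, compare the lower bound $\E_{\bm\pi}(\bar L_1 + \cdots + \bar L_k) \ge (1-\epsilon')\,\alpha^2\,2\log(p/k)\sum_j \P_{\bm\pi}(\bar L_j/(\alpha^2\,2\log(p/k)) > 1-\epsilon')$ with the trivial upper bound expressed through $\P_{\bm\pi}(\sum_j \bar L_j \le (1-\epsilon)\,\alpha^2\,2k\log(p/k))$, and pass to the limit $p \goto \infty$ to obtain $\sup_{\hat{\bm\beta}}\P_{\bm\pi}(\sum_j \bar L_j \le (1-\epsilon)\,\alpha^2\,2k\log(p/k)) \goto 0$; since $\bar L_j \le L_j$ the same conclusion holds with $\sum_j L_j = \|\hat{\bm\beta} - \bm\beta\|^2$ in place of $\sum_j \bar L_j$. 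Finally, letting $\epsilon' \goto 0$ and then $\alpha \goto 1$ turns $\alpha^2(1-\epsilon)$ into an arbitrary $1-\epsilon$, and undoing the initial rescaling reinstates $\sigma^2$. Apart from the data-augmentation reduction --- where one must check carefully that the simulated augmentation preserves the conditional law of $(\bm y,\bm X)$ given block $j$'s parameter, so that Lemma~\ref{lm:regression_lower_single} genuinely applies at dimension $m$ --- every remaining computation has already been carried out in Section~\ref{sec:orthogonal-case-later}.
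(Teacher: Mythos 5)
Your proposal is correct and follows the same high-level architecture as the paper's proof: take the block-product prior of Proposition~\ref{prop:lowerbound_sequence} with the amplitude replaced by the regression-appropriate $\tau_{m,n}$ of Lemma~\ref{lm:regression_lower_single} ($m = p/k$), establish the per-block analogue of \eqref{eq:1_peak_ind}, and then aggregate verbatim. Where the two proofs genuinely diverge is in how they obtain the per-block bound \eqref{eq:1_peak_ind} in the presence of cross-block contamination. The paper writes $\bm y = \bm X^{(1)}\bm\beta^{(1)} + \tilde{\bm z}$ with $\tilde{\bm z} := \bm X^{-(1)}\bm\beta^{-(1)} + \bm z \sim \mathcal N(\bm 0, (\tau^2(k-1)/n + 1)\bm I_n)$, notes that this inflated variance is at least $1$, and applies Lemma~\ref{lm:regression_lower_single} directly. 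Your proposal instead performs a data-augmentation (oracle) reduction: any estimator of the full problem induces, by simulating fresh columns and fresh off-block spikes, a randomized estimator for the clean unit-noise $m$-dimensional sub-problem whose loss matches $L_j$ in law, so the per-block bound is the lemma applied at dimension $m$ with noise exactly $\mathcal N(\bm 0,\bm I_n)$.

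Your route is arguably the tidier of the two. The paper's reduction treats $\tilde{\bm z}$ as a Gaussian noise independent of the block-$1$ data, which is true marginally, but the estimator in the full problem also observes $\bm X^{-(1)}$, which is correlated with $\tilde{\bm z}$; conditionally on the full observation, $\tilde{\bm z}$ is a Gaussian mixture rather than a Gaussian, so invoking Lemma~\ref{lm:regression_lower_single} ``to $\bm y = \bm X^{(1)}\bm\beta^{(1)} + \tilde{\bm z}$'' glosses over whether the extra observation $\bm X^{-(1)}$ could help. Your oracle/data-augmentation reduction --- equivalent to revealing $\bm\beta^{-(j)}$ to the statistician, who then subtracts the cross-terms exactly and faces the clean unit-noise sub-problem --- resolves this cleanly, at the cost of the (small) extra argument that randomized estimators cannot beat deterministic ones in the $\inf$. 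Your ``let $\alpha\goto 1$'' step is actually dispensable here since your sub-problem has unit noise (you may take $\alpha=1$ outright, as Lemma~\ref{lm:regression_lower_single} permits); the paper implicitly needs $\alpha < 1$ because its rescaling by the inflated noise standard deviation uses $\alpha = (\tau^2(k-1)/n+1)^{-1/2}$, which is where its condition $\log(p/k)/n\goto 0$ earns its keep. Both routes are valid, and the aggregation step you describe is indeed identical to that in Section~\ref{sec:orthogonal-case-later}.
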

\begin{proof} The proof follows that of Proposition
  \ref{prop:lowerbound_sequence}. As earlier, assume that $\sigma = 1$
  without loss of generality.  The block prior $\bm\pi$ and the
  decomposition of the loss $L$ are exactly the same as before except
  that we work with
\[
\tau = \left( \sqrt{2\log (p/k)} - \log\sqrt{2\log (p/k)} \right)\left( 1 - 2\sqrt{\log (p/k)/n} \right).
\]
Hence, it suffices to prove \eqref{eq:1_peak_ind} in the current
setting, which does not directly follow from Lemma
\ref{lm:regression_lower_single} because of correlations between the
columns of $\bm X$. Thus, write the linear model as
\begin{equation}\nonumber
\bm y = \bm X \bm\beta + \bm z = \bm X^{(1)}\bm\beta^{(1)} + \bm X^{-(1)}\bm\beta^{-(1)} + \bm z,
\end{equation}
where $\X^{(1)}$ (resp.~$\bm\beta^{(1)}$) are the first $p/k$ columns
of $\X$ (resp.~coordinates of $\bm \beta$) and $\X^{(-1)}$ all the
others.  Then 
\[
\tilde{\bm z} := \bm X^{-(1)}\bm\beta^{-(1)} + \bm z \sim
\mathcal{N}\left(\bm{0}, (\tau^2(k-1)/n + 1)\bm{I}_n \right),
\]
and is independent of $\bm X^{(1)}$ and $\bm\beta^{(1)}$. Since
$\tau^2(k-1)/n + 1 \ge 1$ and $n/\log(p/k) \goto \infty$, we can apply
Lemma \ref{lm:regression_lower_single} to
\[
\bm y = \bm X^{(1)}\bm\beta^{(1)} + \tilde{\bm z}.
\]
This establishes \eqref{eq:1_peak_ind}.
\end{proof}


\section{Discussion}
\label{sec:discussion}

Regardless of the design, SLOPE is a concrete and rapidly computable
estimator, which also has intuitive statistical appeal.  For Gaussian
designs, taking Benjamini-Hochberg weights achieves asymptotic
minimaxity over large sparsity classes. Furthermore, it is likely that
our novel methods would allow us to extend our optimality results to
designs with \iid sub-Gaussian entries; for example, designs with
independent Bernoulli entries.  Since SLOPE runs without any knowledge
of the unknown degree of sparsity, we hope that taken together,
adaptivity and minimaxity would confirm the appeal of this procedure.

It would of course be of great interest to extend our results to a
broader class of designs. In particular, we would like to know what
types of results are available when the variables are correlated. In
such settings, is there a good way to select the sequence of weights
$\{\lambda_i\}$ when the rows of the design are independently sampled
from a multivariate Gaussian distribution with zero mean and
covariance $\bm \Sigma$, say? How should we tune this sequence for
fixed designs? This paper does not address such important questions,
and we leave these open for future research. 

Finally, returning to the issue of FDR control it would be interesting
to establish rigorously whether or not SLOPE controls the FDR in
sparse settings.

\section*{Acknowledgements}
W.~S. would like to thank Ma{\l}gorzata Bogdan and Iain Johnstone for helpful discussions. We thank Yuxin Chen, Rina Barber and Chiara Sabatti for their helpful comments about an early version of the manuscript. We would also like to thank the anonymous associate editor and two reviewers for many constructive comments.


\bibliographystyle{abbrv}
\bibliography{ref}

\appendix
\section{Proofs of technical results}
\label{sec:proofs-techn-results}


As is standard, we write $a_n \asymp b_n$ for two positive sequences
$a_n$ and $b_n$ if there exist two constants $C_1$ and $C_2$ (possibly
depending on $q$) such that $C_1 a_n \le b_n \le C_2 a_n$ for all
$n$. Also, we write $a_n \sim b_n$ if $a_n/b_n \goto 1$.

\subsection{Proofs for Section \ref{sec:alternatives-slope}}
\label{sec:proofs-sect-refs}

We remind the reader that the proofs in this subsection rely on some
lemmas to be stated later in the Appendix.
\begin{proof}[Proof of \eqref{eq:lasso_risk}]
  For simplicity, denote by $\hat{\bm\beta}$ the (full) Lasso solution
  $\hat{\bm\beta}_{\textnormal{\tiny Lasso}}$, and $\hat{\bm b}_S$ the
  solution to the reduced Lasso problem
\[
\underset{\bm b \in \R^k}{\mbox{minimize}}~ \frac12\|\bm y - \bm{X}_S \bm b\|^2 + \lambda\|\bm b \|_1,
\]
where $S$ is the support of the ground truth $\bm\beta$. We show that (i) \begin{equation}\label{eq:lasso_product}
\left\| \X_{\overline S}' \bm z \right\|_{\infty} \le (1+c/2)\sqrt{2\log p}
\end{equation}
and (ii) 
\begin{equation}\label{eq:lasso_inflat}
 \left\| \X'_{\overline S}\X_S(\Beta_S - \hat{\bm b}_S) \right\|_\infty < C\sqrt{(k\log^2 p)/n}
\end{equation}
for some constant $C$, both happen with probability tending to one.
Now observe that $\X_{\overline S}'(\bm y - \X_S \hat{\bm b}_S) =
\X_{\overline S}' \bm z + \X'_{\overline S}\X_S(\Beta_S - \hat{\bm
  b}_S)$.  Hence, combining \eqref{eq:lasso_product} and
\eqref{eq:lasso_inflat} and using the fact that $(k\log p)/n \goto 0$
give
\begin{equation}\nonumber
\begin{aligned}
\left\| \X_{\overline S}'(\bm y - \X_S \hat{\bm b}_S) \right\|_{\infty}  &\le  \left\| \X'_{\overline S}\X_S(\Beta_S - \hat{\bm b}_S) \right\|_\infty  + \left\| \X_{\overline S}' \bm z \right\|_{\infty}\\
& \le C\sqrt{(k\log^2 p)/n} + (1+c/2)\sqrt{2\log p} \\
& = o(\sqrt{2\log p}) + (1+c/2)\sqrt{2\log p}\\
& < (1+c)\sqrt{2\log p}
\end{aligned}
\end{equation}
with probability approaching one. This last inequality together with
the fact that $\hat{ \bm b}_S$ obeys the KKT conditions for the
reduced Lasso problem imply that padding $\hat{\bm b}_S$ with zeros on
$\barS$ obeys the KKT conditions for the full Lasso problem and is,
therefore, solution. 

We need to justify \eqref{eq:lasso_product} and
\eqref{eq:lasso_inflat}. First, Lemmas \ref{lm:gauss_max_weak} and
\ref{lm:gauss_concentrate} imply \eqref{eq:lasso_product}. Next, to
show \eqref{eq:lasso_inflat}, we rewrite the left-hand side in 
\eqref{eq:lasso_inflat} as
\[
\X'_{\overline S}\X_S(\Beta_S - \hat{\bm b}_S) = \X'_{\overline S}\X_S(\X'_S\X_S)^{-1}(\X'_S(\bm y - \X_S\hat{\bm b}_S ) - \X_S' \bm z).
\]
By Lemma \ref{lm:bound_gradient}, we have that 
\[
\left\|  \X'_S(\bm y - \X_S\hat{\bm b}_S ) - \X_S' \bm z \right\| \le \sqrt{k}\lambda + \left\| \X_S' \bm z \right\| \le \sqrt{k}\lambda + \sqrt{32 k\log(p/k)} \le C' \sqrt{k\log p}
\]
holds with probability at least $1 - \e{-n/2} - (\sqrt{2}\mathrm{e}k/p)^k \goto 1$. 
In addition, Lemma \ref{lm:slim_spec_refined} with $t = 1/2$ gives
\begin{equation}\nonumber
\left\| \bm{X}_{S}(\bm{X}_{S}'\bm{X}_{S})^{-1} \right\| \le \frac{1}{\sqrt{1-1/n} - \sqrt{\K/n} - 1/2} < 3
\end{equation}
with probability at least $1 - \e{-n/8} \goto 1$. Hence, from the last
two inequalities it follows that
\begin{equation}\label{eq:lasso_term_bound}
\left\| \X_S(\X'_S\X_S)^{-1}(\X'_S(\bm y - \X_S\hat{\bm b}_S ) - \X_S' \bm z) \right\| \le C'' \sqrt{k\log p}
\end{equation}
with probability at least $1 - \e{-n/2} - (\sqrt{2}\mathrm{e}k/p)^k -
\e{-n/8} \goto 1$. Since $\X'_{\overline S}$ is independent of
$\X_S(\X'_S\X_S)^{-1}(\X'_S(\bm y - \X_S\hat{\bm b}_S ) - \X_S' \bm
z)$, Lemma \ref{lm:gauss_max_weak} gives 
\begin{multline}\nonumber
\left\| \X_{\overline S}'\X_S(\X'_S\X_S)^{-1}(\X'_S(\bm y - \X_S\hat{\bm b}_S ) - \X_S' \bm z) \right\|_\infty \\
\le \sqrt{\frac{2\log p}{n}} \left\| \X_S(\X'_S\X_S)^{-1}(\X'_S(\bm y - \X_S\hat{\bm b}_S ) - \X_S' \bm z) \right\|
\end{multline}
with probability approaching one. Combining this with
\eqref{eq:lasso_term_bound} gives \eqref{eq:lasso_inflat}.


Let $\tilde{\bm b}_S$ be the solution to
\[
\underset{\bm b \in \R^k}{\mbox{minimize}}~ \frac12 \left\| \bm\beta_S + \X_S' \bm z - \bm b \right\|^2 + \lambda\|\bm b\|_1.
\]
To complete the proof of \eqref{eq:lasso_risk}, it suffices to
establish (i) that for any constant $\delta > 0$,
\begin{equation}\label{eq:lasso_lift_orth}
\sup_{\|\bm\beta\|_0 \le k}\P \lb \frac{\|\tilde{\bm b}_S - \bm\beta_S\|^2}{2(1+c)^2 k\log p} > 1-\delta \rb \goto 1,
\end{equation}
and (ii)
\begin{equation}\label{eq:lasso_triangle}
\|\tilde{\bm b}_S - \hat{\bm b}_S\| = o_{\P}\left(\| \tilde{\bm b}_S - \Beta_S \|\right)
\end{equation}
since \eqref{eq:lasso_lift_orth} and \eqref{eq:lasso_triangle} give
\begin{equation}\label{eq:lasso_beta_hat}
\sup_{\|\bm\beta\|_0 \le k}\P \lb \frac{\|\hat{\bm b}_S - \bm\beta_S\|^2}{2(1+c)^2 k\log p} > 1-\delta \rb \goto 1
\end{equation}
for each $\delta > 0$. Note that taking $\delta = 1 - 1/(1+c)^2$ in \eqref{eq:lasso_beta_hat} and using the fact that $\hat{\bm b}_S$ is solution to Lasso with probability approaching one finish the proof

\textit{Proof of \eqref{eq:lasso_lift_orth}.}  Let $\beta_i =\infty$
if $i \in S$ and otherwise zero (treat $\infty$ as a sufficiently
large positive constant). For each $i \in S$, $\tilde{b}_{S,i} =
\beta_i + \bm{X}_i'\bm z - \lambda$, and
\[
|\tilde{b}_{S,i} - \beta_i| = |\bm{X}_i'\bm z - \lambda| \ge \lambda - |\bm{X}_i'\bm z|.
\]
On the event $\{\max_{i \in S} |\bm{X}_i'\bm z| \le \lambda\}$, which
happens with probability tending to one, this inequality gives
\begin{align*}
  \|\tilde{\bm b}_S - \bm\beta_S\|^2 \ge \sum_{i \in S} (\lambda - |\bm{X}_i'\bm z|)^2 & = k\lambda^2 - 2\lambda\sum_{i \in S} |\bm{X}_i'\bm z| + \sum_{i \in S} (\bm{X}_i'\bm z)^2\\
  & = (1 + o_{\P}(1))2(1+c)^2k\log p,
\end{align*}
where we have used that both $\sum_{i \in S} (\bm{X}_i'\bm z)^2$ and
$\sum_{i \in S} |\bm{X}_i'\bm z|$ are $O_{\P}(k)$. This proves the
claim.

\textit{Proof of \eqref{eq:lasso_triangle}.}  Apply Lemma
\ref{lm:two_beta} with $T$ replaced by $S$ (here each of $\hat{\bm
  b}_S, \tilde{\bm b}_S$ and $\bm\beta$ is supported on $S$). Since
$k/p \goto 0$, for any constant $\delta' > 0$, all the singular values
of $\X_S$ lie in $(1 - \delta', 1+\delta')$ with overwhelming
probability (see, for example, \cite{vershynin}). Consequently, Lemma
\ref{lm:two_beta} ensures \eqref{eq:lasso_triangle}.


\end{proof}

\begin{proof}[Proof of \eqref{eq:seq_fdr_eq}]
  We assume $\sigma = 1$ and put $\bm\lambda = \bm\lambbh$. As in the
  proof of Theorem \ref{thm:orth_main_general_intro}, we decompose the
  total loss as 
\[
|\hat{\bm\beta}_{\textnormal{\tiny Seq}} - \bm\beta \|^2 =
\|\hat{\bm\beta}_{\textnormal{\tiny Seq},S} - \bm\beta_S\|^2 +
\|\hat{\bm\beta}_{\textnormal{\tiny Seq},\overline S} -
\bm\beta_{\overline S}\|^2 = \|\hat{\bm\beta}_{\textnormal{\tiny
    Seq},S} - \bm\beta_S\|^2 + \|\hat{\bm\beta}_{\textnormal{\tiny
    Seq},\overline S}\|^2.
\]
The largest possible value of the loss off support is achieved when
$\bm y_{\overline S}$ is sequentially soft-thresholded by
$\bm\lambda^{-[k]}$. Hence, by the proof of Lemma
\ref{lm:bound-out-support}, we obtain
\[
\E \|\hat{\bm\beta}_{\textnormal{\tiny Seq},\overline S}\|^2 =
o\left(2k\log(p/k)\right)
\]
for all $k$-sparse $\bm\beta$.

Now, we turn to consider the loss on support. For any $i \in S$, the
loss is at most 
\[
\left( |z_i| + \lambda_{r(i)} \right)^2 = \lambda_{r(i)}^2 + z_i^2 + 2|z_i| \lambda_{r(i)}.
\]
Summing the above equalities over all $i \in S$ gives 
\[
\E \|\hat{\bm\beta}_{\textnormal{\tiny Seq},S} - \bm\beta_S\|^2 \le
\sum_{i=1}^k \lambda_{i}^2 + \sum_{i\in S} z_i^2 + 2\sum_{i\in S}
|z_i|\lambda_{r(i)}.
\]
Note that the first term $\sum_{i=1}^k \lambda_{i}^2 = (1 + o(1)) \,
2k\log(p/k)$, and the second term has expectation $\E \sum_{i\in S}
z_i^2 = k = o(2k\log(p/k))$, so that it suffices to show that
\begin{equation}\label{eq:seq_fdr_main}
  \E \left[2\sum_{i\in S} |z_i|\lambda_{r(i)} \right] = o\left(2k\log(p/k)\right).
\end{equation}
We emphasize that both $z_i$ and $r(i)$ are random so that
$\{\lambda_{r(i)}\}_{i \in S}$ and $\{z_i\}_{i \in S}$ may not be
independent. Without loss of generality, assume $S = \{1, \ldots, k\}$
and for $1 \le i \le k$, let $r'(i)$ be the rank of the $i$th
observation among the first $k$. Since $\bm \lambda$ is nonincreasing
and $r'(i) \le r(i)$, we have
\[
\sum_{1 \le i \le k} |z_i| \lambda_{r(i)} \le \sum_{1 \le i \le k}
|z_i| \lambda_{r'(i)} \le \sum_{1 \le i \le k} |z|_{(i)} \lambda_i,
\]  
where $|z|_{(1)} \ge \cdots \ge |z|_{(k)}$ are the order statistics of
$z_1, \ldots, z_k$.  The second inequality follows from the fact that
for any nonnegative sequences $\{a_i\}$ and $\{b_i\}$, $\sum_i a_i b_i
\le \sum_i a_{(i)} b_{(i)}$. Therefore, letting $\zeta_1, \ldots,
\zeta_k$ be \iid $\mathcal{N}(0, 1)$, \eqref{eq:seq_fdr_main} follows
from the estimate
\begin{equation}\label{eq:seq_fdr_main2}
\sum_{i=1}^k \lambda_i \E |\zeta|_{(i)} = o\left(2k\log(p/k)\right).
\end{equation}
To argue about \eqref{eq:seq_fdr_main2}, we work with the
approximations $\lambda_i \sim \sqrt{2\log(p/i)}$ and
$\E |\zeta|_{(i)} = O\left(\sqrt{2\log(2k/i)} \right)$ (see
e.g.~\eqref{eq:order_expec}), so that the claim is a consequence of
\[
\sum_{i=1}^k \sqrt{\log\frac{p}{i} \log\frac{2k}{i} } =
o\left(2k\log(p/k) \right), 
\]
which is justified as follows:
\begin{align*}
  \sum_{i=1}^k \sqrt{\log\frac{p}{i} \log\frac{2k}{i} }
  & \le k\int_0^1 \sqrt{\log\frac{p/k}{x} \log\frac2{x}} \d x \\
  & \le k\int_0^1 \sqrt{\log\frac{p}{k}} \sqrt{\log\frac2{x}} +
  \frac{\log\frac1{x}\sqrt{\log\frac2{x}}}{2\sqrt{\log(p/k)}} \d x\\
  & = C_1k\sqrt{\log\frac{p}{k}} + \frac{C_2k}{\sqrt{\log(p/k)}}
\end{align*}
for some absolute constants $C_1, C_2$. Since $\log(p/k) \goto
\infty$, it is clear that the right-hand side of the above display is
of $o(2k\log(p/k))$.
\end{proof}

\subsection{Proofs for Section \ref{sec:slope-with-orth}}
\label{sec:proofs-sect-refs-orth}

To begin with, we derive a dual formulation of the SLOPE program
\eqref{eq:slope_lambda}, which provides a nice geometrical
interpretation. This dual formulation will also be used in the proof
of Lemma \ref{lm:lifted}. Our exposition largely borrows from
\cite{slope}.


Rewrite \eqref{eq:slope_lambda} as
\begin{equation}\label{eq:primal_slope}
\underset{\bm b, \bm r}{\mbox{minimize}} \quad \frac12\|\bm r\|^2 + \sum_{i} \lambda_i|b|_{(i)} \quad \mbox{subject to} \quad  \X\bm b + \bm r = \bm y,
\end{equation}
whose Lagrangian is
\[
\mathcal{L}(\bm{b}, \bm{r}, \bm{\nu}) : = \frac12\|\bm r\|^2 + \sum_{i} \lambda_i|b|_{(i)} - \bm\nu'(\X\bm{b} + \bm{r} - \bm{y}).
\]
Hence, the dual objective is given by
\begin{align*}
\inf_{\bm{b}, \bm{r}} ~ \mathcal{L}(\bm{b}, \bm{r}, \bm{\nu}) &= \bm\nu'\bm y - \sup_{\bm r}\left\{\bm\nu'\bm{r} - \frac12\|\bm r\|^2 \right\} - \sup_{\bm b}\left\{(\X'\bm\nu)'\bm{b} - \sum_{i} \lambda_i|b|_{(i)} \right\}\\
&= \bm\nu'\bm{y} - \frac12\|\bm\nu\|^2 -
\begin{cases}
0 &  ~ \bm\nu \in C_{\bm\lambda, \X}\\
+\infty & \mbox{otherwise},
\end{cases}
\end{align*}
where $C_{\bm\lambda, \X} := \{\bm\nu: \X' \bm\nu ~ \mbox{is majorized
  by}~ \bm\lambda\}$ is a (convex) polytope. 
It thus follows that the dual reads
\begin{equation}\label{eq:dual}
\underset{\bm\nu}{\mbox{maximize}} \quad \bm\nu'\bm{y} - \frac12\|\bm\nu\|^2 \quad \mbox{subject to}~ \bm\nu \in C_{\bm\lambda, \X}.
\end{equation}
The equality $\bm\nu'\bm{y} - \|\bm\nu\|^2/2 = - \|\bm y - \bm\nu\|^2/2 + \|\bm y\|^2/2 $ reveals that the dual solution $\hat{\bm\nu}$ is indeed the projection of $\bm y$ onto $C_{\bm\lambda, \X}$. The minimization of the Lagrangian over $\bm r$ is attained at $\bm r = \bm\nu$. This implies that the primal solution $\hat{\bm\beta}$ and the dual solution $\hat{\bm\nu}$ obey
\begin{equation}\label{eq:dual_stationary}
\bm y - \X\hat{\bm\beta} = \hat{\bm\nu}.
\end{equation}


We turn to proving the facts.
\begin{proof}[Proof of Fact \ref{fc:norm}]
  Without loss of generality, suppose both $\bm a$ and $\bm b$ are
  nonnegative and arranged in nonincreasing order. Denote by
  $T_k^{\bm a}$ the sum of the first $k$ terms of $\bm a$ with
  $T_0^{\bm a} \triangleq 0$, and similarly for $\bm b$. We have
\[
\norm{\bm a}^2 = \sum^p_{k=1}a_k(T^{\bm a}_k - T^{\bm a}_{k-1}) =
\sum^{p-1}_{k=1}T^{\bm a}_k(a_k - a_{k+1}) + a_p T^{\bm a}_p \geq
\sum^{p-1}_{k=1}T^{\bm b}_k(a_k - a_{k+1}) + a_pT^{\bm b}_p =
\sum^p_{k=1}a_kb_k.
\]
Similarly,
\[
\norm{\bm b}^2 = \sum^{p-1}_{k=1}T^{\bm b}_k(b_k - b_{k+1}) +
b_pT^{\bm b}_p \le \sum^{p-1}_{k=1}T^{\bm a}_k(b_k - b_{k+1}) + b_pT^{\bm a}_p = \sum^p_{k=1}b_k(T^{\bm a}_k - T^{\bm a}_{k-1}) = \sum^p_{k=1}a_kb_k, 
\]
which proves the claim.
\end{proof}

\begin{proof}[Proofs of Facts \ref{fc:maj_zero} and \ref{fc:slope_maj}]
  Taking $\X = \bm{I}_p$ in the dual formulation,
  \eqref{eq:dual_stationary} immediately implies that $\bm a -
  \slope{\bm a}$ is the projection of $\bm a $ onto the polytope
  $C_{\bm\lambda, \bm I_p}$. By definition, $C_{\bm\lambda, \bm I_p}$
  consists of all vectors majorized by $\bm\lambda$. Hence, $\bm a -
  \slope{\bm a}$ is always majorized by $\bm\lambda$. In particular,
  if $\bm a$ is majorized by $\bm\lambda$, then the projection $\bm a
  - \slope{\bm a}$ of $\bm a$ is identical to $\bm a$ itself. This
  gives $\slope{\bm a} = \bm 0$.
\end{proof}

\begin{proof}[Proof of Fact \ref{fc:monotone}]
  Assume $\bm a$ is nonnegative without loss of generality.  
It is intuitively obvious that
\[
\bm b \ge \bm a \quad \Longrightarrow \quad \slope{\bm b} \ge \slope{\bm
  a} ,
\]
where as usual $\bm b \ge \bm a$ means that $\bm b - \bm a
\in \R^p_+$. In other words, if the observations increase, the fitted
values do not decrease. To save time, we directly verify this claim
by using Algorithm 3 (FastProxSL1) from \cite{slope}.  By the
averaging step of that algorithm, we can see that for each $1 \le i, j
\le p$,
\[
\frac{\partial \left[ \slope{\bm a} \right]_i}{\partial a_j} = 
\begin{cases}
\frac1{\#\{1 \le k \le p: \ \left[ \slope{\bm a} \right]_k = \left[ \slope{\bm a} \right]_j\}}, & \quad
\slope{\bm a}_j = \slope{\bm a}_i > 0,\\
0, & \quad \mbox{otherwise}.  
\end{cases}
\]
This holds for all $\bm a \in \R^p$ except for a set of measure zero.
The nonnegativity of $\partial \left[ \slope{\bm a} \right]_i/\partial
a_j$ along with the Lipschitz continuity of the prox imply the
monotonicity property. A consequence is that $\| \left[ \slope{\bm a}
\right]_{\overline T}\|$ does not decrease as we let $a_i \goto
\infty$ for all $i \in T$. In the limit, $\|\left[ \slope{\bm a}
\right]_{\overline T}\|$ monotonically converges to
$\|\slopex{\bm\lambda^{-|T|}}{\bm a_{\overline T}}\|$.  This gives the
desired inequality.

\end{proof}

As a remark, we point out that the proofs of Facts \ref{fc:maj_zero}
and \ref{fc:slope_maj} suggest a very simple proof of Lemma
\ref{lm:algo-monotone}. Since $\bm a - \slope{\bm a}$ is the
projection of $\bm a$ onto $C_{\bm\lambda, \bm I_p}$,
$\|\slope{\bm a}\|$ is thus the distance between $\bm a$ and the
polytope $C_{\bm\lambda, \bm I_p}$. Hence, it suffices to find a point
in the polytope at a distance of $\|(|\bm a| - \bm\lambda)_+\|$ away
from $\bm a$. The point $\bm b$ defined as
$b_i = \min\{|a_i|, \lambda_i\}$ does the job.

Now, we proceed to prove the preparatory lemmas for Theorem
\ref{thm:orth_main_general_intro}, namely, Lemmas
\ref{lm:bound-out-support-close} and
\ref{lm:bound-out-support-far}. The first two lemmas below can be
found in \cite{handbook}. 

\begin{lemma}\label{lm:gamma_der}
Let $U$ be a $\mathrm{Beta}(a, b)$ random variable. Then
\begin{align*}
&\E\log U = (\log\Gamma(a))' - (\log\Gamma(a + b))',
\end{align*}
where $\Gamma$ denotes the Gamma function and $(\log\Gamma(x))'$ is the derivative with respect to $x$.
\end{lemma}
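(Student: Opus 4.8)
The plan is to evaluate $\E\log U$ directly from the Beta density and to recognize the answer as a logarithmic derivative of the Beta function; this is a classical identity (see \cite{handbook}). Recall that $U$ has density $f(u) = u^{a-1}(1-u)^{b-1}/B(a,b)$ on $(0,1)$, where $B(a,b) = \Gamma(a)\Gamma(b)/\Gamma(a+b)$, so that
\[
\E\log U = \frac{1}{B(a,b)}\int_0^1 (\log u)\, u^{a-1}(1-u)^{b-1}\,\d u.
\]

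The key observation is that $\partial_a\, u^{a-1} = u^{a-1}\log u$, so that the integral on the right equals $\partial_a B(a,b)$ and hence $\E\log U = \partial_a \log B(a,b)$. Since $\log B(a,b) = \log\Gamma(a) + \log\Gamma(b) - \log\Gamma(a+b)$, differentiating with respect to $a$ gives $(\log\Gamma(a))' - (\log\Gamma(a+b))'$, which is exactly the claim. Everything here is a one-line computation once the interchange of derivative and integral is licensed.

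The only step needing justification is precisely that interchange, i.e.~$\partial_a \int_0^1 u^{a-1}(1-u)^{b-1}\,\d u = \int_0^1 u^{a-1}(\log u)(1-u)^{b-1}\,\d u$. I would fix an arbitrary $a_0 > 0$, restrict $a$ to the compact interval $[a_0/2,\, 2a_0]$, and note that on this range the integrand of the right-hand side is dominated in absolute value by $u^{a_0/2 - 1}|\log u|(1-u)^{b-1}$, which is integrable on $(0,1)$ because $u^{a_0/2-1}|\log u|$ is integrable near $0$ and $(1-u)^{b-1}$ is integrable near $1$. The standard criterion for differentiating under the integral sign then applies at every $a \in (a_0/2,\, 2a_0)$, and since $a_0$ was arbitrary the identity holds for all $a > 0$. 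This domination bound is the main (and essentially only) obstacle, and it is entirely routine.
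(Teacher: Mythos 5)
Your proof is correct. The paper does not actually give an argument here: it simply states that Lemmas A.1 and A.2 ``can be found in \cite{handbook}.'' Your self-contained derivation is the standard one hiding behind that citation --- differentiate $B(a,b)=\int_0^1 u^{a-1}(1-u)^{b-1}\,\d u$ in $a$ to produce the $\log u$ factor, identify $\E\log U=\partial_a\log B(a,b)$, and expand $\log B(a,b)=\log\Gamma(a)+\log\Gamma(b)-\log\Gamma(a+b)$ --- and your domination bound $u^{a_0/2-1}|\log u|(1-u)^{b-1}$ on a compact $a$-interval is exactly what is needed to license the interchange.
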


\begin{lemma}\label{lm:gamma-prop}
For any integer $m \ge 1$,
\begin{equation*}
(\log\Gamma(m))' = -\gamma + \sum_{j=1}^{m-1}\frac1{j} = \log m + O\Big(\frac{1}{m}\Big) ,
\end{equation*}
where $\gamma = 0.577215\cdots$ is the Euler constant.
\end{lemma}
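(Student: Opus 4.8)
The plan is to combine the standard recurrence for the digamma function $\psi := (\log\Gamma)'$ with an elementary integral-comparison estimate for partial sums of the harmonic series.

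First I would prove the exact identity. Differentiating the logarithm of the functional equation $\Gamma(x+1) = x\,\Gamma(x)$ gives $\psi(x+1) = \psi(x) + 1/x$ for all $x > 0$; iterating this from $x = 1$ up to $x = m$ yields $(\log\Gamma(m))' = \psi(m) = \psi(1) + \sum_{j=1}^{m-1} 1/j$. It then remains only to identify $\psi(1) = \Gamma'(1)/\Gamma(1) = \Gamma'(1)$. Taking the logarithmic derivative of the Weierstrass product $1/\Gamma(x) = x\,e^{\gamma x}\prod_{n\ge 1}(1 + x/n)\,e^{-x/n}$ and evaluating at $x = 1$ gives, after a telescoping sum, $\psi(1) = -\gamma$; this is the point at which the Euler constant enters. (Alternatively, one may adopt $\gamma := \lim_{n\to\infty}\bigl(\sum_{j\le n} 1/j - \log n\bigr)$ as the definition of $\gamma$ and verify $\Gamma'(1) = -\gamma$ directly from it.) This establishes the first equality.

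For the second equality I would estimate $\sum_{j=1}^{m-1} 1/j - \log m$. Since $\log m = \sum_{j=1}^{m-1}\log\frac{j+1}{j}$ telescopes, this difference equals $\sum_{j=1}^{m-1}\bigl(\frac1j - \log(1 + \frac1j)\bigr)$, a series of positive terms each of which is $\frac{1}{2j^2} + O(j^{-3})$; hence the series converges, its total sum is $\gamma$ by the characterization recalled above, and its tail beyond index $m$ is $O\bigl(\sum_{j\ge m} j^{-2}\bigr) = O(1/m)$. Therefore $\sum_{j=1}^{m-1} 1/j = \log m + \gamma + O(1/m)$, and subtracting $\gamma$ gives the claim. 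Every step here is classical, so there is no genuine obstacle; the only point requiring care is using the two standard characterizations of $\gamma$ (via $\Gamma'(1)$ and via the harmonic-series limit) consistently, so that the constant cancels exactly when $\psi(1) + \sum_{j=1}^{m-1} 1/j$ is combined into $\log m + O(1/m)$.
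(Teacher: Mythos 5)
Your proof is correct. Note that the paper does not actually prove this lemma; it simply cites it to a handbook (``The first two lemmas below can be found in \cite{handbook}''), treating both the digamma recurrence and the harmonic-sum asymptotic as standard facts. Your argument supplies a clean, self-contained derivation: the recurrence $\psi(x+1)=\psi(x)+1/x$ from the functional equation, $\psi(1)=-\gamma$ from the Weierstrass product (or, equivalently, from the definition of $\gamma$), and the telescoping decomposition $\sum_{j=1}^{m-1}1/j-\log m=\sum_{j=1}^{m-1}\bigl(1/j-\log(1+1/j)\bigr)$ with tail bound $O(1/m)$. One small point of care, which you handle correctly: the convergent series $\sum_{j\ge 1}\bigl(1/j-\log(1+1/j)\bigr)$ sums to $\gamma$ and its tail beyond $m$ is $O(1/m)$, so the partial sum through $m-1$ is $\gamma+O(1/m)$, which combined with $\psi(1)=-\gamma$ gives exactly the stated $\log m + O(1/m)$. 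This is the standard route and there is no gap.
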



\begin{lemma}\label{lm:bound-out-support-close}
Let $\bm\zeta \sim \mathcal{N}(\bm 0, \bm{I}_{p-k})$. Under the assumptions of Theorem \ref{thm:orth_main_general_intro}, for any constant $A > 0$,
\[
\frac1{2k\log(p/k)}\sum_{i = 1}^{\floor{Ak}}\E\left( |\zeta|_{(i)} - \lambbh_{k+i}\right)^2_+ \goto 0.
\]

\end{lemma}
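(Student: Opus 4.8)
The plan is to control the whole sum in a single stroke rather than term by term. A naive ``the positive part vanishes with high probability'' argument is doomed: for the smallest indices $i$ the Benjamini--Hochberg curve $\bm\lambbh$ is so liberal that $|\zeta|_{(i)}$ exceeds $\lambbh_{k+i}$ with probability bounded away from zero. The useful fact is rather that the \emph{total} overshoot, measured in squared error, is only of order $k$ --- a factor $\log(p/k)$ below the normalization $2k\log(p/k)$.

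First I would invoke the elementary inequality $(a-b)_+^2 \le (a^2-b^2)_+$, valid for $a,b\ge 0$ (both sides vanish when $a\le b$, and when $a>b$ one has $(a-b)^2 \le (a-b)(a+b)$), together with the monotonicity of $\bm\lambbh$, so that $\lambbh_{k+i}\ge\lambbh_{k+m}$ for all $i\le m:=\floor{Ak}$. Setting $L:=\lambbh_{k+m}^2$, this gives $\bigl(|\zeta|_{(i)}-\lambbh_{k+i}\bigr)_+^2 \le \bigl(|\zeta|_{(i)}^2-L\bigr)_+$ for each $i\le m$. Summing over $i=1,\dots,m$, extending the sum to all $p-k$ order statistics (each added summand is nonnegative), and using that $\{|\zeta|_{(i)}^2\}_{i=1}^{p-k}$ coincides with $\{\zeta_j^2\}_{j=1}^{p-k}$ as a multiset, I arrive at
\[
\sum_{i=1}^{m}\bigl(|\zeta|_{(i)}-\lambbh_{k+i}\bigr)_+^2 \ \le\ \sum_{j=1}^{p-k}\bigl(\zeta_j^2-L\bigr)_+ .
\]
Here I assume $p$ large enough that $(1+A)k\le p$, which in particular forces $m\le p-k$ and $\lambbh_{k+m}\ge 0$.

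It then remains to estimate a one-dimensional expectation. Writing $\bar\Phi:=1-\Phi$ and taking $\zeta_1\sim\mathcal{N}(0,1)$, two changes of variables and the Mills-ratio bound $\bar\Phi(u)\le\phi(u)/u$ give
\[
\E\bigl(\zeta_1^2-L\bigr)_+ = \int_L^\infty 2\bar\Phi(\sqrt v)\,\d v = 4\int_{\sqrt L}^\infty u\,\bar\Phi(u)\,\d u \ \le\ 4\int_{\sqrt L}^\infty \phi(u)\,\d u = 4\,\bar\Phi(\sqrt L).
\]
Taking expectations in the previous display and using the defining identity $\bar\Phi(\lambbh_j)=jq/(2p)$ with $j=k+m$ and $\sqrt L=\lambbh_{k+m}$ yields
\[
\E\sum_{i=1}^{m}\bigl(|\zeta|_{(i)}-\lambbh_{k+i}\bigr)_+^2 \ \le\ (p-k)\cdot 4\,\bar\Phi(\lambbh_{k+m}) = \frac{2(p-k)(k+m)q}{p} \ \le\ 2(1+A)kq,
\]
so, after dividing by $2k\log(p/k)$, the quantity in the lemma is at most $(1+A)q/\log(p/k)$, which tends to $0$ because $k/p\goto 0$ forces $\log(p/k)\goto\infty$.

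The only real obstacle is conceptual: one must \emph{not} try to argue index-by-index that $|\zeta|_{(i)}\le\lambbh_{k+i}$, since that fails for the smallest $i$. The inequality $(a-b)_+^2\le(a^2-b^2)_+$ is exactly what linearizes the sum enough to let one add back all $p-k$ coordinates and defer everything to the Gaussian tail; every other step is routine.
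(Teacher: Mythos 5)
Your proof is correct, and it takes a genuinely different and markedly more elementary route than the paper's. The paper proceeds term-by-term via a bias–variance decomposition: it bounds $\operatorname{Var}(|\zeta|_{(i)})$ by appealing to a concentration result for order statistics, and controls the squared bias by representing $|\zeta|_{(i)}$ through a $\mathrm{Beta}$ variable, using digamma-function asymptotics to compute $\E|\zeta|_{(i)}^2$, and then performing a careful integral estimate on $\sum_i \log^2(1+k/i)$. You instead collapse everything to a single one-dimensional tail bound by (i) the algebraic inequality $(a-b)_+^2 \le (a^2-b^2)_+$, (ii) replacing the thresholds $\lambbh_{k+i}$ by the weakest one $\lambbh_{k+m}$ over the range $i\le m=\floor{Ak}$, (iii) using exchangeability of $\{|\zeta|_{(i)}^2\}$ to de-sort the sum, and (iv) exploiting that the defining identity $\bar\Phi(\lambbh_j)=jq/(2p)$ supplies exactly the $1/p$ factor that kills the $p-k$ coordinates. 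This avoids the external order-statistics concentration result and the Beta/Gamma machinery entirely, and in fact yields the quantitatively stronger conclusion that the unnormalized sum is $O(k)$ (namely $\le 2(1+A)qk$), not merely $o(k\log(p/k))$. The only caveat you correctly flag is that one needs $p$ large enough that $(1+A)k \le p$ so that $\lambbh_{k+m}$ is well-defined and nonnegative, which is automatic since $k/p\to 0$. One thing your approach does not buy: because you lower-bound every $\lambbh_{k+i}$ by the single value $\lambbh_{k+m}$, this trick would not handle the complementary far-tail sum in Lemma~\ref{lm:bound-out-support-far}, where $i$ ranges up to $p-k$ and the thresholds decay to a constant; the paper's Chernoff argument is still needed there.
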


\begin{proof}[Proof of Lemma \ref{lm:bound-out-support-close}]
  Write $\lambda_i = \lambbh_i$ for simplicity.  It is sufficient to
  prove a stronger version in which the order statistics
  $|\zeta|_{(i)}$ come from $p$ \iid $\mathcal{N}(0,1)$. The reason is
  that the order statistics will be stochastically larger, thus
  enlarging $\E\left( |\zeta|_{(i)} - \lambbh_{k+i}\right)^2_+$, since
  $(\zeta - \lambda)_+^2$ is nondecreasing in $\zeta$.  Applying the
  bias-variance decomposition, we get
\begin{equation}\label{eq:bias_var}
\E\left( |\zeta|_{(i)} - \lambda_{k+i}\right)^2_+ \le \E\left( |\zeta|_{(i)} - \lambda_{k+i}\right)^2 = \operatorname{Var}(|\zeta_{(i)}|) + \left(\E |\zeta_{(i)}| - \lambda_{k+i}\right)^2.
\end{equation}
We proceed to control each term separately. 

For the variance, a direct application of Proposition 4.2 in
\cite{orderconcentration} gives
\begin{equation}\label{eq:var_order_bound}
\operatorname{Var}(|\zeta_{(i)}|) = O\Bigl(\frac1{i\log(p/i)}\Bigr)
\end{equation}
for all $i \le p/2$. Hence,
\[
\sum_{i=1}^{\lfloor Ak \rfloor} \operatorname{Var}(|\zeta_{(i)}|)
= O\left( \sum_{i=1}^{\lfloor Ak \rfloor}
\frac1{i\log(p/i)} \right) = o(2k\log(p/k)),
\]
where the last step makes use of $\log(p/k) \goto \infty$. It remains to show that 
\begin{equation}\label{eq:bias_part}
  \sum_{i=1}^{\lfloor Ak \rfloor} \left(\E |\zeta_{(i)}| - \lambda_{k+i}\right)^2 = o(2k \log(p/k)).   
\end{equation}
Let $U_1, \ldots, U_p$ be \iid uniform random variables on $(0, 1)$
and $U_{(i)}$ be the $i^{\text{th}}$ smallest---please note that for a
change, the $U_i$'s are sorted in increasing order. We know that
$U_{(i)}$ is distributed as $\mathrm{Beta}(i, p+1-i)$ and that
$|\zeta|_{(i)}$ has the same distribution as
$\Phi^{-1}(1 - \order{U}/2)$. Making use of Lemmas
\ref{lm:gamma_der} and \ref{lm:gamma-prop} then gives
\[
\E |\zeta|_{(i)}^2 = \E \left[ \Phi^{-1}(1 - \order{U}/2) ^2\right] \sim \E \left[ 2\log(2/U_{(i)}) \right] = 2\log 2 + 2 \sum_{j=i}^p \frac1{j} = (1+o(1))2\log(p/i),
\]
where the second step follows from $(1 + o_{\P}(1))2\log(2/U_{(i)}) \le \Phi^{-1}(1 - \order{U}/2) ^2 \le 2\log(2/U_{(i)})$ for $i = o(p)$. As a result,
\begin{equation}\label{eq:order_expec}
\begin{aligned}
\E |\zeta_{(i)}| &\le \sqrt{\E |\zeta|^2_{(i)}} = (1+o(1))\sqrt{2\log(p/i)}\\
\E |\zeta_{(i)}| &= \sqrt{\E |\zeta|^2_{(i)} - \operatorname{Var}(|\zeta|_{(i)})} = (1+o(1))\sqrt{2\log(p/i)}.
\end{aligned}
\end{equation}
Similarly, since $k+i = o(p)$ and $q$ is constant, we have the approximation
\[
\lambda_{k+i} = (1+o(1))\sqrt{2\log(p/(k+i))},
\]
which together with \eqref{eq:order_expec} reveals that
\begin{equation}\label{eq:order_diff_expr}
\left(\E |\zeta_{(i)}| - \lambda_{k+i}\right)^2 \le (1+o(1)) \, 2\left[\sqrt{\log(p/i)} -  \sqrt{\log(p/(k+i))}\right]^2 + o(1)\log(p/i).
\end{equation}
The second term in the right-hand side contributes at most $o(1) \,
Ak\log(p/(Ak)) = o(1) \, 2k\log(p/k)$ in the sum
\eqref{eq:bias_part}. For the first term, we get
\[
\left[\sqrt{\log(p/i)} -  \sqrt{\log(p/(k+i))}\right]^2 = \frac{\log^2(1 + k/i)}{\left[\sqrt{\log(p/i)} +  \sqrt{\log(p/(k+i))}\right]^2} = o(1)\log^2(1 + k/i).
\]
Hence, it contributes at most
\begin{equation}
\label{eq:sq_log_sum}
\begin{aligned} 
  o(1)\sum_{i=1}^{\lfloor Ak \rfloor} \log^2(1 + k/i) & \le o(1) \sum_{i=1}^{\lfloor Ak \rfloor} k\int_{\frac{i-1}{k}}^{\frac{i}{k}}\log^2(1+1/x) \d x \\
  & = o(1)k \int_0^ A \log^2(1+1/x) \d x = o(2k\log(p/k)).
\end{aligned}
\end{equation}
Combining \eqref{eq:sq_log_sum}, \eqref{eq:order_diff_expr} and \eqref{eq:bias_part} concludes the proof.
\end{proof}

\newcommand{\KL}{\operatorname{KL}}
\begin{lemma}\label{lm:bound-out-support-far}
Let $\bm\zeta \sim \mathcal{N}(\bm 0, \bm{I}_{p-k})$ and $A > 0$ be any constant satisfying $q(1+A)/A < 1$. Then, under the assumptions of Theorem \ref{thm:orth_main_general_intro},
\begin{equation}\nonumber
\frac1{2k\log(p/k)}\sum_{i=\ceil{Ak}}^{p-k}\E\left(|\zeta|_{(i)} - \lambbh_{k+i} \right)_+^2 \goto 0.
\end{equation}

\end{lemma}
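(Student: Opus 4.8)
The plan is to reduce the claim to a large–deviation estimate for order statistics of i.i.d.\ standard Gaussians, and to exploit that, for $i\ge Ak$, the threshold $\lambbh_{k+i}$ lies \emph{strictly above} the bulk of the distribution of $|\zeta|_{(i)}$, so that each summand decays exponentially in $i$. First, exactly as in the proof of Lemma~\ref{lm:bound-out-support-close}, it suffices to take $|\zeta|_{(1)}\ge\cdots\ge|\zeta|_{(p)}$ to be the order statistics of $p$ i.i.d.\ $\mathcal N(0,1)$ variables, since enlarging the sample stochastically increases the $i$th largest magnitude and $t\mapsto(t-\lambbh_{k+i})_+^2$ is nondecreasing on $[0,\infty)$. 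Put $c:=q(1+A)/A$, which is $<1$ by hypothesis. For $i\ge Ak$ one has $k\le i/A$, hence $k+i\le(1+A^{-1})i$, so by monotonicity of $\Phi^{-1}$, $\lambbh_{k+i}\ge\mu_i:=\Phi^{-1}(1-ci/(2p))$; note also $\mu_i\ge\mu_\star:=\Phi^{-1}(1-c/2)>0$ for every $i\le p$. Thus it is enough to prove $\sum_{i\ge Ak}T_i=O(1)$, where $T_i:=\E(|\zeta|_{(i)}-\mu_i)_+^2$, because $2k\log(p/k)\ge2\log(p/k)\goto\infty$.

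To control a single term I would write $T_i=\int_0^\infty 2v\,\P(|\zeta|_{(i)}\ge\mu_i+v)\,\d v$ and use $\P(|\zeta|_{(i)}\ge u)=\P(\mathrm{Bin}(p,G(u))\ge i)$ with $G(u)=2(1-\Phi(u))$. The two ingredients are: (a) the Chernoff bound $\P(\mathrm{Bin}(p,\theta)\ge i)\le\exp\!\big(-(i\log(i/(p\theta))-i+p\theta)\big)$, valid for $p\theta\le i$, whose exponent is decreasing in $p\theta$; and (b) the Mills-ratio estimate $G(u)\le G(\mu_i)\,e^{-\mu_i(u-\mu_i)}$ for $u\ge\mu_i>0$, together with $p\,G(\mu_i)=ci$. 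Feeding (b) into (a) at $u=\mu_i+v$ gives $\P(|\zeta|_{(i)}\ge\mu_i+v)\le e^{-ig(v)}$ with $g(v):=\mu_i v+ce^{-\mu_i v}-1-\log c$, and one checks that $g(0)=c-1-\log c=:\kappa>0$, that $g$ is increasing, and that $g(v)\ge\mu_i v-1$. Splitting the integral at $v_1:=2/\mu_\star$ and using $g\ge\kappa$ on $[0,v_1]$ and $g(v)\ge\tfrac12\mu_\star v$ on $[v_1,\infty)$ yields $T_i\le v_1^2e^{-i\kappa}+8\mu_\star^{-2}i^{-2}$. Summing, $\sum_{i\ge Ak}T_i\le v_1^2\sum_{i\ge1}e^{-i\kappa}+8\mu_\star^{-2}\sum_{i\ge1}i^{-2}$, a finite constant depending only on $q$ and $A$; dividing by $2k\log(p/k)\goto\infty$ finishes the proof.

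The step I expect to be the main obstacle is obtaining a binomial tail bound sharp enough to be summable in $i$: the crude union bound $\P(\mathrm{Bin}(p,\theta)\ge i)\le(ep\theta/i)^i\le(ec)^i$ becomes useless once $c>1/e$, so one genuinely needs the exact large-deviation rate $\kappa=c-1-\log c$, and one must retain the dependence of $G(u)$ on $u$ through the Mills ratio so that the tail integral over $v$ actually converges, rather than settling for the uniform estimate $e^{-i\kappa}$ over all of $[\mu_i,\infty)$.
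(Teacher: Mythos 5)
Your proof is correct and follows essentially the same route as the paper: both reduce to order statistics of $p$ i.i.d.\ standard Gaussians by stochastic domination, bound the binomial tail $\P(\mathrm{Bin}(p,G(u))\ge i)$ via a Chernoff/KL estimate, control $G$ on the tail with the Mills ratio so that $p\,G(\mu_i+v)\le cie^{-\mu_i v}$, integrate against $2v\,\d v$, and sum over $i$ to a convergent constant before dividing by $2k\log(p/k)\goto\infty$. The only difference is in the final bookkeeping: the paper interchanges the sum over $i$ with the $x$-integral and closes a geometric series, showing the resulting integrand is bounded near the origin by $1/(1-q')$ and decays exponentially at infinity, whereas you split the $v$-integral at $v_1=2/\mu_\star$ and retain the extra rate $\kappa=c-1-\log c$ (which the paper discards when dropping $\KL(i/p\|\alpha_0)\ge0$), giving a slightly tighter but equivalent term-by-term bound $v_1^2e^{-i\kappa}+8\mu_\star^{-2}i^{-2}$.
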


\begin{proof}[Proof of Lemma \ref{lm:bound-out-support-far}]
  Again, write $\lambda_i = \lambbh_i$ for simplicity.  As in the
  proof of Lemma \ref{lm:bound-out-support-close} we work on a
  stronger version by assuming
  $\bm\zeta \sim \mathcal{N}(\bm 0, \bm{I}_p)$. Denote by
  $q' =q(1+A)/A$. For any $u \ge 0$, let
  $\alpha_u := \P(|\mathcal{N}(0,1)| > \lambda_{k+i} + u) = 2\Phi(-
  \lambda_{ k+i} - u)$.
  Then $\mathbb{P}(|\zeta|_{(i)} > \lambda_{ k+i} + u)$ is just the
  tail probability of the binomial distribution with $p$ trials and
  success probability $\alpha_u$. By the Chernoff bound, this
  probability is bounded as
\begin{equation}\label{eq:chernoff_bound}
\mathbb{P}(|\zeta|_{(i)} > \lambda_{ k+i} + u) \le \e{-p \KL\left( i/p\| \alpha_u \right)},
\end{equation}
where $\KL(a \| b) := a\log\frac{a}{b} + (1-a)\log\frac{1-a}{1-b}$ is the Kullback-Leibler divergence. Note that
\begin{equation}\label{eq:entropu}
\frac{\partial \KL(i/p \| b)}{\partial b} = -\frac{i/p}{b} + \frac{1-i/p}{1-b} \le -\frac{i}{pb} + 1
\end{equation}
for all $0 < b < i/p$. Hence, from \eqref{eq:entropu} it follows that
\begin{equation}\label{eq:mult_kl}
\begin{aligned} 
  \KL\left( i/p \| \alpha_u\right) - \KL\left( i/p \| \alpha_0 \right)
  = -\int^{\alpha_0}_{\alpha_u}\frac{\partial \KL}{\partial b} \d b
  & \geq \int^{\alpha_0}_{\alpha_u} \frac{i}{pb} - 1 \d b \\
  & \geq \int^{\alpha_0}_{ \e{-u\lambda_{k+i}}\alpha_0}\frac{i}{pb} -
  1 \d b \\ & = \frac{iu\lambda_{k+i}}{p} -
  \alpha_0\left(1-\e{-u\lambda_{k+i}} \right),
\end{aligned} 
\end{equation}
where the second inequality makes use of $\alpha_u \le \e{
  -u\lambda_{k+i} } \alpha_0$. With the proviso that $q(1+A)/A < 1$
and $i \ge Ak$, it follows that
\begin{equation}\label{eq:alpha0_bound}
\alpha_0 = q(k+i)/p \le q'i/p.
\end{equation}
Hence, substituting \eqref{eq:alpha0_bound} into \eqref{eq:mult_kl},
we see that \eqref{eq:chernoff_bound} yields
\begin{equation}\label{eq:kl_align}
\begin{aligned}
  \mathbb{P}(|\zeta|_{(i)} > \lambda_{ k+i} + u) & \le \e{-p\big(\KL(\frac{i}{p} \| \alpha_u) - \KL(\frac{i}{p} \| \alpha_0)\big)}\e{-p \KL(\frac{i}{p} \| \alpha_0)}\\
&  \le \e{-p\big(\KL(\frac{i}{p}\| \alpha_u) - \KL(\frac{i}{p}\| \alpha_0)\big)}\\ & \le 
  \exp\left(-iu\lambda_{k+i} + q'i\big(1-\exp\big(-u\lambda_{k+i}\big) \big)\right).
\end{aligned}
\end{equation}

With this preparation, we conlude the proof of our lemma as follows:
\begin{align*}
  \E\big(|\zeta|_{(i)} - \lambda_{k+i}\big)_+^2 & = \int_0^{\infty}\mathbb{P}\left( (|\zeta|_{(i)} - \lambda_{ k+i})_+^2 > x \right) \d x\\
  & = \int_0^{\infty}\mathbb{P}(|\zeta|_{(i)} > \lambda_{ k+i} +
  \sqrt{x} )\d x \\ & = 2\int_0^{\infty}u\mathbb{P}(|\zeta|_{(i)} >
  \lambda_{ k+i} + u)\d u, 
\end{align*}
and plugging \eqref{eq:kl_align} gives
\begin{align*}
  \E\big(|\zeta|_{(i)} - \lambda_{k+i}\big)_+^2
  & \le 2\int_0^{\infty}u \exp\Big(-iu\lambda_{k+i} + q'i\big(1-\exp\big(-u\lambda_{k+i}\big) \big)\Big)\d u\\
  & = \frac{2}{\lambda_{k+i}^2}\int_0^{\infty}x \e{-(x -
    q'(1-\e{-x}))i}\d x \\ & \le \frac{2}{\lambda_p^2}\int_0^{\infty}x
  \e{-(x - q'(1-\e{-x}))i}\d x. 
\end{align*}
This yields the upper bound
\begin{equation}\nonumber
\begin{aligned}
\sum_{i=\ceil{Ak}}^{p-k}\E\big(|\zeta|_{(i)} - \lambda_{k+i}\big)_+^2 &\le \frac{2}{\lambda_p^2}\sum_{i=\ceil{Ak}}^{p-k}\int_0^{\infty}x \e{-(x - q'(1-\e{-x}))i}\d x\\
&\le \frac{2}{\Phi^{-1}(1-q/2)^2}\sum_{i=1}^{\infty}\int_0^{\infty}x \e{-(x - q'(1-\e{-x}))i}\d x\\
& = \frac{2}{\Phi^{-1}(1-q/2)^2}\int_0^{\infty}\frac{x\e{-(x - q'(1-\e{-x}))}}{1 - \e{-(x - q'(1-\e{-x}))}}\d x.
\end{aligned}  
\end{equation}
Since the integrand obeys
\[
\lim_{x\goto 0}\frac{x\e{-(x - q'(1-\e{-x}))}}{1 - \e{-(x - q'(1-\e{-x}))}} = \frac1{1-q'}
\]
and decays exponentially fast as $x \goto \infty$, we conclude that
$\sum_{i=\ceil{Ak}}^{p-k}\E(|\zeta|_{(i)} - \lambda_{k+i})_+^2$ is
bounded by a constant. This is a bit more than we need since
$2k\log(p/k) \goto \infty$.
\end{proof}

\subsection{Proofs for Section \ref{sec:slope-under-random-all}}
\label{sec:proofs-sect-refs-3}

In this paper, we often use the Borell inequality to show that
$\P(\|\mathcal{N}(\bm 0, \bm{I}_n)\| > \sqrt{n} + t) \le
\exp(-t^2/2)$.
\begin{lemma}[Borell's inequality]\label{lm:gauss_concentrate}
  Let $\bm\zeta \sim \mathcal{N}(\bm 0, \bm{I}_n)$ and $f$ be an
  $L$-Lipschitz continuous function in $\R^n$. Then
\[
\P\left ( f(\bm\zeta) > \E f(\bm\zeta) + t \right) \le \e{-\frac{t^2}{2L^2}}
\]
for every $t > 0$.
\end{lemma}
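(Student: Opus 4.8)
The plan is to pass from the tail bound to a bound on the exponential moment (Laplace transform) of $f(\bm\zeta)$ via the Chernoff method, and then to control that Laplace transform by combining the Gaussian logarithmic Sobolev inequality with Herbst's argument; this is the route that produces the sharp constant $1/(2L^2)$ in the exponent. Two preliminary reductions are harmless. Rescaling $f$ lets us assume $L = 1$, and since an $L$-Lipschitz function obeys $|f(\bm x)| \le |f(\bm 0)| + \|\bm x\|$ the mean $\E f(\bm\zeta)$ is finite. Replacing $f$ by the truncation $f_M = \max(-M,\min(M,f))$ and then mollifying with a narrow Gaussian kernel produces smooth, bounded functions with gradient bounded by $1$; since these converge to $f$ as $M \uparrow \infty$ (and the mollification width $\downarrow 0$), dominated convergence and Fatou reduce the claim to the case where $f$ is smooth, bounded, and $\|\nabla f\|_\infty \le 1$.

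With these reductions in place, Markov's inequality applied to $e^{\lambda(f(\bm\zeta) - \E f(\bm\zeta))}$ gives, for every $\lambda > 0$,
\[
\P\bigl(f(\bm\zeta) > \E f(\bm\zeta) + t\bigr) \;\le\; e^{-\lambda t}\,\E\, e^{\lambda(f(\bm\zeta) - \E f(\bm\zeta))},
\]
so it suffices to establish the sub-Gaussian bound $\E\, e^{\lambda(f(\bm\zeta) - \E f(\bm\zeta))} \le e^{\lambda^2/2}$ for all $\lambda > 0$; the choice $\lambda = t$ then yields exactly $e^{-t^2/2}$, which after undoing the rescaling is $e^{-t^2/(2L^2)}$.

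To prove this sub-Gaussian bound I would invoke the Gaussian logarithmic Sobolev inequality: for smooth positive $g$,
\[
\E\bigl[g\log g\bigr] - \E[g]\,\log \E[g] \;\le\; \tfrac12\,\E\!\left[\frac{\|\nabla g\|^2}{g}\right],
\]
with expectations under $\mathcal{N}(\bm 0, \bm I_n)$; this follows by tensorization from its one-dimensional instance and may alternatively be quoted as standard. Apply it with $g = e^{\lambda f}$ and write $H(\lambda) = \E\, e^{\lambda f(\bm\zeta)}$. Using $\|\nabla f\| \le 1$, the right-hand side is at most $\tfrac{\lambda^2}{2} H(\lambda)$, while the left-hand side equals $\lambda H'(\lambda) - H(\lambda)\log H(\lambda)$. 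Dividing through by $\lambda^2 H(\lambda)$ turns the inequality into $\frac{d}{d\lambda}\bigl(\lambda^{-1}\log H(\lambda)\bigr) \le \tfrac12$, and integrating from $0$ --- where $\lambda^{-1}\log H(\lambda) \to \E f(\bm\zeta)$ --- gives $\log H(\lambda) \le \lambda\,\E f(\bm\zeta) + \lambda^2/2$, which is the desired estimate.

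The main obstacle is obtaining the \emph{sharp} constant. A direct interpolation argument --- expressing $f(\bm\zeta) - f(\bm\zeta') = \int_0^{\pi/2} \langle \nabla f(\bm\zeta_\theta), \bm\zeta_\theta' \rangle\,d\theta$ along the path $\bm\zeta_\theta = \sin\theta\,\bm\zeta + \cos\theta\,\bm\zeta'$ (with $\bm\zeta'$ an independent copy, so that $\bm\zeta_\theta$ and $\tfrac{d}{d\theta}\bm\zeta_\theta$ are independent standard Gaussians), applying Jensen in $\theta$, and integrating out --- already proves a bound of the form $e^{-c t^2}$ (in the normalization $L = 1$), but only with $c = 2/\pi^2$. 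Recovering the optimal exponent $c = 1/2$ is precisely what requires a sharp functional inequality: the log-Sobolev inequality above, or equivalently the Gaussian isoperimetric inequality used in Borell's original argument. Everything else is routine bookkeeping.
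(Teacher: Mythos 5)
The paper does not prove Lemma~\ref{lm:gauss_concentrate}; it states it as a standard fact (Borell's inequality, equivalently the Gaussian concentration inequality for Lipschitz functions) and uses it as a black box, so there is no in-paper argument to compare against. Your proof is correct and is the standard Herbst argument: reduce to $L=1$, reduce to smooth bounded $f$ by truncation and mollification (both of which preserve the Lipschitz bound, so dominated convergence / Fatou pass the estimate to the limit), run Chernoff, and bound the Laplace transform by applying the Gaussian logarithmic Sobolev inequality with $g = e^{\lambda f}$ to get the differential inequality $\frac{d}{d\lambda}\bigl(\lambda^{-1}\log H(\lambda)\bigr) \le \tfrac12$, which integrates from the boundary value $\lim_{\lambda\downarrow 0}\lambda^{-1}\log H(\lambda) = \E f$ to give $\E\,e^{\lambda(f-\E f)} \le e^{\lambda^2/2}$. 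The Chernoff step then optimizes at $\lambda = t$ to produce exactly $e^{-t^2/2}$, and undoing the rescaling gives $e^{-t^2/(2L^2)}$ as required. Your closing remark is also on point: the interpolation argument (Pisier/Maurey) is more elementary but only yields $e^{-2t^2/\pi^2}$, and a sharp functional inequality --- log-Sobolev as you use, or Borell's original Gaussian isoperimetry --- is genuinely needed for the optimal constant $1/(2L^2)$. I see no gaps; the only minor presentation point is that you may wish to state explicitly that the log-Sobolev inequality you quote has the form $\operatorname{Ent}_\gamma(h) \le \tfrac12 \E_\gamma[\|\nabla h\|^2/h]$ (equivalently $\operatorname{Ent}_\gamma(g^2) \le 2\E_\gamma\|\nabla g\|^2$), so that the reader can verify the constant $1/2$ is the one that propagates into the exponent.
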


\begin{lemma}\label{lm:gauss_max_weak}
Let $\zeta_1, \ldots, \zeta_p$ be \iid $\mathcal{N}(0, 1)$. Then
\[
\max_i |\zeta_i| \le \sqrt{2\log p}
\]
holds with probability approaching one.
\end{lemma}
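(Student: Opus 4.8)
The plan is a one-line union bound. First I would write, by a union bound over the $p$ coordinates together with $\P(|\zeta_i| > t) = 2\bar\Phi(t)$ where $\bar\Phi = 1-\Phi$ denotes the standard normal survival function,
\[
\P\left( \max_{1 \le i \le p} |\zeta_i| > \sqrt{2\log p} \right) \le \sum_{i=1}^p \P\left( |\zeta_i| > \sqrt{2\log p} \right) = 2p\,\bar\Phi\left( \sqrt{2\log p} \right).
\]

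Next I would control $\bar\Phi$ by the elementary Mills-ratio bound $\bar\Phi(t) \le \e{-t^2/2}/(t\sqrt{2\pi})$, valid for every $t > 0$ (it follows from $\int_t^\infty \e{-x^2/2}\,\d x \le t^{-1}\int_t^\infty x\,\e{-x^2/2}\,\d x$). Plugging in $t = \sqrt{2\log p}$, so that $\e{-t^2/2} = 1/p$, gives
\[
2p\,\bar\Phi\left( \sqrt{2\log p} \right) \le 2p\cdot\frac{1}{\sqrt{2\log p}\,\sqrt{2\pi}}\cdot\frac{1}{p} = \frac{1}{\sqrt{\pi\log p}} \goto 0
\]
as $p \goto \infty$, which is exactly the claim.

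There is essentially no obstacle here: the polynomial factor $p$ coming from the union bound is killed by the factor $\e{-t^2/2} = p^{-1}$, and the residual $1/\sqrt{\log p}$ is slack to spare. The only mild point worth noting is that the threshold $\sqrt{2\log p}$ is genuinely of the right size, since $\max_i |\zeta_i|/\sqrt{2\log p} \goto 1$ in probability; so one could not replace it by a smaller multiple of $\sqrt{\log p}$, though (as the computation shows) the constant $\sqrt{2}$ itself is admissible. An alternative route would be Borell's inequality (Lemma~\ref{lm:gauss_concentrate}) applied to the $1$-Lipschitz map $\bm\zeta \mapsto \max_i |\zeta_i|$ together with the standard estimate $\E \max_i |\zeta_i| = (1+o(1))\sqrt{2\log p}$, but the direct union bound above is shorter and self-contained.
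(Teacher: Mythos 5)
Your proof is correct, but it takes a genuinely different and more self-contained route than the paper. The paper does not give a direct argument; it simply appeals to the classical extreme-value result that $\sqrt{2\log p}\,\bigl(\max_i \zeta_i - \sqrt{2\log p} + \tfrac{\log\log p + \log 4\pi}{2\sqrt{2\log p}}\bigr)$ converges weakly to a Gumbel distribution, from which the statement follows a fortiori. That reference proves a much sharper, distributional fact about the second-order fluctuations of the maximum, far more than is needed here. Your union bound with the Mills ratio $\bar\Phi(t) \le \e{-t^2/2}/(t\sqrt{2\pi})$ is the textbook elementary argument: the exponential factor exactly cancels the $p$ from the union bound and the leftover $1/\sqrt{\pi\log p}$ tends to zero, so the proof is three lines, fully self-contained, and requires no external citation. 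The paper's choice buys brevity (one sentence, one reference) and simultaneously records, for the interested reader, the exact location of the maximum; your choice buys elementarity and makes the lemma verifiable on the spot. Either is perfectly acceptable; your version is arguably preferable in a setting where the reader should not have to trust a cited Gumbel limit to believe a crude upper bound.
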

The latter classical result can be proved in many different
ways. Suffices to say that it follows from a more subtle fact, namely,
that
\[
\sqrt{2\log p}\left( \max_i \zeta_i - \sqrt{2\log p} + \frac{\log\log
    p + \log 4\pi}{2\sqrt{2\log p}} \right)
\]
converges weakly to a Gumbel distribution \cite{gumbel}.

\begin{proof}[Proof of Lemma \ref{lm:lifted}]
  Let $\hat{\bm\beta}^{\textnormal{lift}}$ be the lift of $\hat{\bm
    b}_T$ in the sense that $\hat{\bm\beta}^{\textnormal{lift}}_T =
  \hat{\bm b}_T$ and $\hat{\bm\beta}^{\textnormal{lift}}_{\overline T}
  = \bm 0$ and let $|T| = m$. Further, set $\tilde{\bm\nu}:=\bm y -
  \X_T \hat{\bm b}_T = \bm y -
  \X\hat{\bm\beta}^{\textnormal{lift}}$. Applying \eqref{eq:dual} and
  \eqref{eq:dual_stationary} to the reduced SLOPE program, we get that
\[
\X_T'\tilde{\bm\nu} \preceq \bm\lambda^{[m]}.
\]
By the assumption, $\X_{\overline T}'\tilde{\bm\nu}$ is majorized by
$\bm\lambda^{-[m]}$. Hence, $\X'\tilde{\bm\nu}$---the concatenation of
$\X_T'\tilde{\bm\nu}$ and $\X_{\overline T}'\tilde{\bm\nu}$---is
majorized by $\bm\lambda = (\bm\lambda^{[m]},
\bm\lambda^{-[m]})$. This confirms that $\tilde{\bm\nu}$ is dual
feasible with respect to the full SLOPE program. If additionally we
show that
\begin{equation}\label{eq:dual_gap}
\frac12\|\bm y - \X \hat{\bm\beta}^{\textnormal{lift}}\|^2 + \sum_i\lambda_i|\hat\beta^{\textnormal{lift}}|_{(i)} = \tilde{\bm\nu}'\bm{y} - \frac12\|\tilde{\bm\nu}\|^2,
\end{equation}
then the strong duality claims that $\hat{\bm\beta}^{\textnormal{lift}}$ and $\tilde{\bm\nu}$ must, respectively, be the optimal solutions to the full primal and dual.

In fact, \eqref{eq:dual_gap} is self-evident. The right-hand side is
the optimal value of the reduced dual (i.e., replacing $\X$ and
$\bm\lambda$ by $\X_T$ and $\bm\lambda^{[m]}$ in \eqref{eq:dual}),
while the left-hand side agrees with the optimal value of the reduced
primal since
\[
\frac12\|\bm y - \X \hat{\bm\beta}^{\textnormal{lift}}\|^2 = \frac12\|\bm y - \X \hat{\bm b}_T\|^2 ~ \mbox{and} ~ \sum_{i=1}^p \lambda_i|\hat\beta^{\textnormal{lift}}|_{(i)} = \sum_{i=1}^{m} \lambda_i|\hat b_T|_{(i)}.
\]
Since the reduced primal only has linear equality constraints and is
clearly feasible, strong duality holds, and \eqref{eq:dual_gap}
follows from this.
\end{proof}

\begin{lemma}\label{lm:bound_gradient}
Let $1 \le \K < p$ be any (deterministic) integer, then
\begin{equation}\nonumber
\sup_{|T|=\K}\|\bm{X}_T' \bm z\| \le \sqrt{32\K\log(p/\K)}
\end{equation}
with probability at least $1-\e{-n/2}-(\sqrt{2}\mathrm{e} \K /p)^{\K}$. Above, the supremum is taken over all the subsets of $\{1,\ldots,p\}$ with cardinality $\K$.
\end{lemma}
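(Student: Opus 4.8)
The plan is to condition on $\bm z$, use the rotational invariance of the Gaussian design to reduce the supremum to a statement about a single vector of \iid standard normals, and then control the two resulting factors separately. Concretely, the columns $\bm X_1,\dots,\bm X_p$ of $\bm X$ are \iid $\mathcal N(\bm 0,(1/n)\bm I_n)$ and independent of $\bm z$ (recall $\sigma=1$, so $\bm z\sim\mathcal N(\bm 0,\bm I_n)$), so conditional on $\bm z$ the inner products $\bm X_i'\bm z$ are independent $\mathcal N(0,\|\bm z\|^2/n)$; writing $g_i:=\sqrt{n}\,\bm X_i'\bm z/\|\bm z\|$, the conditional law of $(g_1,\dots,g_p)$ is $\mathcal N(\bm 0,\bm I_p)$ regardless of $\bm z$, hence $(g_i)$ is a vector of \iid standard normals independent of $\bm z$ and
\[
\sup_{|T|=\K}\bigl\|\bm X_T'\bm z\bigr\|^2=\frac{\|\bm z\|^2}{n}\,\sup_{|T|=\K}\sum_{i\in T}g_i^2 .
\]
It thus suffices to bound $\|\bm z\|^2/n$ and $\sup_{|T|=\K}\sum_{i\in T}g_i^2$.

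For the noise factor I would apply Lemma~\ref{lm:gauss_concentrate} to the $1$-Lipschitz map $\bm z\mapsto\|\bm z\|$, whose mean is at most $\sqrt n$, with $t=\sqrt n$; this gives $\P(\|\bm z\|>2\sqrt n)\le\e{-n/2}$, so $\|\bm z\|^2/n\le 4$ off an event of probability at most $\e{-n/2}$. For the Gaussian factor, $\sum_{i\in T}g_i^2\sim\chi^2_\K$ for each fixed $T$, and I would invoke the standard chi-square deviation bound $\P(\chi^2_\K\ge \K+2\sqrt{\K x}+2x)\le\e{-x}$ with $x=2\K\log(p/\K)$, then union-bound over the $\binom{p}{\K}\le(\mathrm{e}p/\K)^{\K}$ subsets to obtain
\[
\P\left(\sup_{|T|=\K}\sum_{i\in T}g_i^2\ge \K+2\sqrt{2}\,\K\sqrt{\log(p/\K)}+4\K\log(p/\K)\right)\le\left(\frac{\mathrm{e}p}{\K}\right)^{\K}\e{-2\K\log(p/\K)}=\left(\frac{\mathrm{e}\K}{p}\right)^{\K}\le\left(\frac{\sqrt2\,\mathrm{e}\K}{p}\right)^{\K}.
\]

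If the right-hand side above is $\ge 1$ the lemma is vacuous, so I may assume $\sqrt2\,\mathrm{e}\K/p<1$, i.e.\ $\log(p/\K)\ge\log(\sqrt2\,\mathrm{e})=1+\tfrac12\log 2$; for such values an elementary inequality gives $\K+2\sqrt2\,\K\sqrt{\log(p/\K)}+4\K\log(p/\K)\le 8\K\log(p/\K)$. Intersecting the two good events, whose union has probability at most $\e{-n/2}+(\sqrt2\,\mathrm{e}\K/p)^{\K}$, and multiplying the two factor bounds yields $\sup_{|T|=\K}\|\bm X_T'\bm z\|^2\le 4\cdot 8\K\log(p/\K)=32\K\log(p/\K)$, which is the claim.

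The main difficulty is purely in the bookkeeping of constants in the Gaussian factor: because the union runs over $\binom{p}{\K}\approx(\mathrm{e}p/\K)^\K$ sets, the chi-square exponent must be taken of order $2\K\log(p/\K)$ — essentially twice the combinatorial entropy — and one then has to verify that the target $8\K\log(p/\K)$ still dominates the induced threshold $\K+2\sqrt2\,\K\sqrt{\log(p/\K)}+4\K\log(p/\K)$. This holds precisely because the bound is vacuous unless $p/\K>\sqrt2\,\mathrm{e}$, which keeps $\log(p/\K)$ bounded away from $0$; a soft order-of-magnitude argument would not suffice, so the care lies in choosing the constants ($4$ from the noise bound, $8$ from the chi-square bound) so that their product is exactly $32$.
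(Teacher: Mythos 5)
Your proof is correct and takes essentially the same route as the paper: condition on $\bm z$, use rotational invariance to reduce $\sup_{|T|=\K}\|\bm X_T'\bm z\|^2$ to $(\|\bm z\|^2/n)\cdot\sup_{|T|=\K}\sum_{i\in T}g_i^2$, control $\|\bm z\|$ by Borell, and control the Gaussian maximum by a union bound over the $\binom{p}{\K}$ subsets. The only cosmetic difference is in the Gaussian factor: the paper runs a one-shot Chernoff/MGF bound at $t=1/4$ (yielding $\binom{p}{\K}2^{\K/2}\ee{-2\K\log(p/\K)}$ and the constant $8$ directly), while you invoke the Laurent--Massart chi-square tail and then need the extra ``vacuous unless $p/\K>\sqrt2\,\mathrm{e}$'' step to absorb the lower-order terms $\K+2\sqrt2\,\K\sqrt{\log(p/\K)}$ into $8\K\log(p/\K)$ — a legitimate but slightly longer path to the same constants.
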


\begin{proof}[Proof of Lemma \ref{lm:bound_gradient}]
Conditional on $\bm z$, it is easy to see that $\bm{X}' \bm z$ is distributed as \iid centered Gaussian random variables with variance $\|\bm z\|^2/n$. This observation enables us to write
\begin{equation}\nonumber
\bm{X}' \bm z \eqd \frac{\|\bm z\|}{\sqrt{n}}(\zeta_1, \ldots, \zeta_p),
\end{equation}
where $\bm\zeta := (\zeta_1,\ldots, \zeta_p)$ consists of \iid $\mathcal{N}(0,1)$ independent of $\|\bm z\|$. Hence, it is sufficient to prove that
\begin{equation}\nonumber
\|\bm z\| \le 2\sqrt{n}, \quad|\zeta|_{(1)}^2+\cdots + |\zeta|_{(\K)}^2 \le 8\K\log(p/\K)
\end{equation}
simultaneously with probability at least
$1-\e{-n/2}-\left(\sqrt{2}\mathrm{e}\K/p\right)^{\K}$. From Lemma
\ref{lm:gauss_concentrate}, we know that $\mathbb{P}(\|\bm z\| >
2\sqrt{n}) \le \e{-n/2}$ so we just need to establish the other
inequality. To this end, observe that
\begin{align*}
  \mathbb{P}\left( |\zeta|_{(1)}^2+\cdots + |\zeta|_{(\K)}^2 > 8\K\log(p/\K) \right) & \le \frac{\E\e{\frac{1}{4}\left( |\zeta|_{(1)}^2+\cdots + |\zeta|_{(\K)}^2 \right)}}{\e{2\K\log\frac{p}{\K}}}\\
  & \le \frac{\sum_{i_1< \cdots < i_{\K}}\E\e{\frac{1}{4}\left(
        |\zeta|_{i_1}^2+\cdots + |\zeta|_{i_{\K}}^2
      \right)}}{\e{2\K\log\frac{p}{\K}}} \\ & =\frac{{p\choose
      \K}2^{\K/2}}{\e{2\K\log\frac{p}{\K}}} \\ & \le
  \Big{(}\frac{\sqrt{2}\ee{1}\K}{p}\Big{)}^{\K}.
\end{align*}
\end{proof}


We record an elementary result which simply follows from
$\Phi^{-1}(1-c/2) \le \sqrt{2\log 1/c}$ for each $0 < c < 1$.
\begin{lemma}\label{lm:bh_seq_bound}
  Fix $0 < q < 1$. Then for all $1\le k \le p/2$,
\[
\sum_{i=1}^k \left(\lambbh_i \right)^2 \le C_q \cdot k\log(p/k),
\]
for some constant $C_q > 0$. 
\end{lemma}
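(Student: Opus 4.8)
The plan is to deduce the bound from an elementary pointwise estimate on each $\lambbh_i$, followed by a summation and Stirling's approximation; there is no genuine obstacle, the point being only to keep track of the $q$-dependence and to use the hypothesis $k \le p/2$. The single analytic input is the Gaussian tail bound $\P(|Z| \ge t) \le \e{-t^2/2}$ for $Z \sim \mathcal{N}(0,1)$ and $t \ge 0$. For $0 < c < 1$, set $t := \Phi^{-1}(1 - c/2) > 0$; then $c = \P(|Z| \ge t) \le \e{-t^2/2}$, so $t \le \sqrt{2\log(1/c)}$. This is exactly the inequality $\Phi^{-1}(1 - c/2) \le \sqrt{2\log(1/c)}$ recorded just before the statement. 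I would apply it with $c = iq/p$, which lies in $(0,1)$ since $i \le k \le p/2$ and $q < 1$ force $c \le q/2 < 1$, obtaining
\[
(\lambbh_i)^2 = \bigl(\Phi^{-1}(1 - iq/(2p))\bigr)^2 \le 2\log\frac{p}{iq} = 2\log\frac{p}{i} + 2\log\frac{1}{q}.
\]

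Next I would sum over $i = 1, \ldots, k$. The constant term contributes $2k\log(1/q)$, while $\sum_{i=1}^k \log(p/i) = k\log p - \log(k!)$, and Stirling's lower bound $\log(k!) \ge k\log k - k$ gives $\sum_{i=1}^k \log(p/i) \le k\log(p/k) + k$. Hence
\[
\sum_{i=1}^k (\lambbh_i)^2 \le 2k\log\frac{p}{k} + 2k + 2k\log\frac{1}{q} = 2k\log\frac{p}{k} + 2k\Bigl(1 + \log\frac{1}{q}\Bigr).
\]

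Finally, the linear-in-$k$ remainder must be absorbed into the leading term, and this is where the hypothesis $k \le p/2$ enters: it forces $\log(p/k) \ge \log 2 > 0$, so $2k(1 + \log(1/q)) \le \bigl(2(1 + \log(1/q))/\log 2\bigr)\, k\log(p/k)$. Combining the two displays yields the claim with
\[
C_q = 2 + \frac{2\bigl(1 + \log(1/q)\bigr)}{\log 2}.
\]
I do not expect any real difficulty here — the argument is routine; the only things requiring a moment of care are that the constant is permitted to depend on $q$ (through $\log(1/q)$) and that one invokes $k \le p/2$ precisely so that $\log(p/k)$ stays bounded away from zero and can swallow the $O(k)$ term.
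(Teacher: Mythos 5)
Your proof is correct and follows exactly the route the paper indicates: the paper states that the lemma "simply follows from $\Phi^{-1}(1-c/2) \le \sqrt{2\log(1/c)}$" and leaves the summation to the reader, which is precisely the Stirling bookkeeping you carried out, including the observation that $k \le p/2$ keeps $\log(p/k) \ge \log 2$ so the $O(k)$ remainder can be absorbed.
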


In the next two lemmas, we use the \BH critical values $\bm\lambbh$ to
majorize sequences of Gaussian order statistics.  Again, $\bm a
\preceq \bm b$ means that $\bm b$ majorizes $\bm a$.
\begin{lemma}\label{lm:main_lift_maj}
Given any constant $c > 1/(1-q)$, suppose $\max\{ ck, k + d\} \le \K < p$ for any (deterministic) sequence $d$ that diverges to $\infty$. Let $\zeta_1, \ldots, \zeta_{p-k}$ be \iid $\mathcal{N}(0,1)$. Then
\[
\left( |\zeta|_{(\K-k+1)}, |\zeta|_{(\K-k+2)}, \ldots, |\zeta|_{(p-k)} \right) \preceq \left( \lambbh_{\K+1}, \lambbh_{\K+2}, \ldots, \lambbh_{p} \right)
\] 
with probability approaching one.
\end{lemma}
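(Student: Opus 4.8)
The plan is to prove the stronger \emph{pointwise} statement: with probability tending to one,
\[
|\zeta|_{(i)} \le \lambbh_{i+k} \qquad \text{simultaneously for all } i \in \{\K-k+1, \ldots, p-k\}.
\]
Two nonincreasing sequences dominated entry by entry have dominated partial sums; since the $m$-th entries of the two vectors in the statement are $|\zeta|_{(\K-k+m)}$ and $\lambbh_{\K+m}$ and $(\K-k+m)+k=\K+m$, this pointwise bound delivers exactly the asserted majorization.

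To control a single index $i$, I would pass to a binomial tail. The $\zeta_j$ are i.i.d.\ and, by definition of the critical values, $\P(|\zeta_j|>\lambbh_{i+k}) = 2\Phi(-\lambbh_{i+k}) = (i+k)q/p$, so $\{|\zeta|_{(i)}>\lambbh_{i+k}\}$ is precisely the event that at least $i$ of the $p-k$ variables exceed $\lambbh_{i+k}$:
\[
\P\bigl(|\zeta|_{(i)}>\lambbh_{i+k}\bigr) = \P\bigl(\mathrm{Bin}(p-k,\,(i+k)q/p)\ge i\bigr).
\]
Its mean is $\mu_i = (p-k)(i+k)q/p \le q(1+k/i)\,i$. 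Because $\K\ge ck$ with $c>1/(1-q)$, every index in range obeys $i>\K-k\ge(c-1)k$, hence $k/i<1/(c-1)$ and $\mu_i\le\rho\,i$ with $\rho := qc/(c-1)$; the role of the hypothesis $c>1/(1-q)$ is exactly to force $\rho<1$. A Chernoff bound for the binomial upper tail (of the type used in the proof of Lemma~\ref{lm:bound-out-support-far}) then gives $\P(\mathrm{Bin}(p-k,(i+k)q/p)\ge i)\le \e{-g(\rho)\,i}$ with $g(\rho)=\rho-1-\log\rho>0$ for $\rho\in(0,1)$.

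Finally I would take a union bound. Since $\K\ge k+d$ with $d\to\infty$, the smallest index in the range, $\K-k+1$, diverges, so
\[
\P\bigl(\exists\, i\in\{\K-k+1,\ldots,p-k\}:\ |\zeta|_{(i)}>\lambbh_{i+k}\bigr)\ \le\ \sum_{i\ge\K-k+1}\e{-g(\rho)\,i}\ =\ \frac{\e{-g(\rho)(\K-k+1)}}{1-\e{-g(\rho)}}\ \longrightarrow\ 0,
\]
which is what we need. The one load-bearing step is the elementary arithmetic tying $c>1/(1-q)$ to the strict inequality $\rho<1$: this is what makes the binomial mean $\mu_i$ a fixed fraction of the threshold $i$, so that the Chernoff exponent is linear in $i$ with a positive rate independent of $i$; combined with $\K-k\to\infty$ (the effect of discarding the top $\K-k$ order statistics), the union bound then converges. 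The remaining ingredients — the precise Chernoff/Kullback--Leibler estimate and the bookkeeping — are routine.
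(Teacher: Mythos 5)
Your proof is correct, and it reaches the same pointwise bound as the paper — $|\zeta|_{(\K-k+m)}\le\lambbh_{\K+m}$ for all $m$ in range — but by a genuinely different route. The paper passes to uniform order statistics, uses the exponential-increment representation $U_{(i)}\eqd T_i/T_{p+1}$, and reduces the simultaneous inequality to the event that the random walk $j\mapsto T_{\K-k+j}-T_{\K-k}-q'j$ (with positive drift $1-q'$) stays above a level $q'\K-T_{\K-k}$ that itself drifts to $-\infty$ thanks to $c>1/(1-q)$ and $\K-k\to\infty$. You instead stay with the Gaussians directly: the single-index failure is a binomial upper tail with $p-k$ trials and success probability $(i+k)q/p$, whose mean is at most $\rho\,i$ with $\rho=qc/(c-1)<1$ (this is exactly where $c>1/(1-q)$ enters, same as in the paper), and a multiplicative Chernoff bound gives $\e{-g(\rho)i}$ with $g(\rho)=\rho-1-\log\rho>0$, so a union bound over $i\ge\K-k+1$ gives a geometric series with total $\to0$ because $\K-k\ge d\to\infty$. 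Both arguments exploit the two hypotheses in exactly the same two places (the gap $c>1/(1-q)$ supplies a uniform positive exponent/drift; $\K-k\to\infty$ pushes the starting point far enough out), but your version is more elementary and self-contained — it needs only a binomial Chernoff estimate (indeed the same flavor as the one used in the paper's Lemma~\ref{lm:bound-out-support-far}) and a union bound, avoiding the R\'enyi representation and the weak convergence of the random walk infimum. The trade-off is that the paper's random-walk picture gives a cleaner structural reason the minimum is tight, whereas your union bound is slightly wasteful in constants; for this lemma that waste is harmless.
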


\begin{proof}[Proof of Lemma \ref{lm:main_lift_maj}]
  It suffices to prove the stronger case where
  $\bm \zeta \sim \mathcal{N}(\bm 0, \bm{I}_p)$. Let
  $U_1, \ldots, U_p$ be \iid uniform random variables on $[0, 1]$ and
  $U_{(1)} \le \cdots \le U_{(p)}$ the corresponding order
  statistics. Since
\[
(|\zeta|_{(\K-k+1)}, \ldots, |\zeta|_{(p-k)}) \eqd \left( \Phi^{-1}(1-U_{(\K-k+1)}/2), \ldots, \Phi^{-1}(1-U_{(p-k)}/2) \right),
\]
the conclusion would follow from
\[
\P\left( U_{(\K-k+j)} \ge q(\K + j)/p, ~ \forall j \in \{1, \ldots, p
  - \K\} \right) \goto 1.
\]

Let $E_1, \ldots, E_{p+1}$ be \iid exponential random variables with
mean 1 and denote by $T_i = E_1 + \cdots + E_i$. Then the order
statistics $U_{(i)}$ have the same joint distribution with $T_i/T_{p+1}$. Fixing
an arbitrary constant $q' \in (q, 1 - 1/c)$, we have 
\[
\P\left( U_{(\K-k+j)} \ge q(\K + j)/p, \, \forall j\right) \ge
\P\left( T_{\K-k+j} \ge q'(\K + j), \, \forall j\right) - \P\left(
  T_{p+1} > q'p/q \right).
\]
Since $\P\left( T_{p+1} > q'p/q \right) \goto 0$ by the law of large
numbers, it is sufficient to prove
\begin{equation}\label{eq:random_walk}
  \P\left( T_{\K-k+j} \ge q'(\K + j), \, \forall j \in \{1, \ldots, p
    - \K\} \right) \goto 1.
\end{equation}
This event can be rewritten as
\[
T_{\K-k+j} - T_{\K-k} - q'j \ge q'\K - T_{\K-k}
\]
for all $1\le j \le p - \K$. Hence, \eqref{eq:random_walk} is reduced to proving
\begin{equation}\label{eq:random_walk2}
\P\left( \min_{1\le j \le p-\K}T_{\K-k+j} - T_{\K-k} - q'j \ge q'\K - T_{\K-k} \right) \goto 1.
\end{equation}

As a random walk, $T_{\K-k+j} - T_{\K-k} - q'j$ has \iid increments with mean $1 - q' > 0$ and variance 1. Thus $\min_{1\le j \le p-\K}T_{\K-k+j} - T_{\K-k} - q'j$ converges weakly to a bounded random variable in distribution. Consequently, \eqref{eq:random_walk2} holds if one can demonstrate that $q'\K - T_{\K-k}$ diverges to $-\infty$ as $p \goto \infty$ in probability. To see this, observe that 
\[
q'\K - T_{\K-k} = \frac{q'\K}{\K-k}(\K-k) - T_{\K-k} \le \frac{q'c}{c-1}(\K-k) - T_{\K-k},
\]
where we use the fact $\K \ge ck$. Under our hypothesis $q'c/(c-1) <
1$, the process $\{ q'ct/(c-1) - T_t: t \in \mathbb{N} \}$ is a random
walk drifting towards $-\infty$. Recognizing that $\K - k \ge d \goto
\infty$, we see that $q'c(\K-k)/(c-1) - T_{\K-k}$ (weakly) diverges to
$-\infty$ since it corresponds to a position of the preceding random
walk at $t \goto \infty$. This concludes the proof.

\end{proof}

\begin{lemma}\label{lm:maj_all_order}
  Let $\zeta_1, \ldots, \zeta_{p-k}$ be \iid $\mathcal{N}(0,1)$. Then
  there exists a constant $C_q$ only depending on $q$ such that
\[
\left( \zeta_1, \ldots, \zeta_{p-k} \right) \preceqm C_q \cdot
\sqrt{\frac{\log p}{\log(p/k)}}\left( \lambbh_{k+1}, \ldots, \lambbh_p
\right)
\]
with probability tending to one as $p \goto \infty$ and $k/p \goto 0$.
\end{lemma}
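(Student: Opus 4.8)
The plan is to split the magnitude order statistics $|\zeta|_{(1)}\ge\cdots\ge|\zeta|_{(p-k)}$ into a short ``head'' of length $\K-k$ and the remaining ``tail'', where I set $\K=\max\{\lceil ck\rceil,\,k+d\}$ with $c$ a fixed constant exceeding $1/(1-q)$ (say $c=2/(1-q)$) and $d=\lfloor\log p\rfloor$, so that $d\goto\infty$, $\K/p\goto 0$ and $\K<p$ for all large $p$; in particular $\K$ meets the hypotheses of Lemma~\ref{lm:main_lift_maj}. Write $t=C_q\sqrt{\log p/\log(p/k)}\ (\ge 1)$; I will show $C_q=\sqrt 2$ works. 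Because majorization is transitive and closed under concatenation (the two elementary facts recorded in Section~\ref{sec:preliminaries}), it suffices to establish, with probability tending to one, the two majorizations $(|\zeta|_{(1)},\ldots,|\zeta|_{(\K-k)})\preceq t\,(\lambbh_{k+1},\ldots,\lambbh_{\K})$ and $(|\zeta|_{(\K-k+1)},\ldots,|\zeta|_{(p-k)})\preceq t\,(\lambbh_{\K+1},\ldots,\lambbh_{p})$: concatenating them gives $(|\zeta|_{(1)},\ldots,|\zeta|_{(p-k)})\preceq t\,(\lambbh_{k+1},\ldots,\lambbh_{p})$, hence $(\zeta_1,\ldots,\zeta_{p-k})\preceq t\,(\lambbh_{k+1},\ldots,\lambbh_{p})$, which is the claim.

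For the tail, Lemma~\ref{lm:main_lift_maj} gives $(|\zeta|_{(\K-k+1)},\ldots,|\zeta|_{(p-k)})\preceq(\lambbh_{\K+1},\ldots,\lambbh_p)$ with probability tending to one, and since $t\ge 1$ we have $(\lambbh_{\K+1},\ldots,\lambbh_p)\preceq t\,(\lambbh_{\K+1},\ldots,\lambbh_p)$; transitivity closes this case. For the head I would bound each partial sum directly. On the event $\{|\zeta|_{(1)}\le\sqrt{2\log p}\}$, which has probability tending to one by Lemma~\ref{lm:gauss_max_weak}, one has $\sum_{j=1}^{m}|\zeta|_{(j)}\le m\sqrt{2\log p}$ for every $1\le m\le\K-k$; on the other hand $\sum_{j=1}^{m}\lambbh_{k+j}\ge m\,\lambbh_{k+m}\ge m\,\lambbh_{\K}$ because $k+m\le\K$ and $\bm\lambbh$ is nonincreasing. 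Finally, since $\K q/(2p)\goto 0$ and, by a short check on the size of $\K$ in the two regimes $k\gtrsim\log p$ and $k\lesssim\log p$, $\log(p/\K)=(1+o(1))\log(p/k)$, one gets $\lambbh_{\K}=(1+o(1))\sqrt{2\log(p/\K)}\ge\sqrt{\log(p/k)}$ for all large $p$. Combining the last estimates, $\sum_{j=1}^{m}|\zeta|_{(j)}\le\sqrt 2\,\sqrt{\log p/\log(p/k)}\,\sum_{j=1}^{m}\lambbh_{k+j}$ for every $m\le\K-k$, which is the head majorization with $C_q=\sqrt 2$.

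The only substantive ingredient is the tail majorization, and that is precisely Lemma~\ref{lm:main_lift_maj}, already proved via the random-walk argument; the head part is elementary, the point being merely that over the first $\K-k$ ranks the Benjamini--Hochberg weights stay above $\lambbh_{\K}\asymp\sqrt{\log(p/k)}$ while each order statistic is dominated by the universal bound $\sqrt{2\log p}$, and the ratio of these two scales is exactly the permitted inflation factor. I expect the only mild nuisance to be the bookkeeping that the single choice $\K=\max\{\lceil ck\rceil,k+d\}$ with $d=\lfloor\log p\rfloor$ simultaneously satisfies $\K\ge\max\{ck,k+d\}$, $d\goto\infty$, $\K<p$, $\K/p\goto 0$, and $\log(p/\K)=(1+o(1))\log(p/k)$ — the last of these needing a separate look when $k$ does not tend to infinity (so that $k\lesssim\log p$), but all of this is routine.
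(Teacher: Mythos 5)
Your proof is correct, but it takes a genuinely different route from the paper's.  The paper proves the stronger \emph{coordinate-wise} bound $|\zeta|_{(i)}\le C_q\sqrt{\log p/\log(p/k)}\,\lambbh_{k+i}$ for every $i$ with high probability: it rewrites $|\zeta|_{(i)}$ as $\Phi^{-1}(1-U_{(i)}/2)$, bounds both sides by logarithms, invokes the exponential-gap representation $U_{(i)}\stackrel{d}{=}T_i/T_{p+1}$, and then splits the index range at $i\asymp\sqrt p$ to compare $\log(p/T_i)$ with $\log(p/(k+i))$ in two regimes.  Your argument instead establishes the required majorization directly by a head/tail decomposition at a deterministic rank $\K-k$: the tail $(|\zeta|_{(\K-k+1)},\ldots,|\zeta|_{(p-k)})\preceq(\lambbh_{\K+1},\ldots,\lambbh_p)$ is exactly Lemma~\ref{lm:main_lift_maj}, and the head is handled with the crude uniform bound $|\zeta|_{(1)}\le\sqrt{2\log p}$ combined with the observation that $\lambbh_j\ge\lambbh_\K\gtrsim\sqrt{\log(p/k)}$ on the head ranks; concatenation and transitivity of the majorization order finish it.  Your approach is more economical because it reuses the already-proved random-walk lemma instead of redoing an order-statistics calculation, and it delivers an explicit small constant $C_q=\sqrt 2$; the paper's approach is self-contained and yields the stronger coordinate-wise domination (which of course implies majorization).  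Both are valid.  The one point worth stating more precisely in your write-up is the bookkeeping $\log(p/\K)=(1+o(1))\log(p/k)$: the relevant dichotomy is not quite ``$k$ bounded vs.\ unbounded'' but rather whether $\K=\lceil ck\rceil$ (then $\log(p/\K)=\log(p/k)-\log c+o(1)$ and you use $\log(p/k)\to\infty$) or $\K=k+d$ (then $k=O(\log p)$, so both $\log(p/k)$ and $\log(p/\K)$ are $(1+o(1))\log p$); your argument covers both cases, the phrasing just needs this small correction.
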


\begin{proof}[Proof of Lemma \ref{lm:maj_all_order}]
  Let $U_1, \ldots, U_{p-k}$ be \iid uniform random variables on $[0,
  1]$ and replace $\zeta_i$ by $\Phi^{-1}(1 - U_i/2)$. Note that 
\[
\Phi^{-1}\left(1 - U_i/2\right) \le \sqrt{2\log\frac2{U_i}}, \quad
\lambbh_{k+i} \asymp \sqrt{2\log\frac{2p}{k+i}}; 
\]
Hence, it suffices to prove that for some constant $\kappa_q'$,
\begin{equation}\label{eq:maj_logs}
  \log(2/U_{(i)}) \log(p/k) \le \kappa_q' \cdot \log p \cdot \log(2p/(k+i))
\end{equation}
holds for all $i = 1, \ldots, p-k$ with probability approaching
one. Applying the representation given in the proof of Lemma
\ref{lm:main_lift_maj} and noting that $T_{p+1} = (1+o_{\P}(1))p$, we
see that \eqref{eq:maj_logs} is implied by
\begin{equation}\label{eq:maj_logs2}
\log(3p/T_i) \log(p/k) \le \kappa_q' \cdot  \log p \cdot \log(2p/(k+i)).
\end{equation}
We consider $i \le 4\sqrt{p}$ and $i > 4\sqrt{p}$ separately.

Suppose first that $i \le 4\sqrt{p}$. In this case,
\[
\log(2p/(k+i)) = (1+o(1)) \log(p/k).
\]
Thus \eqref{eq:maj_logs2} would follow from 
\begin{equation}\nonumber
  \log(3p/T_i) = O(\log p) 
\end{equation}
for all such $i$. This is, however, self-evident since $T_i \ge E_1
\ge 1/p$ with probability $1 - \e{-1/p} = o(1)$. 

Suppose now that $i > 4\sqrt{p}$. In this case, we make use of the
fact that $T_i > i/2 - \sqrt{p}$ for all $i$ with probability tending
to one as $p \goto \infty$. Then we prove a stronger result, namely,
\begin{equation}\nonumber
  \log\frac{3p}{i/2 - \sqrt{p}} \cdot \log\frac{p}{k} \le \kappa_q'\log p \cdot \log\frac{2p}{k+i}.
\end{equation}
for all $i > 4\sqrt{p}$. This follows from the two observations below:
\[
\log\frac{3p}{i/2 - \sqrt{p}} \asymp \log\frac{p}{i}, ~
\log\frac{2p}{k+i} \ge \min\left\{ \log\frac{p}{i}, \log\frac{p}{k}
\right\}.
\]
\end{proof}

In the proofs of the next two lemmas, namely, Lemma
\ref{lm:slim_spec_refined} and Lemma \ref{lm:noise_maj}, we introduce
an orthogonal matrix $\bm{Q} \in \R^{n\times n}$ that obeys
\[
\bm{Qz} = \left( \| \bm z \|, 0, \ldots, 0\right).
\]
In the proofs, $\bm Q$ is further set to be measurable with respect to
$\bm z$. Hence, $\bm Q$ is independent of $\X$. There are many options
available to construct such a $\bm Q$, including the Householder
transformation. Set 
\begin{equation}\nonumber
\bm{W} = 
\begin{bmatrix}
\tilde{\bm{w}}\\
\tilde{\bm W}
\end{bmatrix}
:=\bm Q \bm X,
\end{equation}
where $\tilde{\bm w} \in \R^{1 \times p}$ and $\tilde{\bm W} \in \R^{(n-1) \times p}$. The independence between $\bm Q$ and $\X$ suggests that $\bm W$ is still a Gaussian random matrix, consisting of \iid $\mathcal{N}(0,1/n)$ entries. Note that
\[
\X_i'\bm{z} = (\bm{Q}\X_i)'(\bm{Q}\bm{z}) = \|\bm z\|(\bm{Q}\X_i)_1 = \|\bm z\| \tilde{w}_i.
\]
This implies that $S^\star$ is constructed as the union of $S$ and the $\K - k$ indices in $\{1, \ldots, p\}\setminus S$ with the largest $|\tilde{w}_i|$. Since $\tilde{\bm w}$ and $\tilde{\bm W}$ are independent, we see that both $\tilde{\bm W}_{\overline{S^\star}}$ and $\tilde{\bm W}_{S^\star}$ are also Gaussian random matrices. These points are crucial in the proof of these two lemmas.

\begin{lemma}\label{lm:slim_spec_refined}
  Let $k < \K < \min\{n, p \}$ be any (deterministic) integer. Denote
  by $\sigma_{\min}$ and $\sigma_{\max}$, respectively, the smallest
  and the largest singular value of $\X_{S^\star}$. Then for any $t >
  0$,
\[
\sigma_{\min} > \sqrt{1-1/n}-\sqrt{\K/n}-t
\] 
holds with probability at least $1 - \e{-nt^2/2}$. Furthermore,
\[
\sigma_{\max} < \sqrt{1-1/n}+\sqrt{\K/n} + \sqrt{8\K \log (p/\K)/n} + t
\]
holds with probability at least $1-\e{-nt^2/2} - (\sqrt{2}\ee{1}\K/p)^{\K}$.
\end{lemma}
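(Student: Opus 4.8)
The plan is to reduce the statement to a standard estimate on the extreme singular values of a Gaussian matrix, exploiting the rotational device $\bm W = \bm Q\X$ set up just before the lemma. Recall that $\bm Q$ is orthogonal and measurable with respect to $\bm z$, hence independent of $\X$; consequently $\bm W$ has i.i.d.\ $\mathcal{N}(0,1/n)$ entries, its first row $\tilde{\bm w}$ is independent of the block $\tilde{\bm W}$ formed by the remaining $n-1$ rows, and $S^\star$ is a function of $\tilde{\bm w}$ and the deterministic set $S$ only. Since $\bm Q$ is orthogonal, $\X_{S^\star}$ and $\bm W_{S^\star}$ have the same singular values, so it suffices to control $\sigma_{\min}$ and $\sigma_{\max}$ of $\bm W_{S^\star} = \begin{bmatrix}\tilde{\bm w}_{S^\star}\\\tilde{\bm W}_{S^\star}\end{bmatrix}$. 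I would first record the only genuinely delicate point: conditionally on $\tilde{\bm w}$ --- equivalently on the realized value of $S^\star$ --- the matrix $\tilde{\bm W}_{S^\star}$ is still Gaussian with i.i.d.\ $\mathcal{N}(0,1/n)$ entries and dimensions $(n-1)\times|S^\star|$, where $|S^\star|\le\K$.

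For the smallest singular value I would simply discard the first row. Writing $\bm W_{S^\star}'\bm W_{S^\star} = \tilde{\bm w}_{S^\star}'\tilde{\bm w}_{S^\star} + \tilde{\bm W}_{S^\star}'\tilde{\bm W}_{S^\star}$ and noting that the first summand is positive semidefinite gives $\sigma_{\min}(\X_{S^\star}) = \sigma_{\min}(\bm W_{S^\star}) \ge \sigma_{\min}(\tilde{\bm W}_{S^\star})$. I would then apply the classical Gaussian singular-value bound --- Gordon's inequality for the mean together with Borell's inequality (Lemma \ref{lm:gauss_concentrate}) for the concentration of the $1$-Lipschitz map $\bm G\mapsto\sigma_{\min}(\bm G)$, see e.g.~\cite{vershynin} --- to the rescaled matrix $\sqrt n\,\tilde{\bm W}_{S^\star}$, conditionally on $S^\star$. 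Choosing the deviation parameter so that the tail probability is $\e{-nt^2/2}$ and using $|S^\star|\le\K$ then yields $\sigma_{\min}(\X_{S^\star}) \ge \sqrt{1-1/n}-\sqrt{\K/n}-t$ with probability at least $1-\e{-nt^2/2}$; since the conditional bound is uniform over the realized $S^\star$, it also holds unconditionally.

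For the largest singular value I would split $\sigma_{\max}(\X_{S^\star}) = \sigma_{\max}(\bm W_{S^\star}) \le \|\tilde{\bm w}_{S^\star}\| + \sigma_{\max}(\tilde{\bm W}_{S^\star})$ by the triangle inequality for the operator norm. The second term is handled exactly as above, now with $\E\,\sigma_{\max}(\bm G)\le\sqrt{n-1}+\sqrt{|S^\star|}$, giving $\sigma_{\max}(\tilde{\bm W}_{S^\star})\le\sqrt{1-1/n}+\sqrt{\K/n}+t$ with probability at least $1-\e{-nt^2/2}$. For the first term, since $\tilde{\bm w}$ has i.i.d.\ $\mathcal{N}(0,1/n)$ coordinates and $|S^\star|\le\K$, $\|\tilde{\bm w}_{S^\star}\|^2$ is bounded by $n^{-1}$ times the sum of the $\K$ largest squared coordinates of a vector of $p$ i.i.d.\ standard normals; the Chernoff computation already carried out inside the proof of Lemma \ref{lm:bound_gradient} bounds this sum by $8\K\log(p/\K)$ with probability at least $1-(\sqrt 2\,\mathrm{e}\,\K/p)^\K$, so that $\|\tilde{\bm w}_{S^\star}\|\le\sqrt{8\K\log(p/\K)/n}$ on that event. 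A union bound over the two events then produces the asserted upper bound on $\sigma_{\max}$.

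The one step I expect to be the main obstacle is the bookkeeping around the conditioning: I must check carefully that restricting $\tilde{\bm W}$ to the columns indexed by a set $S^\star$ that depends only on the independent vector $\tilde{\bm w}$ (and on the deterministic $S$) leaves a Gaussian matrix with exactly the nominal i.i.d.\ law, so that all the tail bounds above, which are derived conditionally on $S^\star$ but uniformly in its value, pass to unconditional statements. Once that is in place, the rest is a direct invocation of Borell's inequality, the Davidson--Szarek/Gordon estimates, and the order-statistic bound already established in the appendix.
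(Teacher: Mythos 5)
Your argument is correct and matches the paper's proof step for step: same reduction to $\bm W_{S^\star}$ via the orthogonality of $\bm Q$, same drop-the-first-row bound $\sigma_{\min}(\bm W_{S^\star})\ge\sigma_{\min}(\tilde{\bm W}_{S^\star})$ for the lower tail, same combination of the Davidson--Szarek/Gordon plus Borell estimate on $\tilde{\bm W}_{S^\star}$ with the Chernoff order-statistics bound from the proof of Lemma~\ref{lm:bound_gradient} for $\|\tilde{\bm w}_{S^\star}\|$, and the same use of the independence between $\tilde{\bm w}$ (which generates $S^\star$) and $\tilde{\bm W}$ to keep the conditional law of $\tilde{\bm W}_{S^\star}$ i.i.d.\ Gaussian. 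You even have the normalization $\sqrt{8\K\log(p/\K)/n}$ correct, whereas the paper's proof (though not its statement) drops the $1/n$ in that intermediate step.
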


\begin{proof}[Proof of Lemma \ref{lm:slim_spec_refined}]
  Recall that $\tilde{\bm W}_{S^\star} \in \R^{(n-1) \times \K}$ is a
  Gaussian design with \iid $\mathcal{N}(0,1/n)$ entries. Since
  $\bm{W}_{S^{\star}}$ and $\X_{S^\star}$ have the the same set of
  singular values, we consider $\bm{W}_{S^{\star}}$.  

  Classical theory on Wishart matrices (see \cite{vershynin}, for
  example) asserts that (i) all the singular values of $\tilde{\bm
    W}_{S^\star}$ are larger than $\sqrt{1-1/n}-\sqrt{\K/n}-t$ with
  probability at least $1-\e{-nt^2/2}$, and (ii) are all smaller than
  $\sqrt{1-1/n}+\sqrt{\K/n} + t$ with probability at least
  $1-\e{-nt^2/2}$.  Clearly, all the singular values larger of ${\bm
    W}_{S^\star}$ are at least as large as $\sigma_{\min}(\tilde{\bm
    W}_{S^\star})$. Thus, (i) yields the first claim. For the other,
  Lemma \ref{lm:bound_gradient} asserts that the event $\|\tilde{\bm
    w}_{S^\star} \| \le \sqrt{8\K\log(p / \K)}$ happens with
  probability at least $1 - (\sqrt{2}\ee{1}\K/p)^{\K}$. On this event,
\[
\|\bm{W}_{S^{\star}}\| \le \sqrt{\| \tilde{\bm W}_{S^\star}\|^2 + 8\K\log(p/\K)}, 
\]
where $\|\cdot\|$ denotes the spectral norm. Hence, (ii) gives
\[
\|\bm{W}_{S^{\star}}\| \le \| \tilde{\bm W}_{S^\star}\| +
\sqrt{8\K\log(p / \K)}\le \sqrt{1-1/n}+\sqrt{\K/n} + t+\sqrt{ 8\K
  \log(p / \K)}
  \]
  with probability at least $1-\e{-nt^2/2} -
  (\sqrt{2}\ee{1}\K/p)^{\K}$.

\end{proof}


\begin{lemma}\label{lm:noise_maj}
  Denote by $\hat{\bm b}_{S^{\star}}$ the solution to the reduced
  SLOPE problem \eqref{eq:reduced} with $T = S^{\star}$ and
  $\bm\lambda = \bm\lambe$. Keep the assumptions from Lemma
  \ref{lm:main_lift_maj}, and additionally assume $\K/\min\{n, p\}
  \goto 0$. Then there exists a constant $C_q$ only depending on $q$
  such that
\begin{equation}\nonumber
  \bm{X}'_{\overline{S^\star}}\bm{X}_{S^{\star}}(\bm\beta_{S^{\star}} - \hat{\bm b}_{S^{\star}})  \preceqm  C_q \cdot \sqrt{\frac{\K \log p}{n}}\left( \lambbh_{\K+1}, \ldots, \lambbh_p \right)
\end{equation}
with probability tending to one. 
\end{lemma}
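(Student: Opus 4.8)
The plan is to reduce the statement to a majorization bound for a Gaussian vector of the form considered in Lemma~\ref{lm:maj_all_order}, after extracting a quantitative bound on $\|\bm\beta_{S^\star} - \hat{\bm b}_{S^\star}\|$. First I would use the rotation trick described in the paragraph preceding the lemma: write $\bm{Q}$ for the $\bm z$-measurable orthogonal matrix with $\bm{Q}\bm z = (\|\bm z\|, 0, \ldots, 0)$ and $\bm{W} = \bm{Q}\bm{X}$, so that $\bm{X}'_{\overline{S^\star}}\bm{X}_{S^\star} = \bm{W}'_{\overline{S^\star}}\bm{W}_{S^\star}$ and, crucially, $\bm{W}_{\overline{S^\star}}$ splits as its first row $\tilde{\bm w}_{\overline{S^\star}}$ (which is what determines membership in $S^\star$) plus the independent Gaussian block $\tilde{\bm W}_{\overline{S^\star}} \in \R^{(n-1)\times(p-\K)}$. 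Conditioning on everything measurable with respect to $\bm z$ and $\tilde{\bm w}$ (hence on $S^\star$, on $\hat{\bm b}_{S^\star}$, and on $\bm{X}_{S^\star}$), the matrix $\tilde{\bm W}_{\overline{S^\star}}$ is still i.i.d.\ $\mathcal{N}(0,1/n)$ and independent of the fixed vector $\bm v := \bm{X}_{S^\star}(\bm\beta_{S^\star} - \hat{\bm b}_{S^\star})$. Therefore, conditionally, $\bm{X}'_{\overline{S^\star}}\bm{v}$ is distributed as $(\|\bm v\|/\sqrt{n})\,(\zeta_1, \ldots, \zeta_{p-\K})$ with i.i.d.\ standard Gaussian $\zeta_i$, except that one must be slightly careful: $\bm{X}'_{\overline{S^\star}}\bm v$ picks up a contribution from the first coordinate $\tilde{w}_i$ as well; but that contribution equals $\tilde w_i \cdot (\bm{X}_{S^\star}(\cdots))_1 \cdot$ (something), and since $|\tilde w_i|$ for $i \in \overline{S^\star}$ are the \emph{smallest} $p-\K$ magnitudes among the $\tilde w$'s, this term is dominated; alternatively one absorbs it by noting $(\bm{X}_{S^\star}(\bm\beta_{S^\star}-\hat{\bm b}_{S^\star}))$ is a fixed vector and applying the rotation argument to the full $\bm W$, which makes $\bm W'_{\overline{S^\star}}\bm v$ genuinely conditionally Gaussian once we also condition on the first coordinate. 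I would spell this out carefully since it is the one place bookkeeping can go wrong.

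Next I would bound $\|\bm v\| = \|\bm{X}_{S^\star}(\bm\beta_{S^\star} - \hat{\bm b}_{S^\star})\|$. Because $\hat{\bm b}_{S^\star}$ minimizes $\frac12\|\bm y - \bm{X}_{S^\star}\bm b\|^2 + J_{\bm\lambda^{[\K]}}(\bm b)$ and $\bm y = \bm{X}_{S^\star}\bm\beta_{S^\star} + \bm z$, comparing the objective value at $\hat{\bm b}_{S^\star}$ with that at $\bm\beta_{S^\star}$ gives the basic inequality
\[
\tfrac12\|\bm{X}_{S^\star}(\bm\beta_{S^\star} - \hat{\bm b}_{S^\star})\|^2 \le \bm z'\bm{X}_{S^\star}(\hat{\bm b}_{S^\star} - \bm\beta_{S^\star}) + J_{\bm\lambda^{[\K]}}(\bm\beta_{S^\star}) - J_{\bm\lambda^{[\K]}}(\hat{\bm b}_{S^\star}).
\]
Using $\bm z'\bm{X}_{S^\star}\bm w \le \|\bm{X}_{S^\star}'\bm z\|\,\|\bm w\|$ together with Lemma~\ref{lm:bound_gradient} (which gives $\|\bm{X}_{S^\star}'\bm z\| \le \sqrt{32\K\log(p/\K)}$ with high probability), dropping $J(\hat{\bm b}_{S^\star}) \ge 0$, bounding $J_{\bm\lambda^{[\K]}}(\bm\beta_{S^\star}) \le \|\bm\lambda^{[k]}\|\,\|\bm\beta_{S^\star}\|$ is not quite what I want; instead I would note that $\hat{\bm b}_{S^\star}$ is the reduced SLOPE solution so the KKT/subgradient characterization yields $\|\bm{X}_{S^\star}'(\bm y - \bm{X}_{S^\star}\hat{\bm b}_{S^\star})\| \le \|\bm\lambda^{[\K]}\| \asymp \sqrt{\K\log(p/\K)}$ by Lemma~\ref{lm:bh_seq_bound} (this is the analog of the step invoking ``Lemma~\ref{lm:bound_gradient}'' in the Lasso proof of \eqref{eq:lasso_risk}). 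Combined with $\bm{X}_{S^\star}(\bm\beta_{S^\star} - \hat{\bm b}_{S^\star}) = (\bm y - \bm{X}_{S^\star}\hat{\bm b}_{S^\star}) - \bm z$ projected appropriately, and using Lemma~\ref{lm:slim_spec_refined} to control $\|\bm{X}_{S^\star}(\bm{X}_{S^\star}'\bm{X}_{S^\star})^{-1}\|$ (the singular values of $\bm{X}_{S^\star}$ being near $1$ since $\K/\min\{n,p\}\to 0$), I get $\|\bm v\| \le C_q'\sqrt{\K\log(p/\K)} \le C_q'\sqrt{\K\log p}$ with probability tending to one. (Indeed the bound $\|\bm v\| \le C\sqrt{k\log p}$ was essentially carried out for the Lasso in the proof of \eqref{eq:lasso_inflat}–\eqref{eq:lasso_term_bound}; the SLOPE case is identical after replacing $\sqrt{k}\lambda$ with $\|\bm\lambda^{[\K]}\|$.)

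Finally, assembling the two pieces: conditionally on the $\bm z$-and-$\tilde{\bm w}$-measurable $\sigma$-field on the event where $\|\bm v\| \le C_q'\sqrt{\K\log p}$ and $\|\bm z\| \le 2\sqrt n$, the vector $\bm{X}_{\overline{S^\star}}'\bm v$ is (stochastically dominated by) $(\|\bm v\|/\sqrt n)(\zeta_1, \ldots, \zeta_{p-\K})$ with i.i.d.\ standard normals. A vector $\bm a$ is majorized by $\bm b$ iff $c\bm a$ is majorized by $c\bm b$ for $c > 0$, and Lemma~\ref{lm:maj_all_order} (applied with the $k$ there replaced by $\K$, legitimate since $\K/p \to 0$) gives $(\zeta_1, \ldots, \zeta_{p-\K}) \preceq C_q''\sqrt{\log p/\log(p/\K)}\,(\lambbh_{\K+1}, \ldots, \lambbh_p)$ with probability tending to one. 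Multiplying through by $\|\bm v\|/\sqrt n \le C_q'\sqrt{\K\log p}/\sqrt n$, we obtain
\[
\bm{X}_{\overline{S^\star}}'\bm{X}_{S^\star}(\bm\beta_{S^\star} - \hat{\bm b}_{S^\star}) \preceq C_q'C_q''\,\frac{\sqrt{\K\log p}}{\sqrt n}\cdot\sqrt{\frac{\log p}{\log(p/\K)}}\,(\lambbh_{\K+1}, \ldots, \lambbh_p),
\]
and since $\lambbh_{\K+i} \asymp \sqrt{\log(p/(\K+i))}$ while on the right we have the factor $\sqrt{\log p/\log(p/\K)}$, this folds into the claimed bound $C_q\sqrt{\K\log p/n}\,(\lambbh_{\K+1}, \ldots, \lambbh_p)$ (absorbing the ratio, which is at most $\sqrt{\log p / \log(p/\K)}$, a quantity that is $O(1)$ times what is needed, or at worst adjusted by redefining $C_q$; the cleanest route is to keep the $\sqrt{\log p/\log(p/\K)}$ factor from Lemma~\ref{lm:maj_all_order} and note it is $\le \sqrt{\log p}$, while $\sqrt{\K\log(p/\K)}\cdot\sqrt{\log p}/\sqrt{n\log(p/\K)} = \sqrt{\K\log p / n}$ exactly). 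Averaging over the conditioning $\sigma$-field removes the conditioning. The main obstacle I anticipate is the first step: making rigorous that, after the rotation, $\bm{X}_{\overline{S^\star}}'\bm v$ is conditionally Gaussian with the stated variance despite the fact that $S^\star$ itself is defined through $\tilde{\bm w}$ — one must condition on exactly the right $\sigma$-field (generated by $\bm z$ and $\tilde{\bm w}$, hence by $S^\star$, $\bm{X}_{S^\star}$ and $\hat{\bm b}_{S^\star}$) so that $\tilde{\bm W}_{\overline{S^\star}}$ remains i.i.d.\ Gaussian and independent of the now-fixed $\bm v$, and handle the leftover first-coordinate term, which is dominated precisely because $\overline{S^\star}$ consists of the indices with the \emph{smallest} $|\tilde w_i|$.
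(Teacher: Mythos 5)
Your proposal follows essentially the same route as the paper: rotate by $\bm Q$ so that $\bm z \mapsto (\|\bm z\|, 0, \ldots, 0)$ and $\bm X \mapsto \bm W = (\tilde{\bm w}; \tilde{\bm W})$, bound $\|\bm v\| = \|\X_{S^\star}(\bm\beta_{S^\star}-\hat{\bm b}_{S^\star})\|$ via the reduced SLOPE KKT bound on $\X_{S^\star}'(\bm y - \X_{S^\star}\hat{\bm b}_{S^\star})$, Lemma~\ref{lm:bound_gradient}, and Lemma~\ref{lm:slim_spec_refined}, and apply Lemma~\ref{lm:maj_all_order} to the conditionally Gaussian part. (Indeed, the paper's $\bm\xi$ is exactly $\bm Q\bm v$, so your $\|\bm v\|$ estimate coincides with the paper's bound on $\|\bm\xi\|$.) That part of the argument is sound, and your bookkeeping of constants in the last step also checks out.

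The gap is the one you yourself flag but do not close: the first-coordinate contribution. After conditioning on $\bm z$ and $\tilde{\bm w}$, the decomposition is
\[
\bm X'_{\overline{S^\star}} \bm v \;=\; (\bm Q\bm v)_1 \cdot \tilde{\bm w}_{\overline{S^\star}} \;+\; \tilde{\bm W}'_{\overline{S^\star}} (\bm Q\bm v)_{2:n},
\]
and the first summand is a \emph{deterministic} vector under that conditioning, not a Gaussian nuisance. It therefore cannot be ``stochastically dominated by $(\|\bm v\|/\sqrt n)(\zeta_1,\ldots,\zeta_{p-\K})$''; it has to be bounded on its own, by a majorization argument. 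Saying ``the $\tilde w_i$ with $i\in \overline{S^\star}$ are the smallest, so the term is dominated'' names the right intuition but is not a bound: you still need to show that the tail order statistics $(|\tilde w|_{(\K-k+1)},\ldots,|\tilde w|_{(p-k)})$ are themselves majorized by $n^{-1/2}(\lambbh_{\K+1},\ldots,\lambbh_p)$ with probability tending to one. That is precisely the content of Lemma~\ref{lm:main_lift_maj}, which your proposal never invokes. Without that lemma (or an equivalent exponential-random-walk argument on Gaussian order statistics), the first summand is uncontrolled and the proof does not go through. Once you cite Lemma~\ref{lm:main_lift_maj}, multiply through by $|(\bm Q\bm v)_1| \le \|\bm v\| \le C_q\sqrt{\K\log(p/\K)}$, and observe that the resulting scale $\sqrt{\K\log(p/\K)/n}$ is dominated by $\sqrt{\K\log p/n}$, the first summand is majorized by $C_q\sqrt{\K\log p/n}\,(\lambbh_{\K+1},\ldots,\lambbh_p)$, matching the second, and the lemma follows by concatenation.
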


\begin{proof}[Proof of Lemma \ref{lm:noise_maj}]
  In this proof, $C$ is a constant that only depends on $q$ and whose
  value may change at each occurrence.  Rearrange the objective term
  as
\begin{equation}\nonumber
\begin{aligned}
\X'_{\overline{S^\star}}\X_{S^{\star}}(\Beta_{S^{\star}} - \hat{\bm b}_{S^{\star}}) &= \X'_{\overline{S^\star}}\X_{S^{\star}}(\X_{S^{\star}}'\X_{S^{\star}})^{-1}(\X_{S^{\star}}'(\bm{y}-\X_{S^{\star}}\hat{\bm b}_{S^{\star}})-\X_{S^{\star}}' \bm{z})\\
&=\X'_{\overline{S^\star}}\bm{Q}'\bm{Q}\X_{S^{\star}}(\X_{S^{\star}}'\X_{S^{\star}})^{-1}(\X_{S^{\star}}'(\bm y-\X_{S^{\star}}\hat{\bm b}_{S^{\star}})-\X_{S^{\star}}'\bm{z})\\
&= \X'_{\overline{S^\star}}\bm{Q}' \bm\xi,
\end{aligned}
\end{equation}
where
\[
\bm\xi := \bm{Q}\X_{S^{\star}}(\X_{S^{\star}}'\X_{S^{\star}})^{-1}\left(\X_{S^{\star}}'(\bm y-\X_{S^{\star}}\hat{\bm b}_{S^{\star}})-\X_{S^{\star}}' \bm z \right).
\]
For future usage, note that $\bm\xi$ only depends on $\widetilde{\bm w}$ and $\widetilde{\bm W}_{S^\star}$ and is, therefore, independent of $\widetilde{\bm W}_{\overline{S^\star}}$.

We begin by bounding $\|\bm\xi\|$. It follows from the KKT condition
of SLOPE that $\X_{S^{\star}}'(\bm y-\X_{S^{\star}}\hat{\bm
  b}_{S^{\star}})$ is majorized by $\bm\lambda^{[\K]}$. Hence, it
follows from Fact~\ref{fc:norm} that
\begin{equation}\label{eq:maj_lambx}
\left\| \X_{S^{\star}}'(\bm y-\X_{S^{\star}}\hat{\bm b}_{S^{\star}})\right\| \le \| \bm \lambda^{[\K]}\|.
\end{equation}
Lemma \ref{lm:slim_spec_refined} with $t = 1/2$ gives
\begin{equation}\label{eq:inv_sing}
\left\| \bm{X}_{S^{\star}}(\bm{X}_{S^{\star}}'\bm{X}_{S^{\star}})^{-1} \right\| \le \left(\sqrt{1-1/n} - \sqrt{\K/n} - 1/2 \right)^{-1} < 2.01
\end{equation}
with probability at least $1 - \e{-n/8}$ for sufficiently large $p$, where in the last step we have used $\K/n \goto 0$. Hence, from \eqref{eq:maj_lambx} and \eqref{eq:inv_sing} we get
\begin{equation}\label{eq:lm_noise_maj_0}
\begin{aligned}
  \|\bm\xi\| &\le \left\| \bm{X}_{S^{\star}}(\bm{X}_{S^{\star}}'\bm{X}_{S^{\star}})^{-1} \right\| \cdot \left\| \X_{S^{\star}}'(\bm y-\X_{S^{\star}}\hat{\bm b}_{S^{\star}})-\X_{S^{\star}}' \bm z \right\|\\
  &\le 2.01\left(\|{\bm \lambda}^{[\K]}\| + 4\sqrt{2\K\log(p/\K)}\right)\\
  &\le 2.01\left( (1+\epsilon)\sqrt{C} + 4\sqrt{2} \right)
  \sqrt{\K\log(p/\K)}\\
  & = C \cdot \sqrt{\K\log(p/\K)}
\end{aligned}
\end{equation}
with probability at least $1 - \e{-n/2} -
(\sqrt{2}\mathrm{e}\K/p)^{\K} - \e{-n/8} \goto 1$; we used Lemma
\ref{lm:bound_gradient} in the second line and Lemma
\ref{lm:bh_seq_bound} in the third. \eqref{eq:lm_noise_maj_0} will
help us in finishing the proof.

Write
\begin{equation}\label{eq:inflat_two_term}
\X'_{\overline{S^\star}}\X_{S^{\star}}(\Beta_{S^{\star}} - \hat{\bm b}_{S^{\star}}) = \X_{\overline{S^\star}}' \bm{Q}'\bm\xi = \bm{W}_{\overline{S^\star}}' \bm\xi = \left( \tilde{\bm w}'_{\overline{S^\star}}, \bm 0 \right) \bm\xi + \left(\bm 0, \tilde{\bm W}'_{\overline{S^\star}} \right) \bm\xi.
\end{equation}
It follows from Lemma \ref{lm:main_lift_maj} that $\tilde{\bm
  w}_{\overline {S^\star}}$ is majorized by $\left( \lambbh_{\K+1},
  \lambbh_{\K+2}, \ldots, \lambbh_{p} \right)/\sqrt{n}$ in
probability. As a result, the first term in the right-hand side obeys
\begin{equation}\label{eq:maj_low_w}
  \left( \tilde{\bm w}'_{\overline{S^\star}}, \bm 0 \right) \bm\xi = \xi_1\cdot \tilde{\bm w}'_{\overline{S^\star}} \preceqm \|\bm\xi\| \cdot \tilde{\bm w}'_{\overline{S^\star}}
  \preceqm C \cdot \sqrt{\frac{\K}{n}\log\frac{p}{\K}}\left( \lambbh_{\K+1}, \lambbh_{\K+2}, \ldots, \lambbh_{p} \right)
\end{equation}
with probability tending to one. For the second term, by exploiting
the independence between $\bm\xi$ and $\tilde{\bm
  W}_{\overline{S^\star}}$, we have
\[
\left(\bm 0, \tilde{\bm W}'_{\overline{S^\star}} \right) \bm\xi \eqd \sqrt{\frac{\xi_2^2 + \cdots + \xi_n^2}{n}}(\zeta_1, \ldots, \zeta_{p-\K}),
\]
where $\zeta_1, \ldots, \zeta_{p-\K}$ are \iid $\mathcal{N}(0,
1/n)$. Since $\K/p \goto 0$, applying Lemma \ref{lm:maj_all_order}
gives 
\[
\left( \zeta_1, \ldots, \zeta_{p-\K} \right) \preceqm C \cdot \sqrt{\frac{\log p}{\log(p/\K)}}\left( \lambbh_{\K+1}, \ldots, \lambbh_p \right)
\]
with probability approaching one.  Hence, owing to
\eqref{eq:lm_noise_maj_0}, 
\begin{equation}\label{eq:maj_up_w}
  \left(\bm 0, \tilde{\bm W}'_{\overline{S^\star}} \right) \bm\xi \preceqm C \cdot \sqrt{\frac{\K\log p}{n}}\left( \lambbh_{\K+1}, \ldots, \lambbh_p \right)
\end{equation}
holds with probability approaching one.  Finally, combining
\eqref{eq:maj_low_w} and \eqref{eq:maj_up_w} gives that
\begin{align*}
  \X'_{\overline{S^\star}}\X_{S^{\star}}(\Beta_{S^{\star}} - \hat{\bm b}_{S^{\star}})  & =  \left( \tilde{\bm w}'_{\overline{S^\star}}, \bm 0 \right) \bm\xi + \left(\bm 0, \tilde{\bm W}'_{\overline{S^\star}} \right) \bm\xi \\
  &\preceq C \cdot \left(\sqrt{\frac{\K}{n}\log\frac{p}{\K}} + \sqrt{\frac{\K\log p}{n}}\right) \cdot \left( \lambbh_{\K+1}, \ldots, \lambbh_p \right) \\
  &\preceqm C \cdot \sqrt{\frac{\K\log p}{n}}\left( \lambbh_{\K+1},
    \ldots, \lambbh_p \right)
\end{align*}
holds with probability tending to one.
\end{proof}

\subsection{Proofs for Section \ref{sec:lowerbound}}
\label{sec:proofs-sect-refs-5}

\begin{lemma}\label{lm:many_large_bonf}
  Keep the assumptions from Lemma \ref{lm:lowerbound_single} and let
  $\zeta_1,\ldots, \zeta_p$ be \iid $\normaldist(0, 1)$. Then
\[
\# \left\{2\le i \le p: \zeta_i > \tau + \zeta_1 \right\} \goto \infty
\] 
in probability.
\end{lemma}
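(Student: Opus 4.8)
The plan is to reduce the statement to a binomial concentration estimate by truncating $\zeta_1$, and then to extract the divergence of the count from a sharp Gaussian tail bound. Since $\zeta_1$ is bounded in probability, for any $\eta>0$ there is $M>0$ with $\P(\zeta_1>M)<\eta$. On the event $\{\zeta_1\le M\}$ one has
\[
\#\bigl\{2\le i\le p:\zeta_i>\tau+\zeta_1\bigr\}\;\ge\;N_M:=\#\bigl\{2\le i\le p:\zeta_i>\tau+M\bigr\},
\]
and $N_M\sim\mathrm{Binomial}\bigl(p-1,\bar\Phi(\tau+M)\bigr)$, where $\bar\Phi=1-\Phi$, depending only on $\zeta_2,\dots,\zeta_p$. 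Hence it suffices to prove that $N_M\goto\infty$ in probability for each fixed $M$.

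The heart of the argument is to show $\E N_M=(p-1)\bar\Phi(\tau+M)\goto\infty$. I would use the elementary lower bound $\bar\Phi(x)\ge\frac{x}{1+x^2}\cdot\frac{1}{\sqrt{2\pi}}\e{-x^2/2}$ together with the hypotheses on $\tau$ inherited from Lemma~\ref{lm:lowerbound_single}: writing $\tau=\sqrt{2\log p}-\omega_p$, the conditions $\tau=(1+o(1))\sqrt{2\log p}$ and $\sqrt{2\log p}-\tau\goto\infty$ say precisely that $\omega_p\goto\infty$ while $\omega_p=o(\sqrt{\log p})$. Expanding $(\tau+M)^2/2=\log p-(\omega_p-M)\sqrt{2\log p}+(\omega_p-M)^2/2$ gives
\[
(p-1)\,\bar\Phi(\tau+M)\;\ge\;\frac{p-1}{p}\cdot\frac{\tau+M}{1+(\tau+M)^2}\cdot\frac{1}{\sqrt{2\pi}}\,\exp\!\Bigl((\omega_p-M)\sqrt{2\log p}-\tfrac{(\omega_p-M)^2}{2}\Bigr).
\]
Since $\omega_p\goto\infty$ and $\omega_p=o(\sqrt{\log p})$, the exponent equals $(\omega_p-M)\bigl(\sqrt{2\log p}-\tfrac{\omega_p-M}{2}\bigr)=(1+o(1))(\omega_p-M)\sqrt{2\log p}\goto+\infty$, and this exponential growth overwhelms the $\Theta(1/\sqrt{\log p})$ prefactor; hence $\E N_M\goto\infty$.

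To conclude I would invoke Chebyshev's inequality: as $\Var N_M\le\E N_M$, for any fixed level $L$ and $p$ large enough that $\E N_M>L$ we get $\P(N_M\le L)\le\P\bigl(|N_M-\E N_M|\ge\E N_M-L\bigr)\le\E N_M/(\E N_M-L)^2\goto 0$. Combining with the truncation, $\P\bigl(\#\{i:\zeta_i>\tau+\zeta_1\}<L\bigr)\le\P(\zeta_1>M)+\P(N_M<L)<2\eta$ for $p$ large; since $L$ and $\eta$ are arbitrary, the count tends to $\infty$ in probability. The only delicate point is the tail estimate $\E N_M\goto\infty$: this is exactly where the divergence of the gap $\sqrt{2\log p}-\tau$ is used (without it the expected count need not blow up), while $\tau=(1+o(1))\sqrt{2\log p}$ is what keeps the correction term $(\omega_p-M)^2/2$ negligible against $(\omega_p-M)\sqrt{2\log p}$.
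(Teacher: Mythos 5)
Your proof is correct and follows essentially the same route as the paper: both reduce the count to a binomial variable, show its expectation diverges via the Gaussian tail bound driven by $\sqrt{2\log p}-\tau\to\infty$, and then conclude by binomial concentration. The only cosmetic difference is that you handle the randomness of $\zeta_1$ by truncating at a fixed $M$, whereas the paper absorbs it by noting that $\tau'=\tau+\zeta_1$ satisfies the same asymptotic hypotheses with probability tending to one; your version makes this step slightly more explicit.
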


\begin{proof}[Proof of Lemma \ref{lm:many_large_bonf}]
  With probability tending to one, $\tau' := \tau + \zeta_1$ also
  obeys $\tau'/\sqrt{2\log p} \goto 1$ 
  and $\sqrt{2\log p} - \tau' \goto \infty$.  This shows that we only
  need to prove a simpler version of this lemma, namely, $\#
  \left\{1\le i \le p: \zeta_i > \tau \right\} \goto \infty$ in probability.

  Put $\Delta = \sqrt{2\log p} - \tau = o(\sqrt{2\log p})$ and $a =
  \P(\xi_1 > \tau)$. Then, $\# \left\{1\le i \le p: \zeta_i > \tau
  \right\}$ is a binomial random variable with $p$ trials and success
  probability $a$. Hence, it suffices to demonstrate that $ap \goto
  \infty$. To this end, note that 
  \begin{align*}
    a = 1 - \Phi(\tau) \sim \frac1{\tau}\frac1{\sqrt{2\pi}}\e{-\frac{\tau^2}{2}} & \asymp \frac1{\sqrt{2\log p}}\e{-\log p - \Delta^2/2 + \Delta\sqrt{2\log p}}\\
    & = \frac1{p\sqrt{2\log p}}\e{(1+o(1))\Delta\sqrt{2\log p}},
\end{align*}
which gives
\[
ap \asymp \frac1{\sqrt{2\log p}}\e{(1+o(1))\Delta\sqrt{2\log p}}. 
\]
Since $\Delta \goto \infty$ (in fact, it is sufficient to have
$\Delta$ bounded away from 0 from below), we have
\[
\e{(1+o(1))\Delta\sqrt{2\log p}}/\sqrt{2\log p} \goto \infty,
\] 
as we wish.
\end{proof}

\begin{proof}[Proof of Lemma \ref{lm:lowerbound_single}]
For sufficiently large $p$, $2(1-\epsilon)\log p \le (1-\epsilon/2)\tau^2$. Hence, it is sufficient to show
\begin{equation}\nonumber
\P_{\bm\pi} \lb \|\hat{\bm\beta} - \bm\beta\|^2 \le (1 - \epsilon/2)\tau^2 \rb \goto 0
\end{equation}
uniformly for all estimators $\hat{\bm\beta}$. Letting $I$ be the
random coordinate, 
\begin{equation}\nonumber
\|\hat{\bm\beta} - \bm\beta\|^2 =  \sum_{j\ne I} \hat\beta_j^2  + (\hat\beta_I - \tau)^2 = \|\hat{\bm\beta}\|^2 + \tau^2 - 2\tau\hat\beta_I,
\end{equation}
which is smaller than or equal to $(1 - \epsilon/2)\tau^2$ if and only if
\begin{equation}\nonumber
\hat\beta_I \ge \frac{2\|\hat{\bm\beta}\|^2 + \epsilon\tau^2}{4\tau}.
\end{equation}
Denote by $A = A(\bm y;\hat{\bm\beta})$ the set of all $i\in \{1, \ldots, p\}$ such that $\hat\beta_i \ge (2\|\hat{\bm\beta}\|^2 + \epsilon\tau^2)/(4\tau)$, and let $\hat b$ be the minimum value of these $\hat\beta_i$. Then
\begin{equation}\nonumber
  \hat b \ge \frac{2\|\hat{\bm\beta}\|^2 + \epsilon\tau^2}{4\tau} \ge \frac{2|A|\hat{b}^2 + \epsilon\tau^2}{4\tau} \ge \frac{2\sqrt{2|A| \hat b^2 \cdot \epsilon\tau^2 }}{4\tau},
\end{equation}
which gives
\begin{equation}\label{eq:a_card_bound}
|A| \le 2/\epsilon.
\end{equation}
Recall that $\|\hat{\bm\beta} - \bm\beta\|^2 \le (1-\epsilon/2)\tau^2$ if and only if $I$ is among these $|A|$ components. Hence,
\begin{equation}\label{eq:peak_prob_bound}
\P_{\bm\pi} \lb \|\hat{\bm\beta} - \bm\beta\|^2 \le (1 - \epsilon/2)\tau^2 \Big{|} \bm y \rb = \P_{\bm\pi}(I \in A | \bm y) 
= \sum_{i\in A}\P_{\bm\pi}(I = i|\bm y) = \frac{\sum_{i\in A} \e{\tau y_i}}{\sum_{i=1}^p \e{\tau y_i}},
\end{equation}
where we use the fact that $A$ is almost surely determined by $\bm
y$. Since \eqref{eq:peak_prob_bound} is maximal if $A$ is the set of
indices with the largest $y_i$'s, \eqref{eq:peak_prob_bound} and
\eqref{eq:a_card_bound} together yield
\begin{multline}\nonumber
\P_{\bm\pi} \lb \|\hat{\bm\beta} - \bm\beta\|^2 \le (1 - \epsilon/2)\tau^2 \rb \\
\le \P_{\bm\pi}\left( y_I = \tau + z_I \ \mbox{is at least the} ~\lceil 2/\epsilon\rceil^{\text{th}} ~  \mbox{largest among} ~ y_1, \ldots, y_p\right) \goto 0,
\end{multline}
where the last step is provided by Lemma \ref{lm:many_large_bonf}.

\end{proof}


\begin{proof}[Proof of Lemma \ref{lm:regression_lower_single}]
  To closely follow the proof of Lemma \ref{lm:lowerbound_single},
  denote by $A = A(\bm y, \X; \hat{\bm\beta})$ the set of all $i\in
  \{1, \ldots, p\}$ such that $\hat\beta_i \ge (2\|\hat{\bm\beta}\|^2
  + \epsilon\alpha^2\tau^2)/(4\alpha\tau)$, and keep the same notation
  $\hat b$ as before. Then $\|\hat{\bm\beta} - \bm\beta\|^2 \le
  (1-\epsilon/2)\alpha^2\tau^2$ if and only if $I \in A$. Hence,
\begin{equation}\label{eq:peak_prob_bound_reg}
\begin{aligned}
  \P_{\bm\pi} \lb \|\hat{\bm\beta} - \bm\beta\|^2 \le  (1 - \epsilon/2)\alpha^2\tau^2 \Big{|} \bm y, \X\rb & = \P_{\bm\pi}(I \in A |\bm y, \X) \\
 &  = \sum_{i\in A}\P_{\bm\pi}(I = i|\bm y, \X) \\
& = \frac{\sum_{i\in
      A}\exp\bigl(\alpha\tau \X_i'\bm y - \alpha^2\tau^2
        \|\X_i\|^2/2\bigr)}{\sum_{i=1}^p \exp\bigl(\alpha\tau \X_i'\bm y - \alpha^2\tau^2
        \|\X_i\|^2/2\bigr)}
\end{aligned}
\end{equation}
and this quantity is maximal if $A$ is the set of indices $i$ with the
largest values of $\X_i'\bm y/\alpha - \tau \|\X_i\|^2/2$. As shown in
Lemma \ref{lm:lowerbound_single}, $|A| \le 2/\epsilon$, which gives
\begin{multline}\label{eq:2_delta_large_reg}
\P_{\bm\pi} \lb \|\hat{\bm\beta} - \bm\beta\|^2 \le (1 - \epsilon/2)\alpha^2\tau^2 \rb \\
\le \P_{\bm\pi}\left( \X_I'\bm y/\alpha - \tau \|\X_I\|^2/2 \ \mbox{is at least the}\ \lceil 2/\epsilon\rceil^{\text{th}} \ \mbox{largest} \right).
\end{multline}

We complete the proof by showing that the probability in the
right-hand side of \eqref{eq:2_delta_large_reg} is negligible
uniformly over all estimators $\bm{\hat\beta}$ as $p \goto \infty$. By
the independence between $I$ and $\X, \bm z$, we can assume $I = 1$
while evaluating this probability. With this in mind, we aim to show
that there are sufficiently many $i$'s such that
\[
\X_i'\bm y/\alpha - \frac{\tau}{2} \|\X_i\|^2 - \X_1'\bm y/\alpha +
\frac{\tau}{2} \|\X_1\|^2 = \X_i'\left( \bm z/\alpha + \tau \X_1
\right) - \frac{\tau}{2}\|\X_i\|^2 - \X_1'\bm{z}/\alpha -
\frac{\tau}{2}\|\X_1\|^2
\]
is positive.  Since
\[
\X_1'\bm{z}/\alpha + \frac{\tau}{2}\|\X_1\|^2 = O_{\P}(1/\alpha) + \frac{\tau}{2}\left( 1 + O_{\P}(1/\sqrt{n}) \right),
\]
it suffices to show that
\begin{equation}\label{eq:lower_app_str}
\#\left\{ 2 \le i \le p: \X_i'\left( \bm z/\alpha + \tau \X_1 \right)  -  \frac{\tau}{2}\|\X_i\|^2 > \frac{C_1}{\alpha} + \frac{\tau}{2} + \frac{C_2\tau}{\sqrt{n}}\right\} \le \lceil 2/\epsilon \rceil - 1
\end{equation}
holds with vanishing probability for all positive constants $C_1, C_2$. By the independence between $\X_i$ and $\bm z/\alpha + \tau \X_1$, we can replace $\bm z/\alpha + \tau \X_1$ by $(\|\bm z/\alpha + \tau \X_1\|, 0, \ldots, 0)$ in \eqref{eq:lower_app_str}. That is, 
\[
\X_i'\left( \bm z/\alpha + \tau \X_1 \right)  -  \frac{\tau}{2}\|\X_i\|^2 \eqd \left\| \bm z/\alpha + \tau \X_1 \right\| X_{i,1} - \frac{\tau}{2}X_{i,1}^2 - \frac{\tau}{2}\|\X_{i,-1}\|^2,
\]
where $\X_{i,-1} \in \R^{n-1}$ is $\X_i$ without the first entry. To
this end, we point out that the following three events all happen with
probability tending to one:
\begin{equation}\label{eq:lower_small05}
\begin{gathered}
\#\{2 \le i \le p: \|\X_{i,-1}\| \le 1\}/p \goto 1/2 , \\
\max_i X_{i,1}^2 \le \frac{2\log p}{n},\\
\left\| \bm z/\alpha + \tau \X_1 \right\| \ge \left( \sqrt{n} - \sqrt{\log p} \right)/\alpha .
\end{gathered}
\end{equation}
Making use of this and \eqref{eq:lower_app_str}, we only need to show
that
\begin{multline*}
  N \triangleq \#\left\{ 2 \le   i \le 0.49p  : \frac{1}{\alpha}\left( 1 - \sqrt{(\log p)/n} \right) \sqrt{n}X_{i,1} >  \frac{\tau\log p}{n} + \frac{\tau}{2} + \frac{C_1}{\alpha} + \frac{\tau}{2} + \frac{C_2\tau}{\sqrt{n}}\right\}\\
  \#\left\{ 2 \le i \le 0.49p: \frac{1}{\alpha}\left( 1 - \sqrt{(\log
        p)/n} \right) \sqrt{n}X_{i,1} > \tau + \frac{\tau\log p}{n} +
    \frac{C_1}{\alpha} + \frac{C_2\tau}{\sqrt{n}} \right\}
\end{multline*}
obeys 
\begin{equation}
  \label{eq:lower_app_04}
  N \le \lceil 2/\epsilon \rceil - 1
\end{equation}
with vanishing probability. The first line of
  \eqref{eq:lower_small05} shows that there are at least $0.49p$ many
  $i$'s such that $\|\X_{i,-1}\| \le 1$ and we assume they correspond
  to indices $2 \le i \le 0.49p$ without loss of generality. (Note
  that $N$ is independent of all $\X_{i,-1}$'s.)  Observe that
\[
\tau' := \frac{\tau + \tau(\log p)/n + C_1/\alpha + C_2\tau/\sqrt{n}}{\left( 1 - \sqrt{(\log p)/n}\right)/\alpha} = \alpha\left( 1 +  2\sqrt{\frac{\log p}{n}} \right)\tau  + O(1)
\]
for sufficiently large $p$ (to ensure $(\log p)/n$ is small). Hence,
plugging the specific choice of $\tau$ and using $\alpha \le 1$, we
obtain 
\[
\tau' \le \left( 1 +  2\sqrt{(\log p)/n} \right)\tau  + O(1) \le \sqrt{2\log p} - \log\sqrt{2\log p} + O(1),
\]
which reveals that $\sqrt{2\log (0.49p)} - \tau' = \sqrt{2\log p} - \tau'  + o(1) \goto \infty$. Since $\sqrt{n}X_{n,i}$ are \iid $\mathcal{N}(0,1)$, Lemma \ref{lm:many_large_bonf} validates \eqref{eq:lower_app_04}.
\end{proof}


\begin{proof}[Proof of Corollary \ref{cor:lowerbound_general_pred}]
  Let $c > 0$ be a sufficiently small constant to be determined
  later. It is sufficient to prove the claim with $p$ replaced by a
  possibly smaller value given by $p^{\star} := \min\{\lfloor cn
  \rfloor, p\}$ (if we knew that $\beta_i = 0$ for $p^\star+1 \le i
  \le p$, the loss of any estimator $\X \hat{\bm \beta}$ would not
  increase after projecting onto the linear space spanned by the first
  $p^\star$ columns). Hereafter, we assume $\X \in \R^{n \times
    p^{\star}}$ and $\bm\beta \in \R^{p^{\star}}$.  Observe that $p =
  O(n)$ implies $p = O(p^{\star})$ and, therefore,
\begin{equation}\label{eq:lower_coro_pred}
\log(p^{\star}/k) \sim \log(p/k).
\end{equation}
In particular, $k/p^{\star} \goto 0$ and $n/\log(p^{\star}/k) \goto \infty$. This suggests that we can apply Theorem \ref{thm:reg_prob_lowerbound} to our problem, obtaining
\begin{equation}\nonumber
\inf_{\hat{\bm\beta}}\sup_{\|\bm\beta\|_0 \le k}\P\left(\frac{\|\hat{\bm\beta} - \bm\beta\|^2}{2k\log (p^\star/k)} > 1 - \epsilon' \right) \rightarrow 1.
\end{equation}
for every constant $\epsilon' > 0$. Because of
\eqref{eq:lower_coro_pred}, we also have
\begin{equation}\label{eq:lower_coro_reeuced}
\inf_{\hat{\bm\beta}}\sup_{\|\bm\beta\|_0 \le k}\P\left(\frac{\|\hat{\bm\beta} - \bm\beta\|^2}{2k\log(p/k)} > 1 - \epsilon' \right) \rightarrow 1
\end{equation}
for any $\epsilon' > 0$. 

Since $p^\star/n \le c \le 1$, the smallest singular value of the
Gaussian random matrix $\X$ is at least $1 - \sqrt{c} + o_{\P}(1)$
(see, for example, \cite{vershynin}). This result, together with
\eqref{eq:lower_coro_reeuced}, yields
\begin{equation}\nonumber
\inf_{\hat{\bm\beta}}\sup_{\|\bm\beta\|_0 \le k}\P\left(\frac{\|\X\hat{\bm\beta} - \X\bm\beta\|^2}{2k\log(p/k)} > (1-\sqrt{c})^2(1 - \epsilon') \right) \rightarrow 1
\end{equation}
for each $\epsilon' > 0$. Finally, choose $c$ and $\epsilon'$
sufficiently small such that $(1-\sqrt{c})^2(1 - \epsilon') > 1 -
\epsilon$.
\end{proof}


\end{document}